\documentclass[11pt,reqno]{amsart}

\usepackage[T1]{fontenc}
\usepackage[utf8]{inputenc}
\usepackage{lmodern}
\usepackage{amsmath,amssymb,amsfonts,amsthm,mathtools}
\usepackage{enumitem,array}
\usepackage{microtype}
\usepackage{xcolor}
\usepackage[bookmarks=true,colorlinks=true,linkcolor=blue!55!black,citecolor=blue!55!black,urlcolor=blue!55!black]{hyperref}
\hypersetup{
  pdftitle={From rough local mass to logarithmic spectral ladders for damped waves},
  pdfauthor={Henry Shin},
  pdfsubject={Exact local-mass inverse theorems and logarithmic spectral realization for damped waves},
  pdfkeywords={damped waves, rough damping, local mass, inverse resolvent estimates, non-selfadjoint spectrum, logarithmic oscillator}
}
\allowdisplaybreaks
\newtheorem{thmx}{Theorem}

\newtheorem{theorem}{Theorem}[section]
\newtheorem{proposition}[theorem]{Proposition}
\newtheorem{lemma}[theorem]{Lemma}
\newtheorem{corollary}[theorem]{Corollary}
\newtheorem{definition}[theorem]{Definition}
\theoremstyle{remark}
\newtheorem{remark}[theorem]{Remark}

\newcommand{\T}{\mathbb T}
\newcommand{\R}{\mathbb R}
\newcommand{\N}{\mathbb N}
\newcommand{\Nzero}{\mathbb N_0}
\newcommand{\Z}{\mathbb Z}
\newcommand{\C}{\mathbb C}
\newcommand{\one}{\mathbf 1}
\newcommand{\eps}{\varepsilon}
\newcommand{\cA}{\mathcal A}
\newcommand{\cH}{\mathcal H}
\newcommand{\cL}{\mathcal L}
\newcommand{\cT}{\mathcal T}
\newcommand{\cS}{\mathcal S}
\newcommand{\cU}{\mathcal U}

\newcommand{\vertiii}[1]{{\left\vert\kern-0.25ex\left\vert\kern-0.25ex\left\vert #1 \right\vert\kern-0.25ex\right\vert\kern-0.25ex\right\vert}}

\newcommand{\doilink}[1]{\href{https://doi.org/#1}{\nolinkurl{doi:#1}}}
\DeclareMathOperator{\spec}{spec}
\DeclareMathOperator{\dist}{dist}
\DeclareMathOperator{\Ran}{Ran}
\DeclareMathOperator{\Log}{Log}

\numberwithin{equation}{section}

\title[Rough local mass and logarithmic spectral ladders]{From rough local mass to logarithmic spectral ladders for damped waves}
\author{Henry Shin}
\address{San Diego, CA, USA}
\email{hkshin@gmail.com}
\date{July 2026}

\subjclass[2020]{35L05, 35P05, 35P20, 47D06, 47A10, 35B35, 93C20}
\keywords{damped waves, rough damping, local mass, inverse resolvent estimates, non-selfadjoint spectrum, logarithmic oscillator}

\begin{document}

\begin{abstract}
We prove an exact inverse theorem for damped waves on $X=\T\times Y$
with arbitrary bounded measurable transverse damping $a=a(x)\ge0$.
Let $\Theta_a(r)$ be the least average of $a$ on radius-$r$ intervals.
Then $\liminf_{r\downarrow0}\Theta_a(r)>0$ characterizes exponential
stability, while for every $L(S)=\ell(\log(eS))$ with
$\ell\colon[1,\infty)\to[1,\infty)$ nondecreasing, unbounded, and eventually
doubling,
$\Theta_a(r)\gtrsim L(1/r)^{-1}$ is equivalent to stationary- and
generator-resolvent bounds $O(|s|^{-1}L(|s|))$ and $O(L(|s|))$.
One resonant scalar block per dyadic octave already recovers this
multiscale mass bound.  Every such critical profile occurs for indicator
damping on an open dense set of arbitrarily small measure, whereas the
stationary resolvent-to-mass implication fails at every sublinear power gauge.

For the cusp $a(x)=(\log(e/|x|))^{-A}$, $A>0$, near its isolated damping
well, we prove the sharp obstruction is genuine spectrum.  Blow-up yields
$-\partial_y^2+i\log|y|$, whose spectrum is a simple interlaced vertical
ladder.  Every fixed finite ladder portion transfers to damped-wave eigenvalues
with complete blockwise algebraic count and two-term asymptotics.
Combined with the resolvent theorem, these eigenvalues give the
sharp regularized decay scale $\exp[-t^{1/(A+1)}]$.
\end{abstract}

\maketitle

\section{Introduction}\label{sec:intro}

Decay of damped waves is usually organized by two pieces of geometric
information: which rays meet the damping and how rapidly the coefficient
vanishes near those that do not.  For continuous or regularly structured
coefficients, a vanishing order and a microlocal normal form often supply
the second datum.  Neither object need exist for a general bounded measurable
damping.  There may be no regular boundary of the damped region and no
pointwise profile from which a decay rate can be read.

For general bounded measurable damping, the problem is therefore twofold:
identify the measurable feature of the coefficient encoded by
high-frequency dynamics, and determine whether the resulting sharp
nonselfadjoint resolvent scale is carried by eigenvalues or only by
pseudospectrum.  The latter distinction is essential, since a large
resolvent need not place any spectrum near the imaginary axis.

In the separated setting studied here, two complementary results resolve
these problems.  On
$X=\T_x\times Y$, with arbitrary $0\le a=a(x)\in L^\infty(\T)$, define
the lower interval mass
\begin{equation}\label{eq:Theta}
    \Theta_a(r)=\inf_{x\in\T}\frac1{2r}
       \int_{x-r}^{x+r}a(y)\,dy,
    \qquad 0<r\le\pi.
\end{equation}
Write $\cA$ for the associated damped-wave generator on the projected
energy space and $P_X(is)=-\Delta_X-s^2+isa(x)$ for the stationary operator;
their precise domains and zero-mode conventions are given below.
Theorem~\ref{thm:B} proves that the critical lower bounds for this single
coefficient-level quantity are equivalent to the corresponding stationary-
and generator-resolvent bounds.  Its inverse direction is strikingly
sparse: one resonant scalar block at one transverse frequency in each
dyadic octave recovers the mass bound at every sufficiently small scale.
Theorem~\ref{thm:E} then shows, for the canonical logarithmic cusp, that the
scale selected by this rough theorem is attained by genuine damped-wave
eigenvalues, with two-term locations and the complete algebraic count in
every fixed block window.

The distinction from existing results is both the inverse direction and
the spectral realization.  At the exponential endpoint, geometric control
and its Gaussian-beam converse give the classical picture for regular
damping \cite{Ralston1969,RT,BLR,Lebeau1996}.  Beyond that endpoint, sharp
rates have generally been deduced from a prescribed vanishing law,
derivative bounds, or controlled geometry of the damped and undamped sets
\cite{ALN,Kleinhenz2019,DK2020,Sun2023,Kleinhenz2025,DKP,AK,
KleinhenzWang2026}.  For rough coefficients, Burq--Moyano prove a universal
logarithmic regularized-decay estimate for every nontrivial bounded
measurable damping on a compact manifold \cite{MoyanoBurq2023}, while
L\'eautaud shows that H\"older damping failing geometric control forces at
least logarithmic resolvent divergence along a high-frequency sequence
\cite{Leautaud2025}.  None of these results gives a coefficient-level
quantity that is both sufficient for and recoverable from the dynamics,
scale by scale, without continuity or a pointwise profile.

The closest direct comparison for the cusp is
\cite[Theorem~2.2\textup{(3)}]{Kleinhenz2025}.  For the same profile
\[
    a(x)\asymp(\log(1/|x|))^{-A},
\]
that work obtains the generator-resolvent upper bound
$\|(is-\cA)^{-1}\|\lesssim(\log|s|)^A$ and the upper regularized-decay
envelope $\exp(-ct^{1/(A+1)})$, but only the weaker lower obstruction with
exponent $1/A$.  Theorem~\ref{thm:E} closes this gap for the exact cusp:
it constructs true eigenvalues at the predicted logarithmic depth and
thereby gives the matching exponent $1/(A+1)$.  Theorem~\ref{thm:B} is
complementary: it removes derivative and pointwise-profile assumptions and
gives a direct-and-inverse classification for arbitrary bounded measurable
transverse damping.

The unity of the two results is quantitative.  At the logarithmic gauge,
Theorem~\ref{thm:B} gives the global rough dictionary; for the cusp,
upper-mass saturation and Theorem~\ref{thm:E} add sharp sampled growth and
genuine spectrum.  In the sampled statements below, $s_j\to\infty$ ranges
over the distinct positive transverse frequencies; in the final line,
$z_j$ denotes the corresponding principal-rung spectral point constructed
in Theorem~\ref{thm:E}:
\begin{equation}\label{eq:spine}
\begin{aligned}
 \Theta_a(r)\gtrsim(\log(1/r))^{-A}\quad(r\downarrow0)
 &\ \Longleftrightarrow\
 \|P_X(is)^{-1}\|\lesssim |s|^{-1}(\log|s|)^A\\
 &\ \Longleftrightarrow\
 \|(is-\cA)^{-1}\|\lesssim(\log|s|)^A,
 \qquad |s|\gg1,\\
 &\ \Longrightarrow\
 \|e^{t\cA}\cA^{-1}\|\lesssim e^{-ct^{1/(A+1)}},\\
 \Theta_a(r)\asymp(\log(1/r))^{-A}\quad(r\downarrow0)
 &\ \Longrightarrow\
 \|(is_j-\cA)^{-1}\|\asymp(\log s_j)^A,\\
 \begin{gathered}
  z_j\in\spec(\cA),\qquad \Im z_j\sim s_j,\\[-2pt]
  -\Re z_j\asymp(\log s_j)^{-A}
 \end{gathered}
 &\ \Longrightarrow\
 \|e^{t\cA}\cA^{-1}\|\gtrsim e^{-Ct^{1/(A+1)}}.
\end{aligned}
\end{equation}
The first two lines are the global direct-and-inverse stationary/generator
dictionary.  Their upper resolvent bound gives the third line specifically by
the Batty--Duyckaerts $M_{\log}$ theorem and its Hilbert-space formulation
\cite{BD,RST}.  This implication belongs to the broader resolvent--semigroup
framework whose exponential endpoint is the Gearhart--Pr\"uss--Huang theorem
and whose polynomial Hilbert-space theory includes Borichev--Tomilov
\cite{Gearhart,Pruss,Huang,BT}.  The fourth line is the forward
sharpness statement supplied by upper-mass saturation; it does not assert
that a lower resolvent estimate recovers upper-mass saturation.  Resolvent
growth alone cannot furnish the final lower decay bound, because it may be
purely pseudospectral.  The genuine spectral points in
Theorem~\ref{thm:E} supply exactly the missing lower obstruction, completing
the sharp decay law.
Theorem~\ref{thm:A} supplies the exponential endpoint;
Theorem~\ref{thm:C} proves abundance of critical profiles and failure of
the inverse implication at power scales; and Theorem~\ref{thm:D} provides
the exact model identity behind the spectral transfer.

\subsection{Setting and the lower-mass invariant}\label{subsec:intro-setup}

Let $Y$ be a compact connected Riemannian manifold without boundary,
$\dim Y\ge1$, and put
\[
    X=\T_x\times Y,\qquad \T=\R/2\pi\Z.
\]
The product carries the metric $dx^2+g_Y$, and
$\Delta_X=\partial_x^2+\Delta_Y$ is nonpositive.  For
$0\le a=a(x)\in L^\infty(\T)$, consider
\begin{equation}\label{eq:dwe}
    \partial_t^2u-\Delta_Xu+a(x)\partial_tu=0.
\end{equation}
On the projected energy space
$\cH=\dot H^1(X)\times L^2(X)$, where
$\dot H^k(X)=H^k(X)/\C$, we use the homogeneous quotient Hilbert
structure
\[
 \|[u]\|_{\dot H^1(X)}:=\|\nabla u\|_{L^2(X)},\qquad
 \langle [u],[w]\rangle_{\dot H^1(X)}
   :=\langle\nabla u,\nabla w\rangle_{L^2(X)},
\]
and hence
\[
 \langle([u],v),([w],z)\rangle_{\cH}
 :=\langle\nabla u,\nabla w\rangle_{L^2(X)}
   +\langle v,z\rangle_{L^2(X)}.
\]
These expressions are representative-independent and define Hilbert
norms because $X$ is compact and connected.  Define, for every real
$d\in L^\infty(\T)$,
\[
    \cA_d([u],v)=([v],\Delta_Xu-dv),\qquad
    \mathcal D(\cA_d)=\dot H^2(X)\times H^1(X).
\]
For the given damping we abbreviate $\cA=\cA_a$; this operator generates a
contraction semigroup.  For a real coefficient $d\in L^\infty(\T)$, put
\begin{equation}\label{eq:P-def}
    P_{X,d}(is)=-\Delta_X-s^2+isd(x),\qquad
    \mathcal D(P_{X,d}(is))=H^2(X),
\end{equation}
and abbreviate $P_X(is):=P_{X,a}(is)$.
If $0=\lambda_0<\lambda_1\le\lambda_2\le\cdots$ are the eigenvalues of
$-\Delta_Y$ and $s_j=\sqrt{\lambda_j}$, then, specializing to $d=a$,
separation of variables reduces
\eqref{eq:P-def} to
\begin{equation}\label{eq:Q-family}
    Q_{s,\mu}=-\partial_x^2-\mu+isa(x),\qquad \mu\le s^2.
\end{equation}
Write
\[
    \Sigma_Y
      :=\{\sqrt\lambda:\lambda\in\spec(-\Delta_Y),\ \lambda>0\}
\]
for the set of distinct positive transverse frequencies.
Write $\cA_{d,j}$ for the restriction of $\cA_d$ to the reducing subspace
generated by a fixed transverse eigenfunction with eigenvalue $\lambda_j$,
and put $\cA_j=\cA_{a,j}$.  At a nonzero spectral parameter this scalar
block is governed by
\[
    P_{d,j}(z)=-\partial_x^2+\lambda_j+zd+z^2
    \quad\text{on }H^2(\T),
\]
with $d=a$ unless a perturbation is explicitly indicated.  The precise
block and zero-mode conventions are recorded in Lemma~\ref{lem:blocks}.

The geometry behind this reduction is structural.  The leaves
$\{x\}\times Y$ are geodesically invariant, the damping is constant along
them, and the Laplacian is the tensor sum
\[
    -\Delta_X=-\partial_x^2\otimes I+I\otimes(-\Delta_Y).
\]
Exact invariance along the leaves therefore replaces regularity of the
coefficient, while the one-dimensional rough direction leaves only a
uniformly bounded propagating Fourier cluster.  In particular, taking
$Y=\T^{d-1}$ shows that the results include arbitrary bounded measurable
unidirectional damping $a=a(x_1)$ on every flat torus $\T^d$, $d\ge2$.  After the
exact block decomposition, the only quantitative spectral information
used from $Y$ is the dyadic nonlacunarity of its distinct positive
frequencies.

The invariant $\Theta_a$ was defined in \eqref{eq:Theta}, where the
integral is taken over the corresponding torus arc.  For
$a=\one_\Omega$ we write $\vartheta_\Omega=\Theta_{\one_\Omega}$.
The relevant slowly varying scales are the following.

\begin{definition}[Critical gauges]\label{def:critical}
A gauge $L:[S_0,\infty)\to[1,\infty)$, $S_0\ge1$, is \emph{critical} if
\[
    L(S)=\ell(\log(eS)),
\]
where $\ell:[1,\infty)\to[1,\infty)$ is nondecreasing and unbounded, and
there are specified constants $u_0\ge1$ and $C_\ell\ge1$ such that
\[
    \ell(2u)\le C_\ell\ell(u),\qquad u\ge u_0.
\]
The gauge data are the quadruple
$(S_0,u_0,C_\ell,\ell(u_0))$; enlarging $u_0$ changes only these data.
Thus all references below to dependence on the full gauge data include both
the eventual-doubling threshold and the normalization at that threshold.
Typical examples are
\[
    L(S)=(\log(eS))^A,\qquad
    L(S)=(\log\log(e^eS))^p,
\]
and finite products of positive powers of iterated logarithms.  We extend
$L$ constantly below $S_0$.
\end{definition}
A doubling $\ell$ has polynomial upper growth, so
$L(S)=o(S^\varepsilon)$ for every $\varepsilon>0$.  More importantly,
evaluation is stable under fixed power-type changes of scale:
$L(CS^\alpha L(S)^\gamma)\asymp L(S)$; see
Lemma~\ref{lem:scale-stability}.  That stability is exactly what closes the
inverse argument.

\subsection{The rough inverse theorem}

Theorems~\ref{thm:A} and~\ref{thm:B} identify $\Theta_a$ as the
coefficient-level invariant at the endpoint and throughout the critical
class.  Theorem~\ref{thm:B} is the central inverse result in the latter
regime, while Theorem~\ref{thm:C} establishes abundance of critical profiles
and the failure of the inverse implication at power scales.

\begin{thmx}[Endpoint classification]\label{thm:A}
Let $0\le a\in L^\infty(\T)$.  The following are equivalent:
\begin{enumerate}[label=\textup{(\roman*)}]
\item $\displaystyle \liminf_{r\downarrow0}\Theta_a(r)>0$;
\item $\|P_X(is)^{-1}\|_{L^2\to L^2}\lesssim |s|^{-1}$ for $|s|\ge1$;
\item $i\R\subset\rho(\cA)$ and
$\sup_{s\in\R}\|(is-\cA)^{-1}\|_{\cH\to\cH}<\infty$;
\item $e^{t\cA}$ is exponentially stable.
\end{enumerate}
If $a=\one_\Omega$ with $\Omega\subset\T$ measurable, these conditions
hold if and only if $|\T\setminus\Omega|=0$.
\end{thmx}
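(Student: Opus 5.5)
The plan is to prove the cycle (i)$\Rightarrow$(ii)$\Rightarrow$(iii)$\Rightarrow$(iv)$\Rightarrow$(i) through the scalar block reduction of Lemma~\ref{lem:blocks}. The implication (iii)$\Leftrightarrow$(iv) is the Gearhart--Pr\"uss--Huang theorem for the contraction semigroup $e^{t\cA}$ on the Hilbert space $\cH$.

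For (ii)$\Leftrightarrow$(iii) I will use the standard identity relating $(is-\cA)^{-1}$ to $P_X(is)^{-1}$ on the projected energy space. For $|s|\ge1$, $\|(is-\cA)^{-1}\|_{\cH}\lesssim 1+|s|\,\|P_X(is)^{-1}\|_{L^2\to L^2}$, and conversely $|s|\,\|P_X(is)^{-1}\|\lesssim\|(is-\cA)^{-1}\|$. On a compact set of $s$, including the zero-mode convention at $s=0$, invertibility on $i\R$ comes from a unique continuation argument in the transverse blocks. If $P_{a,j}(is)u=0$, pairing with $u$ gives $s\int a|u|^2=0$, so $au=0$ and $u$ is a Laplace eigenfunction on $\T$. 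Then $u$ is a trigonometric polynomial vanishing on the positive-measure set $\{a>0\}$, hence $u=0$; here (i) ensures $a\not\equiv0$, and compactness plus the resolvent being analytic give uniform bounds.

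The main step is (i)$\Rightarrow$(ii). Write $Q_{s,\mu}u=f$ in the block form \eqref{eq:Q-family}, with $\|u\|=1$. The imaginary part gives $s\int a|u|^2\le\|f\|$. The real part gives $\|u'\|^2-\mu\|u\|^2=\Re\langle f,u\rangle$.

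If $\mu\le s^2/C$ or $\mu<0$ is far from the spectrum, ellipticity gives $\|u\|\lesssim s^{-1}\|f\|$ directly, since $\dist(\mu,\{k^2\})$ is either large or the block is nonresonant. The remaining case is $\mu\sim k_0^2$ with $k_0\le s$. Here I will localize in frequency: $u$ is, up to an error $O(\|f\|/k_0)$ in $L^2$, a combination $\alpha e^{ik_0x}+\beta e^{-ik_0x}$ plus a remainder controlled by $\|f\|/\dist$. The modulus $|u|^2=|\alpha|^2+|\beta|^2+2\Re(\alpha\bar\beta e^{2ik_0x})$ oscillates at scale $1/k_0$.

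The obstacle is to show $\int a|u|^2\gtrsim\Theta_a(c/k_0)$. I will average over intervals of length $\sim\pi/k_0$: on each such interval $|u|^2$ is comparable to a nonnegative function $\ge c(|\alpha|-|\beta|)^2$. That degenerates in the standing-wave case, so instead I will bound $|u|^2$ below by $c\|u\|^2$ on a fixed proportion of every period and use interval mass on half-periods shifted away from the nodes. A cleaner route is to compare $|u|^2$ against a convolution. Since $|u|^2$ and its derivative are controlled by $k_0$, we have $|u|^2\ge c\,\rho_{r}*|u|^2-Ck_0^{-1}\|u'\|$-type errors. Then Fubini gives $\int a|u|^2\ge\int |u|^2\,(\rho_r*a)\gtrsim\Theta_a(r)\|u\|^2$, modulo errors absorbed through $\|f\|$.

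I expect this lower bound, uniform in $\mu$ and robust for standing waves, to be the hardest point. The fallback is a Poincar\'e-type estimate on intervals of length $\asymp1/k_0$ using $\|u'\|\lesssim k_0$. For $k_0$ bounded, I use compactness and the unique continuation above. The conclusion is $s\,\Theta_a(c/s)\|u\|^2\lesssim\|f\|\|u\|+\text{errors}$, which gives (ii) when $\liminf\Theta_a>0$.

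For (iv)$\Rightarrow$(i), i.e.\ (ii)$\Rightarrow$(i), I argue by contradiction using quasimodes. Suppose $\Theta_a(r_n)\to0$ along $r_n\to0$, with centers $x_n$. Take $u_n=\chi_n(x)e^{ik_nx}$, where $\chi_n$ is a smooth bump adapted to an interval of radius $\gg r_n$ on which $a$ has small average; choosing the radius as a multiple of $r_n$ uses the subadditivity of averages. Set $k_n\gg r_n^{-2}$ and pick a transverse frequency with $s^2-\lambda_j\approx k_n^2$. Then $\|Q u_n\|\lesssim k_n r_n^{-1}+s\|a^{1/2}\chi_n\|\cdot\|a\|_\infty^{1/2}$, which is $o(s)$. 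This contradicts (ii).

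A single transverse frequency $s_1=\sqrt{\lambda_1}$ already suffices, by taking $\mu=s^2-\lambda_1$ and letting $s$ vary continuously. Finally, for $a=\one_\Omega$ one has $\Theta\equiv1$ if $|\T\setminus\Omega|=0$. Otherwise, a Lebesgue density point of the complement gives $\Theta(r)\to0$.
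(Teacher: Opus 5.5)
\textbf{The converse direction does not work as written.} For $u_n=\chi_ne^{ik_nx}$ in the block $\mu=k_n^2$, the residual contains the transport commutator $2ik_n\chi_n'$, of relative size $k_n/r_n$. So $\|Q_{s,\mu}u_n\|/\|u_n\|=o(s)$ needs $k_n\ll sr_n$, that is, a nearly resonant block $s^2-\lambda_j\ll(sr_n)^2$. You never arrange this.

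Your closing claim that the single frequency $\sqrt{\lambda_1}$ suffices is false. There $\mu=s^2-\lambda_1\approx s^2$, so the commutator is $\approx s/r_n\gg s$. In fact no quasimode can succeed in that block: Lemma~\ref{lem:observability} gives $\|v\|\le C_1(s/\mu)\|Q_{s,\mu}v\|\lesssim s^{-1}\|Q_{s,\mu}v\|$ for every $v$ and every nontrivial $a$. The obstruction lives only in (nearly) resonant channels, so transverse frequencies tending to infinity are indispensable.

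The subadditivity remark is also backwards. An arc of radius $Mr_n$ containing a low-mass window need not have low mass; averages only pass down to subarcs, with $\Theta_a(r/M)\le M\Theta_a(r)$.

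The repair is the paper's argument. Work at $\mu=0$ on a minimizing window of radius $r_k$ with $\Theta_a(r_k)\to0$. Pick a transverse frequency $s^{(k)}\in[Kr_k^{-2},4Kr_k^{-2})$, which exists by Weyl's law. The absorbing quasimode of Proposition~\ref{prop:absorbing} then gives $\|P_X(is^{(k)})^{-1}\|\ge\frac c2 r_k^2\ge cK/(2s^{(k)})$, contradicting (ii) once $K$ is large. At the endpoint even a plain cutoff $\chi$ on that window suffices, since $\|Q_{s,0}\chi\|/\|\chi\|\lesssim r_k^{-2}+s\,\Theta_a(r_k)^{1/2}$.

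\textbf{Your primary route for (i)$\Rightarrow$(ii) also fails.} The two-mode reduction treats $(-\partial_x^2-\mu)u=f-isau$ as if the right side were $O(\|f\|)$. But $s\|au\|$ can be as large as $(s\|a\|_\infty\|f\|\|u\|)^{1/2}$, while the gap at $k_0^2$ is only $\sim k_0$. Even your claimed error $O(\|f\|/k_0)$ exceeds the target size $\|f\|/s$ when $k_0\ll s$. So the cluster picture collapses exactly in the low-detuning regime, including the resonant channel $\mu\approx0$. Likewise, $\mu\le s^2/C$ gives no ellipticity, since $\mu$ may equal some $k^2$.

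Your Poincar\'e fallback, by contrast, is correct and is by itself the whole argument. It is the absorption branch in the proof of Proposition~\ref{prop:upper-engine}, which alone gives the paper's endpoint bound. Insert $s\int a|u|^2\le\|f\|\|u\|$ and $\|u'\|^2\le\mu_+\|u\|^2+\|f\|\|u\|$ into \eqref{eq:periodic-wp} at the single scale $r=c/s$, with $C_Pc^2\le\frac12$. This gives $\|u\|\lesssim(s\,\Theta_a(c/s))^{-1}\|f\|$ uniformly for all $\mu\le s^2$. No frequency localization, Nazarov estimate, or compactness in $k_0$ is needed.

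Two small additions are needed. Use the conjugation argument of Remark~\ref{rem:invertibility} to obtain surjectivity from the a priori bound. And in (ii)$\Rightarrow$(iii), nontriviality of $a$ follows from (ii) itself, not from (i).
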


The near-exponential regime is substantially richer.  A set
$\cS\subset(0,\infty)$ is \emph{dyadically syndetic} if it meets every
sufficiently high block $[2^k,2^{k+1})$; throughout the paper such sets are
subsets of the distinct positive transverse eigenfrequencies.

\begin{thmx}[Critical local-mass dictionary]\label{thm:B}
Let $L$ be critical and let $0\le a\in L^\infty(\T)$ be nontrivial.  The
following are equivalent:
\begin{enumerate}[label=\textup{(\roman*)}]
\item there exist $c>0$ and $r_{\mathrm{mass}}\in(0,\pi]$ such that
\begin{equation}\label{eq:crit-mass}
    \Theta_a(r)\ge cL(1/r)^{-1},\qquad 0<r\le r_{\mathrm{mass}};
\end{equation}
\item there exist $C,s_0>0$ such that
\begin{equation}\label{eq:crit-stat}
    \|P_X(is)^{-1}\|_{L^2\to L^2}
       \le C|s|^{-1}L(|s|),\qquad |s|\ge s_0;
\end{equation}
\item there exist $C,s_0>0$ and a dyadically syndetic set
$\cS\subset\Sigma_Y\cap[s_0,\infty)$ such that, for every $s\in\cS$, the
single resonant block $Q_{s,0}=-\partial_x^2+isa$ is invertible and
\begin{equation}\label{eq:crit-sparse-block}
    \|Q_{s,0}^{-1}\|_{L^2(\T)\to L^2(\T)}
       \le Cs^{-1}L(s);
\end{equation}
\item there exist $C,s_0>0$ such that
\begin{equation}\label{eq:crit-gen}
    \|(is-\cA)^{-1}\|_{\cH\to\cH}
       \le CL(|s|),\qquad |s|\ge s_0.
\end{equation}
\end{enumerate}
If, in addition, there exist $C_+>0$ and $r_1>0$ such that
\begin{equation}\label{eq:crit-upper-saturation}
    \Theta_a(r)\le C_+L(1/r)^{-1},
    \qquad 0<r\le\min(r_1,\pi),
\end{equation}
then, along every sufficiently large transverse eigenfrequency,
\begin{equation}\label{eq:sharp-transverse}
    \|P_X(is_j)^{-1}\|\asymp s_j^{-1}L(s_j),\qquad
    \|(is_j-\cA)^{-1}\|\asymp L(s_j).
\end{equation}
\end{thmx}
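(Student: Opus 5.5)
The plan is to prove the cycle (i)$\Rightarrow$(ii)$\Rightarrow$(iii)$\Rightarrow$(i), and then attach (iv) to (ii) through the standard block dictionary. The separation of variables in Lemma~\ref{lem:blocks} reduces everything to the one-dimensional family $Q_{s,\mu}=-\partial_x^2-\mu+isa$ with $\mu=s^2-\lambda_j\le s^2$. This gives
\[
\|P_X(is)^{-1}\|=\sup_j\|Q_{s,s^2-\lambda_j}^{-1}\|.
\]
Likewise, $\|(is-\cA_j)^{-1}\|$ is comparable to $1+|s|\,\|P_{j}(is)^{-1}\|$, by the usual energy bookkeeping on each block. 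With these identities, (ii)$\Leftrightarrow$(iv) is routine. Low frequencies are harmless: nontriviality of $a$ together with Theorem~\ref{thm:A}-type unique continuation gives $i\R\subset\rho(\cA)$. Also (ii)$\Rightarrow$(iii) is immediate by taking $\mu=0$ and $\cS=\Sigma_Y\cap[s_0,\infty)$. This set is dyadically syndetic because of the nonlacunarity of the transverse spectrum (Weyl's law).

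For (i)$\Rightarrow$(ii) I would bound $\|Q_{s,\mu}^{-1}\|$ uniformly in $\mu\le s^2$. Pair $Q_{s,\mu}u=f$ with $u$; the imaginary part gives $s\int a|u|^2\le\|f\|\|u\|$. Split according to the frequency $\sqrt{\mu_+}$.
\begin{itemize}
\item If $\mu\lesssim s\,\Theta$, or $\mu<0$, the real part gives ellipticity and the bound follows directly.
\item Otherwise $u$ is, modulo $\|f\|/\mu^{1/2}$-type errors, a superposition $e^{\pm i\sqrt\mu x}$ with amplitudes varying on the scale $r$. There $|u|^2$ is approximately constant on intervals of length $r$, so a local averaging lemma gives $\int a|u|^2\ge c\,\Theta_a(r)\int|u|^2$ minus errors.
\end{itemize}
The variation control comes from $|u'|\lesssim\sqrt\mu|u|+\dots$ and from the Wronskian-type quantity $|u'|^2+\mu|u|^2$. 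Its derivative is $2\Re(\bar u'(-f+isau))$, which is controlled by $s\|a\|_\infty$ times local mass. Choosing $r\sim (sL(s))^{-1}$, or a similar power-type scale, and invoking the scale stability $L(CS^\alpha L^\gamma)\asymp L(S)$ of Lemma~\ref{lem:scale-stability} converts $\Theta_a(r)\ge cL(1/r)^{-1}$ into $\|Q_{s,\mu}^{-1}\|\lesssim s^{-1}L(s)$. I would first try a pure Gronwall/averaging argument at $r\asymp s^{-1}$ when $\mu\sim s^2$. For intermediate $\mu$ one needs $r\asymp\mu^{-1/2}$ together with an extra factor controlled by the gauge stability.

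The inverse direction (iii)$\Rightarrow$(i) is the heart. Given $r$ small, pick $s\in\cS$ with $s\approx r^{-2}$ or $r^{-1}$ in a dyadic octave; syndeticity and gauge stability make the precise choice immaterial. Let $x_0$ nearly attain the infimum in $\Theta_a(r)$, and test $Q_{s,0}$ on a quasimode $u=\chi((x-x_0)/r)$, a bump of width $r$. Then
\[
\|Q_{s,0}u\|\le\|u''\|+s\|au\|\lesssim r^{-2}\|u\|+s\Big(\|a\|_\infty\int_{I}a\Big)^{1/2}.
\]
Here I use $a^2\le\|a\|_\infty a$, so $\|au\|^2\lesssim r\Theta_a(r)$. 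Comparing with $\|u\|\sim r^{1/2}$ gives $\|Q_{s,0}^{-1}\|^{-1}\lesssim r^{-2}+s\Theta_a(r)^{1/2}$. Balancing via $s=r^{-2}L(1/r)^{1/2}$ then yields $s^{-1}L(s)^{-1}\cdot s\lesssim L^{1/2}+s\Theta^{1/2}$. Since this pure bump loses a square root, the main obstacle is to replace the bump with a better quasimode: the ground state of $-\partial_x^2$ on an interval where $a$ is small in average. One can use a Dirichlet-type profile and estimate $\langle au,u\rangle$ rather than $\|au\|$, through the imaginary part $\Im\langle Q u,u\rangle=s\int a|u|^2$ combined with a resolvent identity in which the real part is absorbed. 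Concretely, I would apply $\|Q^{-1}\|\le Cs^{-1}L$ to a modified quasimode $u-(\text{correction})$ and use a duality/Schur argument to get the linear dependence $\Theta_a(r)\gtrsim L(1/r)^{-1}$.

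Finally, the saturation statement \eqref{eq:sharp-transverse} follows by running the same quasimode construction at $s=s_j$ with $r$ chosen so that $s_j\Theta_a(r)\asymp r^{-2}$, combined with the upper bound from (i)$\Rightarrow$(ii).
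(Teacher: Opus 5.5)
\textbf{The inverse direction has a genuine gap.} Your reductions (ii)$\Leftrightarrow$(iv) via the stationary--generator comparison and (ii)$\Rightarrow$(iii) via the $\mu=0$ summand are fine. The heart of the theorem, (iii)$\Rightarrow$(i), is not proved, and the lower half of \eqref{eq:sharp-transverse} has the same defect.

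Your bump $u=\chi((x-x_0)/r)$ has residual of order $(r^{-2}+s\Theta_a(r)^{1/2})\|u\|$. Inserted into \eqref{eq:crit-sparse-block}, with any choice of $s$, this gives at best $\Theta_a(r)\gtrsim L(1/r)^{-2}$. In the saturation argument it gives only $\|P_X(is_j)^{-1}\|\gtrsim s_j^{-1}L(s_j)^{1/2}$.

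The repairs you list do not recover the lost square root. Smallness of $\Im\langle Q_{s,0}u,u\rangle=s\int a|u|^2$ only puts $0$ near the numerical range, which gives no lower resolvent bound for a nonnormal operator. The ``duality/Schur argument'' is never specified.

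What is needed is a quasimode whose residual is \emph{linear} in the local mass. The paper gets it by absorbing the damping exactly. On a minimal-mass interval $I=(x_0-r,x_0+r)$, solve $-w''+isaw=0$ with $w(x_0)=1$, $w'(x_0)=0$. The Volterra form $w=1+is\int_{x_0}^x(x-y)a(y)w(y)\,dy$ and Gr\"onwall give, under the admissibility condition $sr^2\Theta_a(r)\le c_0$ with $c_0$ small,
\[
|w-1|\le\tfrac12,\qquad |w'|\le s\textstyle\int_Ia\,\sup_I|w|\lesssim sr\Theta_a(r)\quad\text{on } I.
\]
Then $(-\partial_x^2+isa)(\chi w)=-\chi''w-2\chi'w'$, so $\|Q_{s,0}^{-1}\|\ge c\,(r^{-2}+s\Theta_a(r))^{-1}$.

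\textbf{The choice of frequency matters.} It is not ``immaterial'', as you claim. With $s\approx r^{-2}$ (let alone $r^{-1}$), even the sharp lower bound $\approx r^2\approx s^{-1}$ is compatible with the upper bound $Cs^{-1}L(s)$. The paper argues by contradiction along $r_k\downarrow0$ with $\Theta_a(r_k)L(1/r_k)\to0$. It takes $s^{(k)}\in\cS\cap[KR_k^2L(R_k),4KR_k^2L(R_k))$ with $R_k=1/r_k$. Then:
\begin{itemize}
\item admissibility holds, because $s^{(k)}r_k^2\Theta_a(r_k)\le4K\Theta_a(r_k)L(R_k)\to0$;
\item the quasimode gives $\|Q_{s^{(k)},0}^{-1}\|\ge c/(2R_k^2)$;
\item (iii) and Lemma~\ref{lem:scale-stability} give $\|Q_{s^{(k)},0}^{-1}\|\le CC_1/(KR_k^2)$, with $C_1$ independent of $K$.
\end{itemize}
Choosing $K>2CC_1/c$ gives the contradiction.

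For saturation, your balance $s_j\Theta_a(r)\asymp r^{-2}$ must be taken with a small constant, $s_jr^2\Theta_a(r)\le c_0$, for the Gr\"onwall step. This is the paper's $r_j=\eps s_j^{-1/2}L(s_j)^{1/2}$. It must also be run with the absorbing quasimode, not the bump.

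\textbf{On (i)$\Rightarrow$(ii).} Two of your heuristics are false as stated. For $\mu\ge0$, the real part $\|u'\|^2-\mu\|u\|^2=\Re\langle f,u\rangle$ is not coercive. And $|u|^2$ is not nearly constant on $r$-intervals near the nodes of a standing wave, where a rough $a$ may concentrate.

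Your local-averaging idea is made rigorous by the weighted interval Poincar\'e inequality \eqref{eq:periodic-wp}, and then no Wronskian analysis is needed. Take $r=(4C_P)^{-1/2}s^{-1}$, so that $C_Pr^2\mu_+\le\frac14$ for every $\mu\le s^2$. Combine this with $s\int a|u|^2\le\|f\|\|u\|$ and $\|u'\|^2\le\mu_+\|u\|^2+\|f\|\|u\|$. This gives, uniformly in $\mu$,
\[
\|u\|\lesssim\bigl(s^{-1}\Theta_a(r)^{-1}+s^{-2}\bigr)\|f\|\lesssim s^{-1}L(s)\|f\|,
\]
using doubling of $L$ in the last step.

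This route is simpler than the paper's engine, which adds a Nazarov--Tur\'an cluster estimate and balances at $\rho_s=s^{-1}L(s)^{1/2}$. That extra machinery is what the sharper power-scale bounds of Theorem~\ref{thm:C} need, not the critical bound.
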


The decisive implication is \textup{(iii)}$\Rightarrow$\textup{(i)}: the
resolvent of one scalar resonant channel at one transverse frequency per
octave recovers the full multiscale lower-mass bound.  No estimate for any
nonresonant block is assumed.  Thus $\Theta_a$ is not merely a sufficient
thickness condition; it is a sparsely observable dynamical invariant.  No
continuity, monotonicity, vanishing order, or derivative control is used.

\begin{corollary}[Logarithmic dictionary]\label{cor:log}
Let $A>0$ and let $0\le a\in L^\infty(\T)$ be nontrivial.  The condition
\[
    \Theta_a(r)\gtrsim(\log(e/r))^{-A}\qquad(r\downarrow0)
\]
is equivalent to
\[
    \|P_X(is)^{-1}\|\lesssim |s|^{-1}(\log(e|s|))^A
\]
and to
\[
    \|(is-\cA)^{-1}\|\lesssim(\log(e|s|))^A
\]
at high frequency.  The inverse implication remains valid if there are
constants $C,s_0>0$ and a dyadically syndetic set
$\cS\subset\Sigma_Y\cap[s_0,\infty)$ such that $Q_{s,0}$ is invertible
for every $s\in\cS$ and
\[
    \|Q_{s,0}^{-1}\|\le
       Cs^{-1}(\log(es))^A,
    \qquad s\in\cS.
\]
The same conclusion follows if the full stationary estimate holds on such
a set with $P_X(is)$ invertible at every sampled frequency, or if the full
generator estimate holds there with $is\in\rho(\cA)$.
\end{corollary}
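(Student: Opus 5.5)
The corollary is the specialization of Theorem~\ref{thm:B} to $\ell(u)=u^A$. The plan is to check that this gauge is critical and then read off the equivalences (i)$\Leftrightarrow$(ii)$\Leftrightarrow$(iii)$\Leftrightarrow$(iv). The sampled full-operator variants are handled by reducing them to (iii).

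\textbf{Criticality.} Put $L(S)=(\log(eS))^A=\ell(\log(eS))$ with $\ell(u)=u^A$. This $\ell$ maps $[1,\infty)$ into $[1,\infty)$, and it is nondecreasing and unbounded. It is exactly doubling, with $\ell(2u)=2^A\ell(u)$, so we may take $u_0=1$, $C_\ell=2^A$ and $S_0=1$. Hence $L$ is critical in the sense of Definition~\ref{def:critical}.

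\textbf{Translating the three conditions.}
\begin{itemize}
\item For the mass condition, $L(1/r)=(\log(e/r))^A$. So $\Theta_a(r)\gtrsim(\log(e/r))^{-A}$ for small $r$ is literally \eqref{eq:crit-mass} with some $r_{\mathrm{mass}}$.
\item For $|s|\ge s_0$ we have $L(|s|)=(\log(e|s|))^A$. So the stated stationary and generator bounds are exactly \eqref{eq:crit-stat} and \eqref{eq:crit-gen}.
\item The sparse-block hypothesis is verbatim \eqref{eq:crit-sparse-block} on a dyadically syndetic $\cS\subset\Sigma_Y\cap[s_0,\infty)$.
\end{itemize}
Theorem~\ref{thm:B} therefore yields both the equivalences and the sparse inverse implication.

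\textbf{Sampled full-operator versions.} Suppose $P_X(is)$ is invertible with $\|P_X(is)^{-1}\|\le Cs^{-1}(\log(es))^A$ for $s\in\cS$. Let $s=s_j\in\Sigma_Y$, pick a transverse eigenfunction $\varphi_j$ with $-\Delta_Y\varphi_j=s^2\varphi_j$, and use the subspace $L^2(\T)\otimes\varphi_j$. This subspace reduces $P_X(is)$, and the restriction is $-\partial_x^2+s^2-s^2+isa=Q_{s,0}$. The restriction of an invertible operator to a reducing subspace is invertible, with inverse norm at most the full one. So (iii) holds with the same $\cS$.

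For the generator version, suppose $is\in\rho(\cA)$ with $\|(is-\cA)^{-1}\|\le C(\log(es))^A$ on $\cS$. We pass to the block $\cA_j$ via Lemma~\ref{lem:blocks} and the standard relation between the generator resolvent and $P_{a,j}(is)=Q_{s,0}$. For $s\ne0$ and $f\in L^2(\T)$, solving $(is-\cA_j)([u],v)=(0,f)$ gives $v=isu$ and $Q_{s,0}u=-f$, up to sign conventions. Then
\[
\|v\|_{L^2}=s\|u\|_{L^2}\le\|(is-\cA_j)^{-1}\|\,\|f\|_{L^2},
\]
so $\|Q_{s,0}^{-1}\|\le s^{-1}\|(is-\cA)^{-1}\|$. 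Invertibility of $Q_{s,0}$ also follows from this: $Q_{s,0}$ is Fredholm of index zero, being a compact perturbation of $-\partial_x^2+1$, and the resolvent identity shows it is injective. This again gives (iii).

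\textbf{Main obstacle.} The only delicate point is the zero-mode and quotient conventions in the generator-to-block step, since the $\dot H^1$ norm of a block vector is $(\|\partial_xu\|^2+s^2\|u\|^2)^{1/2}$. This norm is at least $s\|u\|$, so the estimate above is safe. Everything else is direct substitution into Theorem~\ref{thm:B}.
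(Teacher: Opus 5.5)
Your proposal is correct and follows the paper's route: check that $\ell(u)=u^A$ gives a critical gauge, read the three conditions as (i), (ii)/(iv) and (iii) of Theorem~\ref{thm:B}, and reduce the sampled full-operator hypotheses to the resonant block $Q_{s,0}$. The only difference is in the generator case. The paper first applies the reverse comparison \eqref{eq:stat-gen-reverse} to the full operator and then restricts via \eqref{eq:P-block}, whereas you carry out that same reverse comparison directly on the block $\cA_j$.
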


The next theorem shows both abundance and criticality of the dictionary.

\begin{definition}[Dyadic regularity]\label{def:dyadic-regular}
A function $\theta:(0,r_0]\to(0,1]$ is \emph{dyadically regular} if it is
nondecreasing, tends to zero at the origin, and there exist
$c_\theta\in(0,1]$ and $n_0\in\N$ such that, whenever $n\ge n_0$ and
$2^{-n}\le r_0$,
\[
    c_\theta\,\theta(2^{-n})
       \le \theta(2^{-(n+1)})\le \theta(2^{-n}).
\]
The constants $(c_\theta,n_0)$ are part of the dyadic-regularity data.
\end{definition}
For every critical $L$, the profile
$\theta_L:(0,1]\to(0,1]$, $\theta_L(r)=L(1/r)^{-1}$, is
dyadically regular: the arguments of $\ell$ at adjacent dyadic radii have
bounded ratio, so eventual doubling supplies the displayed lower ratio.

\begin{thmx}[Realization and breakdown]\label{thm:C}
\leavevmode
\begin{enumerate}[label=\textup{(\alph*)}]
\item \textup{(Exact rough profiles.)}  Let $L$ be critical.  For every
$\eta>0$ there is an open dense set $\Omega\subset\T$ with
\[
    |\Omega|<\eta,\qquad
    \T\setminus\Omega\ \text{closed, nowhere dense, and of positive measure},
\]
such that
\begin{equation}\label{eq:exact-profile-theorem}
    \vartheta_\Omega(r)\asymp L(1/r)^{-1},\qquad r\downarrow0.
\end{equation}
For $a=\one_\Omega$, the critical bounds in Theorem~\ref{thm:B} hold and
are attained along all sufficiently large transverse eigenfrequencies,
while exponential stabilization fails.
\item \textup{(Sublinear power gauges fail.)}  For every $0<\beta<1$ there
exist a nontrivial continuous damping $0\le a\in C(\T)$ and constants
$C,s_0>0$ such that $P_X(is)$ is invertible for every $|s|\ge s_0$ and
\[
 \|P_X(is)^{-1}\|_{L^2\to L^2}\le C|s|^{-1+\beta},
 \qquad |s|\ge s_0,
\]
but
\[
 \lim_{r\downarrow0}r^{-\beta}\Theta_a(r)=0.
\]
Thus this stationary bound does not imply
$\Theta_a(r)\gtrsim r^\beta$, even among continuous dampings.
More generally, if $0\le b\in L^\infty(\T)$ and
$\Theta_b(r)\asymp r^\gamma$ as $r\downarrow0$ for some $0<\gamma<2$,
then the corresponding generator $\cA_b$ satisfies, along all sufficiently
large transverse eigenfrequencies
\begin{equation}\label{eq:power-window}
    s_j^{\gamma/(2+\gamma)}
      \ \lesssim\ \|(is_j-\cA_b)^{-1}\|
      \ \lesssim\ s_j^{2\gamma/(2+\gamma)}.
\end{equation}
\end{enumerate}
\end{thmx}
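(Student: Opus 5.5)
Part (a) is a Cantor-type construction; part (b) combines a direct $L^2$-multiplier resolvent estimate with the exact power-mass example. Throughout I rely on Theorems~\ref{thm:A} and~\ref{thm:B}.

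\emph{Part (a).} I would build $\Omega$ as a countable union of disjoint open intervals arranged in a dyadic hierarchy. Put $\theta=\theta_L$, which is dyadically regular. Fix a scale $N_0$ large and an enumeration of the dyadic arcs of generation $n\ge N_0$.

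At generation $n$, inside every dyadic arc $I$ of length $2^{-n}$ I place a union of open subintervals of total length $\asymp\theta(2^{-n})\,2^{-n}$. These must be spread at the next finer scale, so that every arc of length $2^{-n-1}$ (with the $n_0$ shift from dyadic regularity) receives mass $\gtrsim\theta(2^{-n})2^{-n}$. Summing $\theta(2^{-n})$ over all arcs of one generation gives $\theta(2^{-n})$ per generation. Since $\theta$ is only slowly decaying, this sum diverges and cannot be used as it stands.

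I would therefore use a \emph{nested} scheme. I would check that the lower bound only needs new mass at generation $n$ of density $\theta(2^{-n})$, with earlier generations already supplying density $\ge\theta(2^{-m})\ge\theta(2^{-n})$ at coarser scales. The upper bound is the delicate part: every interval of radius $r$ must see density $\lesssim\theta(r)$. This forces the complement to contain, at every scale $r$, an arc of radius $r$ on which the density is $\le C\theta(r)$.

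My plan is to reserve a ``protected'' fat-Cantor skeleton $K$ of positive measure. Around each point of a sparse dense sequence I place intervals whose density profile at radius $r$ is exactly $\theta(r)$; a single well $|x|^{-1}$-weighted fills do this. I then add countably many small intervals to make $\Omega$ dense and open with $|\Omega|<\eta$, keeping densities at most $C\theta(r)$ by choosing lengths geometrically small. The lower bound $\vartheta_\Omega\gtrsim\theta$ at points far from wells comes from placing, in every dyadic arc, a scaled copy of the well profile.

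I expect this construction to be the main obstacle: getting uniform $\asymp$ at all points and scales while keeping the measure small and $\T\setminus\Omega$ of positive measure. Once \eqref{eq:exact-profile-theorem} holds, Theorem~\ref{thm:B} with \eqref{eq:crit-upper-saturation} gives the bounds and their attainment. Theorem~\ref{thm:A} rules out exponential stability, since $|\T\setminus\Omega|>0$.

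\emph{Part (b), window \eqref{eq:power-window}.} For the upper bound I would use a Morawetz/multiplier estimate on the blocks $Q_{s,\mu}$. The damping term gives control $s\langle a u,u\rangle\le\|f\|\|u\|$. Interval mass at radius $r$ together with an $H^1$ Poincaré-type inequality gives
\[
\|u\|^2\lesssim \Theta(r)^{-1}\langle au,u\rangle+r^2\|u'\|^2 .
\]
Here $\|u'\|^2\lesssim\mu\|u\|^2+\|f\|\|u\|$ in the resonant regime. I would optimize $r$ with $r^{2+\gamma}\asymp s^{-1}$ after passing to the generator. Nonresonant blocks are handled by the elementary estimate.

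For the lower bound I would use a quasimode: a cutoff of width $r$ centred at a point where the local mass $\asymp r^\gamma$. Its error is $r^{-2}+sr^\gamma$, balanced at $r\asymp s^{-1/(2+\gamma)}$, which gives $s^{\gamma/(2+\gamma)}$.

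\emph{Part (b), counterexample.} For a given $\beta$, choose $a(x)=|x|^{\gamma}$ near $0$, so that $\Theta_a(r)\asymp r^\gamma$. By the upper window the stationary bound is $|s|^{-1+2\gamma/(2+\gamma)}$; we need $2\gamma/(2+\gamma)\le\beta$ together with $\Theta_a(r)=o(r^\beta)$, i.e.\ $\gamma>\beta$. These conflict. I would instead take a continuous $a$ vanishing like $|x|^\gamma$ with $\gamma\in(\beta,\cdot)$ and sharpen the upper estimate for smooth monotone wells, using the positive-commutator bound $s^{-1+\gamma/(\gamma+2)}$. Then $\gamma/(\gamma+2)\le\beta<\gamma$, which is satisfiable.
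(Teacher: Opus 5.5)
Your proposal has genuine gaps in both parts. In (a) you never actually produce a set. The fat-Cantor skeleton, the wells and the ``$|x|^{-1}$-weighted fills'' do not resolve the divergence you correctly identified, and placing a scaled well in every dyadic arc at every generation reproduces it. The missing idea is that new mass is needed only along a sparse set of generations on which $\theta$ halves.

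\begin{itemize}
\item \emph{Selecting generations.} With $\theta_n=\theta(2^{-n})$, let $N_{k+1}$ be the least index with $\theta_{N_{k+1}}\le\theta_{N_k}/2$. Minimality and dyadic regularity give $\tfrac{c_\theta}{2}\theta_{N_k}\le\theta_{N_{k+1}}\le\tfrac12\theta_{N_k}$. Hence $\sum_k\theta_{N_k}<\infty$, while consecutive selected levels remain comparable.
\item \emph{Construction.} Put one centred open arc of length $\delta h_{N_k}\theta_{N_k}$ in each dyadic cell of generation $N_k$, where $h_{N_k}=2\pi2^{-N_k}$. Then $|\Omega|\le2\pi\delta\sum_k\theta_{N_k}<\eta$ for small $\delta$. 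Also $\Omega$ is open and dense, and $|\T\setminus\Omega|>0$.
\item \emph{Lower bound.} A window of radius $r$ contains at least $r/h_{N_k}$ full cells of the first selected generation with $h_{N_k}\le r/2$. So it carries mass at least $\delta r\theta_{N_k}\gtrsim\delta r\theta_{N_{k-1}}\gtrsim\delta r\theta(r)$.
\item \emph{Upper bound.} Take the window centred at $0$, which is an endpoint of every generation. Central arcs of generations with $h_{N_m}>4r$ miss $(-r,r)$. Each finer selected generation contributes at most $10\delta r\theta_{N_m}$, and these form a geometric series bounded by $20\delta r\theta(r)$.
\end{itemize}

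Your closing appeal to Theorems~\ref{thm:B} and~\ref{thm:A} is then correct.

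In (b) there are three problems.

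\begin{itemize}
\item \emph{Upper end of the window.} Your absorption estimate needs $r^2\mu\lesssim1$, so with $r\asymp s^{-1/(2+\gamma)}$ it covers only $\mu\lesssim s^{2/(2+\gamma)}$. The blocks with $s^{2/(2+\gamma)}\ll\mu\le s^2$ are not nonresonant in any elementary sense, since $-\partial_x^2-\mu$ has spectrum within $O(\sqrt\mu)$ of $0$. You give no estimate for them. For merely bounded measurable $b$, the paper uses the propagating-cluster observability $\|v\|\lesssim(s/\mu)\|Q_{s,\mu}v\|$ of Lemma~\ref{lem:observability}, obtained from Nazarov--Tur\'an on the cluster $\bigl||n|-\sqrt\mu\bigr|\le2$. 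It combines this with absorption at radius $\min(c\mu^{-1/2},s^{-1/(2+\gamma)})$. The two branches cross at $\mu\asymp s^{4/(2+\gamma)}$, which gives $\|P_{X,b}(is)^{-1}\|\lesssim s^{-(2-\gamma)/(2+\gamma)}$ and hence, by Lemma~\ref{lem:comparison}, the upper end $s^{2\gamma/(2+\gamma)}$.
\item \emph{Lower end of the window.} A plain cutoff $\chi$ of width $r$ has residual $-\chi''+isb\chi$. For rough $b$ (e.g.\ $b=\one_\Omega$), $\|b\chi\|/\|\chi\|$ is of order $\Theta_b(r)^{1/2}\asymp r^{\gamma/2}$, not $r^\gamma$, and balancing then yields only $s_j^{\gamma/(4+\gamma)}$. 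The error $r^{-2}+s\Theta_b(r)$ you claim requires the absorbing quasimode of Proposition~\ref{prop:absorbing}. There one cuts off the exact solution of $-w''+isbw=0$ on the minimizing interval. Under $sr^2\Theta_b(r)\ll1$ it satisfies $|w-1|\le\frac12$ and $|w'|\lesssim sr\Theta_b(r)$, so only the commutator terms $-\chi''w-2\chi'w'$ remain.
\item \emph{Counterexample.} The conflict you found is an arithmetic slip. The condition $2\gamma/(2+\gamma)\le\beta$ is equivalent to $\gamma\le2\beta/(2-\beta)$, and $2\beta/(2-\beta)>\beta$ for $0<\beta<1$. Hence $a=\dist_\T(x,0)^{\gamma}$ with $\beta<\gamma\le2\beta/(2-\beta)$ works directly from the upper window bound. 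The unproved sharpened commutator bound for smooth wells is unnecessary.
\end{itemize}
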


The point of part \textup{(b)} is structural.  The inverse principle is
exact throughout the critical logarithmic/doubling class, while the
reverse resolvent-to-mass implication already fails for every sublinear
power gauge.

\subsection{Spectral realization at the logarithmic cusp}

Theorem~\ref{thm:E} is the main spectral result; its model input is the
exact spectral identity of Theorem~\ref{thm:D}.
Resolvent growth alone does not distinguish eigenvalues from
pseudospectrum \cite{TZ,Davies,DSZ}.  To decide whether the critical scale
is carried by true spectrum, write a block eigenvalue as $z=is+w$ and set
\begin{equation}\label{eq:Ts}
    T_s=-\partial_x^2+isa(x),\qquad \mathcal D(T_s)=H^2(\T).
\end{equation}
The exact block equation becomes
\begin{equation}\label{eq:pencil}
    -v''+(s_j^2+za+z^2)v=0,
\end{equation}
and, with $\zeta=-2isw$,
\[
    \left(T_s+\frac{i\zeta}{2s}a-\frac{\zeta^2}{4s^2}\right)v=\zeta v.
\]
Thus a low eigenvalue of $T_s$ must first be found and then transferred
through a genuinely nonnormal analytic pencil.  For the logarithmic cusp,
the scaling limit admits an exact spectral identity.
Proposition~\ref{prop:spectral-calibration}
makes the link with the rough inverse theorem quantitative: a critical
lower-mass bound
forces every eigenvalue with $\Re\lambda\le s$ above the wall
$\Im\lambda\gtrsim s/L(s)$.  Every nonreal block eigenvalue satisfies the
exact depth identity
\[
 -\Re z=\frac12\frac{\int_\T a|v|^2}{\|v\|_2^2}.
\]
In particular this identity holds for every eigenvalue in a transverse block
with $s_j>\|a\|_\infty/2$, which is the high-frequency regime used below.
Thus local mass fixes both the admissible spectral scale and the
structural factor $1/2$ in the wave depth.  The logarithmic ladder below
reaches that forced scale.

For a real-valued function $f$, we use the convention
$f_+:=\max(f,0)$ and $f_-:=\max(-f,0)$.  For $\gamma>0$ and
$\sharp\in\{N,D\}$, let $H_\gamma^\sharp$ be the
self-adjoint operator associated with the form
\[
 h_\gamma[u]=\int_0^\infty |u'|^2
       +\gamma\int_0^\infty(\log y)_+|u|^2
       -\gamma\int_0^\infty(\log y)_-|u|^2
\]
on $H^1(\R_+)\cap L^2((\log y)_+dy)$, with the additional trace condition
$u(0)=0$ when $\sharp=D$.  Let $\cL_\gamma^\sharp$ be the maximal
separated realization of $-\partial_y^2+i\gamma\log y$ with domain
\[
\begin{aligned}
 \mathcal D(\cL_\gamma^\sharp)=\{u\in L^2(\R_+):{}&
 u,u'\in AC_{\rm loc}(0,\infty)\ \text{and have finite traces at }0,\\
 &
 -u''+i\gamma\log y\,u\in L^2(\R_+),\\
 &u'(0)=0\ \text{if }\sharp=N,\quad
 u(0)=0\ \text{if }\sharp=D\}.
\end{aligned}
\]
and let $\cL_\gamma$ be the maximal distributional realization of
$-\partial_y^2+i\gamma\log|y|$ on $L^2(\R)$, namely
\[
 \mathcal D(\cL_\gamma)=\{u\in L^2(\R):
 -u''+i\gamma\log|y|\,u\in L^2(\R)
 \text{ in }\mathcal D'(\R)\}.
\]
The existence of the endpoint
traces and the equivalent form and maximal-domain descriptions are proved
in Section~\ref{sec:oscillator}.

\begin{thmx}[The imaginary logarithmic oscillator]\label{thm:D}
The operators $\cL_\gamma^\sharp$ and $\cL_\gamma$ are closed with compact
resolvent.  Let
$E^\sharp_{\gamma,0}<E^\sharp_{\gamma,1}<\cdots$ be the eigenvalues of
$H_\gamma^\sharp$, and choose a real $L^2(\R_+)$-normalized eigenfunction
$\phi^\sharp_{\gamma,n}$ for each $E^\sharp_{\gamma,n}$.  Its continuation
in the principal-logarithm sector is constructed in
Section~\ref{sec:oscillator}.  Then
\begin{equation}\label{eq:log-osc-spec-gamma}
    \spec(\cL_\gamma^\sharp)
      =\left\{\frac{\gamma\pi}{4}+iE^\sharp_{\gamma,n}:n\ge0\right\},
\end{equation}
every eigenvalue is algebraically simple, and its eigenfunction is the
boundary-rotated Sturm--Liouville mode
$u^\sharp_{\gamma,n}(y)=\phi^\sharp_{\gamma,n}(e^{i\pi/4}y)$.  Moreover
\begin{equation}\label{eq:log-osc-spec-scaled}
    E^\sharp_{\gamma,n}
       =\gamma\left(E^\sharp_{1,n}-\frac12\log\gamma\right).
\end{equation}
Under the even/odd decomposition, the full-line operator is unitarily
equivalent to $\cL_\gamma^N\oplus\cL_\gamma^D$.  The two half-line spectra strictly
interlace,
\[
    E^N_{\gamma,0}<E^D_{\gamma,0}<E^N_{\gamma,1}<E^D_{\gamma,1}<\cdots.
\]
For $m\in\Nzero$, set
\[
 \bar E_{\gamma,2m}:=E^N_{\gamma,m},\qquad
 \bar E_{\gamma,2m+1}:=E^D_{\gamma,m}.
\]
Then
\[
    \spec(\cL_\gamma)
       =\left\{\frac{\gamma\pi}{4}+i\bar E_{\gamma,n}:n\ge0\right\}
\]
is a simple parity-alternating vertical ladder.  For the same
$m\in\Nzero$, put
$\phi_{\gamma,2m}=\phi^N_{\gamma,m}$ and
$\phi_{\gamma,2m+1}=\phi^D_{\gamma,m}$, and let $u_{\gamma,2m}$ be the even
extension of $y\mapsto\phi_{\gamma,2m}(e^{i\pi/4}y)$ and
$u_{\gamma,2m+1}$ the odd extension of
$y\mapsto\phi_{\gamma,2m+1}(e^{i\pi/4}y)$.  With the normalized half-line
modes fixed above, every rung has nonzero bilinear
self-overlap:
\[
    \int_\R u_{\gamma,n}(y)^2\,dy
       =2e^{-i\pi/4}\int_0^\infty\phi_{\gamma,n}(r)^2\,dr
       =2e^{-i\pi/4}\ne0.
\]
For $\gamma=1$, set
$\bar E_n:=\bar E_{1,n}$, $\phi_n:=\phi_{1,n}$,
$u_n:=u_{1,n}$, $E_0^N:=E^N_{1,0}$,
$\nu_n=\pi/4+i\bar E_n$, and
$\nu_*=\nu_0=\pi/4+iE_0^N$; put $u_*:=u_0$.  In particular
\begin{equation}\label{eq:log-osc-overlap}
    \int_\R u_*(y)^2\,dy
       =2e^{-i\pi/4}\int_0^\infty
          \bigl(\phi^N_{1,0}(r)\bigr)^2\,dr
       =2e^{-i\pi/4}\ne0.
\end{equation}
\end{thmx}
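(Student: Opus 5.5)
The plan is complex scaling: rotate the half-line variable by $e^{i\pi/4}$ so that the nonselfadjoint operator $-\partial_y^2+i\gamma\log y$ becomes a multiple of the selfadjoint Sturm--Liouville operator $H^\sharp_\gamma$ plus a constant. Take $y=e^{-i\pi/4}\rho$, so $\rho=e^{i\pi/4}y$. Then $\partial_y^2=e^{i\pi/2}\partial_\rho^2=i\partial_\rho^2$ and $\log y=\log\rho-i\pi/4$. Hence
\[
 -\partial_y^2+i\gamma\log y
   =-i\partial_\rho^2+i\gamma\log\rho+\frac{\gamma\pi}{4}
   =i\bigl(-\partial_\rho^2+\gamma\log\rho\bigr)+\frac{\gamma\pi}{4}.
\]
A real eigenfunction $\phi$ of $H^\sharp_\gamma$ with eigenvalue $E$ therefore gives a solution $u(y)=\phi(e^{i\pi/4}y)$ of $(-\partial_y^2+i\gamma\log y)u=(\gamma\pi/4+iE)u$. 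To make this rigorous, I first show that $\phi$ continues analytically to the sector $|\arg\rho|<\pi/2$ in the principal-logarithm sense. The coefficient $\log\rho$ is analytic there, and at $\rho=0$ the singularity is integrable, so solutions are $C^1$ up to $0$ with finite traces. Along the ray $\arg\rho=\pi/4$ the continuation still decays: by WKB/Liouville--Green, the recessive solution decays like $\exp(-\int\sqrt{\gamma\log\rho}\,d\rho)$ uniformly in closed subsectors of $|\arg\rho|<\pi/2$, because $\Re\bigl(e^{i\pi/4}\sqrt{\log}\bigr)>0$ there. So $u\in\mathcal D(\cL^\sharp_\gamma)$, and its boundary condition at $0$ is preserved: $u'(0)=e^{i\pi/4}\phi'(0)=0$ and $u(0)=\phi(0)$. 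This gives the inclusion $\supseteq$ in \eqref{eq:log-osc-spec-gamma}.

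Next I prove closedness and compact resolvent. Integration by parts gives the numerical-range bound
\[
 \Im\langle(\cL^\sharp_\gamma-\mu)u,u\rangle=\gamma\!\int\log y\,|u|^2-\Im\mu\|u\|^2,
\]
together with $\Re\langle\cdot\rangle=\|u'\|^2$. Boundary terms vanish by the $\sharp$ condition, and decay at infinity is justified by a cutoff argument for the maximal domain. Combining the real and imaginary parts controls $\|u'\|^2+\int(\log y)_+|u|^2$ by $\|(\cL-\mu)u\|\,\|u\|+C\|u\|^2$. The negative part $(\log y)_-$ is form-bounded with relative bound $0$ via Hardy/Sobolev on $(0,1)$. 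This yields closedness, an $m$-sectorial-type resolvent bound for $\Re\mu\ll0$, and compact embedding of $H^1\cap L^2((\log y)_+)$ into $L^2$, hence compact resolvent.

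The converse inclusion $\subseteq$ is the step I expect to be the main obstacle. An eigenfunction $u$ of $\cL^\sharp_\gamma$ with eigenvalue $\nu$ is, by ODE uniqueness, the recessive solution at infinity, since the subdominant solution is unique up to scalars. I would continue $u$ from the ray $\arg y=0$ back through the sector to the positive real $\rho$-axis, obtaining a recessive real-axis solution $\phi$ of $-\phi''+\gamma\log\rho\,\phi=E\phi$ with $E=-i(\nu-\gamma\pi/4)$ satisfying the $\sharp$ condition. Then $E$ is an eigenvalue of the selfadjoint $H^\sharp_\gamma$, so $E$ is real. The delicate point is uniform control of the Liouville--Green recessive solution on the whole closed sector $0\le\arg\rho\le\pi/4$ for fixed complex $E$, including near $\rho=0$ where $\log$ blows up. I would handle this with a Volterra integral equation along rays with the error-control function $\int|q|^{-1/4}|(q^{-1/4})''|$, where $q=\gamma\log\rho-E$.

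Algebraic simplicity would follow from the bilinear (non-Hermitian) pairing. Since $\cL^\sharp_\gamma$ is $J$-symmetric for complex conjugation, a Jordan chain exists iff $\int u^2=0$. Rotating the contour by Cauchy's theorem, with decay at infinity in the sector, gives $\int_0^\infty u^2\,dy=e^{-i\pi/4}\int_0^\infty\phi^2\,d\rho=e^{-i\pi/4}\neq0$; this is also \eqref{eq:log-osc-overlap}. Geometric simplicity is automatic for a second-order ODE with one boundary condition plus recessiveness. Scaling $\rho=\gamma^{-1/2}t$ sends $H^\sharp_\gamma$ to $\gamma H^\sharp_1-\tfrac{\gamma}{2}\log\gamma$, which gives \eqref{eq:log-osc-spec-scaled}. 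On the full line, the even/odd decomposition commutes with $\cL_\gamma$ and identifies it with $\cL^N_\gamma\oplus\cL^D_\gamma$. Interlacing of the two half-line spectra is Sturm comparison: the $n$-th Neumann eigenfunction has $n$ zeros, and the Dirichlet/Neumann spectra of a selfadjoint half-line Sturm--Liouville problem strictly alternate by the standard Prüfer-angle argument. The full-line overlap doubles the half-line one.
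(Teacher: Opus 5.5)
Your core mechanism matches the paper's:
\begin{itemize}
\item the rotation $\rho=e^{i\pi/4}y$, which turns $-\partial_y^2+i\gamma\log y$ into $i(-\partial_\rho^2+\gamma\Log\rho)+\gamma\pi/4$;
\item sectorial Liouville--Green recessiveness, which justifies the rotation in both directions;
\item the overlap $\int u^2=e^{-i\pi/4}\int\phi^2\neq0$ from Cauchy's theorem on the $\pi/4$ pie slice, fed into the bilinear Green identity to exclude Jordan chains;
\item the $\gamma^{-1/2}$ dilation for \eqref{eq:log-osc-spec-scaled}, and the parity splitting.
\end{itemize}

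You depart from the paper in three auxiliary steps.
\begin{itemize}
\item \emph{Closedness and compact resolvent.} The paper builds the Green kernel at $-1$ from the recessive and separated solutions and shows their Wronskian is nonzero by an energy identity. It gets compactness from Liouville--Green kernel bounds and the Schur tail estimate (Lemma~\ref{lem:schur-tail}), then identifies $\Ran\mathcal G$ with the maximal domain by recessive uniqueness. Your energy method avoids all kernel estimates: cutoff pairing places maximal-domain functions in $V=H^1\cap L^2((\log y)_+dy)$, and $V\hookrightarrow L^2$ is compact.
\item \emph{Jordan chains.} Your method also gives $w,w'\in L^2$ for a generalized vector $w$. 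If $(\cL^\sharp_\gamma-\lambda)w=u$, the Wronskian $w'u-wu'$ is integrable at infinity and has integrable derivative $-u^2$, so it tends to $0$. This is lighter than the paper's Lemma~\ref{lem:inhom-recessive}.
\item \emph{Interlacing.} The paper uses the rank-one Krein resolvent difference, min--max via the form-domain inclusion, and disjointness (Lemma~\ref{lem:rank-one-extension-input}). Your Pr\"ufer/oscillation argument is the other standard route for a regular left endpoint and limit-point right endpoint, and it gives strictness directly.
\end{itemize}

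Two points need repair.

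First, your a priori estimate only gives injectivity and closed range of $\cL^\sharp_\gamma-\mu$ for $\Re\mu<0$; you still have to produce a resolvent point.
\begin{itemize}
\item The form $\int u'\bar v'+i\gamma\int\log y\,u\bar v-\mu\int u\bar v$ is not sectorial, since its numerical range approaches $i\R_+$. So ``$m$-sectorial'' is the wrong label, and Kato's representation theorem is not available.
\item For real $\mu\ll0$ the form is nevertheless coercive in modulus on $V$. Its real part controls $\|u'\|^2+|\mu|\|u\|^2$, and its imaginary part controls $\gamma\int(\log y)_+|u|^2$ up to the infinitesimally form-bounded $(\log y)_-$ term.
\item Lax--Milgram then gives surjectivity, with the Neumann condition arising as the natural boundary condition. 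Your a priori estimate shows this operator coincides with the maximal one. Alternatively, prove that the adjoint is injective.
\end{itemize}

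Second, Liouville--Green cannot be made uniform on the whole closed sector $0\le\arg\rho\le\pi/4$, as you propose.
\begin{itemize}
\item The error integral $\int|q'|^2|q|^{-5/2}$ diverges at $\rho=0$.
\item The turning point $e^{E/\gamma}$ lies in the closed sector: on the edge $\arg\rho=0$ for the real eigenvalues, and possibly inside the sector for the a priori complex $E$ in the converse.
\end{itemize}
Use Liouville--Green only on the tail $|\rho|\ge y_1$, to identify the raywise $L^2$ solution with the sectorial recessive one. Treat the bounded part by analytic continuation together with the endpoint Volterra equation. That equation supplies ray-independent Cauchy data at $0$, and hence transport of the $\sharp$-condition. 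This is exactly the split between Lemma~\ref{lem:sector-LG} and Lemma~\ref{lem:log-volterra-endpoint}.
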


\begin{remark}[A vertical spectral line]\label{rem:vertical}
The identity is unusually rigid: complex rotation does not merely deform
the logarithmic potential; it translates it by a constant.  Thus
\[
    \spec(\cL_\gamma^\sharp)
       =\frac{\gamma\pi}{4}+i\,\spec(H_\gamma^\sharp),
\]
and the real part $\gamma\pi/4$ becomes a universal spectral fingerprint
in the wave problem.
\end{remark}

\begin{thmx}[Logarithmic spectral ladder for damped waves]\label{thm:E}
Let $A>0$ and $0<x_0<1$.  Assume that $0\le a\in L^\infty(\T)$ has a
representative that is even on $(-x_0,x_0)$, is bounded below by a positive
constant on $\{|x|\ge x_0\}$, and satisfies
\begin{equation}\label{eq:log-cusp-main}
    a(x)=(\log(e/|x|))^{-A},\qquad 0<|x|<x_0.
\end{equation}
For $s>1/A$, let $R_s\in(0,1)$ be the unique solution of
\begin{equation}\label{eq:Rs-main}
    A sR_s^2\ell_s^{-A-1}=1,
    \qquad \ell_s:=\log(e/R_s),
\end{equation}
and let $\nu_n=\pi/4+i\bar E_n$ be the ladder of
Theorem~\ref{thm:D}.  For a transverse frequency $s_j$, put
$\ell_j:=\ell_{s_j}$.  Then $i\R\subset\rho(\cA)$ and the following hold.
\begin{enumerate}[label=\textup{(\alph*)},leftmargin=*]
\item Let $B\subset\C$ be a closed rectangle with
$\partial B\cap\spec(\cL_1)=\varnothing$.  For all sufficiently large
$s_j$, the spectrum of $\cA_j$ in
\begin{equation}\label{eq:ladder-window}
    \mathcal W_j(B)=
    \left\{is_j-\frac12\ell_j^{-A}
      +\frac{iA}{2}\ell_j^{-A-1}\beta:\ \beta\in B\right\}
\end{equation}
consists of exactly one algebraically simple eigenvalue $z_j^{(n)}$ for
each $\nu_n\in B$, and no others.  Rung by rung,
\begin{equation}\label{eq:ladder-asymptotic}
    z_j^{(n)}=is_j-\frac12\ell_j^{-A}
       +\frac{iA\nu_n}{2}\ell_j^{-A-1}
       +o(\ell_j^{-A-1}),
\end{equation}
so
\begin{equation}\label{eq:ladder-real-imag}
\begin{aligned}
 -\Re z_j^{(n)}
   &=\frac12\ell_j^{-A}
     +\frac{A\bar E_n}{2}\ell_j^{-A-1}+o(\ell_j^{-A-1}),\\
 \Im z_j^{(n)}-s_j
   &=\frac{A\pi}{8}\ell_j^{-A-1}+o(\ell_j^{-A-1}).
\end{aligned}
\end{equation}
\item In particular, for every sufficiently large positive transverse
frequency $s_j=\sqrt{\lambda_j}$, the principal rung
$z_j=z_j^{(0)}$ is algebraically simple as an eigenvalue of the scalar
block $\cA_j$ and satisfies
\begin{equation}\label{eq:log-resonance-asymptotic}
    z_j=is_j-\frac12\ell_j^{-A}
       +\frac{iA\nu_*}{2}\ell_j^{-A-1}
       +o(\ell_j^{-A-1}).
\end{equation}
Equivalently,
\begin{equation}\label{eq:log-real-imag}
    -\Re z_j=\frac12\ell_j^{-A}
       +\frac{AE_0^N}{2}\ell_j^{-A-1}+o(\ell_j^{-A-1}),
\end{equation}
\begin{equation}\label{eq:log-imag-shift}
    \Im z_j-s_j=\frac{A\pi}{8}\ell_j^{-A-1}
       +o(\ell_j^{-A-1}).
\end{equation}
Its algebraic multiplicity in the full generator is obtained by summing
the scalar-block multiplicities as in Lemma~\ref{lem:blocks}\textup{(iv)}:
the identical blocks attached to $\ker(-\Delta_Y-\lambda_j)$ contribute
$\dim\ker(-\Delta_Y-\lambda_j)$, and any accidental coincidence with a
different transverse block contributes additively.
Together with the global resolvent estimate, these eigenvalues give
\[
 c e^{-C t^{1/(A+1)}}
 \ \le\ \|e^{t\cA}\cA^{-1}\|
 \ \le\ C e^{-c t^{1/(A+1)}},\qquad t\ge1.
\]
\item The conclusions of \textup{(a)}--\textup{(b)} are stable under
fixed measurable lower-order relative perturbations.  More precisely,
let $\sigma>0$ and let $0\le\tilde a\in L^\infty(\T)$ have a
representative such that, for some $x_1\in(0,1)$ and $C_W>0$,
\[
 \tilde a(x)=(\log(e/|x|))^{-A}(1+w(x)),\qquad
 |w(x)|\le C_W(\log(e/|x|))^{-1-\sigma}
\]
for a.e.\ $0<|x|<x_1$, and assume
\[
  \operatorname*{ess\,inf}_{\{\rho\le|x|\}}\tilde a>0
  \qquad\text{for every }\rho\in(0,x_1].
\]
Then $i\R\subset\rho(\cA_{\tilde a})$, and \textup{(a)}--\textup{(b)}
hold with $\tilde a$ in place of $a$, with the same scales $R_s,\ell_s$,
the same limiting ladder, and the same two leading coefficients.  The
finite-$s$ eigenfunctions need not have parity unless $\tilde a$ is even.
\end{enumerate}
\end{thmx}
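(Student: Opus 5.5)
The plan is to reduce everything to the scalar pencil \eqref{eq:pencil} on a single transverse block and then carry out a blow-up at the damping well. Writing $z=is+w$ and $\zeta=-2isw$ converts the block eigenvalue problem into the nonlinear problem
\[
 \Bigl(T_s+\tfrac{i\zeta}{2s}a-\tfrac{\zeta^2}{4s^2}\Bigr)v=\zeta v .
\]
The spectral window \eqref{eq:ladder-window} corresponds to $\zeta=s\ell_s^{-A}+sA\ell_s^{-A-1}\beta$ at leading order. Subtracting the constant $is\ell_s^{-A}$ from $isa$, rescaling $x=R_sy$, and multiplying by $R_s^2$, the operator $R_s^2(T_s-is\ell_s^{-A})$ has potential
\[
 isR_s^2\bigl[(\ell_s-\log|y|)^{-A}-\ell_s^{-A}\bigr].
\]
Expanding in $\log|y|/\ell_s$ gives $isR_s^2A\ell_s^{-A-1}\log|y|+O(sR_s^2\ell_s^{-A-2}\log^2|y|)$. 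By \eqref{eq:Rs-main} the leading term is exactly $i\log|y|$, so the rescaled operator converges to $\cL_1$ locally, with remainder $O(\log^2|y|/\ell_s)$. On the region $|x|\ge x_0$ the potential is $\gtrsim s$ after subtraction. The terms $\frac{i\zeta}{2s}a$ and $\zeta^2/4s^2$ are of relative size $\ell_s^{-A}$ and $\ell_s^{-2A}s^0$ compared with $\zeta$; after rescaling they contribute constants of order $\ell_s^{-2A}R_s^2s$ plus $\ell_s^{-A}\cdot\ell_s^{-1}$-type corrections. I would verify that each is $o(1)$ relative to the scale $sR_s^2A\ell_s^{-A-1}=1$, or otherwise absorb it into the stated $o(\ell_j^{-A-1})$.

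The core step is a norm-resolvent (or generalized/strong-with-compactness) convergence argument. For $\beta$ on $\partial B$ I would show that the rescaled pencil operator $\mathcal P_s(\beta)$ minus $\beta$ is invertible with uniformly bounded inverse, and that $\mathcal P_s(\beta)^{-1}\to(\cL_1-\beta)^{-1}$ in norm after cut-off identification between $L^2(\T)$ and $L^2(\R)$. The tools are:
\begin{enumerate}[label=\textup{(\roman*)}]
\item uniform coercivity from the imaginary part: $\Im$ of the potential is $\gtrsim\log|y|$ for $1\le|y|\le x_0/R_s$ (the expansion remains controlled there because $|\log|y||\le\ell_s$), and is huge beyond, giving localization and compactness as in the compact-resolvent proof of Theorem~\ref{thm:D};
\item a numerical-range/sectoriality estimate near $y=0$, where $\log|y|\to-\infty$. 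This is handled using the fact that $\Re$ of the form is $\|u'\|^2$ and that the logarithmic singularity is form-bounded with relative bound $0$.
\end{enumerate}
Then the Riesz projections $\frac{1}{2\pi i}\oint_{\partial B}$ converge in norm, so their ranks eventually agree. By Theorem~\ref{thm:D} each $\nu_n\in B$ is algebraically simple, which gives exactly one simple eigenvalue per rung. Since the pencil is nonlinear, I would use the Gohberg--Sigal argument principle, or equivalently a Schur-complement or Keldysh reduction to the scalar function $\beta\mapsto\langle(\mathcal P_s(\beta)-\beta)^{-1}\dots\rangle$, to count multiplicities. The nonvanishing overlap $\int u_n^2=2e^{-i\pi/4}$ guarantees a simple zero of the reduced scalar function, and hence algebraic simplicity. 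Undoing the scaling gives \eqref{eq:ladder-asymptotic}, and taking real and imaginary parts with $\nu_n=\pi/4+i\bar E_n$ gives \eqref{eq:ladder-real-imag}. The claim $i\R\subset\rho(\cA)$ follows from Theorem~\ref{thm:A}-type unique continuation, since $a>0$ a.e., together with Theorem~\ref{thm:B}.

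For (b), the principal rung is the case $B\ni\nu_0$. Full multiplicity follows from Lemma~\ref{lem:blocks}(iv). For the decay bounds, the upper bound comes from Corollary~\ref{cor:log}, since $\Theta_a(r)\asymp(\log(1/r))^{-A}$ for this cusp, combined with Batty--Duyckaerts. For the lower bound, take the eigenvector $U_j$ of $z_j$ and use $\|e^{t\cA}\cA^{-1}U_j\|=e^{t\Re z_j}|z_j|^{-1}\|U_j\|$. Then optimize over $j$: with $-\Re z_j\asymp(\log s_j)^{-A}$ (note $\ell_j\sim\frac12\log s_j$), the quantity $\sup_j \exp(-ct(\log s_j)^{-A}-\log s_j)$ is maximized at $\log s_j\approx t^{1/(A+1)}$. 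This requires a transverse frequency in every dyadic octave, which is dyadic nonlacunarity by Weyl's law.

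For (c), the perturbation $w$ changes the rescaled potential by $isR_s^2\ell_s^{-A}O((\ell_s-\log|y|)^{-1-\sigma})$, which is $O(\ell_s^{-\sigma})$ locally after scaling and therefore vanishes in the limit. The same convergence argument then applies without parity.

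I expect the main obstacle to be the uniform invertibility on $\partial B$ over the intermediate zone $1\ll|y|\lesssim x_0/R_s$. In that zone the log expansion degenerates and the operator is strongly nonnormal, so resolvent bounds cannot come from spectral information. I would first try a direct $\Im$-coercivity estimate via the quadratic form: pairing with $\chi(y)u$ for a cut-off in the far zone, and using monotonicity of $a$ in $|x|$, so that $\Im$ of the potential exceeds $c\min(\log|y|,\ell_s)$.
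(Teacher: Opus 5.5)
Your outline is correct and follows the paper's overall architecture. It uses the same blow-up $\cT_s=R_s^2U_sT_sU_s^{-1}-i\ell_s/A\to\cL_1$ and local $H^1$ control from the real part; that control is independent of the potential, which is what handles $y=0$. Exterior tightness comes from the imaginary part paired against a radial cutoff, followed by uniform contour resolvent bounds, a Gohberg--Sigal count for the quadratic pencil, and the dyadic envelope optimization for \textup{(b)}.

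The intermediate zone you flag is not a real obstacle. Convexity, $(1-x)^{-A}\ge1+Ax$, gives $V_s(y)\ge\log|y|$ on all of $1\le|y|<x_0/R_s$, even where the Taylor expansion fails. Since only a priori estimates are used, nonnormality plays no role.

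The genuine difference is the multiplicity step. The paper never proves norm-resolvent convergence. Singular-sequence compactness gives it only uniform resolvent bounds on compact subsets of $\rho(\cL_1)$. Rank $\ge1$ of each rescaled Riesz projection then comes from cut-off model quasimodes. Rank $\ge2$ (two eigenvalues, a two-dimensional eigenspace, or a Jordan chain) is excluded using the complex symmetry $\mathfrak b(\cT_su,v)=\mathfrak b(u,\cT_sv)$ on $\T_s$ and tight convergence of eigenfunction pairings, whose limit is $\int u_n^2\ne0$.

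Your route instead needs honest norm convergence of $\iota_s(\cT_s-\beta)^{-1}\iota_s^*$, and compactness alone does not supply it. It does follow from an Anselone-type argument: uniform bounds, collective compactness from your tightness and local $H^1$ estimates, and strong convergence of the resolvents and of their adjoints. The adjoint part is automatic from $\cT_s^*=C\cT_sC$. Once you have norm convergence, Lemma~\ref{lem:riesz-tools}\textup{(iii)} gives rank one directly, and no finite-$s$ overlap argument is needed.

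The count must also be done in two stages, because Gohberg--Sigal compares families on one fixed space. First compare the pencil with $\cT_s-\beta$ on $H^2(\T_s)$ by homotoping away the pencil correction; this alone already gives multiplicity one, so your Keldysh/overlap reduction is redundant. Then compare $\cT_s$ with $\cL_1$ through Riesz ranks. In short, the paper's route needs weaker convergence, while yours gives projection and eigenfunction convergence in one stroke.

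Two slips need fixing.

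\emph{Pencil correction.} In the window one has $\zeta=is\ell_s^{-A}+As\ell_s^{-A-1}\beta$ (you dropped the $i$). After rescaling, the correction satisfies $R_s^2\|\tfrac{i\zeta}{2s}a-\tfrac{\zeta^2}{4s^2}\|\lesssim R_s^2\ell_s^{-A}=\ell_s/(As)$. Your order $\ell_s^{-2A}R_s^2s=\ell_s^{1-A}/A$ carries a spurious factor $s$. Were it correct, it would move $z$ by about $\ell_s^{-2A}$, which is not $o(\ell_s^{-A-1})$ when $A\le1$.

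\emph{Part \textup{(c)}.} The exterior tail cannot come from monotonicity here. Moreover, the rescaled perturbation is $O\bigl(sR_s^2(\ell_s-\log|y|)^{-A-1-\sigma}\bigr)$, not $O(\ell_s^{-\sigma})$ uniformly, and it is large where $\ell_s-\log|y|\ll\ell_s$. The repair has four pieces:
\begin{enumerate}[label=\textup{(\roman*)}]
\item shrink the chart so that $|w|\le\frac12$;
\item where $\ell_s-\log|y|\le\kappa\ell_s$, use $\tilde a\ge\frac12a_0$ to get $V_s^{\tilde a}\to+\infty$;
\item on the rest of the chart, use the relative bound, whose error is $\lesssim\kappa^{-A-1-\sigma}\ell_s^{-\sigma}$;
\item off the chart, use the positive essential infimum.
\end{enumerate}
This is Lemma~\ref{lem:stable-cusp-estimates}, which also supplies $\Theta_{\tilde a}\asymp(\log(e/r))^{-A}$ for the upper envelope.
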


Part \textup{(a)} is the central spectral statement.  It is stronger than
a quasimode, a pseudospectral lower bound, or the construction of one
eigenvalue: every fixed model window transfers with its full algebraic
count, and nothing else enters the corresponding transverse block window.
Each resulting block eigenvalue embeds in the spectrum of the full
generator; repeated transverse multiplicities and accidental coincidences
between distinct blocks are governed by Lemma~\ref{lem:blocks}.  The
vertical model ladder becomes a horizontal ladder of decay rates beneath
each transverse frequency, while every fixed rung has frequency shift
$A\pi\ell_j^{-A-1}/8+o(\ell_j^{-A-1})$, with universal leading coefficient
$A\pi/8$.  Part \textup{(b)} closes the dynamical chain by matching the
principal rung with the global resolvent bound, and
part \textup{(c)} shows that neither conclusion depends on exact parity or
smoothness of the lower-order modulation.  The exact identity
\eqref{eq:wave-depth-identity} gives the leading real part a direct
interpretation: every rung has depth equal to one half of the damping mass
sampled by its eigenfunction, so the coefficient $1/2$ is structural.
The remainders $o(\ell_j^{-A-1})$ are qualitative.  Their non-effectivity
comes from compactness-based convergence of the rescaled Riesz projections
to the logarithmic oscillator; no convergence rate is assumed or claimed
under the bounded and measurable hypotheses used here.  The two displayed
coefficients and the finite-window algebraic count are unaffected.

\subsection{Geometric scope and limitations}

The decisive hypothesis is exact tensor-sum separation, allowing the twisted
boundary conditions induced by isometric holonomy, rather than global
product topology.  Section~\ref{sec:mapping-torus}
proves that Theorems~\ref{thm:A}--\ref{thm:C} and the logarithmic ladder
persist on every isometric mapping torus.  Holonomy changes the scalar
periodic boundary condition to a Floquet condition, but leaves the
estimates, the fixed-window count, and the two leading spectral
coefficients unchanged; no finite-order assumption on the holonomy is
needed.  The full statement is Theorem~\ref{thm:F}.

That exact separation is nevertheless essential to classification by
$\Theta_a$ alone.  For the warped metrics, with $|\eps|<1$,
\[
    g_\eps=dx^2+(2+\eps\sin x)^2dy^2
\]
on $\T^2$, Proposition~\ref{prop:warped-sharpness} and
Corollary~\ref{cor:warped-log-cusp} keep the damping and its lower interval
mass fixed, but change the logarithmic cusp from stretched-exponential
decay at $\eps=0$ to exponential stabilization for every
$0<|\eps|<1$.  Thus isometric holonomy preserves the scalar mechanism,
whereas arbitrarily small smooth warping can change it.  Damping
depending on both variables, and higher-dimensional rough bases, lie
outside that finite-cluster mechanism.

\subsection{Further comparisons with prior work}\label{subsec:literature}

Sharp and near-sharp decay estimates for trapped waves on tori, partially
rectangular domains, and product manifolds have been derived from the
geometry of the undamped set and from prescribed vanishing, growth, or
derivative conditions
\cite{BurqHitrik2007,ALN,Stahn2017,LL,BurqZuily2015,Kleinhenz2019,
DK2020,Sun2023,Kleinhenz2025,DKP,AK}.  In the polynomial regime,
Kleinhenz--Wang treat controlled unidirectional damping on the torus,
including unbounded polynomial singularities, and prove sharp
polynomial decay \cite{KleinhenzWang2026}; recent work also treats
non-polynomial derivative bounds and finer geometric effects
\cite{Kleinhenz2025,DKP,AK}.  Those results begin with a specified
pointwise profile or damped-set geometry and derive a decay estimate.  The
coefficient here is an arbitrary bounded measurable function, the datum is
its worst local average at every scale, and the conclusion is inverse as
well as direct.  The novelty is therefore not the appearance of a
logarithm by itself, but the exact mass--resolvent equivalence, its recovery
from one resonant channel per octave, and the counterexamples showing that
the reverse resolvent-to-mass implication fails for every sublinear power
gauge.

Rough damping also has a geometric stabilization theory on tori:
Burq--G\'erard establish stabilization results for polygonal dampings on
$\T^2$, and Rouveyrol develops higher-dimensional sufficient conditions
\cite{BG,Rouveyrol2024}.  Fixed-scale lower averages and
thick-set uncertainty principles form another related line.  Green obtains
fractional-wave decay from a uniform lower average on intervals of one
fixed length and proves the corresponding necessity for exponential decay
\cite{Green2020}; Green--Jaye--Mitkovski prove annular uncertainty
principles for geometrically controlling sets
\cite{GreenJayeMitkovski}.  Our upper bound uses this uncertainty-principle
philosophy in the finite propagating cluster, through Nazarov's measurable
Tur\'an estimate \cite{Nazarov}.  The converse is different: an absorbing
ODE quasimode converts a bound on one resonant scalar channel per octave
into lower mass at every sufficiently small scale.  This sparse dynamical
recovery is the inverse component absent from a spectral inequality alone.
It is also distinct from the classical inverse-spectral problem for a
regular one-dimensional damping, where full spectral or trace data are
used to recover or constrain the coefficient \cite{BorisovFreitas2009}:
here the data are sparse high-frequency resolvent samples, and the
recovered object is the rough multiscale invariant $\Theta_a$, not the
coefficient itself.

Rough Schr\"odinger observability provides a complementary fixed-scale
comparison.  Burq--Zworski prove observability on rectangular two-tori from
arbitrary measurable sets of positive measure \cite{BurqZworski2019}.
Burq--Zhu prove a general toral criterion, resolve the positive-measure
conjecture in one dimension, and obtain product classes of measurable
observation domains in arbitrary dimensions \cite{BurqZhu2025}.  These
results concern fixed observation sets; Theorem~\ref{thm:B} instead identifies a quantitative
shrinking-scale invariant and recovers it from sparse high-frequency
resolvent samples.

A direct damped-wave spectral precedent is Nonnenmacher's appendix to
Anantharaman--L\'eautaud \cite{ALN}, where characteristic strip damping
produces a near-axis branch at polynomial depth; Stahn later proves the
corresponding exact strip-decay rate on the Dirichlet square
\cite{Stahn2017}.  The present regime
has no open undamped strip: every punctured neighborhood of the trapped
slice carries positive damping.  Its spectral depth is logarithmic rather
than polynomial, and every fixed model window is transferred with its
complete blockwise algebraic count and two-term coefficients.

Classical damped-wave spectral theory describes high-frequency
bands, Weyl laws, and deviations controlled by geodesic averages of the
damping
\cite{Sjostrand2000,Hitrik2002,Hitrik2003,AschLebeau2003,
Anantharaman2010}.  Theorem~\ref{thm:E} is narrower geometrically and finer
near the axis: it identifies an exact singular model, transfers every
fixed portion of its spectrum with algebraic multiplicity, and excludes
additional eigenvalues from the corresponding wave window.  Hitrik's
elliptic-orbit construction produces eigenfrequencies converging rapidly
to the axis when the damping misses a neighborhood of the trapped orbit
\cite{Hitrik2003}.  Here every neighborhood of the trapped slice carries
positive damping mass, and the spectrum instead lies at the finite
logarithmic depth dictated by that mass.  The boundary deformation used
below belongs to the classical Aguilar--Combes/Balslev--Combes
complex-scaling lineage \cite{AguilarCombes,BalslevCombes}.  Its natural
nonnormal relatives include the rotated oscillator, the complex Airy
operator, and the semiclassical spectral theory of smooth purely imaginary
potentials \cite{Davies,DSZ,AlmogHenry2016,AlmogGrebenkovHelffer2019}.
Arnaiz--Bony--Michel analyze leftmost eigenvalues of purely imaginary
semiclassical Schr\"odinger operators through finite-order homogeneous
critical models \cite{ArnaizBonyMichel}.  The cusp here is neither $C^1$
at its zero nor homogeneous of finite order: complex dilation translates
the logarithmic potential and fixes its complete vertical spectrum.  The
remaining damped-wave step is a multiplicity-preserving transfer of that
spectrum through a nonnormal quadratic pencil.

\subsection{Proof architecture and organization}

The rough dictionary is driven by a two-part one-dimensional mechanism.
The upper branch combines a weighted interval Poincar\'e inequality with a
Nazarov--Tur\'an observability estimate for the propagating Fourier window;
the lower branch is an absorbing ODE quasimode on an interval of small
damping mass.  At a critical gauge, the two branches balance at
$\rho_s=s^{-1}L(s)^{1/2}$, and scale stability makes the inverse argument
close.  At power scales the balance is no longer rigid, producing the
breakdown in Theorem~\ref{thm:C}.

For the spectral argument, Proposition~\ref{prop:spectral-calibration}
first turns the rough mass bound into a spectral wall and identifies wave
depth with sampled damping mass.  The cusp scaling dictated by
\eqref{eq:Rs-main} produces the full-line operator
$\cL_1=-\partial_y^2+i\log|y|$.  Liouville--Green control and boundary
complex dilation solve the model; compactness and Riesz projection
convergence transfer its ladder to $T_s$; complex dilation supplies the
nonzero bilinear overlap, and complex symmetry uses it to exclude higher
geometric and algebraic multiplicity; finally the two Gohberg--Sigal
transfer propositions carry the isolated rungs and their total
finite-window count through the quadratic wave pencil.  The eigenvalues
and the global resolvent estimate then meet in the
envelope lemma to give sharp regularized decay.

Part~\ref{part:invariant} proves Theorems~\ref{thm:A}--\ref{thm:C}.
Part~\ref{part:model} develops the pencil transfer, solves the logarithmic
oscillator, proves Theorem~\ref{thm:E}, and establishes stability under
fixed measurable lower-order relative perturbations.  Its final section
uses the joint Floquet decomposition to prove the mapping-torus extension,
Theorem~\ref{thm:F}.  General notation and analytic inputs are collected at
the beginning of Part~\ref{part:invariant}.

\paragraph{Relation to the first preprint version.}
This article supersedes its
\href{https://arxiv.org/abs/2606.15218v1}{first arXiv version}.
Relative to that version, the rough inverse theorem is sharpened to sparse
recovery from scalar blocks, and the logarithmic oscillator,
multiplicity-preserving spectral realization, and stable perturbation theory
are added.  Together these results give a self-contained passage from
measurable mass to true spectrum and decay.

\clearpage
\tableofcontents
\part{The rough local-mass invariant}\label{part:invariant}

\section{Preliminaries and analytic inputs}\label{sec:prelim}

\subsection{Conventions}\label{subsec:notation}

Hilbert inner products are linear in the first argument.  Constants $c,C$
may change from line to line and depend on the quantitative hypotheses
currently in force.  For a fixed nontrivial damping they may depend on
$\|a\|_\infty$, a positive-mass scale, and the geometry of $Y$; for a
critical gauge they may also depend on its full gauge data.  We write
$A\lesssim B$, $A\gtrsim B$, and $A\asymp B$ in the usual way.  If $\Phi$
is positive and increasing,
\[
    F(t)\asymp_{\exp}e^{-\Phi(t)}
\]
means that $c e^{-C\Phi(Ct)}\le F(t)\le C e^{-c\Phi(ct)}$ for large $t$.

On $\T$ we use
\[
    \operatorname{dist}_\T(x,y)=\min_{k\in\Z}|x-y+2\pi k|,
    \qquad |x|=\operatorname{dist}_\T(x,0).
\]
An interval or arc is the image of a connected real interval, with periodic
Lebesgue measure.  We write $D(z,r)$ and $\overline D(z,r)$ for open and
closed discs, $K\Subset U$ for compact containment, and use essential
infima for measurable coefficients.  For a semibounded self-adjoint
operator with compact resolvent, $\lambda_k(B)$ denotes its eigenvalues in
nondecreasing order with multiplicity; $\lambda_j$ without an operator
argument is reserved for the transverse eigenvalues of $-\Delta_Y$.

Recall that a set $\cS\subset(0,\infty)$ is dyadically syndetic if it meets every
sufficiently high interval $[2^k,2^{k+1})$.  Dyadic syndeticity always
concerns distinct positive transverse frequencies.  The Weyl law implies
that every sufficiently high dyadic block contains such a frequency:
if $N_Y(\Lambda):=\#\{j:\lambda_j\le\Lambda\}$ is the counting function
with multiplicity and $d=\dim Y$, then
\[
 N_Y\!\left(\left(\tfrac32R\right)^2\right)-N_Y(R^2)
   \sim c_Y\bigl((\tfrac32)^d-1\bigr)R^d>0.
\]
Consequently, for every sufficiently large $R$, there is a distinct
positive transverse frequency in $(R,\tfrac32R]\subset[R,2R)$.
Once the exact tensor-product decomposition has been made, this dyadic
nonlacunarity is the only quantitative spectral input from $Y$ used
below.  A closed rectangle in $\C$ includes its boundary and has nonempty
interior.

\subsection{External analytic inputs used below}

We record the imported results in the forms required by the two main
arguments.  The inputs for the rough inverse theorem are Nazarov's
measurable Tur\'an estimate and the standard semigroup comparison.  The
spectral argument additionally uses the fixed-domain Gohberg--Sigal theorem, the
Batty--Duyckaerts resolvent-to-decay estimate, Olver's finite-endpoint
Liouville--Green theorem, and the rank-one comparison of separated
half-line extensions.  Whenever a displayed statement is an adapted
corollary, the reduction to the cited result is included here; the
logarithmic endpoint-at-infinity analysis needed for Theorem~\ref{thm:D}
is derived below rather than imported.

\begin{lemma}[Nazarov--Tur\'an estimate for exponential polynomials]\label{lem:nazarov-input}
Let $I\subset\R$ be a bounded interval, let $E\subset I$ be measurable with $|E|>0$, and let
\[
    p(x)=\sum_{m=1}^N c_m e^{i\xi_m x},
\]
where $N\ge1$, the coefficients $c_m\in\C\setminus\{0\}$, and the frequencies $\xi_m\in\R$ are distinct.  Equivalently, for an arbitrary finite representation one first combines equal frequencies and deletes zero combined coefficients, and $N$ denotes the number of remaining distinct terms.  There is an absolute constant $C_{\mathrm{TN}}\ge1$ such that
\begin{equation}\label{eq:nazarov-input}
    \sup_I |p|\le
       \Bigl(\frac{C_{\mathrm{TN}}|I|}{|E|}\Bigr)^{N-1}
       \operatorname*{ess\,sup}_{E}|p| .
\end{equation}
For $N=1$ the exponent is zero and the assertion is the constant-modulus
identity for a single exponential.  Nazarov's theorem is stated with the
pointwise supremum on $E$.  To obtain the stronger essential-supremum form
displayed above, let
\[
 E_\varepsilon
   =\{x\in E:|p(x)|\le
       \operatorname*{ess\,sup}_{E}|p|+\varepsilon\},
 \qquad \varepsilon>0.
\]
Then $E_\varepsilon$ is measurable, $|E_\varepsilon|=|E|$, and
$\sup_{E_\varepsilon}|p|\le
\operatorname*{ess\,sup}_{E}|p|+\varepsilon$.  Apply the pointwise theorem
to $E_\varepsilon$ and let $\varepsilon\downarrow0$.  Conversely, replacing
the essential supremum in \eqref{eq:nazarov-input} by the pointwise supremum
gives an immediate weaker form.  The constant is independent of the
interval length and of the locations or separation of the distinct
frequencies.  This is the measurable Tur\'an theorem of Nazarov
\cite[\S1, Theorem~1]{Nazarov}, after translation and affine rescaling of
the interval; the general exponential factor in Nazarov's statement is one
because the exponents $i\xi_m$ have zero real part.  In the only application
below, $I$ is a length-$2\pi$ representative of $\T$ and, after combining
equal Fourier modes, $N\le10$; hence the exponent is at most $9$.
\end{lemma}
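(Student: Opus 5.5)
This lemma is an imported result: the content to verify is that Nazarov's measurable Tur\'an theorem \cite[\S1, Theorem~1]{Nazarov} yields \eqref{eq:nazarov-input} in the stated normalization. The plan has three steps: normalize the interval, specialize Nazarov's exponential factor, and upgrade from the pointwise to the essential supremum.

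\emph{Normalization.} Nazarov's theorem concerns $p(t)=\sum_{m=1}^N c_m e^{\lambda_m t}$ with $\lambda_m\in\C$, on an interval $J$ and a measurable $E\subset J$. It gives
\[
 \sup_J|p|\le e^{|J|\max_m|\Re\lambda_m|}\Bigl(\frac{C|J|}{|E|}\Bigr)^{N-1}\sup_E|p|.
\]
I first translate $I$ to a standard position $J$. A translation $x\mapsto x+x_0$ only multiplies each $c_m$ by the unimodular factor $e^{i\xi_m x_0}$. Hence coefficients stay nonzero and frequencies stay distinct. Affine rescaling $x\mapsto\kappa x$ replaces $\xi_m$ by $\kappa\xi_m$, which keeps them real and distinct. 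The ratio $|I|/|E|$ is invariant under both maps, and so are both suprema. Since $\lambda_m=i\xi_m$ has zero real part, the exponential factor equals one. This gives the bound with the pointwise supremum on $E$ and a constant $C_{\mathrm{TN}}$ that depends neither on $|I|$ nor on the frequencies. The reduction to distinct nonzero terms is needed because $N$ in Nazarov's bound counts distinct exponents. Merging equal frequencies and dropping vanishing combined coefficients changes neither $p$ nor its suprema. If every term cancels, $p\equiv0$ and the inequality is trivial.

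\emph{Essential supremum.} Set $M=\operatorname*{ess\,sup}_E|p|$. The set $E_\varepsilon=\{x\in E:|p(x)|\le M+\varepsilon\}$ is measurable because $p$ is continuous. By the definition of essential supremum, $E\setminus E_\varepsilon$ is null, so $|E_\varepsilon|=|E|$. Applying the pointwise form to $E_\varepsilon$ gives $\sup_I|p|\le(C_{\mathrm{TN}}|I|/|E|)^{N-1}(M+\varepsilon)$, and letting $\varepsilon\downarrow0$ proves \eqref{eq:nazarov-input}. For $N=1$, $|p|=|c_1|$ is constant, so both sides coincide.

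\emph{Application count.} In the application, $p$ will be a propagating Fourier cluster on a length-$2\pi$ representative of $\T$. Its modes are integers $k$ with $|k^2-\mu|$ controlled, which allows at most $10$ distinct frequencies, so the exponent $N-1$ is at most $9$.

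The only step requiring real care is the measure-zero upgrade, specifically checking that $|E_\varepsilon|=|E|$ so that the ratio $|I|/|E|$ is unchanged. Everything else is bookkeeping.
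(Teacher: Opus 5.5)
Your proposal is correct and follows the paper's own argument essentially verbatim. You cite Nazarov's theorem, note that translation and rescaling preserve $|I|/|E|$ and both suprema, observe that the exponential factor is one because $\Re(i\xi_m)=0$, and pass to the essential supremum through the full-measure sets $E_\varepsilon$.

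One small imprecision: Nazarov's bound holds for any finite representation, with the exponent given by the number of terms used. Merging equal frequencies therefore only improves the exponent; it is not required for the theorem to apply. This does not affect your argument.
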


We use the standard conventions of \cite{GohbergSigal} for a holomorphic family $F(z)$ of index-zero Fredholm operators: $z_0$ is a \emph{characteristic value} if $F(z_0)$ is not invertible; a \emph{root function} at $z_0$ is a holomorphic vector function $\phi$ with $\phi(z_0)\neq0$ and $F(z)\phi(z)$ vanishing at $z_0$, its \emph{order} being the order of that zero; and a \emph{Keldysh chain} of length $m$ at $z_0$ is a tuple $(u_0,\dots,u_{m-1})$ with $u_0\neq0$ and $\sum_{i=0}^{k}\tfrac1{i!}F^{(i)}(z_0)\,u_{k-i}=0$ for $0\le k\le m-1$ --- the condition that the coefficients of $F(z)\bigl(u_0+(z-z_0)u_1+\cdots+(z-z_0)^{m-1}u_{m-1}\bigr)$ vanish to order $m$ at $z_0$, so a chain of length $m$ is the same as a root function of order at least $m$ with leading vector $u_0$.  A \emph{canonical system} at $z_0$ consists of chains whose leading vectors form a basis of $\ker F(z_0)$, each of maximal length among chains whose leading vector lies outside the span of the preceding leading vectors; the \emph{Gohberg--Sigal multiplicity} of $z_0$ is the sum of the lengths of a canonical system, a quantity independent of the choices; it is the multiplicity appearing in the following lemma.

For later use, if $T$ is a closed operator and
$F(\zeta)=T-\zeta$ on the fixed domain $\mathcal D(T)$, then the Keldysh
chain condition at an isolated eigenvalue $\lambda$ is exactly
\[
 (T-\lambda)u_0=0,\qquad
 (T-\lambda)u_k=u_{k-1}\quad(1\le k<m).
\]
Thus Keldysh chains of the linear pencil are precisely Jordan chains of
$T$, with the same lengths, and the Gohberg--Sigal multiplicity of
$\lambda$ equals its algebraic multiplicity as an eigenvalue of $T$.

\begin{lemma}[Gohberg--Sigal operator Rouch\'e theorem, fixed-domain homotopy form]\label{lem:GS-input}
Let $\Omega\subset\C$ be a bounded domain with piecewise $C^1$ boundary,
and let $U\subset\C$ be a connected open set with
$\overline\Omega\Subset U$.  Let $X,Y$ be Banach spaces.  For
$0\le\tau\le1$, let
\[
    F_\tau\in\operatorname{Hol}\bigl(U,\mathcal L(X,Y)\bigr)
\]
be Fredholm of index zero at every $z\in U$, and assume that $(\tau,z)\mapsto F_\tau(z)$ is norm-continuous on $[0,1]\times\overline\Omega$.  If
\[
    F_\tau(z)\text{ is invertible for every }(\tau,z)\in[0,1]\times\partial\Omega,
\]
then, for each $\tau$, the characteristic values of $F_\tau$ in $\Omega$ are finite in number, counted with Gohberg--Sigal multiplicity, and their total multiplicity is independent of $\tau$.

The same conclusion holds for closed-operator families $A_\tau(z):D\to Y$ when the common domain $D$ is equipped with one fixed Banach norm, independent of $z$ and $\tau$, for which every $A_\tau(z)$ is bounded and the resulting family has the properties above with $X=D$.  Equivalently, one may precompose with any fixed Banach-space isomorphism $J:X_0\to D$; the families $A_\tau(z)$ and $A_\tau(z)J$ have corresponding root functions, Keldysh chains, orders, and multiplicities.  In the applications, $D=H^2(\T)$ carries the graph norm of a fixed reference operator.  No moving-domain version is used.
\end{lemma}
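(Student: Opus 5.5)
The plan is to reduce everything to the classical operator Rouché theorem of Gohberg--Sigal \cite{GohbergSigal}. That theorem states: if $F,G$ are holomorphic index-zero Fredholm families near $\overline\Omega$, $F$ is invertible on $\partial\Omega$, and $\|F(z)^{-1}(G(z)-F(z))\|<1$ on $\partial\Omega$, then $F$ and $G$ have the same total Gohberg--Sigal multiplicity in $\Omega$. The homotopy form will follow from an openness-and-connectedness argument in $\tau$.

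\textbf{Step 1: finiteness for fixed $\tau$.}
\begin{itemize}
\item Since $F_\tau$ is invertible at a boundary point of $\Omega$, the analytic Fredholm alternative applies on the connected set $U$. Hence the characteristic values of $F_\tau$ form a discrete subset of $U$, and $F_\tau^{-1}$ is meromorphic with finite-rank principal parts.
\item The set of noninvertibility points is relatively closed in $U$ and misses $\partial\Omega$. Its intersection with $\Omega$ is therefore a discrete subset of the compact set $\overline\Omega$ staying away from $\partial\Omega$, so it is finite.
\item Each characteristic value has finite Gohberg--Sigal multiplicity, because $\ker F_\tau(z_0)$ is finite-dimensional and chain lengths are bounded by the pole order of $F_\tau^{-1}$. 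Hence the total count $N(\tau)$ is a finite integer.
\end{itemize}

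\textbf{Step 2: local constancy in $\tau$.}
\begin{itemize}
\item The map $(\tau,z)\mapsto F_\tau(z)$ is norm-continuous, and inversion is continuous on the open set of invertible operators. The set $[0,1]\times\partial\Omega$ is compact and every $F_\tau(z)$ there is invertible. Hence $M:=\sup_{[0,1]\times\partial\Omega}\|F_\tau(z)^{-1}\|<\infty$.
\item By uniform continuity on this compact set, there is $\delta>0$ such that $|\tau-\tau'|<\delta$ implies $\|F_{\tau'}(z)-F_\tau(z)\|<1/M$ for all $z\in\partial\Omega$.
\item For such $\tau,\tau'$ the Rouché hypothesis $\|F_\tau(z)^{-1}(F_{\tau'}(z)-F_\tau(z))\|<1$ holds on $\partial\Omega$, so $N(\tau')=N(\tau)$.
\item Thus $N$ is locally constant and integer-valued on the connected interval $[0,1]$, hence constant.
\end{itemize}

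\textbf{Step 3: the closed-operator version.}
\begin{itemize}
\item Equip $D$ with the fixed Banach norm. Then each $A_\tau(z)$ is a bounded operator $D\to Y$, and the hypotheses are literally those of the bounded case with $X=D$.
\item For a fixed isomorphism $J:X_0\to D$, the correspondence $\phi\mapsto J^{-1}\phi$ is a bijection between holomorphic $D$-valued and $X_0$-valued functions. It preserves $\phi(z_0)\ne0$ and the vanishing order of $A_\tau(z)\phi(z)=A_\tau(z)J(J^{-1}\phi(z))$.
\item Therefore root functions, Keldysh chains (via $u_k\mapsto J^{-1}u_k$), canonical systems and multiplicities correspond exactly. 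Fredholmness, index and invertibility are likewise unchanged.
\end{itemize}

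\textbf{Main obstacle.} The only real subtlety is Step 1, namely ensuring the analytic Fredholm theorem applies on the component containing $\overline\Omega$. This is handled by connectedness of $U$ together with invertibility at a boundary point. Everything else is a direct citation of \cite{GohbergSigal} plus compactness.
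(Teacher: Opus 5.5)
Your proof is correct, but Step~2 follows a different route from the paper's. You apply the Gohberg--Sigal Rouch\'e theorem once, on all of $\Omega$, to the pair $F_\tau,F_{\tau'}$. This rests on the uniform bound $M=\sup_{[0,1]\times\partial\Omega}\|F_\tau(z)^{-1}\|$ and on uniform continuity over $[0,1]\times\partial\Omega$.

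The paper instead works locally around a fixed $\tau_0$:
\begin{enumerate}
\item It surrounds the finitely many characteristic values of $F_{\tau_0}$ by small disjoint closed discs $\overline D_j\Subset\Omega$.
\item It runs a Neumann series on the compact remainder $K=\overline\Omega\setminus\bigcup_j D_j$. This shows that, for $\tau$ near $\tau_0$, $F_\tau$ is invertible on $K$, so all its characteristic values in $\Omega$ stay inside the discs.
\item It applies Rouch\'e on each disc separately and sums the multiplicities, treating the case of no characteristic values separately.
\end{enumerate}
The paper takes this detour because the formulation it cites (\cite[Theorem~1.15]{AKL}) is stated for simply connected domains, while $\Omega$ here may be multiply connected. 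Localizing lets it use only the disc case. The price is that it needs norm continuity on all of $[0,1]\times\overline\Omega$, not just on $[0,1]\times\partial\Omega$.

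Your one-shot argument is shorter and uses continuity only on the boundary. It is valid only if you cite a version of the operator Rouch\'e theorem for bounded domains whose boundary is a finite union of disjoint closed piecewise $C^1$ curves. Examples are Gohberg--Sigal's own contour formulation or the Cauchy-domain version in Gohberg--Goldberg--Kaashoek. Such a version holds, because the logarithmic-residue proof goes through for these boundaries. You should state this explicitly, or else fall back on the paper's disc localization when $\Omega$ is not simply connected.
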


\begin{proof}[Reduction to the cited operator Rouch\'e theorem]
Fix $\tau_0\in[0,1]$.  Boundary invertibility and compactness give a
neighborhood of $\partial\Omega$ on which $F_{\tau_0}$ is invertible.
Since $U$ is connected and $F_{\tau_0}$ is a holomorphic Fredholm family
which is invertible at a boundary point, analytic Fredholm theory
\cite[Theorem~1.16, p.~15]{AKL} shows that its characteristic values are
discrete in $U$.  They are therefore finite in number in $\Omega$: after
removing the invertible boundary neighborhood, they lie in a compact subset
of $U$.  Write them as $z_1,\dots,z_m$.

If $m=0$, then $F_{\tau_0}$ is invertible on $\overline\Omega$; compactness
and norm-continuity imply that $F_\tau$ remains invertible there for all
$\tau$ in a neighborhood of $\tau_0$, so the local constancy assertion is
immediate.  Suppose $m\ge1$.  Choose pairwise disjoint closed discs
$\overline D_j\Subset\Omega$, centered at $z_j$, such that $z_j$ is the
only characteristic value of $F_{\tau_0}$ in $\overline D_j$, and set
\[
 K=\overline\Omega\setminus\bigcup_{j=1}^m D_j.
\]
The operator $F_{\tau_0}(z)$ is invertible for every $z\in K$, and hence
\[
 \sup_{z\in K}\|F_{\tau_0}(z)^{-1}\|<\infty.
\]
Norm-continuity gives a neighborhood $I_{\tau_0}$ of $\tau_0$ such that,
for $\tau\in I_{\tau_0}$,
\[
 \sup_{z\in K}
 \bigl\|F_{\tau_0}(z)^{-1}
       \bigl(F_\tau(z)-F_{\tau_0}(z)\bigr)\bigr\|<1.
\]
The Neumann-series criterion makes $F_\tau$ invertible on $K$, so every
characteristic value of $F_\tau$ in $\Omega$ lies in one of the discs
$D_j$.  On each simply connected disc, the operator Rouch\'e theorem of
Gohberg--Sigal \cite{GohbergSigal}, in the fixed-Banach-space formulation
\cite[Theorem~1.15, p.~14]{AKL}, gives equality of the total multiplicities of
$F_\tau$ and $F_{\tau_0}$.  Summing over $j$ proves local constancy of the
total multiplicity in the original, possibly multiply connected domain
$\Omega$.  It is therefore constant on the connected interval $[0,1]$.
The fixed-domain assertion follows by taking $X=D$; precomposition by a
fixed isomorphism carries each root function and Keldysh chain bijectively
to one of the same order.
\end{proof}

\begin{lemma}[Quantified resolvent-to-decay theorem]\label{lem:BD-RST-input}
Let $T(t)$ be a bounded $C_0$-semigroup on a Hilbert space with generator $A$, and assume $i\R\subset\rho(A)$.  Let $L$ be a critical gauge and let
\[
    M(S)=C_0\max\{L(S),M_0\},\qquad S\ge0,
\]
with arbitrary constants $C_0,M_0\ge1$, after any harmless nondecreasing modification on a compact interval.  If
\[
    \|(is-A)^{-1}\|\le M(|s|),\qquad s\in\R,
\]
then there are constants $c,C,t_0>0$, depending only on the semigroup bound and on the displayed majorant data, such that
\begin{equation}\label{eq:BD-RST-input}
    \|T(t)A^{-1}\|\le \frac{C}{M_{\log}^{-1}(ct)},
    \qquad t\ge t_0.
\end{equation}
Here
\[
    M_{\log}(S)=M(S)\bigl(\log(1+M(S))+\log(1+S)\bigr),
\]
and the generalized right inverse is defined for every $r\ge0$ by
\[
    M_{\log}^{-1}(r)
       :=\sup\Bigl(\{S\ge0:M_{\log}(S)\le r\}\cup\{0\}\Bigr).
\]
Since $M_{\log}$ is nondecreasing and tends to infinity, this supremum is finite; $t_0$ is chosen large enough that $M_{\log}^{-1}(ct)\ge1$ for $t\ge t_0$, so the denominator in \eqref{eq:BD-RST-input} is positive.

This follows from the Batty--Duyckaerts $M_{\log}$ theorem \cite[Theorem~1.5]{BD}, stated in the present Hilbert-space notation as \cite[Theorem~3.1]{RST}, and specialized to the critical majorants used in Lemma~\ref{lem:envelope}.  The only convention change is the right-inverse notation.  If the source statement is applied first to a continuous increasing majorant, use the following upper regularization.  For large dyadic $2^k$ define $M^\#$ on $[2^k,2^{k+1}]$ by linear interpolation between the shifted values $M(2^{k+1})$ and $M(2^{k+2})$, and connect it continuously to a large constant on the remaining compact interval.  Then $M^\#$ is continuous, nondecreasing, and for $2^k\le S\le2^{k+1}$,
\[
    M(S)\le M(2^{k+1})\le M^\#(S)\le M(2^{k+2})\le M(4S).
\]
Since $M$ is critical up to a fixed compact-frequency maximum, scale stability and doubling give $M(4S)\le C_\#M(S)$ for $S\ge S_*$, with $C_\#$ depending only on the relevant full gauge data and the compact-frequency constant.  Hence
\[
    M_{\log}(S)\le (M^\#)_{\log}(S)\le C_\# M_{\log}(4S),\qquad S\ge S_*.
\]
The generalized inverses are therefore comparable in the form needed below: for some constants $c,C>0$,
\[
    \bigl((M^\#)_{\log}\bigr)^{-1}(t)\ge c\,M_{\log}^{-1}(ct),
    \qquad
    \frac1{\bigl((M^\#)_{\log}\bigr)^{-1}(t)}\le \frac{C}{M_{\log}^{-1}(ct)}
\]
for all sufficiently large $t$.  Applying the continuous-majorant theorem to $M^\#$ gives \eqref{eq:BD-RST-input} for $M$ for $t\ge t_0$.  The shifted interpolation majorizes $M$ on each dyadic interval and is insensitive to jumps or flat portions of the original nondecreasing majorant.
\end{lemma}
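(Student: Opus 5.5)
The proof reduces to the Batty--Duyckaerts $M_{\log}$ theorem, in the Hilbert-space form \cite[Theorem~3.1]{RST}. That theorem is stated for a continuous nondecreasing majorant of $\|(is-A)^{-1}\|$. So I would (1) replace $M$ by a continuous upper regularization $M^\#$ that is comparable to $M$ at a fixed dilation of scale, (2) apply the cited theorem to $M^\#$, and (3) convert the resulting bound, which involves $((M^\#)_{\log})^{-1}$, back into one involving $M_{\log}^{-1}$.

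\emph{Step 1: regularization.} For large $k$, define $M^\#$ on $[2^k,2^{k+1}]$ by linear interpolation between $M(2^{k+1})$ and $M(2^{k+2})$. Below some dyadic threshold, join it continuously to a constant exceeding $\sup_{[0,S_*]}M$. Then $M^\#$ is continuous and nondecreasing. Monotonicity of $M$ gives $M(S)\le M^\#(S)\le M(4S)$ on each dyadic block, so $M^\#$ is still a resolvent majorant.

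\emph{Step 2: scale stability.} I need $M(4S)\le C_\#M(S)$ for $S\ge S_*$. Write $L(4S)=\ell(\log(4eS))$ and note that $\log(4eS)\le 2\log(eS)$ once $S\ge 4/e$. Since $\ell$ is nondecreasing, eventual doubling gives $\ell(\log(4eS))\le C_\ell\,\ell(\log(eS))$ as soon as $\log(eS)\ge u_0$. Taking the maximum with $M_0$ and multiplying by $C_0$ preserves this inequality. Consequently
\[
M_{\log}(S)\le (M^\#)_{\log}(S)\le C_\#\,M(S)\bigl(\log(1+C_\#M(S))+\log(1+S)\bigr)\le C'M_{\log}(S)
\]
for $S\ge S_*$, using $\log(1+C_\#x)\le \log C_\#+\log(1+x)$ and $M\ge1$.

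\emph{Step 3: inverse comparison and conclusion.} This is where I expect the only real care to be needed, because $M_{\log}$ may jump or be flat, so the generalized inverses must be handled through their supremum definition rather than by inverting pointwise. Fix a large $t$. Take any $S\ge S_*$ with $M_{\log}(S)\le t/C'$. Step 2 then gives $(M^\#)_{\log}(S)\le t$, so $S\le ((M^\#)_{\log})^{-1}(t)$. Taking the supremum over such $S$ yields
\[
((M^\#)_{\log})^{-1}(t)\ge M_{\log}^{-1}(t/C'),
\]
provided the right-hand side is at least $S_*$. This holds for large $t$ because $M_{\log}$ is finite-valued, so its generalized inverse tends to infinity.

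Applying \cite[Theorem~3.1]{RST} to $M^\#$ gives $\|T(t)A^{-1}\|\le C/((M^\#)_{\log})^{-1}(c't)$. Combining with the inverse comparison gives \eqref{eq:BD-RST-input} with $c=c'/C'$. Finally, choose $t_0$ so that $M_{\log}^{-1}(ct)\ge\max(1,S_*)$ for $t\ge t_0$. All constants depend only on $C_0$, $M_0$, the gauge data $(S_0,u_0,C_\ell,\ell(u_0))$, and the semigroup bound, as claimed.
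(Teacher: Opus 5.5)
Your proposal is correct and follows essentially the same route as the paper. Both use the shifted dyadic interpolation $M^\#$ with $M\le M^\#\le M(4\,\cdot)$, compare $(M^\#)_{\log}$ with $M_{\log}$ for large $S$, compare the generalized right inverses through their supremum definition, and then apply the Batty--Duyckaerts/RST theorem to the continuous majorant $M^\#$. The only cosmetic difference is that you use eventual doubling to get $(M^\#)_{\log}(S)\le C'M_{\log}(S)$, which puts the constant into the argument $ct$. The paper instead compares with $M_{\log}(4S)$, an inequality that already follows from $M^\#(S)\le M(4S)$ and monotonicity, so there the factor $4$ enters the prefactor.
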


\begin{lemma}[Finite-endpoint Olver theorem and logarithmic endpoint-at-infinity reduction]\label{lem:olver-input}
The external input is Olver's finite-endpoint complex
Liouville--Green theorem and its error-control equation
\cite[Ch.~6, Theorem~11.1 and \S\S 11.2, 13, pp.~220--227]{Olver}.  In the notation
below it gives the relative function and derivative estimates
\eqref{eq:olver-u}--\eqref{eq:olver-du}.  We then derive the
endpoint-at-infinity consequences for the logarithmic potentials
\eqref{eq:olver-log-q}.  Let $D\subset\C$ be simply connected, let $q$ be
holomorphic and zero-free on $D$, fix a branch of $q^{1/2}$ and a
holomorphic fourth root $q^{1/4}$ with $(q^{1/4})^2=q^{1/2}$, and put
$\xi(z)=\int^z q^{1/2}$.  If $z_{\rm ref}$ is a finite reference endpoint,
let $H(z_{\rm ref})\subset D$ be a connected open progressive domain: for
each $z\in H(z_{\rm ref})$ a chosen path $\Gamma_z\subset D$, consisting of
a finite chain of $R_2$-arcs (arcs admitting regular $C^2$
parametrizations), satisfies Olver's progressivity hypothesis and joins $z$
to $z_{\rm ref}$.  Every finite-endpoint path used below is a straight
radial segment; its admissibility is automatic, and its progressivity is
verified from the phase in the relevant application.  In the logarithmic applications
$H(\infty)$ denotes the sectorial tail whose
points are joined to infinity by the rays constructed in the proof; those
endpoint-at-infinity assertions are established directly by the Volterra
argument below rather than imported from Olver.  A path is
$\xi$-progressive for the recessive exponential when $\Re\xi$ is
nondecreasing toward the reference endpoint; for the dominant exponential
the direction is reversed.  For $u''=q(z)u$, Olver's exact error-control
differential is
\[
 dF_q(z)=\left(\frac{5q'(z)^2}{16q(z)^{5/2}}
              -\frac{q''(z)}{4q(z)^{3/2}}\right)dz.
\]
For estimates we use the absolute majorant
\[
    \omega_q(z)|dz|
    :=\left(\left|\frac{q'(z)^2}{q(z)^{5/2}}\right|+
       \left|\frac{q''(z)}{q(z)^{3/2}}\right|\right)|dz|,
\]
and define the majorizing error-control variation of the chosen family by
\[
    \mathcal V=\sup_{z\in H(z_{\rm ref})}
       \int_{\Gamma_z}\omega_q(t)|dt| .
\]
For finite reference endpoint $z_{\rm ref}$, if the paths are progressive
and $\mathcal V<\infty$, Olver's theorem gives on $H(z_{\rm ref})$ a
solution of $u''=q(z)u$ of the form
\begin{equation}\label{eq:olver-u}
    u(z)=q(z)^{-1/4}\exp(\mp\xi(z))(1+\eta_0(z)),
    \qquad
    \sup_{H(z_{\rm ref})} |\eta_0|
       \le C_0\bigl(e^{C_0\mathcal V}-1\bigr).
\end{equation}
The sign is chosen according to the progressive direction.  In the same notation the differentiated estimate is
\begin{equation}\label{eq:olver-du}
\begin{aligned}
    u'(z)&=\mp q(z)^{1/4}\exp(\mp\xi(z))(1+\eta_1(z)),\\
    \sup_{H(z_{\rm ref})}|\eta_1|&\le
       C_1\biggl(e^{C_1\mathcal V}-1
          +\sup_{H(z_{\rm ref})}\frac{|q'|}{|q|^{3/2}}\biggr).
\end{aligned}
\end{equation}
Here $C_0,C_1$ are absolute in the fixed-sector applications below.  Since
$|dF_q|\le\omega_q(z)|dz|$, Olver's source variation is bounded by
$\mathcal V$, so using this majorant only enlarges the error bound.  The
derivative of $q^{-1/4}$ is the only normalization change: it contributes
the relative term $O(|q'|/|q|^{3/2})$, while the differentiated Volterra
error is controlled by the same majorizing variation.

For the only potentials used below,
\begin{equation}\label{eq:olver-log-q}
    q(z)=e^{i\alpha}\gamma\Log z-E,
    \qquad
    \alpha\in\{0,\pi/2\},\qquad \gamma>0,
\end{equation}
with $E$ in a fixed compact subset of $\C$, the endpoint at infinity, compact-parameter uniformity, and the raywise dominant construction are consequences of the finite-endpoint theorem and the sectorial Volterra construction displayed in the proof below; no endpoint-at-infinity assertion for a general potential is used or claimed.  For the endpoint-at-infinity assertion, write the angular interval of the sectorial tail as $[\theta_-,\theta_+]$ and require
\begin{equation}\label{eq:olver-phase-condition}
    c_0:=\inf_{\theta\in[\theta_-,\theta_+]}
       \cos(\theta+\alpha/2)>0.
\end{equation}
The sign of the square-root branch is fixed by the uniform asymptotic normalization
\begin{equation}\label{eq:olver-sqrt-normalization}
    q(re^{i\theta})^{1/2}
      =e^{i\alpha/2}(\gamma\log r)^{1/2}
         \bigl(1+O((\log r)^{-1})\bigr),
    \qquad r\to\infty,
\end{equation}
on that angular interval.  The constants may depend on $\gamma$, $c_0$, and
the compact set for $E$; they are uniform if $\gamma$ is restricted to a
compact subinterval of $(0,\infty)$.  Thus all later uses of this input are
reduced to \eqref{eq:olver-u}--\eqref{eq:olver-du} on the logarithmic
sectors verified explicitly in Lemma~\ref{lem:sector-LG}; no other
complex-domain Liouville--Green variant is invoked, and the notation
$H(\infty)$ below refers only to those explicitly constructed logarithmic
sectorial tails satisfying
\eqref{eq:olver-phase-condition}--\eqref{eq:olver-sqrt-normalization}.
\end{lemma}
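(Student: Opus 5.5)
The statement to prove is Lemma~\ref{lem:olver-input}. It has two parts: the finite-endpoint part is imported from Olver, and the endpoint-at-infinity part for $q(z)=e^{i\alpha}\gamma\Log z-E$ must be derived.

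\textbf{Finite-endpoint estimates.} For \eqref{eq:olver-u} I will quote \cite[Ch.~6, Thm.~11.1]{Olver}. Its error function is $F_q$, and its variation along progressive paths is dominated by $\int\omega_q|dt|$, because $|dF_q|\le\omega_q|dz|$. This gives the bound $|\eta_0|\le C_0(e^{C_0\mathcal V}-1)$.

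For \eqref{eq:olver-du} I will differentiate the representation $u=q^{-1/4}e^{\mp\xi}(1+\eta_0)$. Differentiating $q^{-1/4}$ produces the relative term $-\tfrac14 q'q^{-3/2}$. The term $\eta_0'$ is controlled by Olver's companion bound for the derivative of the Volterra error, which again involves only $\mathcal V$.

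\textbf{Endpoint at infinity: setup.} On the sector $\theta\in[\theta_-,\theta_+]$ with $|z|\ge r_0$ large, $q$ is zero-free, since $|\gamma\Log z|\to\infty$ while $E$ stays in a compact set. Fix the branch of $q^{1/2}$ by \eqref{eq:olver-sqrt-normalization}. Write $z=re^{i\theta}$ and integrate along the ray. Then
\[
\xi(re^{i\theta})=\int^r e^{i(\theta+\alpha/2)}(\gamma\log t)^{1/2}\bigl(1+O((\log t)^{-1})\bigr)\,dt,
\]
so $\Re\xi$ increases along the ray at rate at least $c_0(\gamma\log r)^{1/2}/2$, by \eqref{eq:olver-phase-condition}. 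Radial rays toward infinity are therefore progressive for the recessive exponential.

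\textbf{Endpoint at infinity: error control.} The majorant along a ray is
\[
\omega_q\lesssim \frac{|q'|^2}{|q|^{5/2}}+\frac{|q''|}{|q|^{3/2}}\lesssim r^{-2}(\log r)^{-3/2},
\]
because $q'=e^{i\alpha}\gamma/z$ and $q''=-e^{i\alpha}\gamma/z^2$. This is integrable to infinity, with tail $O(r^{-1})$. I will then build the recessive solution directly. Set $w=q^{-1/4}e^{-\xi}(1+\eta)$ and solve the Volterra equation for $\eta$ with kernel $\frac1{2}(1-e^{2(\xi(z)-\xi(t))})\,dF_q(t)$, integrating from $z$ to $\infty$ along the ray. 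The exponential factor has modulus at most $1$ by progressivity, so Picard iteration converges. This gives $|\eta|\le e^{\mathcal V_z}-1$, where $\mathcal V_z\to0$ as $|z|\to\infty$. The derivative estimate follows as in the finite case, with the extra relative term $|q'|/|q|^{3/2}=O(r^{-1}(\log r)^{-3/2})$.

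\textbf{Uniformity and the dominant solution.} Uniformity in $E$ over compact sets and in $\gamma$ over compact subintervals of $(0,\infty)$ follows because every bound depends continuously on these parameters. The raywise dominant solution comes from the finite-endpoint theorem, taking the reference point on the ray and reversing the direction of progressivity.

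\textbf{Main obstacle.} I expect the main difficulty to be the branch and progressivity bookkeeping: showing that $\Re(q^{1/2}e^{i\theta})>0$ uniformly for all large $r$, including the $O((\log r)^{-1})$ correction and the effect of $E$. I will handle this by choosing $r_0$ so large that the relative correction is below $c_0/2$.
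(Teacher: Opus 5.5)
\textbf{Overall.} Your route is the paper's. You quote Olver for the finite-endpoint estimates \eqref{eq:olver-u}--\eqref{eq:olver-du}. At infinity you solve the recessive Volterra equation along outward rays; your kernel $\tfrac12(1-e^{2(\xi(z)-\xi(t))})\,dF_q(t)$ is, up to sign, the paper's kernel $\frac{1-e^{-2(\xi(t)-\xi(z))}}{2q(t)^{1/2}}\mathcal E_q(t)\,dt$. Progressivity comes from the phase condition, and the variation is controlled by the $r^{-2}(\log r)^{-3/2}$ majorant.

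\textbf{The gap: holomorphy in $z$.} You solve the Volterra equation ray by ray. What you produce is therefore a family of functions, one per ray, each solving the radial ODE and recessive along its own ray. The endpoint-at-infinity assertion concerns a single solution of $u''=qu$, holomorphic on the open sectorial tail $H(\infty)$, and this is what is used downstream. In Lemma~\ref{lem:sector-LG}, a solution decaying along $\theta=0$ is concluded to be recessive on the whole closed sector, and that is what makes the contour rotation in the proof of Theorem~\ref{thm:D} close. Nothing in your Picard argument shows that the fixed point depends holomorphically on $z$, i.e.\ that the recessive solutions attached to different rays are one analytic function. A priori, the analytic continuation of the $\theta$-ray recessive solution need not decay along a neighbouring ray.

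\textbf{How to close it.} The paper uses path independence. The phase condition gives the angle-uniform coercivity $\Re\xi(\rho e^{i\theta})\ge c\rho(\log\rho)^{1/2}-C$, so the contribution of large connecting arcs $|t|=\rho$ is $o(1)$. For $z$ near $z_0$, the radial path can then be replaced by a short path to $z_0$ followed by the fixed tail $\Gamma_{z_0}$. Hence the Volterra operator maps bounded holomorphic functions to bounded holomorphic functions, and the holomorphic Picard iterates converge by Weierstrass.

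A shorter repair works inside your own setup. Write $t=z\tau$ with $\tau\in[1,\infty)$ and $dF_q(t)=F_q'(t)\,dt$.
\begin{itemize}
\item For fixed $\tau$, the integrand is holomorphic in $z$.
\item Progressivity bounds the exponential factor by $2$.
\item $|F_q'(z\tau)\,z|\le C|z|^{-1}\tau^{-2}(\log(|z|\tau))^{-3/2}$ is an integrable majorant, locally uniform in $z$.
\end{itemize}
Hence every Picard iterate is holomorphic, and so is their uniform limit. Since the limit satisfies the radial equation on every ray and is holomorphic, it satisfies $u''=qu$ on the sector.

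\textbf{Smaller point: the derivative bound.} The derivative bound at infinity does not follow ``as in the finite case'': there you imported Olver's derivative estimate, whereas the infinite endpoint is built by hand. Differentiate your Volterra equation directly; the integrand vanishes at $t=z$, so you get $|\eta'(z)|\le|q(z)|^{1/2}(1+\sup|\eta|)\int_{\Gamma_z}\omega_q|dt|$. Together with the $O(|q'|/|q|^{3/2})$ term from $q^{-1/4}$, this gives \eqref{eq:olver-du} on the tail. It is the paper's identity $\phi_-^2h'=\int_{\Gamma_z}\mathcal E_q\phi_-^2(1+h)\,dt$.
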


\begin{proof}[Reduction in the logarithmic sectors]
Only the reductions after Olver's finite-endpoint theorem are proved here.  Fix a compact set $K_E\subset\C$ for $E$, fix $\gamma>0$, and fix a closed logarithmic sector
\[
   \{re^{i\theta}:r\ge R_0,\ \theta\in[\theta_-,\theta_+]\}
\]
having a positive angular margin inside the principal branch domain of $\Log$ and satisfying \eqref{eq:olver-phase-condition}.  After slightly enlarging the angular interval while preserving a lower bound $c_0/2$, and then increasing $R_0$ if necessary, $q$ in \eqref{eq:olver-log-q} is zero-free on the resulting open sector.  The branches of $q^{1/2}$ and $q^{1/4}$ are fixed as in the statement.  Uniformly for $E\in K_E$ and for sectors with these fixed margins,
\begin{equation}\label{eq:olver-log-estimates}
\begin{gathered}
    |q(z)|\asymp_{\gamma,K_E}\log |z|,
    \qquad
    q'(z)=\frac{e^{i\alpha}\gamma}{z},
    \qquad
    q''(z)=-\frac{e^{i\alpha}\gamma}{z^2},\\
    \frac{|q'(z)|}{|q(z)|^{3/2}}
       \le \frac{C}{|z|(\log |z|)^{3/2}},\qquad
    \frac{|q'(z)|^2}{|q(z)|^{5/2}}
      +\frac{|q''(z)|}{|q(z)|^{3/2}}
       \le \frac{C}{|z|^2(\log |z|)^{3/2}} .
\end{gathered}
\end{equation}
For $z=re^{i\theta}$ choose $\Gamma_z$ to be the outward radial ray $\{\rho e^{i\theta}:\rho\ge r\}$.  After increasing $R_0$, these rays are recessive-progressive, as follows from the phase calculation below, and their error-control variation satisfies
\begin{equation}\label{eq:olver-tail-variation}
    \sup_{\substack{z\text{ in the sector}\\ |z|\ge R}}
       \int_{\Gamma_z}\omega_q(t)|dt|
       \le C\int_R^\infty r^{-2}(\log r)^{-3/2}\,dr,
\end{equation}
which is finite and tends to zero as $R\to\infty$, uniformly in $E\in K_E$ and in the sector within the fixed angular margin.  Only this chosen radial family, whose Euclidean arclength element is $d\rho$, is used in the variation estimate; no corresponding bound is asserted for arbitrary progressive paths of uncontrolled Euclidean length.  These estimates also imply
\[
    \sup_{|z|\ge R}\frac{|q'|}{|q|^{3/2}}\longrightarrow0
    \qquad(R\to\infty),
\]
again uniformly in the same parameters.  Therefore the relative errors in \eqref{eq:olver-u} and \eqref{eq:olver-du} can be made as small as required on tails, and they tend to zero along the chosen progressive approaches to infinity.

We now spell out the endpoint-at-infinity passage in the logarithmic case.  Let $D_R$ be a connected, simply connected open sectorial tail on which the above estimates hold and whose rays are recessive-progressive.  On $D_R$ set
\[
    \phi_-(z)=q(z)^{-1/4}e^{-\xi(z)},\qquad
    \mathcal E_q(z)=\frac{5q'(z)^2}{16q(z)^2}-\frac{q''(z)}{4q(z)} .
\]
Then $\phi_-''=(q+\mathcal E_q)\phi_-$.  Writing $u=\phi_-(1+h)$ and imposing the recessive normalization $h(\infty)=0$ gives
\[
    (\phi_-^2 h')'=-\mathcal E_q\phi_-^2(1+h).
\]
Equivalently, initially taking $\Gamma_z$ to be the chosen outward radial ray from $z$ to infinity in $D_R$,
\begin{equation}\label{eq:olver-volterra-tail}
    h(z)=-\int_{\Gamma_z}
      \frac{1-e^{-2(\xi(t)-\xi(z))}}{2q(t)^{1/2}}
      \mathcal E_q(t)(1+h(t))\,dt .
\end{equation}
The sign in \eqref{eq:olver-volterra-tail} is forced by the displayed differential identity.  Conversely, any bounded solution of \eqref{eq:olver-volterra-tail} gives an exact solution $u''=qu$ with the stated normalization.  The integral is path-independent within $D_R$ among paths for which the improper integral is defined.  Indeed, the integrand is holomorphic in $t$ for fixed $z$, and it remains only to justify the limit at the endpoint at infinity when two truncated progressive paths are connected by a large circular arc.  The phase hypothesis now gives the required uniform coercivity rather than merely raywise progressivity: uniformly on the angular interval,
\[
\begin{aligned}
 \frac{d}{d\rho}\Re\xi(\rho e^{i\theta})
   &=\Re\!\left(e^{i\theta}q(\rho e^{i\theta})^{1/2}\right)\\
   &=(\gamma\log\rho)^{1/2}
      \left(\cos(\theta+\alpha/2)
             +O((\log\rho)^{-1})\right).
\end{aligned}
\]
Thus, after increasing $R$, this derivative is at least $\frac{c_0}{4}(\gamma\log\rho)^{1/2}$.  Integration along the ray, with the additive normalization of $\xi$ absorbed into the constant, shows that there are constants $c,C>0$ such that
\[
    \Re \xi(\rho e^{i\theta})\ge c\rho(\log\rho)^{1/2}-C
\]
uniformly for all angles of the tail.  Hence, for fixed $z$, the factor $\exp(-2(\xi(t)-\xi(z)))$ is uniformly bounded, and in fact exponentially small, on the connecting arcs $|t|=\rho$ as $\rho\to\infty$.  Since
\[
    \frac{|\mathcal E_q(t)|}{|q(t)|^{1/2}}
       \le C |t|^{-2}(\log |t|)^{-3/2},
\]
the contribution of such an arc is
\[
    O\bigl(\rho^{-1}(\log\rho)^{-3/2}\bigr)+
    O\bigl(e^{-c\rho(\log\rho)^{1/2}}\bigr)=o(1),
\]
with constants depending on the fixed initial point $z$ and the sector.
Thus the improper Volterra integral is independent of the choice among
progressive paths for which it is defined.  We record explicitly why the
resulting function is holomorphic in the initial point, despite the radial
path depending on that point.  Fix $z_0\in D_R$ and a disc
$D_0\Subset D_R$ centered at $z_0$.  For $z\in D_0$, path independence
allows us to replace the radial path $\Gamma_z$ by the concatenation of a
path in $D_0$ from $z$ to $z_0$ and the fixed path $\Gamma_{z_0}$.  For a
bounded holomorphic input $h$, the integral over the finite first piece is
holomorphic in $z$ by the primitive theorem, since its kernel is jointly
holomorphic in $(z,t)$.  On the fixed tail $\Gamma_{z_0}$, the estimates
above give a locally uniform integrable majorant for $z\in D_0$; hence its
truncations are holomorphic in $z$ and converge locally uniformly as the
outer endpoint tends to infinity.  The Weierstrass theorem therefore shows
that the Volterra operator maps bounded holomorphic functions on $D_R$ to
bounded holomorphic functions.  Picard iterates starting from $0$ are
holomorphic, and their norm convergence gives a holomorphic fixed point.

On each chosen radial path, $\Re(\xi(t)-\xi(z))\ge0$ for $t$ farther out than $z$, so the kernel in \eqref{eq:olver-volterra-tail} is bounded by a constant times $|q(t)|^{-1/2}$.  Hence the operator norm on $H^\infty(D_R)$ is bounded by $C\mathcal V_R$, where
\[
    \mathcal V_R:=\sup_{z\in D_R}\int_{\Gamma_z}\omega_q(t)|dt|\longrightarrow0.
\]
For $R$ large this norm is $<1/2$, and the contraction equation has a unique bounded holomorphic solution with
\[
    \|h\|_{H^\infty(D_R)}\le C\mathcal V_R.
\]
The normalization is
\[
    u(z)q(z)^{1/4}e^{\xi(z)}=1+h(z)\longrightarrow1
    \qquad (z\to\infty\text{ along the chosen radial tails}),
\]
uniformly on compact subsectors of the tail.  On compact subsectors below the tail radius, the Cauchy data at the entrance circle determine the continuation uniquely; standard continuous dependence for the ODE gives $C^1_{\mathrm{loc}}$ convergence and holomorphy there.  Thus the construction produces one sectorial recessive solution, not merely a raywise family.

The derivative estimate is obtained from the first-order identity above.  Since
\[
    \phi_-^2(z)h'(z)=\int_{\Gamma_z}
        \mathcal E_q(t)\phi_-^2(t)(1+h(t))\,dt,
\]
the same progressive bound gives
\[
    |h'(z)|\le C|q(z)|^{1/2}\mathcal V_R
    \qquad (z\in D_R),
\]
after increasing $R$ if necessary.  Combining this with
\[
    \frac{\phi_-'(z)}{-q(z)^{1/2}\phi_-(z)}=1+O\!\left(\frac{|q'|}{|q|^{3/2}}\right)
\]
gives the differentiated estimate \eqref{eq:olver-du}.  The constants are uniform for $E$ in the prescribed compact set and for $\gamma$ in any fixed compact subinterval of $(0,\infty)$, because all lower bounds for $|q|$, all tail variations, and all progressive-path constants are uniform after increasing the lower radius.

For the dominant solution later used on a fixed ray no sector-wide dominant
branch is required or asserted.  Fix a ray $z=re^{i\theta}$, $r\ge y_1$,
along which $\Re\xi$ is increasing, and set $z_1=y_1e^{i\theta}$.  With
$\phi_+(z)=q(z)^{-1/4}e^{+\xi(z)}$, solve on this one ray the initial
Volterra equation
\begin{equation}\label{eq:olver-ray-dominant}
    h_-(z)=-\int_{z_1}^z
      \frac{1-e^{-2(\xi(z)-\xi(t))}}{2q(t)^{1/2}}
      \mathcal E_q(t)(1+h_-(t))\,dt,
\end{equation}
where the integral is along the ray.  This is the same
variation-of-constants identity for $u_-:=\phi_+(1+h_-)$ with
$h_-(z_1)=h_-'(z_1)=0$.  The kernel in
\eqref{eq:olver-ray-dominant} is bounded by $C|q(t)|^{-1/2}$ on the
reversed-progressive ray, and the ray variation is bounded by
\eqref{eq:olver-tail-variation}; for $y_1$ large the Volterra operator is a
contraction on bounded functions of $r\in[y_1,\infty)$.  Hence
\[
    \sup_{r\ge y_1}|h_-(re^{i\theta})|+
    \sup_{r\ge y_1}\frac{|h_-'(re^{i\theta})|}{|q(re^{i\theta})|^{1/2}}
       \le C\mathcal V_{y_1},
\]
so the relative function and derivative errors can be made arbitrarily small on the whole ray.  The Cauchy data of this ray solution at any point of the ray extend holomorphically to a small sector by the local ODE theorem.  This raywise construction is exactly what is used in Lemma~\ref{lem:sector-LG}.

Finally, the compatibility needed below is compatibility of the \emph{normalized} recessive construction, not uniqueness from one scalar value.  Suppose two constructions are defined on sectorial progressive domains with a common tail, use the same branches of $q^{1/2}$ and $q^{1/4}$ and the same additive normalization of $\xi$, and both satisfy
\[
    u(z)q(z)^{1/4}e^{\xi(z)}\longrightarrow1
\]
along the common progressive tail.  After restricting to a sufficiently far-out common tail, both are bounded fixed points of the same path-independent contraction equation \eqref{eq:olver-volterra-tail}; hence they coincide there.  Their Cauchy data consequently agree at every point of that tail, and ordinary Cauchy uniqueness for the second-order equation, followed by analytic continuation, propagates the equality throughout the connected component of their common domain.  More generally, equality of both value and derivative at one point suffices; equality of the scalar values alone does not.  This is precisely the uniqueness and compatibility used below.
\end{proof}

\begin{lemma}[Rank-one endpoint comparison for half-line Schr\"odinger extensions]\label{lem:rank-one-extension-input}
Let
\[
    \tau=-\frac{d^2}{dx^2}+V(x)
\]
on $(0,\infty)$, where $V\in L^1_{\mathrm{loc}}(0,\infty;\R)$ and $V\in L^1(0,1)$, so that $0$ is a regular endpoint, and suppose that $+\infty$ is limit point.  Let $H^N$ and $H^D$ be the self-adjoint realizations with boundary conditions $u'(0)=0$ and $u(0)=0$, respectively.  Assume that their standard quadratic forms are closed and bounded below, that
\[
    \mathcal Q(H^D)
       =\{u\in\mathcal Q(H^N):u(0)=0\}
       \subset\mathcal Q(H^N),
\]
and that both operators have compact resolvent.  Then for every real
\[
    \mu<\min\{\inf\spec H^N,\inf\spec H^D\}
\]
the resolvent difference
\[
    (H^N-\mu)^{-1}-(H^D-\mu)^{-1}
\]
has rank one.  If the eigenvalues are listed increasingly with multiplicity, then
\[
    E^N_n\le E^D_n\le E^N_{n+1},\qquad n\ge0,
\]
and equivalently the two eigenvalue counting functions differ by at most one at every real energy.  Strictness is not asserted here; in the application it is obtained separately from spectral disjointness.  This is the regular-left, limit-point-right Schr\"odinger specialization of Krein's resolvent formula for separated self-adjoint extensions; see, for instance, \cite[Sec.~7, pp.~110--125]{Weidmann}.
\end{lemma}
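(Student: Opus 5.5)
The plan is to prove both conclusions from the variational structure, using the codimension-one inclusion of form domains, and then to recover the rank-one statement from the limit-point alternative at $+\infty$.

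\emph{Step 1 (rank one).} Fix real $\mu$ below both spectra and $f\in L^2(0,\infty)$. Put
\[
w=(H^N-\mu)^{-1}f-(H^D-\mu)^{-1}f .
\]
Both resolvent outputs solve $(\tau-\mu)u=f$ distributionally on $(0,\infty)$. Hence $(\tau-\mu)w=0$, and $w\in L^2$ near $+\infty$. Since $+\infty$ is limit point, the solutions of $(\tau-\mu)u=0$ that are square integrable near $+\infty$ form a one-dimensional space, spanned by a Weyl solution $\psi_\mu$. Therefore $w=c(f)\psi_\mu$, where $c(f)$ is a linear functional, so the difference has rank at most one.

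The rank is exactly one. If $c$ vanished identically, the two resolvents would coincide, and so $H^N=H^D$. That is impossible: $\psi_\mu$ itself, cut off near infinity so that it lies in both maximal domains near $0$, shows that the two boundary conditions cut out different domains. Only the rank bound is needed below, so this point is cosmetic.

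\emph{Step 2 (codimension one).} Show that the trace map $u\mapsto u(0)$ is a continuous linear functional on $\mathcal Q(H^N)$ equipped with its form norm. Because $V\in L^1(0,1)$, the potential is infinitesimally form-bounded relative to $\int_0^1|u'|^2$ on $(0,1)$, using the one-dimensional bound $\|u\|_{L^\infty(0,1)}^2\le\varepsilon\|u'\|^2_{L^2(0,1)}+C_\varepsilon\|u\|^2_{L^2(0,1)}$. Consequently the form norm controls $\|u\|_{H^1(0,1)}$, and the trace is bounded on $H^1(0,1)$.

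By hypothesis $\mathcal Q(H^D)$ is the kernel of this functional. It is a closed subspace of codimension at most one, and on it the two forms agree.

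\emph{Step 3 (interlacing).} Both operators have compact resolvent, so by min--max $E^\sharp_n$ is the $n$-th min--max value of the corresponding closed form over $(n+1)$-dimensional subspaces of $\mathcal Q(H^\sharp)$.

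The inequality $E^N_n\le E^D_n$ is immediate. Every trial subspace for $H^D$ is a trial subspace for $H^N$, and the two forms coincide there.

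For $E^D_n\le E^N_{n+1}$, take any $(n+2)$-dimensional subspace $F\subset\mathcal Q(H^N)$. Then $F\cap\ker(\text{trace})$ has dimension at least $n+1$, lies in $\mathcal Q(H^D)$, and its Rayleigh quotients are bounded by $\sup_F$. Taking the infimum over $F$ gives the bound. The statement that the two counting functions differ by at most one at every real energy is then a restatement of these two inequalities.

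The only genuinely analytic point, and the step I expect to be the main obstacle, is Step 2: the continuity of the trace in the form norm under the weak hypothesis $V\in L^1(0,1)$. Everything else is abstract min--max, plus the limit-point uniqueness of $L^2$ solutions, which is precisely the cited Weyl--Krein framework.
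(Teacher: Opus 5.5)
Your argument is correct, and it reaches the interlacing by a different route from the paper. For the rank bound the mechanism is the same: a difference of resolvent outputs solves $(\tau-\mu)w=0$ and is square-integrable at $+\infty$, so the limit-point property places it in the span of $\psi_\mu$. You apply this directly to $w=(H^N-\mu)^{-1}f-(H^D-\mu)^{-1}f$. The paper instead first writes the separated Green kernels $c_\sharp u_\sharp(x_<)\psi_\mu(x_>)$ and uses kernel symmetry to reach $c\,\psi_\mu(x)\psi_\mu(y)$; your version is shorter and needs no Wronskian bookkeeping.

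For $E^D_n\le E^N_{n+1}$ the two routes genuinely differ. The paper uses the rank-one resolvent difference itself, via rank-one interlacing of the compact positive resolvents transported by $E\mapsto(E-\mu)^{-1}$. You use only the codimension-one form-domain inclusion and min--max. Your argument is independent of the ODE and of the sign of the rank-one perturbation: it holds for any closed semibounded form restricted to the kernel of a linear functional. The paper's route keeps the Krein-formula picture, in which the counting-function bound is read off directly from the rank.

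Two corrections. First, your Step~2 is unnecessary. For a finite-dimensional $F\subset\mathcal Q(H^N)$, the kernel in $F$ of any linear functional has codimension at most one in $F$, whether or not the functional is continuous. The hypothesis already presupposes that $u\mapsto u(0)$ is defined on $\mathcal Q(H^N)$ with kernel $\mathcal Q(H^D)$. So the step you flag as the main obstacle is harmless but not needed.

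Second, and more substantively, your witness for $H^N\ne H^D$ fails. Let $u_N,u_D$ be the solutions with Neumann and Dirichlet Cauchy data at $0$. Then $\psi_\mu$ has nonzero Wronskian with both, since otherwise it would be an eigenfunction of $H^N$ or $H^D$ at $\mu$. So $\psi_\mu$ satisfies neither boundary condition and cannot separate the two domains. Use a Dirichlet eigenfunction $\phi$ instead: $\phi(0)=0$, and $\phi'(0)\ne0$ by Cauchy uniqueness at the regular endpoint, so $\phi\in\mathcal D(H^D)\setminus\mathcal D(H^N)$. The lemma asserts rank exactly one, so this sentence is not merely cosmetic, though the fix is one line and the paper treats the point just as briefly.
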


\begin{proof}
Fix $\mu$ below both spectra.  Let $\psi_\mu$ be a nonzero real solution of
\[
    -u''+(V-\mu)u=0
\]
which is square-integrable at $+\infty$.  The limit-point hypothesis makes its span the unique square-integrable solution space there.  Let $u_N,u_D$ be the real solutions normalized by
\[
    u_N(0)=1,\quad u_N'(0)=0,
    \qquad
    u_D(0)=0,\quad u_D'(0)=1.
\]
For $\sharp\in\{N,D\}$, the Wronskian $W(u_\sharp,\psi_\mu)$ is nonzero: otherwise a nonzero solution would satisfy the $\sharp$ boundary condition and be square-integrable at infinity, contradicting $\mu\in\rho(H^\sharp)$.  The Green kernel of $(H^\sharp-\mu)^{-1}$ therefore has the standard separated form
\[
    G^\sharp_\mu(x,y)
      =c_\sharp\,u_\sharp(x_<)\psi_\mu(x_>),
    \qquad
    x_<=\min(x,y),\quad x_>=\max(x,y),
\]
where the sign of the reciprocal-Wronskian constant $c_\sharp$ is fixed by the derivative jump for $-d^2/dx^2$.

The kernel $K_\mu:=G^N_\mu-G^D_\mu$ has no derivative jump across $x=y$ and hence solves the homogeneous equation distributionally in each variable.  For fixed $y$, it is square-integrable at $+\infty$ as a function of $x$, so the limit-point property gives $K_\mu(x,y)=c(y)\psi_\mu(x)$.  Self-adjoint symmetry of the real Green kernels then yields
\[
    K_\mu(x,y)=c\,\psi_\mu(x)\psi_\mu(y)
\]
for a real constant $c$.  Thus the resolvent difference has rank at most one.  The coefficient cannot vanish: otherwise the two resolvents, and therefore the two self-adjoint operators, would coincide, contradicting the distinct Neumann and Dirichlet endpoint extensions.  Hence the rank is exactly one.

The form-domain inclusion gives $E^N_n\le E^D_n$ by the min--max principle.  Since the two positive compact resolvents differ by rank one, their min--max values interlace; under the strictly decreasing map $E\mapsto(E-\mu)^{-1}$ this gives $E^D_n\le E^N_{n+1}$.  The same interlacing is equivalent to the stated counting-function bound.
\end{proof}

\begin{lemma}[Standard semigroup and positivity inputs]\label{lem:standard-tools}
The following standard facts are used below in precisely these forms.
\begin{enumerate}[label=\textup{(\roman*)},leftmargin=*]
\item \textup{(Gearhart--Pr\"uss--Huang.)}  For a bounded $C_0$-semigroup on a Hilbert space with generator $A$, exponential stability is equivalent to $i\R\subset\rho(A)$ together with $\sup_{s\in\R}\|(is-A)^{-1}\|<\infty$; when the resolvent is compact, excluding point spectrum on a bounded imaginary segment and proving a high-frequency bound give the bounded-frequency part by continuity of the resolvent.  This is the Gearhart--Pr\"uss--Huang theorem \cite{Gearhart,Pruss,Huang}.
\item \textup{(Quantitative transverse input.)}  After the exact block
decomposition of Lemma~\ref{lem:blocks}, the only quantitative spectral
input from $Y$ is the Weyl-law consequence recorded above: all
sufficiently large dyadic frequency blocks contain a distinct positive
transverse eigenfrequency.
\item \textup{(Compact-circle positivity.)}  Let $V\in L^\infty(\T;\R)$ and let $H=-\partial_x^2+V$ be the form realization on $L^2(\T)$.  Then $H$ is a bounded form perturbation of the compact-resolvent Laplacian; hence $H$ has compact resolvent and $e^{-tH}$ is compact for every $t>0$.  The Feynman--Kac formula is first used for bounded nonnegative Borel data, with Brownian motion normalized to have generator $\partial_x^2$, and the strict positivity of the circle heat kernel extends positivity improvement to every nonzero $f\in L^2_+$.  Compact positivity improvement gives, by the Krein--Rutman/Perron--Frobenius theorem, a simple lowest eigenvalue with a strictly positive eigenfunction.
\item \textup{(Half-line Perron--Sturm.)}  For a real, locally integrable Sturm--Liouville expression on $(0,\infty)$ with a regular separated endpoint at $0$, limit-point endpoint at $+\infty$, semibounded separated form, and compact resolvent, the lowest eigenvalue in each separated self-adjoint realization is simple and the corresponding eigenfunction has a representative strictly positive on $(0,\infty)$ after multiplying by a scalar.  This is the usual one-dimensional Perron--Sturm/oscillation consequence \cite[Secs.~13--14, pp.~194--226]{Weidmann}; in the logarithmic case the endpoint and form-domain details are supplied by Lemma~\ref{lem:log-form-realization} before this item is invoked.
\end{enumerate}
\end{lemma}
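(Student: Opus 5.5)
The lemma collects four standard inputs, so the plan is to reduce each item to its cited source and check only the hypotheses and the small bridging arguments. For \textup{(i)}, apply the Gearhart--Pr\"uss--Huang theorem \cite{Gearhart,Pruss,Huang} directly to the bounded semigroup on the Hilbert space. For the refinement, suppose $(is-A)^{-1}$ is compact at one point of $\rho(A)$. Then $\spec(A)$ consists of isolated eigenvalues, so excluding point spectrum on a segment $i[-s_1,s_1]$ puts that closed segment in $\rho(A)$. Since the resolvent is norm-continuous on the open set $\rho(A)$, it is bounded on the compact segment. Combined with the assumed bound for $|s|\ge s_1$, this gives $\sup_{s\in\R}\|(is-A)^{-1}\|<\infty$.

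Item \textup{(ii)} has already been proved in \S\ref{subsec:notation}. The sharp Weyl asymptotic $N_Y(\Lambda)\sim c_Y\Lambda^{d/2}$ gives
\[
N_Y\bigl(\tfrac94R^2\bigr)-N_Y(R^2)\sim c_Y\bigl((\tfrac32)^d-1\bigr)R^d>0 .
\]
So some eigenvalue, and hence some distinct frequency, lies in $(R,\tfrac32R]$ for all large $R$. Taking $R=2^k$ gives dyadic syndeticity. I would only add the remark that nothing else about $Y$ is used after Lemma~\ref{lem:blocks}; this is a bookkeeping statement rather than a mathematical one.

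For \textup{(iii)}, the form $\int|u'|^2+\int V|u|^2$ on $H^1(\T)$ differs from the Laplacian form by a bounded form. By KLMN it is closed and semibounded, and by the second resolvent identity $(H+M)^{-1}$ is a bounded perturbation composed with the compact $(-\partial_x^2+1)^{-1}$. Hence $H$ has compact resolvent, and $e^{-tH}$ is compact for every $t>0$ by the spectral theorem. For positivity, first take bounded nonnegative Borel $f$ and write
\[
(e^{-tH}f)(x)=\mathbb E_x\Bigl[e^{-\int_0^tV(B_\tau)\,d\tau}f(B_t)\Bigr],
\]
with Brownian motion generated by $\partial_x^2$. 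The exponential weight lies between $e^{\mp t\|V\|_\infty}$, so $e^{-tH}f\ge e^{-t\|V\|_\infty}e^{t\partial_x^2}f$. The circle heat kernel is strictly positive, so the right side is strictly positive whenever $f\ge0$ is nonzero. Monotone approximation extends this to all nonzero $f\in L^2_+$. A compact, positivity-improving operator then has, by Krein--Rutman/Perron--Frobenius, a simple top eigenvalue $e^{-t\lambda_0}$ with a strictly positive eigenvector. This eigenvector is the simple lowest eigenfunction of $H$.

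For \textup{(iv)}, I would argue via Sturm--Liouville theory \cite{Weidmann}. Because $0$ is a regular endpoint, a separated eigenfunction vanishing together with its derivative at $0$ is zero. Because $+\infty$ is limit point, the $L^2$ solution space at infinity is one-dimensional. Together these make every eigenvalue geometrically simple: two eigenfunctions satisfying the same condition at $0$ are proportional. Since the resolvent is self-adjoint and compact, this gives algebraic simplicity too. Positivity follows because a real ground state $\phi$ gives $|\phi|$ the same form value, since $|\nabla|\phi||=|\nabla\phi|$ a.e. and $|\phi|$ still satisfies the separated condition. So $|\phi|$ is also a minimizer, hence an eigenfunction. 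By ODE uniqueness it cannot vanish at an interior point where its derivative also vanishes, so it has no interior zeros and $\phi$ has constant sign.

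The only genuinely delicate point is verifying the regular-endpoint, limit-point, form-domain and compact-resolvent hypotheses for the logarithmic potential. As the statement says, that is deferred to Lemma~\ref{lem:log-form-realization}, so here it is simply assumed. I expect the main effort in this lemma to be stating \textup{(iii)} carefully, because the potential is only measurable and positivity improvement must be passed from bounded to $L^2$ data.
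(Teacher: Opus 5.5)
Your proposal is correct and follows the paper's own treatment, which for this lemma consists of the inline sketches in the statement. For (i) that is Gearhart--Pr\"uss--Huang plus discreteness of the spectrum and continuity of the resolvent on a compact segment; for (ii) the Weyl-law count of \S\ref{subsec:notation}; and for (iii) the chain you reproduce: bounded form perturbation, the Feynman--Kac lower bound via the strictly positive circle heat kernel, extension to $L^2_+$, and Krein--Rutman.

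For (iv) the paper only cites Weidmann's oscillation theory. Your variational argument is a valid self-contained substitute: $|\phi|$ is a second minimizer, hence a nonnegative eigenfunction, and an interior zero would be a double zero excluded by ODE uniqueness. One remark: simplicity already follows from the regular endpoint alone, since the separated condition cuts the two-dimensional Cauchy data at $0$ down to a line, so the limit-point property is not needed for that step.
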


\begin{lemma}[Compact-resolvent and Riesz-projection facts]\label{lem:riesz-tools}
Let $A$ be a densely defined closed operator on a complex Hilbert space $H$.
\begin{enumerate}[label=\textup{(\roman*)},leftmargin=*]
\item If $A$ has compact resolvent, then every finite point of $\spec(A)$ is an isolated eigenvalue of finite algebraic multiplicity, and finite spectral points have no finite accumulation point.  For a positively oriented rectifiable Jordan curve $\Gamma\subset\rho(A)$, or a finite union of such curves with pairwise disjoint enclosed regions, the Riesz projection
\[
    P_\Gamma(A)=\frac{1}{2\pi i}\int_\Gamma (z-A)^{-1}\,dz
\]
has finite rank; its rank is the total algebraic multiplicity of the eigenvalues enclosed by $\Gamma$.  No completeness of the total root-vector system is asserted.
\item One has $\rho(A^*)=\overline{\rho(A)}$ and
\[
    P_\Gamma(A)^*=P_{\overline\Gamma}(A^*),
\]
where $\overline\Gamma$ is given the positive orientation around the conjugate enclosed region.  Consequently an isolated eigenvalue $\lambda$ of $A$ and the isolated eigenvalue $\bar\lambda$ of $A^*$ have the same algebraic multiplicity.
\item Let $A_n$ be densely defined closed operators on the same Hilbert space, and suppose that $\Gamma\subset\rho(A)\cap\rho(A_n)$ for all sufficiently large $n$ and
\[
    \sup_{z\in\Gamma}
       \bigl\|(z-A_n)^{-1}-(z-A)^{-1}\bigr\|\longrightarrow0.
\]
Then $P_\Gamma(A_n)\to P_\Gamma(A)$ in operator norm.  If $P_\Gamma(A)$ has finite rank, then for all sufficiently large $n$ the two projections have the same rank; in particular, total algebraic multiplicity inside the contour is stable under this contour-uniform norm-resolvent convergence.
\end{enumerate}
These facts are the standard compact-resolvent and Riesz-projection results
in \cite[Ch.~III, \S6.4--8, pp.~178--188; Ch.~IV, \S3.3--5,
pp.~210--214]{Kato}.
\end{lemma}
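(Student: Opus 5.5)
This lemma collects standard Kato facts, so the plan is to give short derivations that reduce each item to the resolvent identity and contour integration.

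For (i), we fix $z_0\in\rho(A)$ and pass to the compact operator $K=(z_0-A)^{-1}$. The spectral mapping $\lambda\mapsto(z_0-\lambda)^{-1}$ carries $\spec(A)$ bijectively onto $\spec(K)\setminus\{0\}$. The Riesz--Schauder theory of compact operators then gives the following:
\begin{itemize}
\item nonzero spectral points of $K$ are isolated eigenvalues of finite algebraic multiplicity;
\item these points can accumulate only at $0$, which corresponds to $\lambda=\infty$.
\end{itemize}
The Riesz projection of $A$ along $\Gamma$ coincides with the Riesz projection of $K$ along the image curve. This follows from the change of variables $\mu=(z_0-z)^{-1}$ in the contour integral together with the identity
\[
(z-A)^{-1}=\mu K(\mu-K)^{-1}\cdot(\text{scalar factors}).
\]
Hence the projection has finite rank, equal to the total algebraic multiplicity, namely the dimension of the generalized eigenspaces. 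For a finite union of curves, we sum the disjoint projections.

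For (ii), the identity $((z-A)^{-1})^*=(\bar z-A^*)^{-1}$ holds for closed densely defined $A$. It gives $\rho(A^*)=\overline{\rho(A)}$. Taking adjoints inside the Bochner integral conjugates both $z$ and $dz$, and conjugation reverses orientation. Together with the factor $\overline{1/(2\pi i)}=-1/(2\pi i)$, this yields $P_{\overline\Gamma}(A^*)$ with positive orientation. A projection and its adjoint have equal rank, so the multiplicities of $\lambda$ and $\bar\lambda$ agree.

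For (iii), we estimate the norm directly:
\[
\|P_\Gamma(A_n)-P_\Gamma(A)\|\le\frac{|\Gamma|}{2\pi}\sup_{z\in\Gamma}\bigl\|(z-A_n)^{-1}-(z-A)^{-1}\bigr\|\to0.
\]
For rank stability, we use the standard fact that two projections $P,Q$ with $\|P-Q\|<1$ have equal rank. One direction is that $P$ restricted to $\Ran Q$ is injective, so $\operatorname{rank}Q\le\operatorname{rank}P$; the other direction holds by symmetry. This gives equality of the finite ranks, and combined with (i) it gives stability of the total algebraic multiplicity.

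The only delicate points are the orientation bookkeeping in (ii) and the change of variables in (i). Both are routine, so we simply cite \cite{Kato} for the details.
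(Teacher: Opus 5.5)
Your proposal is correct and follows essentially the same route as the paper. For (i) you reduce to a compact resolvent; for (ii) you use the adjoint identity $((z-A)^{-1})^*=(\bar z-A^*)^{-1}$, where the two sign changes cancel; and for (iii) you integrate the uniform resolvent estimate and use injectivity of each projection restricted to the other's range.

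One detail in (i) should be made explicit: take $z_0\in\rho(A)$ outside the closed region enclosed by $\Gamma$, which is possible since the spectrum is discrete. Then, after $dz=\mu^{-2}d\mu$, the extra term in $\mu^{-1}K(\mu-K)^{-1}=(\mu-K)^{-1}-\mu^{-1}I$ integrates to zero, and the image curve is positively oriented.
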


\begin{proof}
For \textup{(i)}, compactness of one resolvent converts the local spectral problem for $A$ into that of a compact bounded operator by the resolvent identity; the nonzero spectral points of a compact operator are isolated eigenvalues of finite algebraic multiplicity, and the contour integral is the corresponding finite-rank spectral projection.  For \textup{(ii)},
\[
    \bigl((z-A)^{-1}\bigr)^*=(\bar z-A^*)^{-1},
\]
and conjugation reverses the contour orientation, yielding the displayed projection identity and equality of ranks.  For \textup{(iii)}, integration of the assumed uniform resolvent convergence gives norm convergence of the projections.  Once $\|P_\Gamma(A_n)-P_\Gamma(A)\|<1$, restriction of either projection to the range of the other is injective; hence the two finite-dimensional ranges have the same dimension.
\end{proof}

\subsection{Generator and block reduction}

\begin{lemma}[Well-posedness and block structure]\label{lem:blocks}
\leavevmode
\begin{enumerate}[label=\textup{(\roman*)}]
\item $\cA$ is maximally dissipative on $\cH$, generates a contraction semigroup, and has compact resolvent.
\item The transverse decomposition $L^2(X)=\bigoplus_{j\ge0}L^2(\T)\otimes\C\phi_j$, where $-\Delta_Y\phi_j=\lambda_j\phi_j$, reduces $\cA$; here $(\phi_j)_{j\ge0}$ is a fixed orthonormal eigenbasis of $-\Delta_Y$, repeated eigenvalues producing repeated identical scalar blocks, and all block statements are per basis element --- global multiplicities thus inherit the transverse multiplicity --- while dyadic syndeticity always refers to the set of \emph{distinct} positive frequencies.  Write $\cA_j$ for the block generators.  For $j\ge1$ the block energy space is
\[
    \cH_j=H^1(\T)\times L^2(\T),\qquad
    \|(u,v)\|_{\cH_j}^2=\|u'\|_2^2+\lambda_j\|u\|_2^2+\|v\|_2^2,
\]
the first-component norm being inherited from $\dot H^1(X)$, while the $j=0$ block carries $\cH_0=(H^1(\T)/\C)\times L^2(\T)$ with the quotient first-component norm.  On these spaces
\[
    \cA_j(u,v)=(v,u''-\lambda_j u-av),\qquad j\ge1,
\]
with domain $H^2(\T)\times H^1(\T)$, and the same formula modulo constants on $(H^2(\T)/\C)\times H^1(\T)$ for $j=0$.  A number $z\in\C\setminus\{0\}$ is an eigenvalue of $\cA_j$ if and only if the pencil equation
\begin{equation}\label{eq:pencil-block}
    -v''+(\lambda_j+za+z^2)v=0
\end{equation}
has a nontrivial solution $v\in H^2(\T)$; at $z=0$ no division by the first block equation is made, and the equation $\cA_jU=0$ is understood directly on the corresponding quotient block.  We shall also use the standard product Sobolev characterization: if $u=\sum_j u_j\otimes\phi_j$, then
\begin{equation}\label{eq:product-H2-block}
    \|u\|_{H^2(X)}^2\asymp \sum_{j\ge0}\Bigl(\|u_j''\|_2^2+(1+\lambda_j)\|u_j'\|_2^2+(1+\lambda_j)^2\|u_j\|_2^2\Bigr),
\end{equation}
and analogously $\|u\|_{H^1(X)}^2\asymp\sum_j(\|u_j'\|_2^2+(1+\lambda_j)\|u_j\|_2^2)$.
\item For $s\in\R\setminus\{0\}$, with $P_X(is)$ realized as the closed operator $H^2(X)\to L^2(X)$ and each $Q_{s,\mu}$ as the closed periodic operator $H^2(\T)\to L^2(\T)$, $is\in\rho(\cA)$ if and only if $P_X(is)$ is invertible on $L^2(X)$, and
\begin{equation}\label{eq:P-block}
    \|P_X(is)^{-1}\|_{L^2\to L^2}=\sup_{j\ge0}\|Q_{s,\mu_j}^{-1}\|_{L^2(\T)\to L^2(\T)},
    \qquad \mu_j=s^2-\lambda_j .
\end{equation}
The displayed norms are the $L^2$ operator norms of the inverses.
\item More generally, for $z\in\C$, the reduced direct sum satisfies
\begin{equation}\label{eq:direct-sum-resolvent}
    z\in\rho(\cA)\quad\Longleftrightarrow\quad
    z\in\rho(\cA_j)\ \text{for every }j\ \text{and}\ \sup_j\|(z-\cA_j)^{-1}\|_{\cH_j\to\cH_j}<\infty .
\end{equation}
The norms in \eqref{eq:direct-sum-resolvent} are the operator norms on the block energy spaces just defined.  The resolvent criterion is separate from the finite-point spectral assertion: since $\cA$ has compact resolvent, every finite spectral point is an eigenvalue; decomposing an eigenvector into transverse blocks shows that at least one block has the same spectral point.  Its algebraic multiplicity is the sum of the algebraic multiplicities over the finitely many transverse basis blocks producing the same $z$; in particular repeated transverse eigenvalues add identical scalar-block multiplicities.  No classification of accidental coincidences between different transverse frequencies is asserted.
\end{enumerate}
\end{lemma}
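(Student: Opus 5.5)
The proof proceeds item by item, using the tensor-sum structure $-\Delta_X=-\partial_x^2\otimes I+I\otimes(-\Delta_Y)$ and the fact that $a$ depends only on $x$.

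For \textup{(i)}, dissipativity follows from the energy identity
\[
\Re\langle\cA U,U\rangle_\cH=\Re\bigl(\langle\nabla v,\nabla u\rangle+\langle\Delta u-av,v\rangle\bigr)=-\int_X a|v|^2\le0,
\]
which is representative-independent on $\dot H^1\times L^2$. For maximality, solve $(\lambda-\cA)U=F$ with $\lambda>0$. Eliminating $v=\lambda u-f$ gives the coercive problem $\lambda^2u-\Delta u+\lambda au=g$. On the quotient one must check the zero mode: pairing with constants determines the mean of $u$, since $\lambda^2+\lambda a>0$ on average. Lax--Milgram then gives $u\in H^1$, and elliptic regularity gives $u\in H^2$ because $au\in L^2$. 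Lumer--Phillips yields the contraction semigroup. The resolvent is compact because it maps $\cH$ into $\dot H^2\times H^1$, and this space embeds compactly into $\dot H^1\times L^2$ by Rellich on the compact manifold $X$.

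For \textup{(ii)}, the closed subspaces $L^2(\T)\otimes\phi_j$, and their energy analogues, are invariant under $\cA$ and under its resolvent, because both $\Delta_X$ and multiplication by $a(x)$ commute with the projections $\Pi_j$ onto $\phi_j$. Orthogonality in $\cH$ comes from
\[
\langle\nabla(u\phi_j),\nabla(w\phi_k)\rangle=\delta_{jk}\bigl(\langle u',w'\rangle+\lambda_j\langle u,w\rangle\bigr),
\]
which produces exactly the stated block norms; for $j=0$ the constants are quotiented out. The product Sobolev equivalence \eqref{eq:product-H2-block} follows from elliptic regularity of $1-\Delta_X$ and the Fourier computation $\|(1-\Delta_X)u\|^2=\sum_j\|(1+\lambda_j-\partial_x^2)u_j\|^2$, expanded by integration by parts on $\T$. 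For the pencil equivalence at $z\neq0$, the first block equation $zu=v$ gives $u=v/z$, and substituting into the second gives \eqref{eq:pencil-block}. Conversely, a solution $v$ of the pencil equation produces the eigenvector $(v/z,v)$.

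For \textup{(iii)}, restrict to $z=is$ with $s\neq0$, so that $(is-\cA)(u,v)=(f,g)$ reduces to $P_X(is)u=-(g+isf)$ via $v=isu-f$. This reduction, together with bounded inversion and the block decomposition, gives the equivalence of invertibility. The norm identity \eqref{eq:P-block} holds because $P_X(is)$ is the orthogonal direct sum of the operators $Q_{s,\mu_j}$ on the $L^2$ decomposition. The inverse of a direct sum exists and is bounded exactly when each summand is invertible with uniformly bounded inverses, and its norm is the supremum. The domain statement uses \eqref{eq:product-H2-block}.

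For \textup{(iv)}, the same direct-sum principle applied to $z-\cA$ gives \eqref{eq:direct-sum-resolvent}. For the multiplicity statement, take $z$ an isolated eigenvalue. The Riesz projection $P$ of $\cA$ commutes with every $\Pi_j$, so $P=\bigoplus_jP_j$ with $P_j$ the block Riesz projection, and $\operatorname{rank}P=\sum_j\operatorname{rank}P_j$. Since $\operatorname{rank}P<\infty$, only finitely many blocks contribute, and by Lemma~\ref{lem:riesz-tools}\textup{(i)} this sum is the stated sum of block algebraic multiplicities. The main point needing care is this finiteness together with the uniform control of the block resolvents on a small circle around $z$. Both follow from compactness of the global resolvent, which gives norm convergence of $\sum_j P_j$, so no separate bound on the individual blocks is needed.
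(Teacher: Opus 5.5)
Your proposal is correct and follows essentially the same route as the paper: Lumer--Phillips via a coercive Lax--Milgram problem, reduction by the commuting projections $I\otimes\Pi_j$, elimination of $v$ at $z=is$, and blockwise Riesz projections whose ranks add. Three points to fix. With $v=isu-f$ the reduced equation is $P_X(is)u=g+(is+a)f$, not $-(g+isf)$. Imposing $v=isu-f$ exactly, rather than up to an additive constant, uses $s\neq0$ to fix the representative of $[u]$, as the paper does. Finally, the uniform block-resolvent bound on the Riesz circle comes from $\Gamma\subset\rho(\cA)$ and restriction to reducing subspaces; compactness is needed only for the finiteness of the rank.
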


\begin{proof}
(i) $\dot H^2(X)$ is dense in $\dot H^1(X)$ and $H^1(X)$ is dense in $L^2(X)$, so $\mathcal D(\cA)$ is dense in $\cH$.  For $U=([u],v)\in\mathcal D(\cA)$, $\Re\langle\cA U,U\rangle_\cH=-\int_Xa|v|^2\le0$.  For surjectivity of $1-\cA$, let $([f_1],f_2)\in\cH$ and fix the mean-zero representative $f_1$.  The equations $[u]-[v]=[f_1]$, $v-\Delta_Xu+av=f_2$ reduce, upon choosing the representative $v=u-f_1$ (any additive constant in the first equation is absorbed into the representative of $u$), to the single elliptic problem
\[
    (1+a)u-\Delta_Xu=f_2+(1+a)f_1\ \in L^2(X),
\]
which has a unique solution $u\in H^1(X)$ by Lax--Milgram (the form $\int|\nabla u|^2+\int(1+a)|u|^2$ is coercive); since $u\in H^1\subset L^2$ and $a\in L^\infty$, $(1+a)u\in L^2$, so the equation gives $\Delta_Xu\in L^2$ and elliptic regularity yields $u\in H^2(X)$; then $v=u-f_1\in H^1$, $([u],v)\in\mathcal D(\cA)$, and $(1-\cA)([u],v)=([f_1],f_2)$.  Replacing $f_1$ by $f_1+c$ replaces $u$ by $u+c$ and leaves $[u]$ and $v$ unchanged, so the construction is representative-independent; Lumer--Phillips applies.  Compactness: if $\|U\|_\cH+\|\cA U\|_\cH\le1$ with $U=([u],v)$, $\cA U=([v],\Delta_Xu-av)$, then $v$ is bounded in $H^1$ (the $L^2$ bound from $U$, the gradient bound from the first component of $\cA U$), $\|\Delta_Xu\|_{L^2}\le1+\|a\|_\infty\|v\|\le C$, and elliptic regularity with the Poincar\'e inequality bounds the mean-zero representative $u$ in $H^2$.  So graph-bounded sets are bounded in $(H^2\cap\{\int_Xu=0\})\times H^1\Subset(H^1\cap\{\int_Xu=0\})\times L^2=\cH$ on the compact manifold $X$, and the resolvent is compact.  (ii) Since $a=a(x)$, $\cA$ commutes with $I\otimes\Pi_j$, where $\Pi_j$ denotes the orthogonal projection of $L^2(Y)$ onto $\C\phi_j$; the eigenvalue equation $\cA_j(u,v)=z(u,v)$ gives $v=zu$ --- for $j=0$ after normalizing the quotient representative, legitimate since $z\neq0$; the case $z=0$ is always treated separately --- and \eqref{eq:pencil-block}.  (iii) We spell out the quotient normalization because this is the only point where the stationary operator acts on actual representatives.  Let $([f_1],f_2)\in\cH$ and choose the mean-zero representative of $f_1$.  If $([u],v)$ solves $(is-\cA)([u],v)=([f_1],f_2)$, the first component gives $[v]=[isu-f_1]$.  Since $s\ne0$, there is a unique representative of $[u]$ for which
\begin{equation}\label{eq:resolvent-v-rep}
    v=isu-f_1\qquad\text{in }L^2(X).
\end{equation}
Indeed, starting from any representative $u_0$ one has $v=isu_0-f_1+c$ for some constant $c$; replacing $u_0$ by $u=u_0+c/(is)$ gives \eqref{eq:resolvent-v-rep}.  If two representatives $u$ and $u+e$ both satisfy \eqref{eq:resolvent-v-rep}, then $ise=0$, hence $e=0$.  With this uniquely normalized representative fixed, substituting \eqref{eq:resolvent-v-rep} into the second component gives
\begin{equation}\label{eq:resolvent-substitution}
    P_X(is)u=f_2+(is+a)f_1.
\end{equation}
Conversely, if $u\in H^2(X)$ solves \eqref{eq:resolvent-substitution} and $v$ is defined by \eqref{eq:resolvent-v-rep}, then $v\in H^1(X)$ and $([u],v)$ solves the generator resolvent equation.  Thus $is\in\rho(\cA)$ is equivalent to invertibility of $P_X(is):H^2(X)\subset L^2(X)\to L^2(X)$; since this operator is closed, its inverse, when it exists, is bounded from $L^2$ to $H^2$ with the graph norm.

For the block identity, write $f=\sum_j f_j\otimes\phi_j$ and set $u_j=Q_{s,\mu_j}^{-1}f_j$ whenever the block inverses exist with a uniform $L^2$ bound.  Then $u=\sum_j u_j\otimes\phi_j\in L^2(X)$ and $P_X(is)u=f$ distributionally.  Hence $-\Delta_Xu=f+(s^2-isa)u\in L^2(X)$, and elliptic regularity on the compact product gives $u\in H^2(X)$; equivalently, the product characterization \eqref{eq:product-H2-block} holds for the assembled vector.  This proves that the direct sum of the block inverses is the inverse of the closed operator $P_X(is)$, with norm equal to the supremum of the $L^2$ block norms, and gives \eqref{eq:P-block}.

(iv) Since the transverse projections reduce $\cA$, the operator is the Hilbert direct sum of the closed blocks $\cA_j$ on the corresponding block energy spaces.  For a direct sum of closed operators, $(z-\cA)^{-1}$ is the direct sum of $(z-\cA_j)^{-1}$ precisely when all block inverses exist and their norms are uniformly bounded; this gives \eqref{eq:direct-sum-resolvent}.  Conversely, any vector supported in one block shows that failure of a block inverse prevents invertibility of $z-\cA$.  Because $\cA$ has compact resolvent by (i), every finite spectral point is an eigenvalue of finite algebraic multiplicity and finite spectral points cannot accumulate.  For the algebraic statement, let $\Gamma$ be a small circle about $z$ with $\Gamma\subset\rho(\cA)$; then $\Gamma\subset\rho(\cA_j)$ for every $j$, with $\sup_j\sup_{\zeta\in\Gamma}\|(\zeta-\cA_j)^{-1}\|\le\sup_{\zeta\in\Gamma}\|(\zeta-\cA)^{-1}\|<\infty$ by \eqref{eq:direct-sum-resolvent}.  The Riesz projection $\Pi_\Gamma(\cA)=\frac1{2\pi i}\oint_\Gamma(\zeta-\cA)^{-1}\,d\zeta$ therefore commutes with every $I\otimes\Pi_j$ and acts blockwise, $\Pi_\Gamma(\cA)=\bigoplus_j\Pi_\Gamma(\cA_j)$, the contour integral converging blockwise by the uniform bound.  Its range is the generalized eigenspace at $z$ and is finite-dimensional by compactness of the resolvent, so only finitely many $\Pi_\Gamma(\cA_j)$ are nonzero and $\operatorname{rank}\Pi_\Gamma(\cA)=\sum_j\operatorname{rank}\Pi_\Gamma(\cA_j)$: eigenvectors and generalized eigenvectors decompose into block components, the generalized eigenspace is the algebraic direct sum of the block generalized eigenspaces at $z$, and algebraic multiplicities add over the finitely many contributing blocks.
\end{proof}

For the notation $\cA_d$ and $\cA_{d,j}$ introduced above, the same
block definitions and pencil correspondence are always understood with
$a$ replaced by $d$.  After fixing a transverse block $j$ and writing
$s=s_j$, abbreviate
\[
 P_d(z):=P_{d,j}(z)=-\partial_x^2+s^2+zd+z^2.
\]
Closedness and compact resolvent follow by the same argument as
Lemma~\ref{lem:blocks}, with the Lax--Milgram resolvent point chosen at a
real parameter $\lambda>\|d\|_\infty$ when $d$ is sign-changing
(equivalently, $\cA_d$ is a bounded perturbation of the undamped block
generator); the direct-sum and pencil-correspondence statements are
identical.  Dissipativity and nonpositive-real-part conclusions require
$d\ge0$, and any axis-exclusion or unique-continuation step requires the
corresponding nontriviality condition on $d$.

\begin{remark}[Abstract transverse scope]\label{rem:abstract-transverse}
The transverse manifold is a geometric realization of a slightly more
general untwisted tensor-sum input.  Let $B$ be a nonnegative
self-adjoint operator with compact resolvent on a Hilbert space $K$, set
\[
 H_B=-\partial_x^2\otimes I+I\otimes B
 \quad\text{on }L^2(\T)\otimes K,
\]
and put
\[
 \dot{\mathcal D}(H_B^{1/2})
 :=\mathcal D(H_B^{1/2})/\ker H_B,
 \qquad
 \mathcal E_B:=\dot{\mathcal D}(H_B^{1/2})\times L^2(\T;K).
\]
The first quotient is the homogeneous Hilbert quotient: explicitly,
\[
 \|[u]\|_{\dot{\mathcal D}(H_B^{1/2})}
    :=\|H_B^{1/2}u\|,
 \qquad
 \langle([u],v),([w],z)\rangle_{\mathcal E_B}
 :=\langle H_B^{1/2}u,H_B^{1/2}w\rangle+\langle v,z\rangle.
\]
The zero eigenspace is isolated because $H_B$ has compact resolvent, so
this quotient norm is complete.  With $a=a(x)$ acting by multiplication
on $L^2(\T;K)$, the corresponding generator is
\[
 \mathcal A_B([u],v)=([v],-H_Bu-av),\qquad
 \mathcal D(\mathcal A_B)
 =\bigl(\mathcal D(H_B)/\ker H_B\bigr)
      \times\mathcal D(H_B^{1/2}).
\]
This action is representative-independent, since $H_B$ annihilates its
kernel, and $[v]$ is defined for every second component in the displayed
domain.
If the distinct positive square roots of $\spec B$ are dyadically
syndetic, the scalar decomposition and the proofs of
Theorems~\ref{thm:A}--\ref{thm:E} remain valid for the corresponding
abstract damped wave system; multiplicities of $B$ are inherited by the
scalar blocks.  This includes smooth compact transverse manifolds with
the Dirichlet or Neumann Laplacian and the corresponding boundary
condition.  We retain the product-manifold formulation because it exposes
the PDE geometry; Theorem~\ref{thm:F} gives its nonproduct flat-holonomy
extension.
\end{remark}

\begin{lemma}[Axis cleanness for nontrivial transverse damping]\label{lem:axis-cleanness}
Let $0\le d\in L^\infty(\T)$ be real and nontrivial, equivalently $\int_\T d>0$.  Then the corresponding generator $\cA_d$ satisfies $i\R\subset\rho(\cA_d)$.
\end{lemma}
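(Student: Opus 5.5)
Since $\cA_d$ has compact resolvent (Lemma~\ref{lem:blocks}(i) and the remark following it), every point of $i\R\cap\spec(\cA_d)$ would be an eigenvalue. It therefore suffices to exclude eigenvalues on the imaginary axis. The decomposition of Lemma~\ref{lem:blocks}(iv) then reduces this to excluding them in each scalar block $\cA_{d,j}$: any eigenvector of $\cA_d$ has a nonzero component in some block, and that component is an eigenvector of the block for the same eigenvalue.

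\emph{Case $z=is$ with $s\ne0$.} Let $(u,v)$ be an eigenvector of $\cA_{d,j}$. The first block equation gives $v=isu$, where for $j=0$ we first normalize the quotient representative. By the pencil correspondence~\eqref{eq:pencil-block}, $v\in H^2(\T)$ is a nontrivial solution of
\[
-v''+(\lambda_j-s^2)v+isdv=0 .
\]
Pair this equation with $v$ and take the imaginary part:
\[
s\int_\T d|v|^2=0 .
\]
Since $s\neq 0$, this gives $dv=0$ a.e., so $v$ solves the constant-coefficient equation $-v''+(\lambda_j-s^2)v=0$ on $\T$.

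This is the one step needing care, because $d$ is merely measurable and there is no unique continuation from an open set. We argue instead as follows. A nonzero solution of this ODE on $\T$ is a trigonometric polynomial: a combination of $e^{\pm ikx}$ with $k^2=s^2-\lambda_j$, and periodicity forces $k\in\Z$. Such a nonzero trigonometric polynomial vanishes only on a finite set; when $k=0$ it is a nonzero constant and does not vanish at all. Hence $d=0$ a.e., which contradicts $\int_\T d>0$. (Nazarov's estimate, Lemma~\ref{lem:nazarov-input}, would give the same conclusion more quantitatively, but it is not needed.)

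\emph{Case $z=0$.} Suppose $\cA_{d,j}(u,v)=0$. The first component gives $[v]=0$, so $v$ is a constant $c$. For $j\ge1$ the first component reads $v=0$ directly. The second component gives $u''-\lambda_ju=dc$.
\begin{itemize}
\item For $j\ge1$ we have $c=0$, so $u''=\lambda_ju$ with $\lambda_j>0$. On the circle this forces $u=0$.
\item For $j=0$ the first-component class lives in $H^1(\T)/\C$. Integrating $u''=dc$ over $\T$ gives $c\int_\T d=0$, hence $c=0$. Then $u''=0$, so $u$ is constant and $[u]=0$.
\end{itemize}
In both cases the eigenvector is zero, so $0\notin\spec(\cA_d)$.

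Combining the two cases, no point of $i\R$ is an eigenvalue of any block, and therefore $i\R\subset\rho(\cA_d)$. The only genuine obstacle is the measurable unique-continuation step in the first case, and it is resolved by the elementary zero-set property of the exponential solutions.
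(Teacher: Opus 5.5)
Your proof is correct and follows the paper's argument: compact resolvent reduces the claim to excluding imaginary-axis eigenvalues. For $is\neq0$, the imaginary-part pairing gives $dv=0$ a.e., so $v$ solves a constant-coefficient equation and cannot vanish on the positive-measure set $\{d>0\}$. Where the paper invokes real-analyticity, you use the explicit trigonometric form of $v$, and you treat $z=0$ block by block rather than by integrating $\Delta_X u = vd$ over $X$; the content is the same.
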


\begin{proof}
The preceding replacement convention gives compact resolvent for $\cA_d$, so it is enough to exclude imaginary-axis eigenvalues.  Let $i\sigma$ be an eigenvalue.  If $\sigma\ne0$, the block reduction gives a nonzero scalar solution of
\[
    -v''+(\lambda_j+i\sigma d-\sigma^2)v=0
\]
for some transverse block.  Taking imaginary parts after pairing with $v$ gives $\sigma\int_\T d|v|^2=0$, hence $dv=0$ a.e. and $v=0$ a.e. on the positive-measure set $\{d>0\}$.  The equation then reduces distributionally on the whole circle to the constant-coefficient equation $-v''+(\lambda_j-\sigma^2)v=0$.  Thus $v$ has a real-analytic representative; since it vanishes on a set of positive measure, it vanishes identically.  This rules out nonzero eigenvectors for $\sigma\ne0$.  At $\sigma=0$, the equation $\cA_d([u],v)=0$ forces $v$ to be constant and $\Delta_Xu=vd$; integrating over $X$ gives $v\int_Xd=0$, hence $v=0$, and then $u$ is constant, so $[u]=0$ in the quotient energy space.  Thus $0$ is not an eigenvalue.  Compact resolvent excludes residual and continuous spectrum on the axis, proving the claim.
\end{proof}

\subsection{The stationary--generator comparison}

The following comparison is standard for damped waves; we include the proof because the paper uses both directions quantitatively, and because the source term $isf_1$ in \eqref{eq:resolvent-substitution} must be controlled through the resolvent identity.

\begin{lemma}[Stationary--generator comparison]\label{lem:comparison}
Let $|s|\ge1$ and suppose $P_X(is)$ is invertible with $m(s)=\|P_X(is)^{-1}\|_{L^2\to L^2}$.  Then $is\in\rho(\cA)$ and
\begin{equation}\label{eq:stat-gen-comparison}
    \|(is-\cA)^{-1}\|_{\cH\to\cH}\le C\bigl(1+|s|\,m(s)\bigr).
\end{equation}
The constant $C$ depends only on the product geometry, the Poincar\'e constants implicit in the quotient convention, and $\|a\|_{L^\infty}$, and is independent of $s$, $m(s)$, and the source terms.  Conversely, if $is\in\rho(\cA)$ then $P_X(is)$ is invertible and
\begin{equation}\label{eq:stat-gen-reverse}
    m(s)\le |s|^{-1}\|(is-\cA)^{-1}\|_{\cH\to\cH}.
\end{equation}
\end{lemma}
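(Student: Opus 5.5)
The plan is to work throughout with the normalized representative from the proof of Lemma~\ref{lem:blocks}\textup{(iii)}. Given $F=([f_1],f_2)\in\cH$ with $f_1$ mean-zero, the resolvent equation $(is-\cA)([u],v)=F$ becomes $v=isu-f_1$ as in \eqref{eq:resolvent-v-rep}, together with the stationary equation \eqref{eq:resolvent-substitution}, $P_X(is)u=f_2+(is+a)f_1$. The converse \eqref{eq:stat-gen-reverse} is the easy direction, so I will do it first. Lemma~\ref{lem:blocks}\textup{(iii)} already gives invertibility of $P_X(is)$. For $f_2\in L^2$, apply the generator resolvent to $F=(0,f_2)$, so that $\|F\|_\cH=\|f_2\|$. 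Then $v=isu$ with $u=P_X(is)^{-1}f_2$, hence $|s|\,\|P_X(is)^{-1}f_2\|=\|v\|\le\|(is-\cA)^{-1}\|\,\|f_2\|$. Taking the supremum over $f_2$ gives \eqref{eq:stat-gen-reverse}.

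For \eqref{eq:stat-gen-comparison}, invertibility of $is-\cA$ again follows from Lemma~\ref{lem:blocks}\textup{(iii)}, so only the norm bound needs proof. The naive estimate loses a factor of $|s|$. The source $isf_1$ has $L^2$ norm of order $|s|\,\|\nabla f_1\|$, which gives $\|u\|\lesssim m|s|\,\|F\|$ and then $\|v\|\lesssim s^2m\,\|F\|$. To avoid this I will split $u=u_1+u_2$ and treat the two parts separately.
\begin{itemize}
\item[(1)] The first part is $u_1=P_X(is)^{-1}(f_2+af_1)$. Poincar\'e for mean-zero $f_1$ gives $\|u_1\|\le C m\,\|F\|$.
\item[(2)] The second part solves $P_X(is)u_2=isf_1$. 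Here I use the algebraic identity $P_X(is)\bigl(f_1/(is)\bigr)=-\Delta_Xf_1/(is)+isf_1+af_1$, valid in $H^{-1}$. It shows that $u_2=f_1/(is)-w$, where $P_X(is)w=-\Delta_Xf_1/(is)+af_1$.
\item[(3)] Consequently $v=isu-f_1=is\,u_1-is\,w$. The term $f_1$ cancels exactly; this is the resolvent-identity cancellation the lemma alludes to.
\end{itemize}

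The main step, and the one I expect to be the obstacle, is an $H^{-1}\to L^2$ bound for $P_X(is)^{-1}$ of the form
\[
\|P_X(is)^{-1}\|_{H^{-1}\to L^2}\le C\bigl(1+|s|m(s)\bigr).
\]
I would get it by duality from an $L^2\to H^1$ bound for the adjoint $P_X(is)^*=-\Delta_X-s^2-isa$. The adjoint has the same $L^2$ inverse norm $m(s)$. For $P_X(is)^*\varphi=g$, pairing with $\varphi$ and taking real parts gives
\[
\|\nabla\varphi\|^2=s^2\|\varphi\|^2+\Re\langle g,\varphi\rangle,
\]
so $\|\nabla\varphi\|\le|s|\,m\,\|g\|+m^{1/2}\|g\|$. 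Since $|s|\ge1$, this is $\lesssim(1+|s|m)\|g\|$. Granting the $H^{-1}$ bound, I would estimate the two terms of the source of $w$ separately. Since $\|\Delta_Xf_1\|_{H^{-1}}\le\|\nabla f_1\|$, the first term contributes $\|w\|\lesssim(1+|s|m)\,|s|^{-1}\|\nabla f_1\|$. The term $af_1$ contributes $\|w\|\lesssim m\,\|a\|_\infty\|f_1\|$. Together, $\|w\|\lesssim(|s|^{-1}+m)\,\|F\|$. Combined with step (3), this gives $\|v\|\lesssim(1+|s|m)\,\|F\|$.

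It remains to bound $\|\nabla u\|$. Apply the same energy identity to $P_X(is)u=g$, with $g=f_2+(is+a)f_1$ now a mixed $L^2+H^{-1}$ source, and again take real parts. The identity gives $\|\nabla u\|^2\le s^2\|u\|^2+|\langle g,u\rangle|$. To avoid a circular $\|u\|_{H^1}$ term, split $u=u_1+u_2$ and handle $\nabla u_2$ through $u_2=f_1/(is)-w$, applying the same energy identity to $w$. In that identity the pairing $\langle -\Delta_Xf_1/(is),w\rangle=\langle\nabla f_1,\nabla w\rangle/(is)$ is absorbed by Cauchy--Schwarz. Then $|s|\,\|u\|\le\|v\|+\|f_1\|$, already bounded, closes the estimate. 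All constants depend only on Poincar\'e, $\|a\|_\infty$, and the geometry, as claimed.
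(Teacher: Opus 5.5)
Your proposal is correct and follows essentially the paper's proof: the same split of $u$, the same cancellation of $f_1$ via $s^2P^{-1}=-I+P^{-1}(-\Delta_X+isa)$, the same duality bound obtained from the gradient energy estimate for the adjoint inverse, and the same reverse argument with data $(0,g)$. The paper phrases the duality bound through $P^{-1}\operatorname{div}$ and justifies the identity by $H^2$ approximation of $f_1$, which is equivalent to your $H^{-1}$ extension. One simplification: since $f_1\in H^1$, the source $g=f_2+(is+a)f_1$ lies in $L^2$ rather than in $L^2+H^{-1}$, so $\|\nabla u\|^2\le s^2\|u\|^2+\|g\|\,\|u\|$ with $\|g\|\lesssim|s|\,\|F\|_{\cH}$ closes directly, as in the paper, and your detour through $w$, though valid, is unnecessary.
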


\begin{proof}
Write $P_+=P_X(is)$, $P_-=P_X(-is)$, $P=P_+$, and $m=m(s)$; throughout the proof, first components such as $f_1$ are represented by their mean-zero representatives, as in Lemma~\ref{lem:blocks}, so that expressions like $af_1$ and $\nabla f_1$ are well defined.  Let $g\in L^2$, $u=P^{-1}g$.  Pairing $Pu=g$ with $u$,
\begin{equation}\label{eq:P-pairings}
    \|\nabla u\|_2^2-s^2\|u\|_2^2=\Re\langle g,u\rangle,
    \qquad
    s\int_Xa|u|^2=\Im\langle g,u\rangle,
\end{equation}
whence $\|\nabla P^{-1}g\|\le(|s|m+m^{1/2})\|g\|$.  Since $P_+^*=P_-$ and $P_- =\overline{P_+\overline{\,\cdot\,}}$, the same bound holds for $P_-^{-1}$.  Define $P_+^{-1}\operatorname{div}:L^2(X;T^*X)\to L^2(X)$ by duality, $\langle P_+^{-1}\operatorname{div}F,\,g\rangle:=-\langle F,\,\nabla P_-^{-1}g\rangle$ for $g\in L^2(X)$ --- the bounded extension of $F\mapsto P_+^{-1}(\operatorname{div}F)$ from fields with $\operatorname{div}F\in L^2(X)$, as integration by parts on the closed manifold shows.  Then
\begin{equation}\label{eq:P-div}
    \|P^{-1}\operatorname{div}\|_{L^2(X;T^*X)\to L^2(X)}\le C(|s|m+m^{1/2}).
\end{equation}
Now let $(f_1,f_2)\in\cH$ with $F=\|(f_1,f_2)\|_\cH$; recall $\|f_1\|_2\le C\|\nabla f_1\|_2\le CF$ by Poincar\'e--Wirtinger for the mean-zero representative.  Solve \eqref{eq:resolvent-substitution} as $u=u_1+u_2$ with $u_2=P^{-1}(f_2+af_1)$, so $\|u_2\|\le CmF$, and $u_1=P^{-1}(isf_1)$.  With the convention $\langle\operatorname{div}F,\varphi\rangle=-\langle F,\nabla\varphi\rangle$, one has $-\Delta_X f_1=-\operatorname{div}\nabla f_1$ in distributions.  We first justify the next resolvent identity at the $H^{-1}$ level.  Choose mean-zero $f_{1,n}\in H^2(X)$ with $f_{1,n}\to f_1$ in $H^1(X)$.  For $f_{1,n}$ the identity
\[
    P^{-1}(isf_{1,n})=\frac{i}{s}\Bigl(-f_{1,n}-P^{-1}\operatorname{div}(\nabla f_{1,n})+isP^{-1}(af_{1,n})\Bigr)
\]
holds in $L^2$, since $s^2P^{-1}=-I+P^{-1}(-\Delta_X+isa)$ on $H^2$ data and $-\Delta_Xf_{1,n}=-\operatorname{div}\nabla f_{1,n}$.  Passing to the limit uses $P^{-1}:L^2\to L^2$, $a\in L^\infty$, and the bounded extension \eqref{eq:P-div} of $P^{-1}\operatorname{div}:L^2(T^*X)\to L^2$.  Hence
\[
    u_1=\frac{i}{s}\Bigl(-f_1-P^{-1}\operatorname{div}(\nabla f_1)+isP^{-1}(af_1)\Bigr),
\]
with the middle term defined via \eqref{eq:P-div} since $f_1\in\dot H^1$ only.  By duality: with $\langle\operatorname{div}F,\varphi\rangle:=-\langle F,\nabla\varphi\rangle$, one has $\langle P_+^{-1}\operatorname{div}F,\,g\rangle=-\langle F,\,\nabla P_-^{-1}g\rangle$ ($P_+^*=P_-$, $a$ being real), so this operator norm equals that of $\nabla P_-^{-1}$, bounded by the gradient estimate above.
Thus by \eqref{eq:P-div},
\[
    \|u_1\|\le \frac{C}{|s|}F+\frac{C}{|s|}(|s|m+m^{1/2})F+CmF
    \le C\Bigl(m+\frac1{|s|}\Bigr)F .
\]
Hence $|s|\,\|u\|\le C(1+|s|m)F$ and $\|v\|=\|isu-f_1\|\le C(1+|s|m)F$.  Finally, by the first identity in \eqref{eq:P-pairings} applied to $g=f_2+(is+a)f_1$ (so $\|g\|\le C|s|F$),
\[
    \|\nabla u\|^2\le s^2\|u\|^2+\|g\|\|u\|\le C(1+|s|m)^2F^2 ,
\]
which proves \eqref{eq:stat-gen-comparison}.  For the converse, given $g\in L^2$ apply $(is-\cA)^{-1}$ to $(0,g)$: with the representative of the first component normalized as in Lemma~\ref{lem:blocks}(iii), $v=isu$ holds in $L^2$ and $u$ solves $Pu=g$, so $\|u\|=\|v\|/|s|\le|s|^{-1}\|(is-\cA)^{-1}\|\,\|g\|$.
\end{proof}

\subsection{Scale stability of critical gauges}

\begin{lemma}[Scale stability]\label{lem:scale-stability}
Let $L(S)=\ell(\log(eS))$ be critical.  Then for every $C_0>0$, $\alpha>0$, and $\gamma\in\R$ there are $C_1$, depending only on $\alpha$ and the doubling constant of $\ell$ --- \emph{not} on $C_0$, $\gamma$, or the normalization of $\ell$ --- and $S_1\ge S_0$, depending in addition on $C_0$, $\gamma$, and on the full gauge data, such that
\begin{equation}\label{eq:scale-stability}
    C_1^{-1}L(S)\le L\bigl(C_0S^\alpha L(S)^{\gamma}\bigr)\le C_1L(S),
    \qquad S\ge S_1 .
\end{equation}
In particular, for a large parameter $K\ge1$ in place of $C_0$, only the threshold $S_1(K)$ depends on $K$; the comparison constant does not.
\end{lemma}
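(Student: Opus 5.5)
The argument works entirely at the level of logarithms.  Put $u=\log(eS)$ and $T=C_0S^\alpha L(S)^\gamma$, so that $L(T)=\ell(\log(eT))$ (with the convention that $L$ is constant below $S_0$).  The plan is to show that $\log(eT)$ is comparable to $u$ with ratio constants depending only on $\alpha$, and then to convert this into a comparison of $\ell$-values using monotonicity and eventual doubling.  Write
\[
 \log(eT)=1+\log C_0+\alpha\log S+\gamma\log L(S)
 =\alpha u+(1-\alpha)+\log C_0+\gamma\log\ell(u).
\]
The first step is the polynomial upper growth of $\ell$.  Iterating $\ell(2v)\le C_\ell\ell(v)$ from $u_0$ gives $\ell(u)\le \ell(u_0)\,C_\ell\,(u/u_0)^{\log_2C_\ell}$ for $u\ge u_0$, hence $\log\ell(u)=O(\log u)=o(u)$ as $u\to\infty$.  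The implied constants here depend on $u_0$, $C_\ell$ and $\ell(u_0)$, that is, on the full gauge data.  Consequently, for $u$ beyond a threshold depending on $C_0$, $\gamma$, $\alpha$ and the gauge data, the error terms $(1-\alpha)+\log C_0+\gamma\log\ell(u)$ are at most $\tfrac{\alpha}{2}u$ in absolute value.  This yields
\[
 \tfrac{\alpha}{2}u\le\log(eT)\le \tfrac{3\alpha}{2}u .
\]
Enlarging the threshold if necessary, we may also assume $\tfrac{\alpha}{2}u\ge u_0$ and $T\ge S_0$, which defines $S_1$.

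The second step converts this ratio comparison into a comparison of $\ell$-values.  Fix the integer $k=k(\alpha)\ge0$ with $2^{k}\ge\max(3\alpha/2,\,2/\alpha)$.  For the upper bound, monotonicity and $k$ applications of doubling (all arguments being at least $u_0$) give
\[
 \ell(\log(eT))\le\ell(2^ku)\le C_\ell^{k}\ell(u).
\]
For the lower bound, we have $\ell(u)\le\ell(2^k\cdot\tfrac{\alpha}{2}u)\le C_\ell^k\,\ell(\tfrac{\alpha}{2}u)\le C_\ell^k\,\ell(\log(eT))$.  Therefore $C_1=C_\ell^{k(\alpha)}$ works.  It depends only on $\alpha$ and the doubling constant; the quantities $C_0$, $\gamma$ and the normalization enter only through $S_1$.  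The final sentence about a large parameter $K$ is the special case $C_0=K$.

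The only delicate point is bookkeeping: the constant $C_1$ must be kept independent of $C_0$ and $\gamma$.  This is ensured by absorbing every additive term into the threshold $S_1$ before doubling is invoked, so that $C_1$ is determined solely by the fixed ratio window $[\alpha/2,3\alpha/2]$.
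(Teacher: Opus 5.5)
Your proof is correct and follows the paper's argument: polynomial upper growth of $\ell$ from iterated eventual doubling, absorption of the additive terms $\log C_0$ and $\gamma\log\ell(u)$ into the threshold $S_1$, and then a fixed number of doublings that depends only on $\alpha$. The paper does the same with the ratio window $[\tfrac\alpha2,2\alpha]$ instead of your $[\tfrac\alpha2,\tfrac{3\alpha}{2}]$, so $C_1=C_\ell^{k(\alpha)}$ is independent of $C_0$, $\gamma$, and the normalization, exactly as claimed.
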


\begin{proof}
For $u\ge u_0$, put $m=\lceil\log_2(u/u_0)\rceil$.  Monotonicity and
iteration of the specified eventual-doubling inequality give
\[
 \ell(u)\le \ell(2^m u_0)
   \le C_\ell^m\ell(u_0)
   \le C_\ell\ell(u_0)(u/u_0)^{p_\ell},
 \qquad p_\ell:=\log_2C_\ell.
\]
Thus $\ell(u)\le Cu^{p_\ell}$, with exponent depending only on the
doubling constant and $C$ depending on the full gauge data; hence
$\log L(S)=O(\log\log S)$ with constants allowed in the eventual
threshold.  Therefore
\[
    \begin{aligned}
    \log\bigl(eC_0S^\alpha L(S)^\gamma\bigr)
    &=\alpha\log(eS)+O_\gamma(\log\log S)+O_{C_0}(1)\\
    &\in[\tfrac\alpha2\log(eS),\,2\alpha\log(eS)]
    \end{aligned}
\]
for $S\ge S_1(C_0,\gamma)$ --- the containment needs only $|O_\gamma(\log\log S)|+O_{C_0}(1)\le\tfrac\alpha2\log(eS)$, and we also enlarge $S_1$ so that $C_0S^\alpha L(S)^\gamma\ge S_0$ before $L$ is evaluated, which is possible because $L(S)^\gamma\ge C(\log(eS))^{-p_\ell|\gamma|}$ when $\gamma<0$ and the left side still tends to $+\infty$ --- and monotonicity plus finitely many doublings of $\ell$ give \eqref{eq:scale-stability}.  The bracket $[\tfrac\alpha2\log(eS),\,2\alpha\log(eS)]$ does not involve $C_0$, $\gamma$, or the normalization of $\ell$, so the number of doublings, hence $C_1$, depends only on $\alpha$ and the doubling constant; $C_0$ (or $K$), $\gamma$, and the full gauge normalization enter only through the threshold $S_1$, the comparison constant being independent of them.
\end{proof}

\section{The local-mass resolvent mechanism}\label{sec:machine}

This section contains the two analytic engines of the rough inverse theorem.  Weighted
Poincar\'e control gives an absorption estimate away from propagation,
while Nazarov's inequality controls the uniformly finite Fourier cluster
near propagation; taking the stronger of these two estimates yields the
upper local-mass engine.
Conversely, an exact local ODE solution cut off in an interval of minimal
damping mass produces a quasimode in the resonant block.  The first engine
turns mass into resolvent bounds, and the second recovers mass from sparse
resolvent information.

Throughout this section $0\le a\in L^\infty(\T)$ with $\int_\T a>0$; since $\Theta_a(\pi)=(2\pi)^{-1}\int_\T a>0$, a scale $r_0\in(0,\pi]$ with $\Theta_a(r_0)>0$ exists, and one is fixed; set $m_0=\int_\T a\ge 2r_0\Theta_a(r_0)>0$.

\begin{lemma}[Weighted interval Poincar\'e inequality]\label{lem:wp}
Let $I\subset\R$ be a bounded interval with $0<|I|<\infty$, and let $0\le w\in L^1(I)$ satisfy $\int_Iw>0$.  Then for $v\in H^1(I)$,
\begin{equation}\label{eq:wp}
    \int_I|v|^2\le \frac{2|I|}{\int_Iw}\int_Iw|v|^2+2|I|^2\int_I|v'|^2 .
\end{equation}
Consequently, there is a universal constant $C_P$ such that for every $0<r\le\pi$ with $\Theta_a(r)>0$ and every $v\in H^1(\T)$,
\begin{equation}\label{eq:periodic-wp}
    \|v\|_2^2\le C_P\Bigl(\Theta_a(r)^{-1}\int_\T a|v|^2+r^2\|v'\|_2^2\Bigr) .
\end{equation}
\end{lemma}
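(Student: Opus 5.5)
The plan is to prove the interval inequality \eqref{eq:wp} first, by a pointwise comparison of $v$ with its values at a second point, and then to obtain \eqref{eq:periodic-wp} by covering $\T$ with arcs of length comparable to $r$.

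\textbf{Step 1: the interval inequality.} For $x,y\in I$ the fundamental theorem of calculus and Cauchy--Schwarz give
\[
 |v(x)|^2\le 2|v(y)|^2+2|I|\int_I|v'|^2 .
\]
Multiplying by $w(y)$ and integrating in $y$ over $I$ yields
\[
 |v(x)|^2\int_Iw\le 2\int_Iw|v|^2+2|I|\Bigl(\int_Iw\Bigr)\int_I|v'|^2 .
\]
We then divide by $\int_Iw>0$ and integrate in $x$ over $I$. This produces exactly
\[
 \int_I|v|^2\le\frac{2|I|}{\int_Iw}\int_Iw|v|^2+2|I|^2\int_I|v'|^2 .
\]
No real obstacle arises here.

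\textbf{Step 2: the periodic inequality.} Fix $r$ with $\Theta_a(r)>0$ and set $N=\lceil \pi/r\rceil$. Partition $\T$ into $N$ consecutive arcs $J_k$ of equal length $2\pi/N$; since $r\le \pi$, we have $2\pi/N\in(r,2r]$, so $|J_k|\le 2r$. The issue is that the definition of $\Theta_a(r)$ controls the mass of arcs of length exactly $2r$, while the $J_k$ may be shorter. To handle this, let $I_k\supset J_k$ be the arc of length $2r$ centered at the midpoint of $J_k$. Each point of $\T$ then lies in a bounded number of the $I_k$, since adjacent centers are separated by at least $r$, giving overlap at most $3$. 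By definition of $\Theta_a$,
\[
 \int_{I_k}a\ge 2r\,\Theta_a(r)>0 .
\]

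\textbf{Step 3: applying Step 1 and summing.} On each arc $I_k$, lifted to a real interval of length $2r\le2\pi$, we apply \eqref{eq:wp} with $w=a$, and we bound $\int_{J_k}|v|^2$ by $\int_{I_k}|v|^2$. This gives
\[
 \int_{J_k}|v|^2\le \frac{4r}{2r\Theta_a(r)}\int_{I_k}a|v|^2+8r^2\int_{I_k}|v'|^2 .
\]
Summing over $k$, the left sides add up to $\|v\|_2^2$ because the $J_k$ partition $\T$. On the right, the bounded overlap of the $I_k$ gives
\[
 \|v\|_2^2\le 3\Bigl(2\Theta_a(r)^{-1}\int_\T a|v|^2+8r^2\|v'\|_2^2\Bigr),
\]
so one may take $C_P=24$.

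The only delicate points are the reduction from arcs of exact length $2r$ to a covering with bounded overlap, and the observation that when $r=\pi$ the single arc is all of $\T$, where the lifted-interval argument still applies. Both are handled by the centered-arc construction in Step 2.
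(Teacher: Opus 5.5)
Your proof is correct and takes essentially the paper's route. The paper also proves \eqref{eq:wp} by a weighted-averaging argument, comparing $v$ with its $w$-weighted mean $v_w$; your two-point comparison integrated against $w(y)$ is the same Jensen step in a different order. It then sums over arcs of radius $r$ with overlap at most three, using $\int_{I_k}a\ge 2r\Theta_a(r)$. Your partition-then-enlarge construction and explicit $C_P=24$ are only cosmetic variants of the paper's direct cover by arcs centered at $kr$.
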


\begin{proof}
Let $v_w=(\int_Iw)^{-1}\int_Iwv$.  Then $|v_w|^2\le(\int_Iw)^{-1}\int_Iw|v|^2$ and, for $x\in I$,
\[
    |v(x)-v_w|^2\le\frac{1}{\int_Iw}\int_Iw(y)|v(x)-v(y)|^2\,dy\le|I|\int_I|v'|^2 .
\]
Integrating $|v|^2\le2|v_w|^2+2|v-v_w|^2$ over $I$ gives \eqref{eq:wp}; covering $\T$ by the arcs $I_k$ of radius $r$ centered at $kr$, $0\le k<\lceil2\pi/r\rceil$ --- every point of $\T$ lies in at least one and at most three of them, wrapped arcs included --- summing \eqref{eq:wp} over $k$, and using $\int_{I_k}a\ge2r\Theta_a(r)$ gives \eqref{eq:periodic-wp} with a universal $C_P$.
\end{proof}

\begin{lemma}[Propagating-frequency observability]\label{lem:observability}
There is $C_1=C_1(\|a\|_\infty,m_0)$ such that for all $s\ge1$, all $1\le\mu\le s^2$, and all $v\in H^2(\T)$,
\begin{equation}\label{eq:observability}
    \|v\|_2\le C_1\,\frac{s}{\mu}\,\|Q_{s,\mu}v\|_2 .
\end{equation}
\end{lemma}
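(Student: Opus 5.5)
The plan is to split $v$ into a bounded propagating Fourier cluster and a nonpropagating remainder. The remainder is controlled by the distance of its frequencies from the characteristic set $k^2=\mu$. The cluster is controlled by the damping, through Nazarov's measurable Tur\'an estimate (Lemma~\ref{lem:nazarov-input}). The damping itself is controlled by the imaginary part of the energy identity.

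Fix $v\in H^2(\T)$ and put $f=Q_{s,\mu}v$. Pairing with $v$ and taking imaginary parts, as in \eqref{eq:P-pairings}, gives
\[
 s\int_\T a|v|^2=\Im\langle f,v\rangle\le\|f\|_2\|v\|_2 ,
\]
and therefore $\|av\|_2^2\le\|a\|_\infty\|f\|_2\|v\|_2/s$. Let $\Pi_P$ be the Fourier projection onto the modes $k\in\Z$ with $|k^2-\mu|\le2\sqrt\mu$. Since $\mu\ge1$, such $k$ satisfy $\bigl||k|-\sqrt\mu\bigr|\le2$, so there are at most $10$ of them; this is the bound $N\le10$ anticipated after Lemma~\ref{lem:nazarov-input}. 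Set $v_P=\Pi_Pv$ and $v_N=v-v_P$. On the complementary modes $|k^2-\mu|>2\sqrt\mu$, and the equation gives $(-\partial_x^2-\mu)v_N=(I-\Pi_P)(f-isav)$. Hence
\[
 \|v_N\|_2\le\frac{1}{2\sqrt\mu}\bigl(\|f\|_2+s\|av\|_2\bigr),
 \qquad
 \|v_N\|_2^2\le\frac{C}{\mu}\Bigl(\|f\|_2^2+s\,\|f\|_2\|v\|_2\Bigr).
\]

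For the cluster, let $E=\{x:a(x)\ge m_0/(4\pi)\}$. From $m_0=\int_\T a\le\|a\|_\infty|E|+m_0/2$ we get $|E|\ge m_0/(2\|a\|_\infty)$. Nazarov's estimate is a supremum estimate, so I first pass to a sup bound on a large subset. By Chebyshev, the subset $E'\subset E$ on which $|v_P|^2\le 4|E|^{-1}\int_E|v_P|^2$ has $|E'|\ge\frac34|E|$. Lemma~\ref{lem:nazarov-input}, applied on a length-$2\pi$ representative of $\T$ with $N\le10$, then gives
\[
 \|v_P\|_2^2\le2\pi\sup_\T|v_P|^2\le C\int_E|v_P|^2\le C\int_\T a|v_P|^2 ,
\]
with $C$ depending only on $\|a\|_\infty$ and $m_0$. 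Next, $\int a|v_P|^2\le2\int a|v|^2+2\|a\|_\infty\|v_N\|_2^2$, and combining with the two previous displays yields
\[
 \|v\|_2^2\le C\Bigl(\frac1s+\frac{s}{\mu}\Bigr)\|f\|_2\|v\|_2+\frac{C}{\mu}\|f\|_2^2 .
\]
Since $\mu\le s^2$, we have $1/s\le s/\mu$ and $\mu^{-1/2}\le s/\mu$, so $\|f\|_2^2/\mu\le(s/\mu)^2\|f\|_2^2$. Solving the resulting quadratic inequality in $\|v\|_2$ gives \eqref{eq:observability} with $C_1=C_1(\|a\|_\infty,m_0)$.

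The main obstacle is keeping every constant independent of $s$ and $\mu$. This needs two things: a uniformly bounded number of cluster modes, together with Nazarov's independence of the frequency locations, and the passage from an $L^2$ bound on $E$ to a supremum bound via Chebyshev. The width $2\sqrt\mu$ of the cluster is chosen to balance these. It is narrow enough that the mode count stays at most $10$, and wide enough that the nonpropagating gain $\mu^{-1/2}$ combines with the damping term $s\|av\|_2$ to give the $s/\mu$ scale.
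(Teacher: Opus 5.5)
Your proof is correct and follows essentially the same route as the paper. Both arguments bound the damping by the imaginary-part identity, use a Fourier cluster of at most ten modes near $|k|=\sqrt\mu$ with complementary gap $2\sqrt\mu$, and apply Nazarov's estimate on a Chebyshev sublevel subset of $\{a\gtrsim m_0\}$. The differences are cosmetic: you define the cluster by $|k^2-\mu|\le2\sqrt\mu$, and you close with a quadratic inequality in $\|v\|_2^2$ rather than Young's inequality on the norms.
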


\begin{proof}
Write $f=Q_{s,\mu}v$, $N=\|v\|_2$, $F=\|f\|_2$.  Pairing with $v$ and taking imaginary parts,
\begin{equation}\label{eq:im-pair}
    s\int_\T a|v|^2=\Im\langle f,v\rangle\le FN .
\end{equation}
Let $\Pi$ be the projection onto $V_\mu=\operatorname{span}\{e^{inx}:\ \bigl||n|-\sqrt\mu\bigr|\le2\}$, a space of dimension $\le10$, and split $v=v_c+v_\perp$ with $v_c=\Pi v$.  For frequencies outside the cluster, $|n^2-\mu|=\bigl||n|-\sqrt\mu\bigr|\,(|n|+\sqrt\mu)\ge2\sqrt\mu$, so from $(-\partial_x^2-\mu)v=f-isav$ and \eqref{eq:im-pair},
\begin{equation}\label{eq:gap-bound}
    2\sqrt\mu\,\|v_\perp\|\le\|f\|+s\|a\|_\infty^{1/2}\Bigl(\int a|v|^2\Bigr)^{1/2}
    \le F+\|a\|_\infty^{1/2}(sFN)^{1/2}.
\end{equation}
Let $\delta=m_0/(8\pi)$ and $E=\{a\ge\delta\}$; then $m_0\le2\pi\delta+\|a\|_\infty|E|$, so $|E|\ge m_0/(2\|a\|_\infty)$.  By Lemma~\ref{lem:nazarov-input}, applied on a length-$2\pi$ representative of $\T$ and using that $v_c$ has at most $10$ Fourier frequencies,
\[
    \sup_\T|v_c|\le\Bigl(\frac{C}{|E'|}\Bigr)^{9}\sup_{E'}|v_c|
\]
for every measurable $E'\subset\T$ with $|E'|>0$; the constant is independent of the frequency locations, and every set to which this is applied below has positive measure.
If $v_c\equiv0$, skip this step; if $v_c\not\equiv0$ but $\|v_c\|_{L^2(E)}=0$, then $v_c$, a finite exponential polynomial and hence continuous, vanishes identically on the subset $E':=\{x\in E:v_c(x)=0\}$ with $|E'|=|E|>0$, so Nazarov's inequality with this $E'$ gives $\sup_\T|v_c|=0$, i.e.\ $v_c\equiv0$, a contradiction --- hence $\|v_c\|_{L^2(E)}>0$.  Now apply this with the Chebyshev sublevel set $E'=\{x\in E:\ |v_c(x)|^2\le2\|v_c\|_{L^2(E)}^2/|E|\}$, which satisfies $|E'|\ge|E|/2$; the Nazarov constant is uniform over all measurable sets of the prescribed measure, so $E'$ may depend on $v_c$.  On $E'$, $\sup_{E'}|v_c|\le(2/|E|)^{1/2}\|v_c\|_{L^2(E)}$, and $\|v_c\|_{L^2(\T)}\le(2\pi)^{1/2}\sup_\T|v_c|$.  Hence $\|v_c\|_{L^2(\T)}\le C_{m_0,\|a\|_\infty}\|v_c\|_{L^2(E)}$.  The same displayed estimate is trivial when $v_c=0$.  Since $a\ge\delta$ on $E$ and by \eqref{eq:im-pair},
\[
    \|v_c\|_{L^2(E)}
      \le\|v\|_{L^2(E)}+\|v_\perp\|_{L^2(E)}
      \le\Bigl(\frac{FN}{s\delta}\Bigr)^{1/2}
         +\|v_\perp\|_{L^2(\T)} .
\]
Combining with \eqref{eq:gap-bound} and Young's inequality ($(FN/(s\delta))^{1/2}\le\frac14N+CF/s$ and $\mu^{-1/2}(sFN)^{1/2}\le\frac14N+CsF/\mu$),
\[
    N\le\|v_c\|+\|v_\perp\|\le\frac12N+C\Bigl(\frac Fs+\frac F{\sqrt\mu}+\frac{sF}\mu\Bigr)
    \le\frac12N+C\,\frac s\mu\,F,
\]
using $\mu\le s^2$ in the last step.  This proves \eqref{eq:observability}.
\end{proof}

The next proposition has a stronger hypothesis than the standing assumption of this section: it requires positivity of the lower mass at every radius up to the working scale.  In all applications below this is supplied either by a uniform endpoint lower bound or by an explicit critical/power lower-mass hypothesis.

\begin{proposition}[Upper local-mass engine]\label{prop:upper-engine}
Assume $\Theta_a(r)>0$ for $0<r\le r_0$.  For $s\ge s_0$, $\mu\le s^2$, and $v\in H^2(\T)$,
\begin{equation}\label{eq:upper-engine}
    \begin{aligned}
    \|v\|_2&\lesssim \cU_a(s)\,\|Q_{s,\mu}v\|_2,\\
    \cU_a(s)&=\sup_{c_*/(2s)\le t\le r_0}\min\Bigl\{st^2,\ \inf_{0<r\le t}\bigl(r^2+s^{-1}\Theta_a(r)^{-1}\bigr)\Bigr\},
    \end{aligned}
\end{equation}
where $\Theta_a(r)^{-1}:=+\infty$ if $\Theta_a(r)=0$ and $c_*:=\tfrac12(1+C_P)^{-1/2}$, with $C_P$ the constant in \eqref{eq:periodic-wp}; the implicit constants depend only on $r_0$, $\|a\|_\infty$, and $m_0$, not on $\mu$ or on $Y$; the threshold $s_0$ depends only on $r_0$ and on the value $\Theta_a(r_0')$ at the explicit smaller scale $r_0':=\min\bigl\{r_0,\ c_*(1+\max(1,r_0^{-2}))^{-1/2}\bigr\}$ fixed in the proof --- not on $\Theta_a(r_0)$ alone, which does not control masses at smaller scales.  Consequently $P_X(is)$ is invertible for $s\ge s_0$, with
\[
    \|P_X(is)^{-1}\|_{L^2\to L^2}\lesssim\cU_a(s).
\]
\end{proposition}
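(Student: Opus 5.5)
The plan is to fix $s\ge s_0$, $\mu\le s^2$, and $v\in H^2(\T)$, and to write $f=Q_{s,\mu}v$, $N=\|v\|_2$, $F=\|f\|_2$. Everything rests on the two pairing identities already used in Lemma~\ref{lem:observability}:
\[
  s\int_\T a|v|^2=\Im\langle f,v\rangle\le FN,\qquad
  \|v'\|_2^2=\mu N^2+\Re\langle f,v\rangle\le \mu_+N^2+FN .
\]
For a given $\mu$ I attach one natural scale, namely $t_\mu:=c_*\mu^{-1/2}$ when $\mu>0$. The argument then splits according to whether $t_\mu$ lies in the window $[c_*/(2s),r_0]$ over which $\cU_a(s)$ takes its supremum. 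Since $\mu\le s^2$, we always have $t_\mu\ge c_*/s$, so only the upper end of the window needs attention.

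\emph{Absorption step.} Let $0<r$ satisfy $\mu_+r^2\le c_*^2$. Insert the two identities into the periodic weighted Poincar\'e inequality \eqref{eq:periodic-wp}:
\[
  N^2\le C_P\Bigl(\Theta_a(r)^{-1}\frac{FN}{s}+r^2(\mu_+N^2+FN)\Bigr).
\]
The choice $c_*=\tfrac12(1+C_P)^{-1/2}$ gives $C_Pr^2\mu_+\le\tfrac14$, so the $N^2$ term on the right can be absorbed into the left. This yields
\[
  N\lesssim\bigl(r^2+s^{-1}\Theta_a(r)^{-1}\bigr)F .
\]
The bound is trivial when $\Theta_a(r)=0$. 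Because the constraint on $r$ is monotone, the estimate holds for every $r\le t_\mu$, and we may take the infimum over all such $r$. When $\mu\le0$ there is no constraint on $r$ at all.

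\emph{Propagation step.} If $\mu\ge1$, Lemma~\ref{lem:observability} gives
\[
  N\le C_1(s/\mu)F=C_1c_*^{-2}\,s\,t_\mu^2\,F .
\]
So whenever $1\le\mu$ and $t_\mu\le r_0$, both estimates hold at the single scale $t=t_\mu$, which is admissible in the supremum. Hence $N\lesssim\min\{st_\mu^2,\inf_{r\le t_\mu}(r^2+s^{-1}\Theta_a(r)^{-1})\}F\le\cU_a(s)F$.

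\emph{Small-$\mu$ regime.} This is the case $\mu<\max(1,c_*^2r_0^{-2})$, including $\mu\le0$. I expect it to be the main bookkeeping obstacle, and it is exactly where $r_0'$ and the dependence of $s_0$ on $\Theta_a(r_0')$ come from. The definition of $r_0'$ ensures that $\mu_+r^2\le c_*^2$ for all $r\le r_0'$. The absorption step therefore gives
\[
  N\lesssim\inf_{r\le r_0'}\bigl(r^2+s^{-1}\Theta_a(r)^{-1}\bigr)F .
\]
To compare this with $\cU_a(s)$, evaluate the supremum at $t=r_0'$. We need this infimum to be at most $s\,r_0'^2$. The infimum is bounded by its value at $r=r_0'$, namely $r_0'^2+s^{-1}\Theta_a(r_0')^{-1}$, and this is $\le s r_0'^2$ once $s\ge s_0(r_0,\Theta_a(r_0'))$. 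In that range the minimum at $t=r_0'$ equals the infimum, so $N\lesssim\cU_a(s)F$. The hypothesis $\Theta_a>0$ on $(0,r_0]$ guarantees $\Theta_a(r_0')>0$. All constants depend only on $C_P$, $C_1(\|a\|_\infty,m_0)$, and $r_0$.

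\emph{From the block estimate to $P_X(is)$.} Each $Q_{s,\mu}:H^2(\T)\to L^2(\T)$ is a relatively compact perturbation of $-\partial_x^2-\mu$, hence Fredholm of index zero. The a priori bound \eqref{eq:upper-engine} makes it injective, so it is invertible with $\|Q_{s,\mu}^{-1}\|\lesssim\cU_a(s)$, uniformly in $\mu\le s^2$. Applying Lemma~\ref{lem:blocks}(iii) with $\mu_j=s^2-\lambda_j\le s^2$ then gives invertibility of $P_X(is)$ and the stated bound. Negative $s$ follows by complex conjugation.
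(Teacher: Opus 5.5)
Your proposal is correct and follows the paper's proof essentially step for step. It uses the same two pairing identities and the same absorption through \eqref{eq:periodic-wp} at radii with $\mu_+r^2\le c_*^2$. It applies the propagating bound of Lemma~\ref{lem:observability} at the scale $t_\mu\asymp\mu^{-1/2}$ for large $\mu$; the paper's $t_\mu=\min(r_0,c_*(1+\mu_+)^{-1/2})$ and threshold $\max(1,r_0^{-2})$ are only cosmetically different. For small $\mu$ it uses the fixed scale $r_0'$ with the threshold $s r_0'^2\ge r_0'^2+s^{-1}\Theta_a(r_0')^{-1}$, exactly as the paper does.

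The remaining differences are harmless. You obtain block invertibility from Fredholm index zero plus injectivity, where the paper instead uses injectivity of the conjugated adjoint. You also cite the assembly in Lemma~\ref{lem:blocks}(iii) rather than redoing the $H^2$ bookkeeping. One small gap: you should explicitly impose $s_0\ge c_*/(2r_0')$, so that $t=r_0'$ lies in the window defining $\mathcal U_a(s)$.
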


\begin{proof}
Write $\mathrm{abs}(t)=\inf_{0<r\le t}(r^2+s^{-1}\Theta_a(r)^{-1})$, a nonincreasing function of $t$, $f=Q_{s,\mu}v$, $N=\|v\|$, $F=\|f\|$.  Pairing with $v$,
\begin{equation}\label{eq:energy-pair}
    s\int_\T a|v|^2\le FN,\qquad \|v'\|_2^2\le\mu_+N^2+FN,\qquad \mu_+:=\max(\mu,0) .
\end{equation}

\emph{Absorption branch (all $\mu\le s^2$).}  Set $t_\mu=\min(r_0,\,c_*(1+\mu_+)^{-1/2})$.  For any $0<r\le t_\mu$ with $\Theta_a(r)>0$ --- radii with $\Theta_a(r)=0$ contribute $+\infty$ to $\mathrm{abs}$ and may be ignored --- insert \eqref{eq:energy-pair} into \eqref{eq:periodic-wp}:
\[
    N^2\le C_P\Theta_a(r)^{-1}\frac{FN}{s}+C_Pr^2(\mu_+N^2+FN)
    \le \frac14N^2+C\bigl(r^2+s^{-1}\Theta_a(r)^{-1}\bigr)FN,
\]
since $C_Pr^2\mu_+\le C_Pc_*^2\le\frac14$; hence $N\le C\,\mathrm{abs}(t_\mu)\,F$.

\emph{Propagating branch ($1\le\mu\le s^2$).}  Lemma~\ref{lem:observability} gives $N\le C_1(s/\mu)F$.

It remains to check that for every $\mu\le s^2$ the better of the two bounds is $\lesssim\cU_a(s)$.  If $\mu\ge\max(1,r_0^{-2})$, then $t_\mu\asymp\mu^{-1/2}$, so $s/\mu\asymp st_\mu^2$ and both branches together give $N\le C\min\{st_\mu^2,\mathrm{abs}(t_\mu)\}F\le C\,\cU_a(s)F$ (note $t_\mu\ge c_*(1+s^2)^{-1/2}\ge c_*/(2s)$ since $\mu\le s^2$).  If $\mu\le\max(1,r_0^{-2})$, we use the absorption branch alone at the fixed radius $r_0':=\min\bigl\{r_0,\ c_*(1+\max(1,r_0^{-2}))^{-1/2}\bigr\}\le t_\mu$, a constant depending only on $r_0$, admissible in the supremum once $c_*/(2s)\le r_0'$, i.e.\ $s\ge c_*/(2r_0')$: $N\le C\,\mathrm{abs}(r_0')F$; and since $st^2$ is increasing while $\mathrm{abs}$ is nonincreasing, once $s(r_0')^2\ge\mathrm{abs}(r_0')$ the definition of $\cU_a$ at $t=r_0'$ gives $\cU_a(s)\ge\mathrm{abs}(r_0')$, as required.  Since $\mathrm{abs}(r_0')\le(r_0')^2+s^{-1}\Theta_a(r_0')^{-1}$ with $\Theta_a(r_0')>0$ by hypothesis, that condition holds as soon as $s\ge2$ and $s^2\ge2(r_0')^{-2}\Theta_a(r_0')^{-1}$: it is through the value $\Theta_a(r_0')$, and only through it, that $s_0$ depends on the damping.  We also take $s_0\ge c_*/(2r_0')$, so the interval in the supremum defining $\cU_a(s)$ is nonempty.  The hypothesis $\Theta_a>0$ on all of $(0,r_0]$ strengthens the standing assumption of this section; every application below (Theorems~\ref{thm:A}(i), \ref{thm:B}(i), \ref{thm:C}(b)) supplies it through its mass lower bound.  (For $\mu\le-1$, coercivity gives the stronger bound $N\le|\mu|^{-1}F$, but the absorption branch already covers all $\mu\le1$.)  For the resolvent statement: the estimate just proved holds for every block $Q_{s,\mu_j}$, $\mu_j=s^2-\lambda_j\le s^2$, uniformly in $j$, giving injectivity and closed range; since $Q_{s,\mu}^*v=\overline{Q_{s,\mu}\bar v}$, applying the estimate to $\bar v$ gives $\|v\|=\|\bar v\|\le C\,\cU_a(s)\|Q_{s,\mu}\bar v\|=C\,\cU_a(s)\|Q_{s,\mu}^*v\|$, so $Q_{s,\mu}^*$ is injective, the closed range is dense, and each block is bijective with $\|Q_{s,\mu_j}^{-1}\|\le C\,\cU_a(s)$ uniformly in $j$.  The blockwise inverses assemble: for $f=\sum_jf_j\otimes\phi_j\in L^2(X)$, the vector $u:=\sum_jQ_{s,\mu_j}^{-1}f_j\otimes\phi_j$ satisfies $\|u\|\le C\,\cU_a(s)\|f\|$ and lies in $H^2(X)$.  Indeed all three $H^2$ sums are controlled with constants $C(s)$ uniform in $j$: transverse regularity from $\lambda_j\|u_j\|\le C(s)\|f_j\|$ (for $\mu_j\le-1$, coercivity gives $\|u_j\|\le|\mu_j|^{-1}\|f_j\|$ with $\lambda_j\le(s^2+1)|\mu_j|$; for $|\mu_j|\le s^2$, $\lambda_j\le2s^2$ and $\|u_j\|\le C\,\cU_a(s)\|f_j\|$, $s$ being fixed); tangential regularity from the block equation $u_j''=-(f_j+\mu_ju_j-isau_j)$, which gives $\|u_j''\|\le\|f_j\|+(|\mu_j|+s\|a\|_\infty)\|u_j\|\le C(s)\|f_j\|$ in both regimes; and the mixed term from $\|u_j'\|^2=\Re\langle-u_j'',u_j\rangle\le\|u_j''\|\,\|u_j\|$, so $\lambda_j\|u_j'\|^2\le(\lambda_j\|u_j\|)\|u_j''\|\le C(s)^2\|f_j\|^2$.  Hence $\sum_j(\|u_j''\|^2+\lambda_j\|u_j'\|^2+\lambda_j^2\|u_j\|^2)\le C(s)^2\|f\|^2$.  The missing unweighted terms in the $(1+\lambda_j)$ criterion are controlled separately: $\sum_j\|u_j\|^2\le C(s)^2\|f\|^2$, and, by the periodic integration identity and Cauchy--Schwarz, $\sum_j\|u_j'\|^2\le\bigl(\sum_j\|u_j''\|^2\bigr)^{1/2}\bigl(\sum_j\|u_j\|^2\bigr)^{1/2}\le C(s)^2\|f\|^2$.  Together these estimates give the full right-hand side of the product characterization \eqref{eq:product-H2-block}, so $u\in H^2(\T\times Y)$ and $u$ solves $P_X(is)u=f$.  Hence $P_X(is)$ is invertible, and the direct-sum identity \eqref{eq:P-block} gives the displayed estimate for $P_X(is)^{-1}$.
\end{proof}

\begin{lemma}[Critical collapse]\label{lem:critical-collapse}
If $L$ is critical and $\Theta_a(r)\ge cL(1/r)^{-1}$ for $0<r\le r_0$, then
\begin{equation}\label{eq:collapse}
    \cU_a(s)\lesssim s^{-1}L(s),\qquad s\ \text{large}.
\end{equation}
\end{lemma}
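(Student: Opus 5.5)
We bound the supremum defining $\cU_a(s)$ by splitting at the balance scale $\rho_s:=s^{-1}L(s)^{1/2}$ mentioned in the architecture overview. Fix $s$ large and $t\in[c_*/(2s),r_0]$. If $t\le\rho_s$, we use the first entry of the minimum:
\[
st^2\le s\rho_s^2=s^{-1}L(s).
\]
If $t>\rho_s$, we use the second entry and evaluate the infimum at the single radius $r=\rho_s\le t$. This is admissible because $\rho_s\le r_0$ once $s$ is large, since $L(s)=o(s^\varepsilon)$. The hypothesis then gives
\[
\inf_{0<r\le t}\bigl(r^2+s^{-1}\Theta_a(r)^{-1}\bigr)\le \rho_s^2+c^{-1}s^{-1}L(1/\rho_s)=s^{-2}L(s)+c^{-1}s^{-1}L\bigl(sL(s)^{-1/2}\bigr).
\]
In both cases the bound is uniform in $t$, so taking the supremum over $t$ will give \eqref{eq:collapse}, provided the last term is controlled.

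That control is the only point needing care, namely $L(sL(s)^{-1/2})\lesssim L(s)$. We will get it from Lemma~\ref{lem:scale-stability} with $C_0=1$, $\alpha=1$, $\gamma=-1/2$, which yields $L(sL(s)^{-1/2})\le C_1L(s)$ for $s\ge S_1$. A cruder route also suffices: since $L\ge1$ we have $sL(s)^{-1/2}\le s$, so monotonicity of $L$ alone gives the bound once the argument exceeds $S_0$, where $L$ is evaluated without the constant extension. Either way, combining with $s^{-2}L(s)\le s^{-1}L(s)$ gives $\cU_a(s)\le(1+C_1c^{-1})s^{-1}L(s)$ for $s\ge\max(S_1,s_*)$. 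Here $s_*$ is chosen so that $\rho_s\le r_0$ and $sL(s)^{-1/2}\ge S_0$.

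The main obstacle is therefore not analytic but bookkeeping. We must check that the chosen radius lies inside the admissible range $(0,t]\cap(0,r_0]$, where the mass hypothesis applies. We must also check that the thresholds depend only on $r_0$, $c$, and the gauge data. No lower bound on $t$ is needed, because the first branch handles every small $t$.
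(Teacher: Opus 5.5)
Your proof is correct and follows the paper's argument. You split at $\rho_s=s^{-1}L(s)^{1/2}$, use the branch $st^2$ for $t\le\rho_s$, and evaluate the absorption infimum at $r=\rho_s$ for $t>\rho_s$. Your remark that monotonicity of $L$ together with $L\ge1$ already gives the needed one-sided bound $L(sL(s)^{-1/2})\le L(s)$ is a valid shortcut; the paper instead obtains this bound from the two-sided scale-stability lemma.
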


\begin{proof}
Let $\rho_s=s^{-1}L(s)^{1/2}$.  Since critical gauges are subpolynomial, $\rho_s\to0$ and $1/\rho_s=sL(s)^{-1/2}\to\infty$.  Increase the threshold in $s$ so that $\rho_s\le r_0$ and $1/\rho_s\ge\max(S_0,S_1)$, where $S_1$ is the scale-stability threshold for the rescaling $S\mapsto SL(S)^{-1/2}$; also $\rho_s\ge c/s$ because $L\ge1$.  If $t\le\rho_s$ then $st^2\le s^{-1}L(s)$.  If $t\ge\rho_s$, take $r=\rho_s$ in the absorption branch: by Lemma~\ref{lem:scale-stability}, $L(1/\rho_s)=L(sL(s)^{-1/2})\asymp L(s)$, so
\[
    \rho_s^2+s^{-1}\Theta_a(\rho_s)^{-1}\le s^{-2}L(s)+Cs^{-1}L(s)\lesssim s^{-1}L(s).\qedhere
\]
\end{proof}

\begin{proposition}[Absorbing ODE quasimode]\label{prop:absorbing}
There are $c_0,c,C>0$, depending only on $\|a\|_\infty$, with the following property.  If $s>0$ and $0<r\le\tfrac12$ satisfy the admissibility condition
\begin{equation}\label{eq:admissible}
    s\,r^2\,\Theta_a(r)\le c_0,
\end{equation}
then there is $v\in H^2(\T)\setminus\{0\}$, supported in an interval of radius $r$, with
\begin{equation}\label{eq:absorbing-quasimode}
    \bigl\|(-\partial_x^2+isa)v\bigr\|_2\le C\bigl(r^{-2}+s\,\Theta_a(r)\bigr)\|v\|_2 .
\end{equation}
Consequently, whenever $Q_{s,0}$ is invertible,
\begin{equation}\label{eq:absorbing-scalar-lower}
    \|Q_{s,0}^{-1}\|\ \ge\
       c\bigl(r^{-2}+s\Theta_a(r)\bigr)^{-1}.
\end{equation}
If, in addition, $s=s_j$ is a transverse eigenfrequency, then either
$is_j\in\spec(\cA)$ or
\begin{equation}\label{eq:absorbing-lower}
    \|P_X(is_j)^{-1}\|\ \ge\ c\bigl(r^{-2}+s_j\Theta_a(r)\bigr)^{-1}.
\end{equation}
\end{proposition}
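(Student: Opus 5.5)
Fix an interval $I=(x_*-r,x_*+r)$ nearly realizing the infimum in \eqref{eq:Theta}, so that $\int_I a\le 4r\Theta_a(r)$; the infimum over the compact circle is attained, by continuity of the average in $x$, so we may even take equality $\int_Ia=2r\Theta_a(r)$. On $I$ we solve the exact ODE $-w''+isaw=0$ with Cauchy data $w(x_*)=1$, $w'(x_*)=0$. The solution exists in $W^{2,1}(I)$, in fact in $H^2$ since $a\in L^\infty$. We then set $v=\chi w$, where $\chi$ is a smooth cutoff equal to $1$ on the middle half of $I$ and supported in $I$, with $|\chi'|\lesssim r^{-1}$ and $|\chi''|\lesssim r^{-2}$.

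The key step is to show that $w$ stays close to $1$ on $I$, using only the admissibility condition \eqref{eq:admissible}. Writing $w=1+h$, the integral equation is
\[
h(x)=is\int_{x_*}^x\int_{x_*}^y a(t)\bigl(1+h(t)\bigr)\,dt\,dy .
\]
Its Volterra operator has norm on $L^\infty(I)$ bounded by $s\,r\int_I a\le 2sr^2\Theta_a(r)\le 2c_0$. Choosing $c_0$ small makes it a contraction, which gives $\|h\|_\infty\le 4c_0\le\tfrac12$. The derivative bound follows from the same equation: $|w'|=|h'|\le 2s\int_I a\lesssim s\,r\,\Theta_a(r)$ on $I$. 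Notably, this uses only the mass of $a$ on $I$ and no regularity of $a$.

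Next we compute $(-\partial_x^2+isa)v=-\chi''w-2\chi'w'$, since $\chi(-w''+isaw)=0$. This gives
\[
\|(-\partial_x^2+isa)v\|_2\lesssim r^{1/2}\bigl(r^{-2}+r^{-1}s\,r\,\Theta_a(r)\bigr).
\]
On the other side, $|w|\ge\tfrac12$ on the middle half of $I$ gives $\|v\|_2\gtrsim r^{1/2}$. Together these yield \eqref{eq:absorbing-quasimode}; the constants depend on $\|a\|_\infty$ only through $c_0$, and really they are absolute. The requirement $r\le\tfrac12$ guarantees that $I$ embeds in $\T$ without wrapping.

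The consequences are formal. For \eqref{eq:absorbing-scalar-lower}, apply $Q_{s,0}^{-1}$ to $f=Q_{s,0}v$, which gives $\|v\|\le\|Q_{s,0}^{-1}\|\|f\|$. For \eqref{eq:absorbing-lower}, the block identity \eqref{eq:P-block} with $\mu_j=s_j^2-\lambda_j=0$ for the resonant block shows that $\|P_X(is_j)^{-1}\|\ge\|Q_{s_j,0}^{-1}\|$ whenever $P_X(is_j)$ is invertible. By Lemma~\ref{lem:blocks}(iii), that invertibility holds exactly when $is_j\notin\spec(\cA)$, which produces the stated dichotomy.

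The main obstacle is the contraction bound: the quasimode must be built from the exact ODE rather than from a plane wave, precisely so that a rough $a$ enters only through $\int_I a$.
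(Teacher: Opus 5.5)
Your proposal is correct and follows the paper's proof essentially step for step: the same minimizing interval, the same exact Cauchy problem $-w''+isaw=0$ with $w(x_*)=1$, $w'(x_*)=0$, the same cutoff identity $(-\partial_x^2+isa)(\chi w)=-\chi''w-2\chi'w'$, and the same formal consequences. The differences are only cosmetic and equivalent: you bound $w-1$ by an $L^\infty(I)$ contraction where the paper applies Gr\"onwall on each side of the center, and you get the $P_X(is_j)$ bound from the block identity \eqref{eq:P-block} where the paper applies $P_X(is_j)^{-1}$ directly to $v\phi_j$.
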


\begin{proof}
The infimum in \eqref{eq:Theta} is attained: $x\mapsto\int_{x-r}^{x+r}a$ is continuous by $L^1$-continuity of translations, and $\T$ is compact.  So choose $I=(x_0-r,x_0+r)$ with $\int_Ia=2r\Theta_a(r)$.  Let $w$ solve $-w''+isaw=0$ on $I$ with $w(x_0)=1$, $w'(x_0)=0$.  The Volterra representation
\[
    w(x)=1+is\int_{x_0}^x(x-y)a(y)w(y)\,dy
\]
gives (the arc $I$ being identified with an interval of $\R$, possible since $2r<2\pi$), by Gr\"onwall applied on each side of $x_0$ separately --- for $x\in[x_0,x_0+r]$ one has $|x-y|\le r$ on the domain of integration, so $|w(x)|\le1+rs\int_{x_0}^{x}a(y)|w(y)|\,dy$ and Gr\"onwall yields $\sup_{[x_0,x_0+r]}|w|\le e^{rs\int_Ia}=e^{2sr^2\Theta_a(r)}$; likewise on $[x_0-r,x_0]$ --- $\sup_I|w|\le e^{2sr^2\Theta_a(r)}\le e^{2c_0}$, then $|w-1|\le 2sr^2\Theta_a(r)e^{2c_0}\le\frac12$ for $c_0$ small, and $|w'|\le s\int_Ia\,\sup|w|\le Csr\Theta_a(r)$ on $I$.  Since $w\in L^\infty(I)$ and $a\in L^\infty$, the equation gives $w''=isaw\in L^\infty(I)$, hence $w\in W^{2,\infty}(I)$.  Let $\chi\in C_c^\infty(I)$ equal $1$ on the middle half with $|\chi'|\lesssim r^{-1}$, $|\chi''|\lesssim r^{-2}$, and set $v=\chi w\in H^2(\T)$.  Then $(-\partial_x^2+isa)v=-\chi''w-2\chi'w'$, so
\[
    \|(-\partial_x^2+isa)v\|_2\lesssim\bigl(r^{-2}+s\Theta_a(r)\bigr)r^{1/2},
    \qquad \|v\|_2\gtrsim r^{1/2},
\]
which is \eqref{eq:absorbing-quasimode}.  If $Q_{s,0}$ is invertible,
applying its inverse to this quasimode gives
\eqref{eq:absorbing-scalar-lower}.  Multiplying $v$ by a transverse
eigenfunction $\phi_j$ with $\lambda_j=s_j^2$ gives
\[
    P_X(is_j)(v\phi_j)=\bigl((-\partial_x^2+is_ja)v\bigr)\phi_j .
\]
If $P_X(is_j)$ is not invertible, Lemma~\ref{lem:blocks}\textup{(iii)} gives $is_j\notin\rho(\cA)$, and since $\cA$ has compact resolvent this means $is_j\in\spec(\cA)$.  If $P_X(is_j)$ is invertible, applying $P_X(is_j)^{-1}$ to the displayed quasimode gives
\[
    \|P_X(is_j)^{-1}\|\ge c\bigl(r^{-2}+s_j\Theta_a(r)\bigr)^{-1},
\]
which is \eqref{eq:absorbing-lower}.
\end{proof}

\section{The endpoint and the critical dictionary}\label{sec:dictionary}

Throughout this section we take $s>0$; since $\cA$ has real coefficients and $P_X(-is)v=\overline{P_X(is)\bar v}$, all axis estimates are symmetric in $s$.

\begin{remark}[A priori bounds imply invertibility]\label{rem:invertibility}
If $\|v\|_2\le M\|Q_{s,\mu}v\|_2$ for all $v\in H^2(\T)$, then $Q_{s,\mu}$ is invertible with $\|Q_{s,\mu}^{-1}\|\le M$: the estimate gives injectivity and closed range, and since $Q_{s,\mu}^*v=\overline{Q_{s,\mu}\overline v}$, the adjoint is injective as well, so the range is dense.  As the blocks of $P_X(is)$ are then invertible with uniformly bounded inverses, the conclusion of Proposition~\ref{prop:upper-engine} reads: \emph{$P_X(is)$ is invertible and $\|P_X(is)^{-1}\|\lesssim\cU_a(s)$ for $s\ge s_0$}; this stronger form is the one used later.  No invertibility hypothesis is imposed.
\end{remark}

\subsection{Proof of Theorem \ref{thm:A}}

If $a=0$ a.e., then $\cA$ is the standard skew-adjoint realization of the undamped wave generator on $\cH$: $\Re\langle\cA U,U\rangle_\cH=0$, and the Lax--Milgram surjectivity argument of Lemma~\ref{lem:blocks}\textup{(i)}, with $a=0$ and with the signs changed, gives both $\operatorname{Ran}(I-\cA)=\cH$ and $\operatorname{Ran}(I+\cA)=\cH$.  Thus $\cA$ and $-\cA$ are maximally dissipative, so $e^{t\cA}$ is a unitary group.  For every $j\ge1$, $Y$ being connected gives $\phi_j\perp1$, so $1\otimes\phi_j$ has zero mean on $X$ and represents a nonzero element of $\dot H^1(X)$; consequently $U_j=(1\otimes\phi_j,\ is_j\,1\otimes\phi_j)$ is an eigenvector of $\cA$ at $is_j$.  Thus (i)--(iv) all fail, and the equivalence holds trivially.  Assume henceforth $\int_\T a>0$ and set $r_0:=\pi$; then $\Theta_a(r_0)=(2\pi)^{-1}\int_\T a>0$, so this explicitly supplies the working scale required by Proposition~\ref{prop:upper-engine}.

\emph{(i)$\Rightarrow$(iii).}  The hypothesis gives $c_\flat>0$ and $r'\in(0,\pi]$ with $\Theta_a(r)\ge c_\flat$ for all $0<r\le r'$; after decreasing $r_0$ to $\min(r_0,r')$ --- harmless, every constant below being allowed to depend on it --- assume $\Theta_a(r)\ge c_\flat>0$ for $0<r\le r_0$.  For any admissible $t$ in \eqref{eq:upper-engine}, choosing $r=\min(t,s^{-1/2})$ in the absorption branch gives
\[
    \inf_{0<r\le t}\bigl(r^2+s^{-1}\Theta_a(r)^{-1}\bigr)\le (1+c_\flat^{-1})\,s^{-1},
\]
so $\cU_a(s)\le(1+c_\flat^{-1})s^{-1}$; the hypothesis gives $\Theta_a(r)\ge c_\flat>0$ for all $0<r\le r_0$, in particular at the auxiliary scale $r_0'$, so the threshold data of Proposition~\ref{prop:upper-engine} are controlled, and by that proposition and Remark~\ref{rem:invertibility}, $\|P_X(is)^{-1}\|\le Cs^{-1}$ for $s\ge s_0$; by Lemma~\ref{lem:comparison}, $is\in\rho(\cA)$ and $\|(is-\cA)^{-1}\|\le C$ for $|s|\ge s_0$.  For bounded frequencies it suffices, by compactness of the resolvent (Lemma~\ref{lem:blocks}) --- so that every spectral point is an eigenvalue of finite algebraic multiplicity --- to exclude eigenvalues on $i[-s_0,s_0]$.  At $z=0$: $\cA U=0$ gives $[v]=0$, so $v$ is a constant $c$; the second component reads $\Delta_Xu=ca$, and integrating over $X$ gives $c\int_Xa=0$, so $c=0$ (as $\int_\T a\ge2r_0c_\flat>0$ under (i)), $v=0$, $\Delta_Xu=0$, and $[u]=0$ in $\dot H^1$.  At $z=is$, $0<|s|\le s_0$: by Lemma~\ref{lem:blocks} an eigenfunction produces, for some $j$, a nontrivial $v\in H^2(\T)$ with $-v''+(\lambda_j-s^2)v+isav=0$; pairing with $v$ and taking imaginary parts gives $s\int_\T a|v|^2=0$, so $av=0$ a.e.  Thus $v$ satisfies $-v''+(\lambda_j-s^2)v=0$ distributionally on $\T$.  Solutions of this constant-coefficient equation are real-analytic, and $v$ vanishes a.e.\ on $\{a>0\}$, a set of positive measure since $\int_\T a\ge 2r_0c_\flat>0$; therefore $v\equiv0$.  Thus $i\R\subset\rho(\cA)$, and continuity of the resolvent on the compact segment $i[-s_0,s_0]$ gives (iii).

\emph{(iii)$\Rightarrow$(iv).}  This is the Gearhart--Pr\"uss--Huang theorem \cite{Gearhart,Pruss,Huang}.

\emph{(iv)$\Rightarrow$(ii).}  By the standard resolvent characterization of exponential stability for bounded semigroups on Hilbert space, exponential stability gives $i\R\subset\rho(\cA)$ and $\sup_{s\in\R}\|(is-\cA)^{-1}\|<\infty$; then \eqref{eq:stat-gen-reverse} yields (ii).

\emph{(ii)$\Rightarrow$(i).}  We prove the contrapositive.  If $\liminf_{r\downarrow0}\Theta_a(r)=0$, choose $r_k\downarrow0$ with $\theta_k=\Theta_a(r_k)\to0$.  Fix $K\ge1$.  Since the transverse eigenfrequencies are dyadically syndetic (\S\ref{subsec:notation}) and every window $[T,4T)$ contains a full dyadic block $[2^m,2^{m+1})$ (take $2^m\in[T,2T)$), for large $k$ we may choose $s^{(k)}\in\{s_j\}\cap[Kr_k^{-2},4Kr_k^{-2})$.  Then $s^{(k)}r_k^2\theta_k\le 4K\theta_k\to0$, so \eqref{eq:admissible} holds for $k$ large; passing to larger $k$, also $r_k\le\tfrac12$, so the radius hypothesis of Proposition~\ref{prop:absorbing} is satisfied.  Hence Proposition~\ref{prop:absorbing} applies: either $is^{(k)}\in\spec(\cA)$, in which case $P_X(is^{(k)})$ is not invertible and (ii) fails, or
\[
    \|P_X(is^{(k)})^{-1}\|\ \ge\ c\bigl(r_k^{-2}+s^{(k)}\theta_k\bigr)^{-1}
    \ \ge\ \frac{c}{2}\,r_k^{2}
    \ \ge\ \frac{cK}{2\,s^{(k)}},
\]
using $s^{(k)}\theta_k\le 4K\theta_kr_k^{-2}\le r_k^{-2}$ for large $k$ and, from $s^{(k)}\ge Kr_k^{-2}$, the inequality $r_k^2\ge K/s^{(k)}$.  If (ii) held with constant $C$, then $cK/2\le C$, which fails once $K>2C/c$.  This proves the equivalence.

\emph{Characteristic case.}  If $|\T\setminus\Omega|=0$ then for every $x$ and $r$, $|(x-r,x+r)\setminus\Omega|=0$, so $\vartheta_\Omega(r)=1$ and (i) holds.  If $|\T\setminus\Omega|>0$, let $x_*$ be a Lebesgue density point of $\T\setminus\Omega$; then $(2r)^{-1}|\Omega\cap(x_*-r,x_*+r)|\to0$, so $\vartheta_\Omega(r)\to0$ and (i) fails. \qed

\subsection{Proof of Theorem \ref{thm:B}}

\emph{(i)$\Rightarrow$(ii).}  For this implication set $r_0:=r_{\mathrm{mass}}\in(0,\pi]$.  Condition \textup{(i)} gives $\Theta_a(r)\ge cL(1/r)^{-1}>0$ for every $0<r\le r_0$, in particular at the auxiliary scale $r_0'$, supplying the full-range positivity required by Proposition~\ref{prop:upper-engine}; Lemma~\ref{lem:critical-collapse}, that proposition, and Remark~\ref{rem:invertibility} give $\|P_X(is)^{-1}\|\lesssim s^{-1}L(s)$ for $s\ge s_0$.

\emph{(ii)$\Rightarrow$(iv).}  Lemma~\ref{lem:comparison} gives $is\in\rho(\cA)$ and $\|(is-\cA)^{-1}\|\le C(1+|s|\cdot C|s|^{-1}L(|s|))\lesssim L(|s|)$.

\emph{(iv)$\Rightarrow$(ii).}  Estimate \eqref{eq:stat-gen-reverse}.

\emph{(ii)$\Rightarrow$(iii).}  For every $s\in\Sigma_Y$ above the
threshold, $Q_{s,0}$ is a direct summand of $P_X(is)$.  Hence
\eqref{eq:P-block} gives
\[
    \|Q_{s,0}^{-1}\|\le\|P_X(is)^{-1}\|
       \le Cs^{-1}L(s).
\]
Take $\cS=\Sigma_Y\cap[s_0,\infty)$, which is dyadically syndetic by
\S\ref{subsec:notation}.

\emph{(iii)$\Rightarrow$(i).}  We prove the contrapositive.  If $a=0$
a.e., then $Q_{s,0}1=0$ for every $s$, so (iii) fails, while (i) also
fails.  Thus $\int_\T a>0$ and the damping is nontrivial.  No mass scale
from condition \textup{(i)} is available in the contrapositive argument
below; the only radius restriction in the absorbing quasimode is the
harmless smallness condition $r\le\tfrac12$.  If \eqref{eq:crit-mass}
fails for every $c,r_{\mathrm{mass}}$, then
$\liminf_{r\downarrow0}\Theta_a(r)L(1/r)=0$: choose $r_k\downarrow0$ with
\[
    \eps_k:=\Theta_a(r_k)\,L(R_k)\to0,\qquad R_k=1/r_k,\ \theta_k=\Theta_a(r_k).
\]
Let $C$ be the constant in (iii), let $c,c_0$ be the constants of Proposition~\ref{prop:absorbing}, and let $C_1$ be the constant of Lemma~\ref{lem:scale-stability} for the rescaling $S\mapsto 4KS^2L(S)$, which for $R_k\ge S_1(K)$ may be taken \emph{independent of $K$}.  Fix $K\ge1$, to be chosen.  Every window $[T,4T)$ contains a full dyadic block $[2^m,2^{m+1})$ with $2^m\in[T,2T)$, so by dyadic syndeticity of $\cS$ we may choose, for $k$ large,
\[
    s^{(k)}\in\cS\cap\bigl[KR_k^2L(R_k),\,4KR_k^2L(R_k)\bigr).
\]
Then $s^{(k)}r_k^2\theta_k\le 4K\,\eps_k\to0$, so
\eqref{eq:admissible} holds for large $k$; moreover $r_k\le\tfrac12$ for
large $k$, so the radius hypothesis of Proposition~\ref{prop:absorbing}
is also satisfied.  Also
$s^{(k)}\theta_k\le 4K\eps_kR_k^2\le R_k^2$ once
$4K\eps_k\le1$.  Condition (iii) says that $Q_{s^{(k)},0}$ is invertible,
so \eqref{eq:absorbing-scalar-lower} gives
\[
    \|Q_{s^{(k)},0}^{-1}\|\ \ge\ c\bigl(R_k^2+s^{(k)}\theta_k\bigr)^{-1}\ \ge\ \frac{c}{2R_k^2}.
\]
On the other hand, (iii) and Lemma~\ref{lem:scale-stability} --- applied with $\alpha=2$, $\gamma=1$, $C_0=4K$, whose comparison constant is independent of $K$ --- give
\[
    \|Q_{s^{(k)},0}^{-1}\|\ \le\ C\,\frac{L(s^{(k)})}{s^{(k)}}
    \ \le\ C\,\frac{C_1L(R_k)}{K R_k^2L(R_k)}
    \ =\ \frac{CC_1}{KR_k^2}.
\]
Choosing $K>2CC_1/c$ (legitimate: $C_1$ does not depend on $K$) yields a contradiction for large $k$.  Hence \eqref{eq:crit-mass} holds.

\emph{Saturation sharpness.}  Assume (i)--(iv) and \eqref{eq:crit-upper-saturation}, and fix $\eps\in(0,1]$ small.  For large $j$ set $r_j=\eps\,s_j^{-1/2}L(s_j)^{1/2}$.  Since every critical gauge is subpolynomial, $r_j\to0$; increasing the threshold in $j$, we may and do assume $r_j\le\min(r_{\mathrm{mass}},r_1,\tfrac12)$.  By Lemma~\ref{lem:scale-stability}, $L(1/r_j)\asymp L(s_j)$, so \eqref{eq:crit-upper-saturation} gives $\Theta_a(r_j)\le C'L(s_j)^{-1}$, and
\[
    s_jr_j^2\Theta_a(r_j)\le C'\eps^2\le c_0
\]
for $\eps$ small: \eqref{eq:admissible} holds.  By \textup{(iv)}, $is_j\in\rho(\cA)$, so Proposition~\ref{prop:absorbing} yields
\[
    \|P_X(is_j)^{-1}\|\ \ge\ c\bigl(\eps^{-2}s_jL(s_j)^{-1}+C's_jL(s_j)^{-1}\bigr)^{-1}
    \ \ge\ c_\eps\,\frac{L(s_j)}{s_j},
\]
Here $c_\eps$ may be chosen with $c_\eps\asymp\eps^2$ as
$\eps\downarrow0$.  This and (ii) prove the first half of
\eqref{eq:sharp-transverse}.  For the generator, the upper bound is (iv)
and the lower bound follows from \eqref{eq:stat-gen-reverse}:
$\|(is_j-\cA)^{-1}\|\ge s_j\|P_X(is_j)^{-1}\|\ge c_\eps L(s_j)$.
\qed

\subsection{Proof of Corollary \ref{cor:log}}
$L(S)=(\log(eS))^A$ is critical with $\ell(u)=u^A$, and $(\log|s|)^A\asymp L(|s|)$ for $|s|\ge2$; restricting $r_0$ to be at most $1$ is harmless because Theorem~\ref{thm:B} uses only small radii in the inverse direction.  The dyadic-block formulation of the sparse stationary hypothesis is exactly dyadic syndeticity, so Theorem~\ref{thm:B} applies directly to the stationary bound.  If instead the generator bound is assumed only on a dyadically syndetic subset $\cS_0$ of the distinct transverse frequencies, with $is\in\rho(\cA)$ for $s\in\cS_0$, then the reverse comparison estimate \eqref{eq:stat-gen-reverse} gives, on the same subset,
\[
    \|P_X(is)^{-1}\|\le |s|^{-1}\|(is-\cA)^{-1}\|
    \lesssim |s|^{-1} L(|s|).
\]
By \eqref{eq:P-block}, this full stationary estimate restricts to the
resonant scalar block.  Thus
Theorem~\ref{thm:B}\textup{(iii)}$\Rightarrow$\textup{(i)} again gives
the logarithmic lower-mass condition. \qed

\section{Exact rough profiles and the power breakdown}\label{sec:profiles}

Theorem~\ref{thm:C} complements the rough dictionary in two directions.
Part (a) shows that every critical profile occurs for a
genuinely rough characteristic damping, so the equivalence of
Theorem~\ref{thm:B} is not tied to regular model wells.  Part (b) shows
that the analogous reverse implication fails for every sublinear power
gauge.  Together they establish both the abundance of the critical theory
and its precise contrast with power scales.

\subsection{Proof of Theorem \ref{thm:C}(a)}

Write $\theta(r)=L(1/r)^{-1}$ and $\theta_n=\theta(2^{-n})$; by Definition~\ref{def:dyadic-regular}, $\theta$ is nondecreasing, $\theta_n\downarrow0$, and $\theta_{n+1}\ge c_\theta\theta_n$.  Consequently, for every fixed $C>0$ there are constants $0<c_C\le C_C<\infty$ such that $c_C\theta(r)\le\theta(Cr)\le C_C\theta(r)$ for all sufficiently small $r$ for which both radii are in $(0,1)$; one moves only finitely many dyadic levels.  All intervals below are arcs in $\T$ of the stated lengths; harmless factors of $2\pi$ are absorbed into these constants.

\emph{Selection of generations.}  Let $N_1$ be large --- large enough that the dyadic-regularity estimates below hold for all $n\ge N_1$, that $2^{-N_1}$ and every fixed-factor radius used below lie in the domain on which $\theta$ is being compared, that $\theta_{N_1}\le1$, and that all small-radius comparisons below occur beyond this threshold --- and define inductively $N_{k+1}$ as the least integer with $\theta_{N_{k+1}}\le\theta_{N_k}/2$.  Then $\theta_{N_k}\le2^{-(k-1)}\theta_{N_1}$, so
\[
    \sum_k\theta_{N_k}\le2\theta_{N_1}<\infty,
\]
and by minimality $\theta_{N_{k+1}}\ge c_\theta\,\theta_{N_{k+1}-1}>c_\theta\theta_{N_k}/2$, so
\begin{equation}\label{eq:generation-comparability}
    \frac{c_\theta}{2}\,\theta_{N_k}\ \le\ \theta_{N_{k+1}}\ \le\ \frac12\,\theta_{N_k}.
\end{equation}

\emph{Construction.}  For $\delta\in(0,\tfrac14]$ let $\Omega=\Omega_\delta$ be the union, over all $k$ and over all dyadic cells $C$ of generation $N_k$ (arcs of length $h_{N_k}=2\pi\,2^{-N_k}$ with the nested dyadic endpoints $2\pi m2^{-N_k}$, $0\le m<2^{N_k}$; in particular $0$ is an endpoint of every generation, as the upper-bound witness below uses), of the open arc of length $\delta h_{N_k}\theta_{N_k}$ centered at the center of $C$.  Since $\theta_{N_k}\le1$ and $\delta\le1/4$, every added arc is strictly shorter than its cell.  Then $\Omega$ is open and
\[
    |\Omega|\le\sum_k2^{N_k}\cdot\delta h_{N_k}\theta_{N_k}=2\pi\delta\sum_k\theta_{N_k}<\eta
\]
for $\delta$ small.  Since $h_{N_k}\to0$, every open arc of $\T$ contains a full dyadic cell of generation $N_k$ for all sufficiently large selected $k$, hence one of the added central arcs; so $\Omega$ is dense, and $\T\setminus\Omega$ is closed and nowhere dense, of measure $\ge2\pi-\min(\eta,\pi)\ge\pi>0$ once $\delta$ is chosen so small that $|\Omega|<\min(\eta,\pi)$.

\emph{Lower bound.}  Let $J=(x-r,x+r)$ with $r$ small, and let $k$ be minimal with $h_{N_k}\le r/2$ (for $r$ small, $k\ge2$).  Then $J$ contains at least $\lfloor 2r/h_{N_k}\rfloor-1\ge r/h_{N_k}$ full cells of generation $N_k$, each contributing its central arc in full, so
\[
    |\Omega\cap J|\ \ge\ \frac{r}{h_{N_k}}\cdot\delta h_{N_k}\theta_{N_k}=\delta\,r\,\theta_{N_k}.
\]
After decreasing the final small-radius threshold so that all dyadic indices used are beyond the dyadic-regularity threshold, by minimality, $h_{N_{k-1}}>r/2$, i.e.\ $2^{-N_{k-1}}>r/(4\pi)$, so $\theta_{N_{k-1}}\ge\theta(r/(4\pi))\ge\theta(r/16)\ge c_\theta^5\,\theta(r)=:c'\theta(r)$ --- bracketing $2^{-n-1}<r\le2^{-n}$, monotonicity and five applications of $\theta_{m+1}\ge c_\theta\theta_m$ give $\theta(r/16)\ge\theta(2^{-n-5})\ge c_\theta^5\theta(2^{-n})\ge c_\theta^5\theta(r)$; with \eqref{eq:generation-comparability}, $\theta_{N_k}\ge(c_\theta/2)\theta_{N_{k-1}}\ge c''\theta(r)$.  Hence, for every center $x$,
\[
    \frac{|\Omega\cap (x-r,x+r)|}{2r}\ \ge\ \frac{\delta c''}{2}\,\theta(r).
\]
Taking the infimum over $x\in\T$ gives
\[
    \vartheta_\Omega(r)\ge \frac{\delta c''}{2}\,\theta(r).
\]

\emph{Upper bound.}  Take the witness $x_*=0$, a dyadic endpoint of every generation, and $J=(-r,r)$.  If $h_{N_m}>4r$, the two cells of generation $N_m$ adjacent to $0$ have centers at distance $h_{N_m}/2>2r$ from $0$, while their central arcs have half-length $\le\delta h_{N_m}/2\le h_{N_m}/8$; since $h_{N_m}/2-h_{N_m}/8>r$, no central arc of generation $N_m$ meets $J$.  If $h_{N_m}\le4r$, at most $2r/h_{N_m}+2\le10r/h_{N_m}$ cells of generation $N_m$ meet $J$, contributing at most $10\delta r\theta_{N_m}$.  Using subadditivity and ignoring overlaps between generations, let $k$ be minimal with $h_{N_k}\le4r$; then $2^{-N_k}=h_{N_k}/(2\pi)\le 2r/\pi<r$, so $\theta_{N_k}\le\theta(r)$, and by \eqref{eq:generation-comparability},
\[
    |\Omega\cap J|\ \le\ 10\delta r\sum_{m\ge k}\theta_{N_m}\ \le\ 20\delta r\,\theta_{N_k}\ \le\ 20\delta r\,\theta(r),
\]
whence $\vartheta_\Omega(r)\le10\delta\,\theta(r)$.  Together, $\vartheta_\Omega(r)\asymp\theta(r)=L(1/r)^{-1}$, which is \eqref{eq:exact-profile-theorem}.

\emph{Resolvent statements.}  Two-sided saturation holds, so Theorem~\ref{thm:B} gives \eqref{eq:crit-stat}, \eqref{eq:crit-gen}, and the attainment \eqref{eq:sharp-transverse} along the transverse eigenfrequencies.  Since $\vartheta_\Omega(r)\to0$, Theorem~\ref{thm:A} shows exponential stabilization fails. \qed

\begin{lemma}[Centered arcs minimize radial averages]\label{lem:centered-arc-min}
Let $0<r<\pi/2$ and let $F:[0,\pi]\to[0,\infty)$ be nondecreasing.  Put $f(x)=F(\operatorname{dist}_{\T}(x,0))$.  Among all arcs $I\subset\T$ of length $2r$, the integral $\int_I f$ is minimized by the centered arc $(-r,r)$.
\end{lemma}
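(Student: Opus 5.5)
Every arc of length $2r$ can be written as $I_c=(c-r,c+r)$, and since $f$ is even about $0$ and $2\pi$-periodic, it suffices to take $c\in[0,\pi]$. The plan is to compare $\int_{I_c}f$ with $\int_{I_0}f$ through a symmetric-decreasing rearrangement argument on the circle, done by hand. The tool is a layer-cake reduction: write
\[
 F(t)=F(0)+\int_{(0,\infty)}\one_{\{F(t)\ge\lambda\}}\,d\lambda
\]
(more precisely, $F$ minus its value at $0$ is an integral of indicators of superlevel sets). Because $F$ is nondecreasing, each superlevel set $\{t\in[0,\pi]:F(t)\ge\lambda\}$ is an interval $[\tau,\pi]$ or $(\tau,\pi]$. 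Consequently $f$ is a nonnegative superposition of the constant $F(0)$ and indicators of the sets $\{x:\operatorname{dist}_\T(x,0)\ge\tau\}$, the complements of centered arcs. Tonelli lets us exchange the integrals, so it suffices to prove the claim when $f=\one_{\T\setminus(-\tau,\tau)}$, or its closed variant, for each $\tau\in[0,\pi]$. The constant term contributes the same amount $2rF(0)$ to every arc.

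For these indicators, minimizing $\int_{I_c}\one_{\T\setminus(-\tau,\tau)}$ is the same as maximizing $|I_c\cap(-\tau,\tau)|$. This is the overlap of two arcs on the circle, of lengths $2r$ and $2\tau$. Here $2r<\pi$, so the arc $I_c$ itself does not wrap onto itself. When $2\tau\ge 2r$, the centered arc $I_0$ is contained in $(-\tau,\tau)$ whenever $r\le\tau$, so the overlap equals its maximal possible value $2r$. When $\tau<r$, the overlap is at most $2\tau$, and $I_0\supset(-\tau,\tau)$ attains that bound. In both cases the overlap is at most $\min(2r,2\tau)$ for every $c$, and $c=0$ attains it. This gives $\int_{I_c}\one_{\ge\tau}\ge\int_{I_0}\one_{\ge\tau}$ for every $\tau$, and integrating in $\lambda$ yields the lemma. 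The closed versus open endpoints of the level sets differ only on null sets, so they do not matter.

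The only step that needs any care is the layer-cake representation when $F$ has jumps or infinite range. Since $F$ is finite-valued and nondecreasing on a compact interval, it is bounded, so the $\lambda$-integral runs over the bounded range $[F(0),F(\pi)]$ and measurability is automatic. The sole role of the hypothesis $r<\pi/2$ is to guarantee that arcs are honest intervals of length below $\pi$, so the overlap bound $\min(2r,2\tau)$ is valid on $\T$ without any wrap-around case.
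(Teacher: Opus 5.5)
Your proposal is correct and follows essentially the same route as the paper. Both arguments use a layer-cake decomposition of $f$ into superlevel sets, which are complements of centered arcs, together with the overlap bound $|I\cap[-t,t]|\le\min(2r,2t)$, which the centered arc $(-r,r)$ attains. (A minor slip: your displayed formula should integrate over $\lambda\in(F(0),\infty)$ rather than $(0,\infty)$, or else $F(0)$ is counted twice; also, the overlap bound holds for every arc simply because $|A\cap B|\le\min(|A|,|B|)$, so it does not need the hypothesis $r<\pi/2$.)
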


\begin{proof}
For $\alpha\ge0$, since $F$ is nondecreasing, $\{t\in[0,\pi]:F(t)\le\alpha\}$ is an interval with left endpoint $0$; with $\rho_\alpha$ its supremum ($\rho_\alpha:=0$ if the set is empty), the superlevel set $\{f>\alpha\}$ is, up to a null set, the complement of the centered arc $[-\rho_\alpha,\rho_\alpha]$.  For every arc $I$ of length $2r$ and every $t\in[0,\pi]$,
\[
    |I\cap[-t,t]|\le \min(2r,2t)=|(-r,r)\cap[-t,t]|,
\]
because a centered arc is the translate of length $2r$ with maximal overlap with a centered arc of length $2t$ on the circle; hence $|I\cap\{f>\alpha\}|\ge2r-\min(2r,2\rho_\alpha)=|(-r,r)\cap\{f>\alpha\}|$ up to null sets.  By the layer-cake formula,
\[
    \int_If=\int_0^\infty\bigl|I\cap\{f>\alpha\}\bigr|\,d\alpha\ \ge\ \int_0^\infty\bigl|(-r,r)\cap\{f>\alpha\}\bigr|\,d\alpha=\int_{-r}^rf .\qedhere
\]
\end{proof}

\subsection{Proof of Theorem \ref{thm:C}(b)}

\emph{Power collapse.}  We first record: if $\Theta_a(r)\ge c\,r^\gamma$ for $0<r\le r_0$ with $0<\gamma<2$, then
\begin{equation}\label{eq:power-collapse}
    \cU_a(s)\ \lesssim\ s^{-\frac{2-\gamma}{2+\gamma}} .
\end{equation}
Indeed, set $t_*=s^{-2/(2+\gamma)}$ and $r_*=s^{-1/(2+\gamma)}\ge t_*$.  Since $r_*\to0$, for $s$ large we have $r_*\le r_0$, so every radius used below lies in the range of the mass hypothesis.  For $t\le t_*$, $st^2\le s^{-(2-\gamma)/(2+\gamma)}$.  For $t_*\le t\le r_*$, take $r=t$: $t^2+s^{-1}c^{-1}t^{-\gamma}\le r_*^2+Cs^{-1}t_*^{-\gamma}\lesssim s^{-(2-\gamma)/(2+\gamma)}$, using $r_*^2=s^{-2/(2+\gamma)}\le s^{-(2-\gamma)/(2+\gamma)}$.  For $t\ge r_*$, take $r=r_*$: $r_*^2+Cs^{-1}r_*^{-\gamma}\lesssim s^{-2/(2+\gamma)}$.  This proves \eqref{eq:power-collapse}.

\emph{Failure at the power gauge $L_\beta(S):=S^\beta$.}  Fix $\beta<\beta'<2\beta/(2-\beta)$ (a nonempty range for $0<\beta<1$, with $\beta'<2$) and let $a(x)=\dist_\T(x,0)^{\beta'}$, a continuous damping.  For $0<r<\pi/2$, Lemma~\ref{lem:centered-arc-min} shows that the radial nondecreasing function $x\mapsto\dist_\T(x,0)^{\beta'}$ has, among arcs of length $2r$, minimal average on the arc centered at $0$.  Hence
\[
    \Theta_a(r)=(2r)^{-1}\int_{-r}^r|y|^{\beta'}\,dy\asymp r^{\beta'},
\]
so $\Theta_a(r)r^{-\beta}\to0$: the local-mass bound at gauge $L_\beta$ fails.  On the other hand \eqref{eq:power-collapse} with $\gamma=\beta'$, Proposition~\ref{prop:upper-engine}, and Remark~\ref{rem:invertibility} give
\[
    \|P_X(is)^{-1}\|\ \lesssim\ s^{-\frac{2-\beta'}{2+\beta'}}\ \le\ s^{-(1-\beta)}=s^{-1}L_\beta(s),
\]
since $(2-\beta')/(2+\beta')\ge1-\beta$ exactly when $\beta'\le2\beta/(2-\beta)$.  Thus the asserted bound holds for all sufficiently large positive $s$.  If $C$ denotes pointwise conjugation, then
$P_X(-is)=CP_X(is)C$; invertibility and the same norm bound therefore hold
for every sufficiently large $|s|$.  So the implication fails.

\emph{The window.}  Let $0\le b\in L^\infty(\T)$ satisfy
$\Theta_b(r)\asymp r^\gamma$ as $r\downarrow0$, where $0<\gamma<2$.
Choose $c,C>0$ and $r_0\in(0,\pi]$ so that
$c\,r^\gamma\le\Theta_b(r)\le C\,r^\gamma$ for all $0<r\le r_0$; in
particular $\Theta_b>0$ on this range, supplying the hypotheses of
Proposition~\ref{prop:upper-engine} with the damping replaced by $b$.
By \eqref{eq:power-collapse}, Proposition~\ref{prop:upper-engine}, and
Remark~\ref{rem:invertibility}, all with that replacement,
$P_{X,b}(is)$ is invertible and
\[
 \|P_{X,b}(is)^{-1}\|\lesssim s^{-(2-\gamma)/(2+\gamma)},
 \qquad s\ge s_0.
\]
Pointwise conjugation gives the same conclusion with $s$ replaced by
$|s|$ for every $|s|\ge s_0$.  Lemma~\ref{lem:comparison}, with $a$
replaced by $b$, then gives $is\in\rho(\cA_b)$ there and
\[
 \|(is-\cA_b)^{-1}\|
   \lesssim1+|s|\,|s|^{-(2-\gamma)/(2+\gamma)}
   \asymp |s|^{2\gamma/(2+\gamma)}.
\]
This is the upper bound in \eqref{eq:power-window} along the positive
transverse frequencies.  At $z=0$, $\cA_b U=0$ forces $v$ constant,
$\Delta_Xu=vb$, and $\int_Xb>0$ gives $U=0$; with compact resolvent,
$0\in\rho(\cA_b)$.  For $0<|s|\le s_0$, the inequality
$\int_\T b\ge2r_0\Theta_b(r_0)>0$ and
Lemma~\ref{lem:axis-cleanness} give $is\in\rho(\cA_b)$ on the bounded part
of the axis as well.  For the lower bound, set
$r_j=\kappa s_j^{-1/(2+\gamma)}$ with $\kappa$ small.  Since $r_j\to0$,
for all large $j$ we have $r_j\le\min(r_0,\tfrac12)$, so
Proposition~\ref{prop:absorbing}, applied with $b$ in place of $a$, is
applicable; moreover
$s_jr_j^2\Theta_b(r_j)\asymp\kappa^{2+\gamma}\le c_0$, and that
proposition gives
\[
    \|P_{X,b}(is_j)^{-1}\|\ \ge\ c\bigl(\kappa^{-2}s_j^{2/(2+\gamma)}+C\kappa^\gamma s_j^{2/(2+\gamma)}\bigr)^{-1}
    \ \ge\ c_\kappa\,s_j^{-2/(2+\gamma)} ,
\]
whence by \eqref{eq:stat-gen-reverse}, applied to $b$,
$\|(is_j-\cA_b)^{-1}\|\ge s_j\|P_{X,b}(is_j)^{-1}\|
\ge c_\kappa s_j^{\gamma/(2+\gamma)}$. \qed

\begin{remark}
The window \eqref{eq:power-window} is what local mass alone yields at
power scales, and part (b) shows that the loss of an inverse theorem there
is genuine.  We do not claim that either endpoint is attained for every
profile: the sharp exponent inside the window can depend on the
arrangement of the damping, consistently with the structured power-well
theory \cite{Kleinhenz2019,DK2020,DKP,AK}.  At critical gauges no such
finer datum is needed for the resolvent bounds of Theorem~\ref{thm:B}.
This is the precise critical-versus-power boundary established here.
\end{remark}

This completes the rough inverse theorem.  At the endpoint and throughout
the critical logarithmic/doubling class, $\Theta_a$ is the exact datum for
the resolvent estimates considered here.  Theorem~\ref{thm:C} realizes
every critical profile by genuinely rough characteristic damping and shows
that the reverse implication already fails for every sublinear power
gauge.  We now prove that the canonical logarithmic scale is carried by
true eigenvalues.

\part{The logarithmic spectral ladder}\label{part:model}

\section{From Schr\"odinger blocks to damped-wave spectrum}\label{sec:pencil}

The rough inverse theorem predicts the size of the logarithmic obstruction but cannot
decide whether it is spectral.  We first isolate the exact bridge needed
to answer that question.  A low, tame, algebraically simple eigenvalue of
$T_s=-\partial_x^2+isa$ must be transferred through the quadratic
damped-wave pencil without losing its algebraic count.  The results of
this section provide both the isolated-rung and finite-window versions of
that transfer; Sections~\ref{sec:oscillator} and~\ref{sec:log-resonance}
then construct the required $T_s$ ladder.

\subsection{The exact pencil reduction}
Fix a transverse eigenfrequency $s=s_j$, so $\lambda_j=s^2$ in \eqref{eq:pencil-block}.  Writing $z=is+w$,
\[
    s^2+za+z^2=isa+wa+2isw+w^2,
\]
so $z$ is a root of the block pencil if and only if
\begin{equation}\label{eq:pencil-Ts}
    \bigl(T_s+wa+w^2\bigr)v=-2isw\,v
\end{equation}
for some nontrivial $v\in H^2(\T)$, with $T_s=-\partial_x^2+isa$ as in
\eqref{eq:Ts}.  Set $\zeta=-2isw$.  Then
\begin{equation}\label{eq:depth-map}
    w=\frac{i\zeta}{2s},\qquad -\Re z=-\Re w=\frac{\Im\zeta}{2s},\qquad \Im z-s=\Im w=\frac{\Re\zeta}{2s}.
\end{equation}
After substituting $w=i\zeta/(2s)$, the exact characteristic equation is
\begin{equation}\label{eq:pencil-zeta}
    \left(T_s+\frac{i\zeta}{2s}a-\frac{\zeta^2}{4s^2}\right)v=\zeta v .
\end{equation}
Thus, for a characteristic value $\zeta$ of this self-consistent pencil, the relation
\[
    -\Re z=\frac{\Im\zeta}{2s}
\]
is exact.  To produce a damped-wave root at the critical depth $-\Re z\asymp L(s)^{-1}$ predicted by Theorem~\ref{thm:B}, the unperturbed spectral target is therefore a genuine low eigenvalue $\lambda_s\in\spec(T_s)$ whose imaginary part lies at the height
\begin{equation}\label{eq:lambda-imag-needed}
    \Im\lambda_s\ \asymp\ \frac{s}{L(s)} .
\end{equation}
The existence of such an eigenvalue is not by itself a pencil-root statement: one must also control the self-consistent perturbation in \eqref{eq:pencil-zeta}.  Accordingly, this section isolates the two obstructions to carrying out that conversion: \emph{existence} of a low cluster at the height \eqref{eq:lambda-imag-needed}, and \emph{phase rigidity} of that cluster.  Both are then verified unconditionally for the logarithmic cusp in Sections~\ref{sec:oscillator}--\ref{sec:log-resonance}.

\subsection{Spectral calibration by local mass}

The reduction above converts the imaginary height of a Schr\"odinger
eigenvalue into the depth of a wave eigenvalue.  The next proposition
calibrates both quantities directly by the damping mass.  Its first part
shows that the rough inverse theorem forces the critical spectral scale; its second part
identifies the depth of every genuine high-frequency wave eigenvalue
exactly.

\begin{proposition}[Spectral calibration]\label{prop:spectral-calibration}
Let $0\le a\in L^\infty(\T)$.
\begin{enumerate}[label=\textup{(\alph*)},leftmargin=*]
\item Let $s>0$.  If $T_sv=\lambda v$ for some
$0\ne v\in H^2(\T)$, then
\begin{equation}\label{eq:scalar-eigen-identities}
 \Re\lambda=\frac{\|v'\|_2^2}{\|v\|_2^2},
 \qquad
 \Im\lambda=s\frac{\int_\T a|v|^2}{\|v\|_2^2}.
\end{equation}
If $0<r\le\pi$, $\Theta_a(r)>0$, and
$2C_Pr^2\Re\lambda\le1$, where $C_P$ is the universal constant in
\eqref{eq:periodic-wp}, then
\begin{equation}\label{eq:scalar-lower-wall}
 \Im\lambda\ge \frac{s\Theta_a(r)}{2C_P}.
\end{equation}
Consequently, if $L$ is critical and the lower-mass bound
\eqref{eq:crit-mass} holds, there are $c_0,s_0>0$ such that
\begin{equation}\label{eq:critical-spectral-wall}
 \lambda\in\spec(T_s),\quad \Re\lambda\le s,\quad s\ge s_0
 \qquad\Longrightarrow\qquad
 \Im\lambda\ge c_0\frac{s}{L(s)}.
\end{equation}

\item Let $s_j>\frac12\|a\|_\infty$, and suppose that
$0\ne u\in H^2(\T)$ and $z\in\C$ solve the exact block equation
\[
 -u''+(s_j^2+za+z^2)u=0.
\]
Then
\begin{equation}\label{eq:wave-depth-identity}
 \Re z=-\frac12\frac{\int_\T a|u|^2}{\|u\|_2^2},
 \qquad
 (\Im z)^2=s_j^2+\frac{\|u'\|_2^2}{\|u\|_2^2}
 -\frac14\left(\frac{\int_\T a|u|^2}{\|u\|_2^2}\right)^{\!2}.
\end{equation}
In particular $-\frac12\|a\|_\infty\le\Re z\le0$, and, for every
$c>0$,
\begin{equation}\label{eq:wave-depth-localization}
 \frac{\int_{\{a\ge c\}}|u|^2}{\|u\|_2^2}
 \le \frac{2(-\Re z)}{c}.
\end{equation}
\end{enumerate}
\end{proposition}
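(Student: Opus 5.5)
Both parts follow from pairing the eigenvalue equation with the eigenfunction and splitting into real and imaginary parts; the only quantitative input is the weighted Poincaré inequality \eqref{eq:periodic-wp}.

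\emph{Part (a).} Pair $T_sv=\lambda v$ with $v$ in $L^2(\T)$ and integrate by parts on the circle:
\[
 \|v'\|_2^2+is\int_\T a|v|^2=\lambda\|v\|_2^2 .
\]
Since $a$ is real, the real and imaginary parts give \eqref{eq:scalar-eigen-identities}.

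For the wall, apply \eqref{eq:periodic-wp} with the given $r$:
\[
\|v\|_2^2\le C_P\Theta_a(r)^{-1}\int_\T a|v|^2+C_Pr^2\|v'\|_2^2 .
\]
Substitute $\int_\T a|v|^2=s^{-1}\Im\lambda\,\|v\|_2^2$ and $\|v'\|_2^2=\Re\lambda\,\|v\|_2^2$. The hypothesis $2C_Pr^2\Re\lambda\le1$ lets us absorb the gradient term into the left side, leaving
\[
\tfrac12\|v\|_2^2\le C_P(s\Theta_a(r))^{-1}\Im\lambda\,\|v\|_2^2,
\]
which is \eqref{eq:scalar-lower-wall}.

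For the critical consequence, take $\lambda\in\spec(T_s)$. Since $T_s$ has compact resolvent, $\lambda$ is an eigenvalue. Note $\Re\lambda\ge0$, and assume $\Re\lambda\le s$. Choose $r=(2C_Ps)^{-1/2}$, so that $2C_Pr^2\Re\lambda\le1$ and $r\le r_{\mathrm{mass}}$ for $s$ large. Then \eqref{eq:crit-mass} gives $\Theta_a(r)\ge cL(1/r)^{-1}$. Lemma~\ref{lem:scale-stability}, with $\alpha=1/2$ and $\gamma=0$, gives $L(1/r)=L((2C_Ps)^{1/2})\asymp L(s)$. Together these yield $\Im\lambda\ge c_0s/L(s)$.

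\emph{Part (b).} Pair the block equation with $u$ and normalize $\|u\|_2=1$. Write $p=\|u'\|_2^2$ and $m=\int_\T a|u|^2\in[0,\|a\|_\infty]$. This gives the scalar quadratic
\[
z^2+mz+(s_j^2+p)=0 ,
\]
so $z$ is one of its two roots. The discriminant is $m^2-4(s_j^2+p)$. Since $s_j>\|a\|_\infty/2\ge m/2$ and $p\ge0$, the discriminant is strictly negative. Hence the roots are nonreal, $\Re z=-m/2$, and $(\Im z)^2=s_j^2+p-m^2/4$. This proves \eqref{eq:wave-depth-identity}.

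The bound $-\frac12\|a\|_\infty\le\Re z\le0$ follows immediately from $0\le m\le\|a\|_\infty$. For \eqref{eq:wave-depth-localization}, Chebyshev gives $c\int_{\{a\ge c\}}|u|^2\le m=2(-\Re z)$.

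No step is a serious obstacle. The points needing care are the sign of the discriminant, which is exactly where $s_j>\|a\|_\infty/2$ enters, and checking the threshold conditions when invoking scale stability in (a).
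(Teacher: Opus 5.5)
Your proposal is correct and follows the paper's proof essentially step for step: pair with the eigenfunction; absorb the gradient term in the weighted Poincar\'e inequality \eqref{eq:periodic-wp} at the scale $r=(2C_Ps)^{-1/2}$, using scale stability to compare $L(1/r)$ with $L(s)$; and read off the depth identity from the negative discriminant of the real scalar quadratic obtained by pairing the block equation. As a minor aside, for $s\ge 2C_P$ the inequality $L((2C_Ps)^{1/2})\le L(s)$ already follows from monotonicity of $L$, so scale stability is not actually needed at that step.
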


\begin{proof}
Pairing $T_sv=\lambda v$ with $v$ and taking real and imaginary parts
gives \eqref{eq:scalar-eigen-identities}.  Applying
\eqref{eq:periodic-wp} to $v$ and dividing by $\|v\|_2^2$ yields
\[
 1\le C_P\Theta_a(r)^{-1}\frac{\Im\lambda}{s}
       +C_Pr^2\Re\lambda.
\]
The assumed bound on the second term gives
\eqref{eq:scalar-lower-wall}.  Under \eqref{eq:crit-mass}, take
$r_s=(2C_Ps)^{-1/2}$.  For all sufficiently large $s$ one has
$r_s\le r_{\mathrm{mass}}$, and if $\Re\lambda\le s$, then
$2C_Pr_s^2\Re\lambda\le1$.  Hence
\[
 \Im\lambda\ge
 \frac{c s}{2C_P L((2C_Ps)^{1/2})}
 \ge c_0\frac{s}{L(s)},
\]
where the last inequality follows from Lemma~\ref{lem:scale-stability}.

For the block pencil, pairing with $u$ and dividing by $\|u\|_2^2$
gives
\[
 z^2+\beta z+K=0,
 \qquad
 \beta=\frac{\int_\T a|u|^2}{\|u\|_2^2},
 \qquad
 K=s_j^2+\frac{\|u'\|_2^2}{\|u\|_2^2}.
\]
Here $0\le\beta\le\|a\|_\infty$ and
$4K\ge4s_j^2>\|a\|_\infty^2\ge\beta^2$.  Thus the scalar quadratic
has negative discriminant, and its two roots are
$-\beta/2\pm i(K-\beta^2/4)^{1/2}$.  This proves
\eqref{eq:wave-depth-identity}.  Finally,
\[
 2(-\Re z)=\beta
 \ge c\frac{\int_{\{a\ge c\}}|u|^2}{\|u\|_2^2},
\]
which is \eqref{eq:wave-depth-localization}.
\end{proof}

\begin{remark}[Floquet uniformity]\label{rem:spectral-calibration-floquet}
The proposition remains valid, with the same constants, on every
Floquet domain
\[
 \begin{aligned}
 H^2_\theta:=\{v\in H^2(0,2\pi):{}&
 v(2\pi)=e^{i\theta}v(0),\\[-2pt]
 &v'(2\pi)=e^{i\theta}v'(0)\},
 \qquad \theta\in\R/2\pi\Z.
 \end{aligned}
\]
This is the $k=2$ instance of the notation used in
Section~\ref{sec:mapping-torus} before Theorem~\ref{thm:F}.
Indeed, the sesquilinear boundary term cancels, and the weighted
Poincar\'e inequality applies to the quasi-periodic extension, whose
modulus is periodic.
\end{remark}

The wall \eqref{eq:critical-spectral-wall} is a forced scale, not an
existence theorem.  Through \eqref{eq:depth-map}, height $s/L(s)$
corresponds to wave depth of order $L(s)^{-1}$.  For the logarithmic
cusp, Sections~\ref{sec:oscillator}--\ref{sec:log-resonance} construct the
fixed-window ladder at that scale; the exact identity
\eqref{eq:wave-depth-identity} also explains the structural factor $1/2$
in the leading depth of Theorem~\ref{thm:E}.

\subsection{Complex symmetry and non-isotropy}
\begin{lemma}[Complex symmetry]\label{lem:complex-symmetry}
Let $\mathbb S=\R/\ell\Z$ be a circle of arbitrary circumference $\ell>0$, let $V\in L^\infty(\mathbb S)$ be real-valued, and let $\cT=-\partial_y^2+iV$ on $\mathcal D(\cT)=H^2(\mathbb S)$ --- a closed operator with compact resolvent, being a bounded perturbation of the self-adjoint $-\partial_y^2$.  Let $C$ denote pointwise conjugation, $Cu=\bar u$.  In this section $\cT=T_s$ on $\mathbb S=\T$, i.e.\ $V=sa$.  The rescaled-circle operators introduced later in Section~\ref{sec:log-resonance} are instances of this same template after their notation is defined there.  Let $\mathfrak b(u,v)=\int uv$ \textup{(}bilinear\textup{)} and, for $v\ne0$,
\begin{equation}\label{eq:delta}
    \delta(v)=\frac{\bigl|\int v^2\bigr|}{\int|v|^2}\in[0,1],
\end{equation}
the integrals over the underlying one-dimensional domain --- the same two definitions are used verbatim on $\R$ in Sections~\ref{sec:oscillator}--\ref{sec:log-resonance}.  The quantity $\delta(0)$ is not defined and is never used.  Recall from \S\ref{subsec:notation} that $\langle\cdot,\cdot\rangle$ is linear in the first argument, so $\langle u,\bar v\rangle=\mathfrak b(u,v)$.
\begin{enumerate}[label=\textup{(\roman*)}]
\item One has $\mathcal D(\cT^*)=H^2(\mathbb S)$, $C\mathcal D(\cT^*)=\mathcal D(\cT)$, and $\cT=C\cT^*C$.  Thus $\cT$ is complex symmetric in the closed-operator sense above.  In particular,
\[
    \mathfrak b(\cT u,v)=\mathfrak b(u,\cT v),
    \qquad u,v\in H^2(\mathbb S).
\]
\item If $\lambda_0$ is an isolated eigenvalue of $\cT$ of algebraic multiplicity one, with eigenfunction $v_0$ and Riesz projection $\Pi_0$, then $\mathfrak b(v_0,v_0)\neq0$,
\begin{equation}\label{eq:projection-formula}
    \Pi_0u=\frac{\mathfrak b(u,v_0)}{\mathfrak b(v_0,v_0)}\,v_0,
    \qquad
    \|\Pi_0\|=\delta(v_0)^{-1}.
\end{equation}
\item If $(\cT-\lambda_0)w=v_0$ has a solution $w\in H^2(\mathbb S)$, then $\mathfrak b(v_0,v_0)=0$.  Hence a geometrically simple eigenvalue whose eigenfunction satisfies $\mathfrak b(v_0,v_0)\neq0$ is algebraically simple.
\end{enumerate}
\end{lemma}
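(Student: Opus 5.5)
The plan is to deduce all three parts from the single algebraic identity $\cT^*=C\cT C$, together with the Riesz-projection facts of Lemma~\ref{lem:riesz-tools}.

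\emph{Part (i).} The operator $\cT$ is $-\partial_y^2$ on $H^2(\mathbb S)$ plus the bounded multiplication $iV$. Since $-\partial_y^2$ is self-adjoint on $H^2(\mathbb S)$, the bounded-perturbation rule gives
\[
\cT^*=-\partial_y^2-iV
\]
on the same domain. Conjugation preserves $H^2(\mathbb S)$, and because $V$ is real,
\[
C\cT^*C\,u=\overline{-\bar u''-iV\bar u}=-u''+iVu=\cT u.
\]
The bilinear identity $\mathfrak b(\cT u,v)=\mathfrak b(u,\cT v)$ then follows by two periodic integrations by parts, with no boundary terms; the term $\int iVuv$ is already symmetric.

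\emph{Part (iii).} Pair the equation $(\cT-\lambda_0)w=v_0$ bilinearly with $v_0$ and apply the symmetry from (i):
\[
\mathfrak b(v_0,v_0)=\mathfrak b\bigl((\cT-\lambda_0)w,v_0\bigr)=\mathfrak b\bigl(w,(\cT-\lambda_0)v_0\bigr)=0.
\]
For the second claim, suppose the eigenvalue is geometrically simple and $\mathfrak b(v_0,v_0)\ne0$. Compact resolvent makes the algebraic eigenspace finite-dimensional. If the algebraic multiplicity were larger than one, the single Jordan chain would have length at least two, giving a generalized eigenvector $w$ with $(\cT-\lambda_0)w=v_0$. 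That contradicts the first claim.

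\emph{Part (ii).} Algebraic simplicity means $\Pi_0$ has rank one with range $\C v_0$, so $\Pi_0u=\ell(u)v_0$ for some bounded functional $\ell$.

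1. By Lemma~\ref{lem:riesz-tools}(ii), $\Pi_0^*$ is the rank-one Riesz projection of $\cT^*$ at $\bar\lambda_0$.
2. By (i), $\cT^*\bar v_0=\overline{\cT v_0}=\bar\lambda_0\bar v_0$, so the range of $\Pi_0^*$ is $\C\bar v_0$.
3. Writing $\Pi_0u=\langle u,g\rangle v_0$, the adjoint is $\Pi_0^*x=\langle x,v_0\rangle g$, whose range is spanned by $g$. Hence $g=c\,\bar v_0$, and therefore $\ell(u)=\bar c\,\langle u,\bar v_0\rangle=\bar c\,\mathfrak b(u,v_0)$.
4. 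Applying $\Pi_0v_0=v_0$ gives $\bar c\,\mathfrak b(v_0,v_0)=1$. This forces $\mathfrak b(v_0,v_0)\ne0$ and yields \eqref{eq:projection-formula}.

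The norm of this rank-one operator is
\[
\|\Pi_0\|=\frac{\|\bar v_0\|\,\|v_0\|}{|\mathfrak b(v_0,v_0)|}=\frac{\|v_0\|^2}{|\int v_0^2|}=\delta(v_0)^{-1}.
\]

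The main obstacle is step 3 of part (ii): identifying the kernel functional of $\Pi_0$ as the bilinear pairing. This is exactly the point where complex symmetry is used to transport the eigenvector of $\cT$ to an eigenvector of the adjoint. The rest is bookkeeping.
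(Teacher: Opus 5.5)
Your proposal is correct and follows essentially the same route as the paper. Part (i) is the bounded-perturbation identity $\cT^*=-\partial_y^2-iV$ combined with conjugation. Part (ii) identifies the kernel vector of the rank-one Riesz projection with a multiple of $\bar v_0$, via Lemma~\ref{lem:riesz-tools}\textup{(ii)} and the normalization $\Pi_0v_0=v_0$. Part (iii) is the one-line bilinear pairing. Your explicit Jordan-chain argument for the ``hence'' clause of (iii) usefully spells out a step the paper leaves implicit.
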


\begin{proof}
(i) Since $V$ is real and bounded, $\cT^*=-\partial_y^2-iV$ on $H^2(\mathbb S)$.  Pointwise conjugation preserves $H^2(\mathbb S)$ and gives $C\cT^*Cu=-u''+iVu=\cT u$ for every $u\in H^2(\mathbb S)$, proving the asserted domain identity and closed-operator complex symmetry.  The bilinear identity follows either from this operator identity or directly by integrating by parts twice on $\mathbb S$; there are no boundary terms, and multiplication by $V$ is bilinearly symmetric.  (ii) Since $V$ is real, $\cT^*u=\overline{\cT\bar u}$, so $\cT^*\bar v_0=\bar\lambda_0\bar v_0$.  The Riesz projection of $\cT$ at $\lambda_0$ has rank one, and Lemma~\ref{lem:riesz-tools}\textup{(ii)} identifies its adjoint with the Riesz projection of $\cT^*$ at $\bar\lambda_0$; hence $\bar\lambda_0$ is algebraically and geometrically simple for $\cT^*$, so $\bar v_0$ spans $\ker(\cT^*-\bar\lambda_0)$.  A rank-one Riesz projection has the form $\Pi_0u=\langle u,w\rangle v_0$ with $w\in\ker(\cT^*-\bar\lambda_0)$ and the normalization is fixed by $\Pi_0v_0=v_0$, i.e. $\langle v_0,w\rangle=1$; writing $w=c\,\bar v_0$ gives $\bar c\,\langle v_0,\bar v_0\rangle=1$ (the inner product is conjugate-linear in its second argument), so $\langle v_0,\bar v_0\rangle=\int v_0^2=\mathfrak b(v_0,v_0)\neq0$ and $\Pi_0=\langle\cdot,\bar v_0\rangle v_0/\langle v_0,\bar v_0\rangle$.  This is \eqref{eq:projection-formula}, and $\|\Pi_0\|=\|v_0\|\,\|\bar v_0\|/|\mathfrak b(v_0,v_0)|=\delta(v_0)^{-1}$.  (iii) $\mathfrak b(v_0,v_0)=\mathfrak b((\cT-\lambda_0)w,v_0)=\mathfrak b(w,(\cT-\lambda_0)v_0)=0$ by (i).
\end{proof}

\subsection{Pencil transfer}\label{subsec:pencil-transfer}

The next statements are formulated for an arbitrary real bounded damping
$d\in L^\infty(\T)$, since they are used both for the cusp and for its fixed
perturbations.  Put
\[
    T_s^d=-\partial_x^2+isd,\qquad
    P_d(z):=P_{d,j}(z)=-\partial_x^2+\lambda_j+zd+z^2,
\]
and let $\cA_{d,j}$ be the corresponding transverse block generator.
Constants depend on $d$ only through $\|d\|_\infty$.
\begin{lemma}[Companion linearization]\label{lem:companion}
Fix $j\ge1$ and a real bounded damping $d\in L^\infty(\T)$.  A point $z_0\in\C$ is a characteristic value of the pencil $P_d(z):H^2(\T)\to L^2(\T)$ if and only if $z_0\in\spec(\cA_{d,j})$, and the Gohberg--Sigal multiplicity of $z_0$ for $P_d$ equals the algebraic multiplicity of $z_0$ as an eigenvalue of $\cA_{d,j}$.
\end{lemma}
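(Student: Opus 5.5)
The plan is the standard linearization equivalence: relate the pencil $P_d(z)$ to the operator pencil $z-\cA_{d,j}$ through holomorphic invertible operator families, then use that such equivalence preserves characteristic values and Gohberg--Sigal multiplicities. For $j\ge1$ we work on $\cH_j=H^1(\T)\times L^2(\T)$ with domain $D=H^2(\T)\times H^1(\T)$ and its fixed graph norm, so that Lemma~\ref{lem:GS-input}'s fixed-domain convention applies. For $z\in\C$ and $(u,v)\in D$,
\[
 (z-\cA_{d,j})(u,v)=(zu-v,\ -u''+\lambda_ju+dv+zv).
\]

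\textbf{Step 1: factorization.} Define the holomorphic, bounded and boundedly invertible families
\[
 E(z)(u,w)=(u,\ w+zu),\qquad D\to D,
\]
which is unipotent, and
\[
 F(z)(f_1,f_2)=(f_1,\ f_2-(z+d)f_1)
\]
on $\cH_j$. Both are triangular with identity diagonal, so their inverses are obtained by flipping the sign of the off-diagonal entry. The first map is legitimate because $u\in H^2$ gives $zu\in H^1$; the second because multiplication by $z+d$ is bounded on $L^2$. A direct computation should give
\[
 F(z)\,(z-\cA_{d,j})\,E(z)(u,w)=(-w,\ P_d(z)u),\qquad (u,w)\in H^2(\T)\times H^1(\T).
\]
Indeed, with $v=w+zu$ the first component is $-w$, and the second component minus $(z+d)(-w)$ equals $-u''+\lambda_ju+zdu+z^2u$. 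The right-hand side is $\mathrm{diag}(-I,P_d(z))$ acting from $H^2\times H^1$ to $H^1\times L^2$.

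\textbf{Step 2: consequences.} From this equivalence, $z-\cA_{d,j}$ is invertible exactly when $P_d(z)$ is, so the spectrum of $\cA_{d,j}$ coincides with the characteristic values of $P_d$. Note that $\spec\cA_{d,j}$ consists of eigenvalues by compact resolvent, and $P_d(z)$ is Fredholm of index zero as a relatively compact perturbation of $-\partial_x^2+1$.

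For multiplicities, I would use the standard fact, immediate from the definitions recalled before Lemma~\ref{lem:GS-input}, that multiplication on both sides by holomorphic invertible families maps root functions to root functions of the same order. If $\phi$ is a root function of $\mathrm{diag}(-I,P_d)$ of order $m$, then $E\phi$ is one for $z-\cA_{d,j}$ of order $m$, and conversely. Leading vectors correspond bijectively through the invertible $E(z_0)$, so canonical systems correspond and the total multiplicities agree. The direct-sum pencil $\mathrm{diag}(-I,P_d)$ has exactly the root functions of $P_d$ in its second slot: its first component must vanish to order $m$, so its value at $z_0$ is $0$, and the remaining slot carries the order. Hence its multiplicity equals the Gohberg--Sigal multiplicity of $P_d$. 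Finally, Keldysh chains of the linear pencil $z-\cA_{d,j}$ are Jordan chains, as remarked before Lemma~\ref{lem:GS-input}, so that multiplicity is the algebraic multiplicity.

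\textbf{Main obstacle.} The delicate point is the bookkeeping of function spaces. The factorization must be checked as an identity of bounded operators between fixed Banach spaces, with $E(z)$ acting on $D$ and not merely on $\cH_j$, so that the operator Rouch\'e and root-function framework is valid. The sign and normalization conventions must also match the Jordan-chain identification. One also has to dispose of $z_0=0$: since $j\ge1$ the block is not a quotient, and the same factorization covers that case, so no separate argument should be needed.
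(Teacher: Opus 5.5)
Your approach works once a sign in the left factor is fixed. With $F(z)(f_1,f_2)=(f_1,\,f_2-(z+d)f_1)$ and $v=w+zu$, the second component of $F(z)(z-\cA_{d,j})E(z)(u,w)$ is
\[
 -u''+\lambda_ju+(z+d)(w+zu)+(z+d)w=P_d(z)u+2(z+d)w .
\]
So the product is not block diagonal. Your verbal check subtracts $(z+d)(-w)$, which adds $(z+d)w$ rather than cancelling it. The correct factor is
\[
 F(z)(f_1,f_2)=(f_1,\,f_2+(z+d)f_1),
\]
which is still unipotent triangular, affine in $z$, and bounded on $\cH_j$. With it, $F(z)(z-\cA_{d,j})E(z)(u,w)=(-w,P_d(z)u)$ holds for every $z\in\C$, including $z=0$.

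In your ordering this map is anti-diagonal. A constant swap of the two target factors turns it into $P_d(z)\oplus(-I_{H^1})$, which is harmless. The rest of your argument then goes through:
\begin{itemize}
\item invertibility transfers pointwise;
\item root functions and their orders transfer through the holomorphic invertible factors, with leading vectors matched by $E(z_0)$;
\item the $-I$ summand contributes nothing. For the converse inclusion, use root functions of the form $(\phi_1,0)$, so that the maximal order attached to each leading vector is the same for $P_d$ and for the extended family.
\end{itemize}

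The paper argues more directly, by a chain bijection. It shows that $(u_k,v_k)$ is a Jordan chain of $\cA_{d,j}$ at $z_0$ exactly when $v_k=z_0u_k+u_{k-1}$ and
\[
 P_d(z_0)u_k+P_d'(z_0)u_{k-1}+\tfrac12P_d''(z_0)u_{k-2}=0,
\]
that is, when $(u_k)$ is a Keldysh chain of $P_d$. This gives a length-preserving bijection of chains and hence equal canonical-system sums. This is precisely the Taylor-coefficient form of your $E(z)$ applied to $(u(z),0)$, since $zu(z)=\sum_k(z-z_0)^k(z_0u_k+u_{k-1})$.

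The two routes trade off as follows.
\begin{itemize}
\item The paper's chain computation is elementary and self-contained. For the ``iff'' part it implicitly relies on index-zero Fredholmness of $P_d$ and compact resolvent of $\cA_{d,j}$, to pass between kernels and invertibility.
\item Your equivalence after extension gives that equivalence at every $z$ directly, and it yields an explicit formula for $(z-\cA_{d,j})^{-1}$ in terms of $P_d(z)^{-1}$. The cost is invoking the standard facts that holomorphic equivalence and trivial extension preserve partial multiplicities.
\end{itemize}
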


\begin{proof}
The pencil $P_d(z):H^2(\T)\to L^2(\T)$ is a holomorphic Fredholm family of index zero on the fixed domain $H^2(\T)$, since it is a bounded lower-order perturbation of $-\partial_x^2+1$ after subtracting a scalar.  On the block, $\cA_{d,j}(u,v)=(v,\,(\partial_x^2-\lambda_j)u-dv)$ with domain $H^2(\T)\times H^1(\T)$, so $\cA_{d,j}$ is the companion operator of $P_d$.  Let $(u_k,v_k)_{0\le k\le m-1}$ be a Jordan chain of $\cA_{d,j}$ at $z_0$, i.e.\ $(\cA_{d,j}-z_0)(u_k,v_k)=(u_{k-1},v_{k-1})$ with $(u_{-1},v_{-1})=(0,0)$; we also set $u_{-2}:=0$, so that the displayed relation below is meaningful for $k=0,1$.  The first components give $v_k=z_0u_k+u_{k-1}$; substituting into the second components gives, for $0\le k\le m-1$,
\[
    P_d(z_0)u_k+P_d'(z_0)u_{k-1}+\tfrac12P_d''(z_0)u_{k-2}=0,
    \qquad P_d'(z)=d+2z,\quad P_d''(z)=2,
\]
which is exactly the Keldysh chain condition for $P_d$ at $z_0$.  Conversely, a Keldysh chain $(u_k)_{0\le k\le m-1}$ of $P_d$ defines a Jordan chain of $\cA_{d,j}$ of the same length via $v_k:=z_0u_k+u_{k-1}$; and $u_0\neq0$ in either direction (if $u_0=0$ then $v_0=z_0u_0=0$).  The correspondence is a length-preserving bijection between Jordan chains of $\cA_{d,j}$ and Keldysh chains of $P_d$ at $z_0$ --- all chain vectors lying in the fixed domains $\mathcal D(\cA_{d,j})$ and $H^2(\T)$ respectively --- so the algebraic multiplicity of $z_0$ for $\cA_{d,j}$ equals the sum of the lengths of a canonical system of Keldysh chains, which is the Gohberg--Sigal multiplicity \cite{GohbergSigal}.
\end{proof}

\begin{proposition}[Pencil transfer]\label{prop:pencil-transfer}
Fix a real bounded damping $d\in L^\infty(\T)$ and a transverse eigenfrequency $s=s_j\ge1$.  Let $\lambda_s$ be an eigenvalue of $T_s^d$ of algebraic multiplicity one, with $|\lambda_s|\le\Lambda_s$, and let $\rho_s>0$ and $C_t>0$ be such that the closed disc $\overline D(\lambda_s,\rho_s)$ meets $\spec(T_s^d)$ only at $\lambda_s$ and the \emph{tameness bound}
\begin{equation}\label{eq:tameness}
    \sup_{|\zeta-\lambda_s|=\rho_s}\bigl\|(T_s^d-\zeta)^{-1}\bigr\|\ \le\ \frac{C_t}{\rho_s}
\end{equation}
holds.  Assume $(\Lambda_s+\rho_s)\le2s$ and
\begin{equation}\label{eq:transfer-smallness}
    \eps_s:=C_t(\|d\|_\infty+1)\,\frac{\Lambda_s+\rho_s}{2s\rho_s}\le\frac14,
    \qquad
    4C_t(\Lambda_s+\rho_s)\,\eps_s\le\frac{\rho_s}{2}.
\end{equation}
Then the block pencil $P_d(z)$ at $\lambda_j=s^2$ has exactly one root $z$ satisfying $-2is(z-is)\in D(\lambda_s,\rho_s)$, counted with Gohberg--Sigal multiplicity one, and it satisfies
\begin{equation}\label{eq:resonance-inclusion}
    z=is+\frac{i\lambda_s}{2s}+O\Bigl(\frac{C_t(\Lambda_s+\rho_s)\,\eps_s}{s}\Bigr).
\end{equation}
The count is the Gohberg--Sigal count for the holomorphic Fredholm family $G(\zeta):H^2(\T)\to L^2(\T)$ on the fixed graph-norm domain of $T_s^d$, after the affine change $z=is+i\zeta/(2s)$; multiplicities are preserved by this biholomorphic reparametrization.  By Lemma~\ref{lem:companion}, this root is an algebraically simple eigenvalue of the block generator $\cA_{d,j}$.
\end{proposition}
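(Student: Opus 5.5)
The plan is to work entirely in the rescaled spectral variable $\zeta=-2is(z-is)$, i.e.\ $z=is+i\zeta/(2s)$. By the exact reduction \eqref{eq:pencil-zeta}, $P_d(z)$ at $\lambda_j=s^2$ equals (up to the constant factor $1$, since $s^2+zd+z^2=isd+wd+2isw+w^2$) the family
\[
 G(\zeta)=T_s^d-\zeta+B(\zeta),\qquad B(\zeta):=\frac{i\zeta}{2s}d-\frac{\zeta^2}{4s^2},
\]
acting on the fixed domain $H^2(\T)$ with the graph norm of $T_s^d$. The affine change of variable is biholomorphic, so root functions and Keldysh chains correspond with the same orders, and Gohberg--Sigal multiplicities are preserved. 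I would first record that $G$ is a holomorphic index-zero Fredholm family on this fixed domain, being a bounded multiplicative perturbation of $T_s^d-\zeta$.

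Next I would apply Lemma~\ref{lem:GS-input} on $\Omega=D(\lambda_s,\rho_s)$ to the homotopy $G_\tau(\zeta)=T_s^d-\zeta+\tau B(\zeta)$, $\tau\in[0,1]$, which is jointly norm-continuous. At $\tau=0$ the characteristic values in $\Omega$ are the eigenvalues of $T_s^d$. By the Keldysh--Jordan remark preceding Lemma~\ref{lem:GS-input}, their total multiplicity is the algebraic multiplicity of $\lambda_s$, namely one. Boundary invertibility comes from the Neumann series $G_\tau(\zeta)=(T_s^d-\zeta)\bigl(I+\tau(T_s^d-\zeta)^{-1}B(\zeta)\bigr)$. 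On $|\zeta-\lambda_s|=\rho_s$ we have $|\zeta|\le\Lambda_s+\rho_s\le 2s$, so
\[
 \|B(\zeta)\|\le\frac{|\zeta|}{2s}\|d\|_\infty+\frac{|\zeta|^2}{4s^2}\le(\|d\|_\infty+1)\frac{\Lambda_s+\rho_s}{2s},
\]
using $|\zeta|/(2s)\le1$. Combined with \eqref{eq:tameness}, this gives $\|(T_s^d-\zeta)^{-1}B(\zeta)\|\le\eps_s\le1/4$. Hence the total count is one for every $\tau$, and in particular exactly one simple root of $G=G_1$ lies in the disc.

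The location estimate \eqref{eq:resonance-inclusion} is the main step. I would localize by showing that $G_1$ is invertible on the annulus $\{\delta\le|\zeta-\lambda_s|\le\rho_s\}$ for $\delta:=4C_t(\Lambda_s+\rho_s)\eps_s\le\rho_s/2$. On this annulus the resolvent splits as $(T_s^d-\zeta)^{-1}=\Pi_0/(\lambda_s-\zeta)+R_{\rm reg}(\zeta)$, where $R_{\rm reg}$ is holomorphic on the whole disc. The maximum principle and \eqref{eq:tameness} then give $\|R_{\rm reg}\|\lesssim C_t/\rho_s$ and $\|\Pi_0\|\le C_t$. Consequently, on $|\zeta-\lambda_s|=r\ge\delta$,
\[
 \|(T_s^d-\zeta)^{-1}\|\lesssim C_t/r,
\]
and the refined bound $\|B\|\le\eps_s\rho_s/C_t$ gives $\|(T_s^d-\zeta)^{-1}B\|\lesssim\eps_s\rho_s/r$. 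This quantity is small once $r$ is comparable to $\rho_s$, but not for $r\approx\delta$. Since this crude bound does not reach $\delta$, I would instead argue via the scalar Schur complement for the rank-one part: the root satisfies
\[
 \zeta-\lambda_s=\frac{\langle B(\zeta)v,\,w^*\rangle}{\langle v,\,w^*\rangle}+\text{higher order},
\]
with error bounded by $C_t\|B\|\lesssim C_t\eps_s(\Lambda_s+\rho_s)\cdot(\text{const})$. Translating back via $z=is+i\zeta/(2s)$ gives the $O\bigl(C_t(\Lambda_s+\rho_s)\eps_s/s\bigr)$ error. I expect this constant bookkeeping to be the main obstacle; the count itself is routine.

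Finally, Lemma~\ref{lem:companion} converts the simple characteristic value of $P_d$ into an algebraically simple eigenvalue of $\cA_{d,j}$.
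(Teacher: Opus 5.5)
Your counting argument is the paper's own. It uses the same homotopy $T_s^d-\zeta+\tau B(\zeta)$, the same Neumann bound $\|(T_s^d-\zeta)^{-1}B(\zeta)\|\le\eps_s\le\frac14$ on the circle, Lemma~\ref{lem:GS-input} with the Keldysh--Jordan identification at $\tau=0$, and Lemma~\ref{lem:companion} at the end. (The perturbation is additive and compact relative to the graph norm, not multiplicative.) The gap is in the location estimate \eqref{eq:resonance-inclusion}. The Schur-complement identity you fall back on carries an unquantified ``higher order'' remainder, and the error size $C_t\eps_s(\Lambda_s+\rho_s)$ you attach to it is asserted, not derived. As written, the proposal therefore does not localize the root.

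The missing observation is small, and it makes the route you abandoned work: tameness forces $C_t\ge1$. Indeed, for $\zeta$ on the circle, $\|(T_s^d-\zeta)^{-1}\|^{-1}\le\dist(\zeta,\spec T_s^d)\le|\zeta-\lambda_s|=\rho_s$.

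Now make your constants explicit. You have $\|\Pi_0\|\le C_t$ and $\|R_{\rm reg}\|\le 2C_t/\rho_s$ on the closed disc. Hence $\|(T_s^d-\zeta)^{-1}\|\le C_t/r+2C_t/\rho_s\le 3C_t/r$ on $|\zeta-\lambda_s|=r\le\rho_s$. Combined with $\|B(\zeta)\|\le\eps_s\rho_s/C_t$, this gives $\|(T_s^d-\zeta)^{-1}B(\zeta)\|\le 3\eps_s\rho_s/r$, which is $<1$ whenever $r>3\eps_s\rho_s$. Since $3\eps_s\rho_s\le 3C_t(\Lambda_s+\rho_s)\eps_s=\frac34\delta$, the bound at $r=\delta$ is at most $\frac34$, contrary to your claim. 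So your annulus is root-free, and the unique root satisfies $|\zeta_*-\lambda_s|\le 3\eps_s\rho_s$. The change of variables $z=is+i\zeta/(2s)$ then gives \eqref{eq:resonance-inclusion}.

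Your Schur complement can also be made exact. Normalize a root vector by $\Pi_0u=v$. Then $(I-\Pi_0)u=-R_{\rm reg}(\zeta_*)B(\zeta_*)u$ with $\|R_{\rm reg}B\|\le 2\eps_s$, so $\|u\|\le 2\|v\|$ and $|\zeta_*-\lambda_s|=|\langle Bu,w^*\rangle|/|\langle v,w^*\rangle|\le 2\eps_s\rho_s$.

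For comparison, the paper freezes $\zeta$ and defines the simple eigenvalue $\lambda(\zeta)$ of $T_s^d+B(\zeta)$ through its rank-one Riesz projection. It bounds $|\lambda(\zeta)-\lambda_s|\le\frac83 C_t(\Lambda_s+\rho_s)\eps_s$ by a trace estimate on a rank-$\le 2$ difference, then finds the fixed point $\zeta=\lambda(\zeta)$ by scalar Rouch\'e. That last step is where the second condition in \eqref{eq:transfer-smallness} enters. The completed annulus argument is shorter, needs neither holomorphy of $\lambda(\zeta)$ nor that second condition, and gives a sharper radius.
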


\begin{proof}
For $\zeta\in\overline D(\lambda_s,\rho_s)$ set $w(\zeta)=i\zeta/(2s)$ and $\mathcal E_s^d(\zeta)=w(\zeta)d+w(\zeta)^2$, a holomorphic family with
\[
    \|\mathcal E_s^d(\zeta)\|\le|w(\zeta)|\bigl(\|d\|_\infty+|w(\zeta)|\bigr)
    \le\frac{(\Lambda_s+\rho_s)(\|d\|_\infty+1)}{2s}=:e_s ,
\]
using $|w(\zeta)|\le(\Lambda_s+\rho_s)/(2s)\le1$.  By \eqref{eq:pencil-Ts}, with $a$ replaced by $d$, $z=is+w(\zeta)$ is a root of $P_d$ if and only if $G(\zeta):=T_s^d-\zeta+\mathcal E_s^d(\zeta)$ is singular.  On the contour $\Gamma_s=\{|\zeta-\lambda_s|=\rho_s\}$, \eqref{eq:tameness} gives $\|(T_s^d-\zeta)^{-1}\mathcal E_s^d(\zeta)\|\le C_te_s/\rho_s=\eps_s\le\frac14$, so $G_\tau(\zeta)=T_s^d-\zeta+\tau \mathcal E_s^d(\zeta)$ is invertible on $\Gamma_s$ for all $\tau\in[0,1]$.  We verify the hypotheses of Lemma~\ref{lem:GS-input} for this homotopy.  The families are closed operators with fixed domain $H^2(\T)$ and compact resolvent.  More explicitly, put $\mathcal X_s^d:=H^2(\T)$ with graph norm $\|u\|_{\mathcal X_s^d}:=\|u\|_2+\|T_s^du\|_2$; because $d\in L^\infty$ and the principal part is $-\partial_x^2$, this norm is equivalent to the usual $H^2$ norm, with constants depending on $s$ but fixed throughout the present proposition --- in particular independent of $\zeta$ and $\tau$, so $\mathcal X_s^d$ is precisely the fixed Banach domain required by Lemma~\ref{lem:GS-input}.  For each fixed $s$ this supplies the fixed-domain Fredholm family; no uniform equivalence of these graph norms as $s\to\infty$ is used in the Gohberg--Sigal count, whose later uniformity comes only from the explicit contour estimates.  To see that $-1\in\rho(T_s^d)$, note that $T_s^d+1:\mathcal X_s^d\to L^2(\T)$ is Fredholm of index zero: relative to $-\partial_x^2+1$, the perturbation is $is\,d(-\partial_x^2+1)^{-1}$, compact on $L^2$ because $(-\partial_x^2+1)^{-1}:L^2\to H^2\Subset L^2$ and multiplication by $d\in L^\infty$ is bounded.  It is injective by the real-part identity $\Re\langle(T_s^d+1)u,u\rangle=\|u'\|_2^2+\|u\|_2^2$; hence it is surjective.  Thus $J:=(T_s^d+1)^{-1}$ is a bounded isomorphism from $L^2(\T)$ onto $\mathcal X_s^d$, and right-multiplication gives the bounded holomorphic Fredholm family
\[
    G_\tau(\zeta)J=I+(\tau \mathcal E_s^d(\zeta)-\zeta-1)(T_s^d+1)^{-1},
\]
of the form identity plus compact.  Characteristic values and multiplicities are unchanged because $J$ is constant in $\zeta$ and bijective.  Explicitly, a root function $u(\zeta)\in\mathcal X_s^d$ for $G_\tau$ corresponds to the root function $w(\zeta)=(T_s^d+1)u(\zeta)\in L^2(\T)$ for $G_\tau J$, and conversely $w(\zeta)$ corresponds to $Jw(\zeta)$; multiplication by this fixed isomorphism preserves the order of vanishing of $G_\tau(\zeta)u(\zeta)$ and therefore the Keldysh-chain lengths.  Since this family is invertible on the contour for every $\tau$, the total multiplicity of characteristic values enclosed is constant in $\tau$ --- the number of characteristic values of $G$ inside $\Gamma_s$, counted with multiplicity, equals that of $T_s^d-\zeta$, namely one.

To localize it, fix $\zeta\in\overline D(\lambda_s,\rho_s)$.  The same Neumann series shows that $T_s^d+\mathcal E_s^d(\zeta)$ has exactly one eigenvalue $\lambda(\zeta)$ in $\overline D(\lambda_s,\rho_s)$, algebraically simple, with rank-one Riesz projection $P_\zeta=\frac1{2\pi i}\oint_{\Gamma_s}(\zeta'-T_s^d-\mathcal E_s^d(\zeta))^{-1}d\zeta'$; the series keeps $\Gamma_s$ in the resolvent set of $T_s^d+\mathcal E_s^d(\zeta)$ uniformly for $\zeta\in\overline D(\lambda_s,\rho_s)$, and $\mathcal E_s^d(\zeta)$ is polynomial in $\zeta$, so $P_\zeta$ --- hence, by the trace formula below, $\lambda(\zeta)$ --- depends holomorphically on $\zeta$; let $P$ be the corresponding projection of $T_s^d$ at $\lambda_s$.  Both $(T_s^d+\mathcal E_s^d(\zeta))P_\zeta$ and $T_s^dP$ are bounded of rank one: from $A(\zeta'-A)^{-1}=\zeta'(\zeta'-A)^{-1}-I$ and $\oint_{\Gamma_s}I\,d\zeta'=0$,
\[
    (T_s^d+\mathcal E_s^d(\zeta))P_\zeta=\frac1{2\pi i}\oint_{\Gamma_s}\zeta'\,(\zeta'-T_s^d-\mathcal E_s^d(\zeta))^{-1}\,d\zeta' ,
\]
a bounded operator whose range lies in the one-dimensional, $(T_s^d+\mathcal E_s^d(\zeta))$-invariant subspace $\operatorname{Ran}P_\zeta\subset\mathcal D(T_s^d)$; likewise for $T_s^dP$.  Since $P_\zeta$ has rank one and its range is invariant under $T_s^d+\mathcal E_s^d(\zeta)$, the finite-rank operator $(T_s^d+\mathcal E_s^d(\zeta))P_\zeta$ has trace equal to the unique enclosed eigenvalue $\lambda(\zeta)$; similarly, $P$ has rank one and $\operatorname{tr}(T_s^dP)=\lambda_s$.  Thus $\lambda(\zeta)=\operatorname{tr}\bigl[(T_s^d+\mathcal E_s^d(\zeta))P_\zeta\bigr]$ and $\lambda_s=\operatorname{tr}[T_s^dP]$, while the resolvent identity gives
\[
    \begin{aligned}
    (T_s^d+\mathcal E_s^d(\zeta))P_\zeta-T_s^dP
    &=\frac1{2\pi i}\oint_{\Gamma_s}\zeta'
       \Bigl[(\zeta'-T_s^d-\mathcal E_s^d(\zeta))^{-1}
             -(\zeta'-T_s^d)^{-1}\Bigr]\,d\zeta'\\
    &=\frac1{2\pi i}\oint_{\Gamma_s}\zeta'
       (\zeta'-T_s^d-\mathcal E_s^d(\zeta))^{-1}
       \mathcal E_s^d(\zeta)(\zeta'-T_s^d)^{-1}\,d\zeta' .
    \end{aligned}
\]
This difference has rank at most two, since its range lies in $\Ran P_\zeta+\Ran P$ (spectral invariance: $(T_s^d+\mathcal E_s^d(\zeta))P_\zeta=P_\zeta(T_s^d+\mathcal E_s^d(\zeta))P_\zeta$, and likewise for $T_s^dP$), and its operator norm is at most
\[
    \rho_s\cdot(\Lambda_s+\rho_s)\cdot\frac{C_t}{\rho_s}(1-\eps_s)^{-1}\cdot e_s\cdot\frac{C_t}{\rho_s}
    \ \le\ \tfrac43\,C_t(\Lambda_s+\rho_s)\,\eps_s ,
\]
using $|\zeta'|\le\Lambda_s+\rho_s$, the tameness \eqref{eq:tameness}, and the Neumann series bound $\|(\zeta'-T_s^d-\mathcal E_s^d(\zeta))^{-1}\|\le\frac{C_t}{\rho_s}(1-\eps_s)^{-1}$ with $\eps_s\le\frac14$.  Since $|\operatorname{tr}D|\le\operatorname{rank}(D)\,\|D\|$ for finite-rank $D$,
\[
    |\lambda(\zeta)-\lambda_s|\ \le\ \tfrac83\,C_t(\Lambda_s+\rho_s)\,\eps_s\ \le\ 4C_t(\Lambda_s+\rho_s)\,\eps_s\ \le\ \frac{\rho_s}{2} .
\]
For each fixed $\zeta$, $G(\zeta)=T_s^d-\zeta+\mathcal E_s^d(\zeta)$ is singular precisely when $\zeta\in\spec(T_s^d+\mathcal E_s^d(\zeta))$.  The Neumann-series argument above keeps the whole contour $\Gamma_s$ in the resolvent set of $T_s^d+\mathcal E_s^d(\zeta)$ for every $\zeta\in\overline D(\lambda_s,\rho_s)$, and the enclosed Riesz projection has rank one; hence the only spectrum of $T_s^d+\mathcal E_s^d(\zeta)$ in $D(\lambda_s,\rho_s)$ is the simple eigenvalue $\lambda(\zeta)$.  Thus, for $\zeta\in D(\lambda_s,\rho_s)$, singularity of $G(\zeta)$ is equivalent to $\zeta=\lambda(\zeta)$; for $\zeta\in\Gamma_s$, invertibility of $G(\zeta)$ has already been proved by the Neumann-series bound.  The scalar Rouch\'e theorem applied to $f(\zeta)=\zeta-\lambda(\zeta)$ against $g(\zeta)=\zeta-\lambda_s$ on $\Gamma_s$ (where $|f-g|\le\rho_s/2<\rho_s=|g|$) produces exactly one solution $\zeta_*$ in $D(\lambda_s,\rho_s)$, with $|\zeta_*-\lambda_s|=|\lambda(\zeta_*)-\lambda_s|\le4C_t(\Lambda_s+\rho_s)\eps_s$.  The Gohberg--Sigal multiplicity of $G$ at $\zeta_*$ is one by the homotopy count alone: at $\tau=0$ the family is the linear pencil $T_s^d-\zeta$, whose only characteristic value inside $\Gamma_s$ is the algebraically simple $\lambda_s$, of Gohberg--Sigal multiplicity one, so the homotopy invariance established above gives total multiplicity one for $G$ inside $\Gamma_s$; since $\zeta_*$ is a characteristic value, it is the only one and has multiplicity one.  (No analysis of the moving projection $P_\zeta$ is needed; the scalar function $\zeta-\lambda(\zeta)$ served to locate $\zeta_*$ and to prove uniqueness of the fixed point.)  Then $z=is+i\zeta_*/(2s)$ gives \eqref{eq:resonance-inclusion}.  Finally, Lemma~\ref{lem:companion} identifies the Gohberg--Sigal multiplicity of this root with its algebraic multiplicity as an eigenvalue of $\cA_{d,j}$; multiplicity one gives algebraic simplicity.
\end{proof}

The simple-mode proposition gives localization and algebraic simplicity.
The companion result below preserves the total algebraic count on a bounded
window; in Theorem~\ref{thm:E} it is the step that excludes unlisted wave
eigenvalues between the transferred rungs.
\begin{proposition}[Finite-window pencil transfer]\label{prop:cluster-transfer}
Let $d\in L^\infty(\T)$ be real-valued, let $s=s_j\ge1$, and let $\Omega\subset\{|\zeta|\le\Lambda\}$ be a bounded domain with piecewise $C^1$ boundary $\Gamma\subset\rho(T_s^d)$.  Assume the boundary tameness
\begin{equation}\label{eq:cluster-tame}
    \sup_{\zeta\in\Gamma}\bigl\|(T_s^d-\zeta)^{-1}\bigr\|\cdot\frac{\Lambda}{2s}\Bigl(\|d\|_\infty+\frac{\Lambda}{2s}\Bigr)\ <\ 1 .
\end{equation}
Then the total Gohberg--Sigal multiplicity of the characteristic values of the holomorphic family $\zeta\mapsto P_d\bigl(is+\tfrac{i\zeta}{2s}\bigr)$ in $\Omega$ equals the total algebraic multiplicity of $\spec(T_s^d)\cap\Omega$.  In particular, if $T_s^d$ has an eigenvalue in $\Omega$, then $\cA_{d,j}$ has an eigenvalue $z=is+\tfrac{i\zeta}{2s}$ with $\zeta\in\Omega$.
\end{proposition}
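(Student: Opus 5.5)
The argument is the homotopy half of Proposition~\ref{prop:pencil-transfer}, now run on a general contour. No localization step is needed. By \eqref{eq:pencil-Ts} with $a$ replaced by $d$, and the substitution $z=is+i\zeta/(2s)$, the pencil $P_d(is+i\zeta/(2s))$ equals
\[
 G(\zeta):=T_s^d-\zeta+\mathcal E_s^d(\zeta),\qquad
 \mathcal E_s^d(\zeta)=w(\zeta)d+w(\zeta)^2,\quad w(\zeta)=\frac{i\zeta}{2s}.
\]
This is an identity of operator families on the fixed domain $H^2(\T)$. Hence characteristic values and Gohberg--Sigal multiplicities of $\zeta\mapsto P_d(is+i\zeta/(2s))$ and of $G$ coincide; they are literally the same family. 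I will work with the homotopy $G_\tau(\zeta)=T_s^d-\zeta+\tau\mathcal E_s^d(\zeta)$, $\tau\in[0,1]$.

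First, the fixed-domain setting. I take $\mathcal X_s^d=H^2(\T)$ with the graph norm of $T_s^d$, exactly as in the proof of Proposition~\ref{prop:pencil-transfer}. I also use the isomorphism $J=(T_s^d+1)^{-1}:L^2\to\mathcal X_s^d$. Then $G_\tau(\zeta)J=I+(\tau\mathcal E_s^d(\zeta)-\zeta-1)J$ is identity plus compact, hence Fredholm of index zero. It is entire in $\zeta$ and jointly norm-continuous in $(\tau,\zeta)$, since $\mathcal E_s^d$ is a polynomial in $\zeta$ with bounded multiplication coefficients. So the hypotheses of Lemma~\ref{lem:GS-input} hold with $U=\C$ and a bounded domain $\Omega$ with piecewise $C^1$ boundary.

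Second, boundary invertibility. For $\zeta\in\Gamma$ we have $|\zeta|\le\Lambda$, since $\Gamma=\partial\Omega\subset\overline\Omega\subset\{|\zeta|\le\Lambda\}$. Therefore
\[
 \|\mathcal E_s^d(\zeta)\|\le\frac{\Lambda}{2s}\Bigl(\|d\|_\infty+\frac{\Lambda}{2s}\Bigr).
\]
By \eqref{eq:cluster-tame}, $\|\tau(T_s^d-\zeta)^{-1}\mathcal E_s^d(\zeta)\|<1$ for all $\tau\in[0,1]$ and $\zeta\in\Gamma$. Writing $G_\tau(\zeta)=(T_s^d-\zeta)\bigl(I+\tau(T_s^d-\zeta)^{-1}\mathcal E_s^d(\zeta)\bigr)$, a Neumann series gives invertibility of $G_\tau(\zeta)$ on $\Gamma$. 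The supremum over $\Gamma$ in \eqref{eq:cluster-tame} handles all boundary points uniformly, and $\Gamma\subset\rho(T_s^d)$ is assumed.

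Lemma~\ref{lem:GS-input} then says the total multiplicity inside $\Omega$ is the same at $\tau=0$ and $\tau=1$. At $\tau=0$ the family is the linear pencil $T_s^d-\zeta$. As recorded after the definition of Keldysh chains, its Gohberg--Sigal multiplicity at each eigenvalue equals the algebraic multiplicity. So the $\tau=0$ count is the total algebraic multiplicity of $\spec(T_s^d)\cap\Omega$; this set is finite by compact resolvent, Lemma~\ref{lem:riesz-tools}(i). At $\tau=1$ the count is the one for $P_d(is+i\zeta/(2s))$.

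For the final assertion: if this common count is positive, $G$ has a characteristic value $\zeta\in\Omega$. Then $z=is+i\zeta/(2s)$ is a characteristic value of $P_d$ at $\lambda_j=s^2$, and Lemma~\ref{lem:companion} makes it an eigenvalue of $\cA_{d,j}$, with matching multiplicity.

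The main obstacle is bookkeeping rather than analysis. One must check that $\Omega$ may be multiply connected or disconnected, and that only boundary invertibility and the fixed-domain norm are used. Lemma~\ref{lem:GS-input} already allows a possibly multiply connected bounded domain. If $\Omega$ is disconnected, I apply it to each connected component and add the counts.
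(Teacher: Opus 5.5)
Your proposal is correct and follows essentially the same route as the paper's proof. Both use the exact identity $P_d\bigl(is+\tfrac{i\zeta}{2s}\bigr)=T_s^d-\zeta+\mathcal E_s^d(\zeta)$, Neumann-series invertibility of the homotopy $T_s^d-\zeta+\tau\mathcal E_s^d(\zeta)$ on $\Gamma$ from the tameness bound, the fixed-domain reduction via $J=(T_s^d+1)^{-1}$, Lemma~\ref{lem:GS-input}, and Lemma~\ref{lem:companion} for the final identification with eigenvalues of $\cA_{d,j}$.
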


\begin{proof}
With $z=is+w$ and $\zeta:=-2isw$, i.e.\ $w=\tfrac{i\zeta}{2s}$, expanding gives the exact identity
\begin{gather*}
    P_d\bigl(is+\tfrac{i\zeta}{2s}\bigr)\ =\ T_s^d-\zeta+\mathcal E_s^d(\zeta),
    \qquad
    \mathcal E_s^d(\zeta)=\frac{i\zeta}{2s}\,d-\frac{\zeta^2}{4s^2},\\
    \|\mathcal E_s^d(\zeta)\|\le\frac{\Lambda}{2s}\Bigl(\|d\|_\infty+\frac{\Lambda}{2s}\Bigr)
\end{gather*}
on $\overline\Omega$.  By \eqref{eq:cluster-tame} and a Neumann series, the homotopy $F_\tau(\zeta):=T_s^d-\zeta+\tau \mathcal E_s^d(\zeta)$, $\tau\in[0,1]$, is invertible for every $\zeta\in\Gamma$.  We use the same fixed-domain reduction as in Proposition~\ref{prop:pencil-transfer}: let $\mathcal X_s^d:=H^2(\T)$ with the graph norm of the fixed operator $T_s^d$, and let $J:=(T_s^d+1)^{-1}:L^2(\T)\to\mathcal X_s^d$ be the bounded isomorphism constructed there.  The bounded perturbation $\mathcal E_s^d(\zeta)$ is polynomial in $\zeta$ as a map $\mathcal X_s^d\to L^2(\T)$, so $F_\tau(\zeta):\mathcal X_s^d\to L^2(\T)$ is a fixed-domain holomorphic Fredholm family of index zero; equivalently,
\[
    F_\tau(\zeta)J=I+\bigl(\tau \mathcal E_s^d(\zeta)-\zeta-1\bigr)(T_s^d+1)^{-1}
\]
is a bounded holomorphic Fredholm family on the fixed space $L^2(\T)$.  Since $J$ is independent of $\zeta$ and bijective, characteristic values, Keldysh chains, and Gohberg--Sigal multiplicities are unchanged by this reduction.  Lemma~\ref{lem:GS-input} therefore applies, and the total multiplicity of the characteristic values of $F_\tau$ in $\Omega$ is independent of $\tau$.  At $\tau=0$ the characteristic values are the eigenvalues of $T_s^d$ in $\Omega$, with Gohberg--Sigal multiplicity equal to algebraic multiplicity; at $\tau=1$ they are the points $\zeta$ at which $P_d(is+\tfrac{i\zeta}{2s})$ is singular, and Lemma~\ref{lem:companion} identifies the resulting point $z=is+\tfrac{i\zeta}{2s}$ of the block generator $\cA_{d,j}$ with the same algebraic multiplicity.
\end{proof}

\subsection{From resolvent and eigenvalues to the decay envelope}
The pencil transfer produces genuine eigenvalues, whereas
Theorem~\ref{thm:B} supplies the matching global resolvent upper bound.
The following elementary envelope calculus is where those two halves
meet: its upper estimate is semigroup-theoretic, and its lower estimate
uses one true eigenvalue in each dyadic octave.
\begin{lemma}[Envelope calculus]\label{lem:envelope}
Let $L$ be critical and set
\begin{equation}\label{eq:envelope-def}
    \mathfrak E(t)=\mathfrak E_{S_1}(t)
      :=\inf_{S\ge S_1}\bigl(\log S+t/L(S)\bigr),
\end{equation}
where $S_1\ge S_0$ is any fixed threshold beyond which the hypotheses below are in force.  Any two admissible choices of $S_1$ give equivalent decay envelopes in the sense that $\exp(-\mathfrak E_{S_1}(t))\asymp_{\exp}\exp(-\mathfrak E_{S_1'}(t))$; in fact every $O(1)$-near-minimizing sequence leaves every compact set as $t\to\infty$, since $L$ is unbounded.  If $\cS$ is dyadically syndetic, then for every large $S$ there is $\sigma\in\cS$ with $S\le \sigma\le4S$; hence, by monotonicity of $L$,
\begin{equation}\label{eq:dyadic-envelope-compare}
    \inf_{\substack{\sigma\in\cS\\ \sigma\ge S_1}}
       \bigl(\log \sigma+t/L(\sigma)\bigr)
    \le \mathfrak E(t)+O(1),
    \qquad t\ge1 .
\end{equation}
This is the only dyadic restriction used below: the point in $\cS$ is chosen in the dyadic block \emph{above} the competitor $S$, so the damping term decreases rather than requiring any additive regularity of $L$.

Throughout, $e^{t\cA}$ is a contraction semigroup (Lemma~\ref{lem:blocks}).  In part \textup{(a)}, increase $S_1$ if necessary so that \eqref{eq:crit-gen} holds for $|s|\ge S_1$, and let $C_{\mathrm{hf}}$ be a corresponding high-frequency constant.  Since $i\R\subset\rho(\cA)$, continuity of the resolvent gives the finite compact-frequency bound
\[
    K_0:=\sup_{|\sigma|\le S_1}\|(i\sigma-\cA)^{-1}\|<\infty.
\]
With the extension of $L$ fixed in Definition~\ref{def:critical}, define for every $S\ge0$
\begin{equation}\label{eq:global-majorant}
    C_*:=\max\{C_{\mathrm{hf}},1\},\qquad
    M(S):=C_*\max\{L(S),K_0,1\}.
\end{equation}
Then $M$ is positive and nondecreasing and $\|(is-\cA)^{-1}\|\le M(|s|)$ for every $s\in\R$.  Part \textup{(a)} applies Lemma~\ref{lem:BD-RST-input} to this global majorant.  Part \textup{(b)} uses $0\in\rho(\cA)$ only through $\cA^{-1}$.  The phrase \emph{regularized semigroup envelope} refers to this operator norm: for $U_0\in\mathcal D(\cA)$, $\|e^{t\cA}U_0\|\le\|e^{t\cA}\cA^{-1}\|\,\|\cA U_0\|$.
\begin{enumerate}[label=\textup{(\alph*)}]
\item If $i\R\subset\rho(\cA)$ and \eqref{eq:crit-gen} holds, then, for all sufficiently large $t$, $\|e^{t\cA}\cA^{-1}\|\le C\exp(-c\,\mathfrak E(ct))$.
\item Assume $0\in\rho(\cA)$.  If there is a dyadically syndetic set of frequencies $s_j$ and eigenvalues $z_j\in\spec(\cA)$ with $|z_j|\le Cs_j$ and $-\Re z_j\le CL(s_j)^{-1}$, then $\|e^{t\cA}\cA^{-1}\|\ge c\exp(-C\,\mathfrak E(Ct))$.
\item For $L(S)=(\log(eS))^A$ one has $\mathfrak E(t)\asymp t^{1/(A+1)}$; under \textup{(a)} and \textup{(b)},
\begin{equation}\label{eq:sharp-envelope}
    \|e^{t\cA}\cA^{-1}\|\ \asymp_{\exp}\ \exp\bigl(-t^{1/(A+1)}\bigr).
\end{equation}
\end{enumerate}
\end{lemma}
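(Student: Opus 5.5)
For part (a), I would apply Lemma~\ref{lem:BD-RST-input} to the global majorant $M$ of \eqref{eq:global-majorant}, which is legitimate because $M$ is critical up to a compact-frequency maximum. The conclusion is $\|e^{t\cA}\cA^{-1}\|\le C/M_{\log}^{-1}(ct)$. By Lemma~\ref{lem:scale-stability}, $\log M(S)=O(\log\log S)$, so $M_{\log}(S)\asymp L(S)\log S$ for large $S$. It then suffices to show $\log M_{\log}^{-1}(ct)\ge c\,\mathfrak E(c't)$. Put $S=M_{\log}^{-1}(ct)$, so that $L(S)\log S\asymp t$. Then
\[
\mathfrak E(t')\le \log S+t'/L(S)\le \log S+C(t'/t)\log S,
\]
and with $t'=ct$ this gives $\mathfrak E(ct)\lesssim\log S$. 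Exponentiating yields (a).

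For part (b), fix $t$ and let $S$ be a near-minimizer of $\mathfrak E(t)$. Using \eqref{eq:dyadic-envelope-compare}, I choose $\sigma=s_j\in\cS$ with $\log s_j+t/L(s_j)\le\mathfrak E(t)+O(1)$. Let $U_j$ be a normalized eigenvector with $\cA U_j=z_jU_j$, and set $V=\cA U_j/\|\cA U_j\|$. Then
\[
e^{t\cA}\cA^{-1}V=e^{tz_j}U_j/\|\cA U_j\|=e^{tz_j}z_j^{-1}U_j,
\]
so $\|e^{t\cA}\cA^{-1}\|\ge e^{t\Re z_j}/|z_j|\ge (Cs_j)^{-1}\exp(-Ct/L(s_j))$. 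This is at least $c\exp(-C(\log s_j+t/L(s_j)))\ge c\exp(-C\,\mathfrak E(t)-C)$. Since $\mathfrak E$ is nondecreasing, replacing $\mathfrak E(t)$ by $\mathfrak E(Ct)$ costs nothing, which gives (b). The equivalence under changes of $S_1$ I would check directly: minimizers escape every compact set because $t/L(S)\to\infty$ on any compact interval. So restricting the infimum only affects bounded $t$, or changes values by $O(1)$.

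For part (c), with $L(S)=(\log eS)^A$ and $u=\log(eS)$, we get $\mathfrak E(t)=\inf_u(u-1+tu^{-A})$ up to $O(1)$. The minimizer is at $u\asymp t^{1/(A+1)}$, giving $\mathfrak E(t)\asymp t^{1/(A+1)}$. Consequently $\mathfrak E(ct)\asymp t^{1/(A+1)}$ for each fixed $c$, and combining (a) and (b) gives \eqref{eq:sharp-envelope} in the $\asymp_{\exp}$ sense.

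The main obstacle is the inversion bookkeeping in (a): relating $M_{\log}^{-1}$ to the infimum defining $\mathfrak E$ with only constant losses inside the argument. This is exactly where the doubling of $\ell$ (via Lemma~\ref{lem:scale-stability}) is needed, in order to absorb the factor $\log S$ versus $\log(1+M)$ and the constant rescalings of $t$.
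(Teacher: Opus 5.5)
Your proposal is correct and follows essentially the paper's own route. For (a) you use Lemma~\ref{lem:BD-RST-input} with the global majorant and the inversion $t\lesssim L(N)\log N$; the paper justifies your ``$L(S)\log S\asymp t$'' for the generalized right inverse by evaluating $M_{\log}$ at $(1+\eps)N(t)$ and using doubling, which is exactly the point you flag. For (b) you use a unit eigenvector plus the dyadic comparison, and for (c) the same one-variable minimization. The remaining differences are cosmetic: you pull the constant outside $\mathfrak E(t)$ and then use monotonicity of $\mathfrak E$, where the paper instead applies \eqref{eq:dyadic-envelope-compare} at time $Ct$. One small addition is needed in your threshold-independence remark: note that $\mathfrak E(t)=o(t)$ (via the competitor $S=e^{\alpha t}$, using unboundedness of $L$), because $t/L(S)\to\infty$ on compacts does not by itself force near-minimizers to escape.
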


\begin{proof}
For the preliminary dyadic comparison, take any large $S\ge S_1$ and choose $\sigma\in\cS\cap[2^k,2^{k+1})$ in the dyadic block immediately above the one containing $S$; then $S\le\sigma\le4S$, so $\log\sigma\le\log S+\log4$ and $L(\sigma)\ge L(S)$, proving \eqref{eq:dyadic-envelope-compare} after taking the infimum over $S$.  The choice of the dyadic block above the competitor is essential: it makes the damping term monotone in the required direction, so no additive estimate for $t/L(S)-t/L(4S)$ is used.

For the threshold-independence assertion, fix a compact interval $[S_1,S_*]$.  On this interval the objective in \eqref{eq:envelope-def} is at least $t/L(S_*)+O(1)$.  Choose $\alpha<1/(4L(S_*))$ and use the competitor $S=e^{\alpha t}$; since $L(e^{\alpha t})\to\infty$, for large $t$ one has $t/L(e^{\alpha t})<t/(4L(S_*))$, and therefore $\log S+t/L(S)<t/(2L(S_*))$.  Thus no $O(1)$-near-minimizing sequence can remain in any fixed compact set.  Changing the threshold $S_1$ only removes or adds a fixed compact interval, so the corresponding exponentials $e^{-\mathfrak E_{S_1}(t)}$ are equivalent in the stated $\asymp_{\exp}$ sense.

\textup{(a)} By Lemma~\ref{lem:BD-RST-input}, with the global majorant \eqref{eq:global-majorant}, for all sufficiently large $t$,
\[
    \|e^{t\cA}\cA^{-1}\|\le\frac{C}{M_{\log}^{-1}(ct)},
    \qquad
    M_{\log}(S)=M(S)\bigl(\log(1+M(S))+\log(1+S)\bigr).
\]
For $S$ large, the fixed compact-frequency bound built into $M$ is dominated by the unbounded function $L(S)$, so $M(S)\asymp L(S)$ and, because a doubling $\ell$ has polynomial upper growth, $\log(1+M(S))\lesssim\log\log(eS)\ll\log S$; hence
\begin{equation}\label{eq:Mlog-asymp}
    M_{\log}(S)\asymp L(S)\log S,
    \qquad S\ge S_2.
\end{equation}
Let $N(t)=M_{\log}^{-1}(ct)$ in the generalized right-inverse sense of Lemma~\ref{lem:BD-RST-input}.  Since $M_{\log}(S)\to\infty$, $N(t)\to\infty$.  Fix $\eps\in(0,1)$.  For large $t$, $N(t)\ge S_2$ and $(1+\eps)N(t)>N(t)$.  By the definition of $N(t)=\sup\{S:M_{\log}(S)\le ct\}$, every $S>N(t)$ satisfies $M_{\log}(S)>ct$; otherwise $S$ would belong to the defining set and the supremum would be at least $S$.  Hence
\[
    ct<M_{\log}((1+\eps)N(t))\lesssim L((1+\eps)N(t))\log((1+\eps)N(t))\lesssim L(N(t))\log N(t),
\]
where the middle comparison uses \eqref{eq:Mlog-asymp} and the last one uses doubling/scale stability of the critical gauge at the fixed factor $1+\eps$.  Thus
\begin{equation}\label{eq:inverse-bookkeeping}
    t\lesssim L(N(t))\log N(t) .
\end{equation}
Thus, for a sufficiently small absolute $c_0>0$,
\[
    \mathfrak E(c_0t)\le\log N(t)+\frac{c_0t}{L(N(t))}\le C\log N(t).
\]
Consequently $N(t)^{-1}\le\exp(-c\mathfrak E(c_0t))$, and the asserted upper bound follows, with constants absorbed into the notation of part \textup{(a)}.  This argument uses only right-inverse inequalities and is unaffected by jumps or flat portions of $M_{\log}$.  This proves the asserted estimate for all sufficiently large $t$.  Since $0\in\rho(\cA)$, the function $t\mapsto\|e^{t\cA}\cA^{-1}\|$ is bounded on compact time intervals, while $\exp(-c\mathfrak E(c_0t))$ has a positive minimum on each such interval.  Increasing the exterior constant therefore extends the estimate to every $t\ge0$.

\textup{(b)} Let $U_j$ be a unit eigenvector for $z_j$.  Then
\[
    e^{t\cA}\cA^{-1}U_j=e^{tz_j}z_j^{-1}U_j,
\]
so
\[
    \|e^{t\cA}\cA^{-1}\|\ge\sup_j\frac{e^{t\Re z_j}}{|z_j|}
    \ge c\exp\Bigl(-\inf_j\bigl(\log s_j+CtL(s_j)^{-1}\bigr)\Bigr).
\]
The frequencies under consideration are dyadically syndetic; applying \eqref{eq:dyadic-envelope-compare} with $Ct$ in place of $t$ bounds the discrete infimum by $\mathfrak E(Ct)+O(1)$.  This yields part \textup{(b)}.

\textup{(c)} With $u=\log(eS)$ and $u_1=\log(eS_1)$,
\[
 \mathfrak E(t)=\inf_{u\ge u_1}(u-1+tu^{-A})
   =\inf_{u\ge u_1}(u+tu^{-A})+O(1).
\]
The last objective is minimized at $u_*\asymp t^{1/(A+1)}$ with value
$\asymp t^{1/(A+1)}$.
\end{proof}

\section{The imaginary logarithmic oscillator: proof of Theorem \ref{thm:D}}\label{sec:oscillator}

Theorem~\ref{thm:D} supplies the spectral model required by the bridge
above.  The proof has four stages: construct the self-adjoint and
nonnormal half-line realizations; use logarithmic Liouville--Green control
to justify boundary rotation; identify algebraic multiplicity through the
rotated left--right pairing; and merge the Neumann and Dirichlet spectra
by parity and rank-one interlacing.  The resulting nonzero bilinear
overlap is retained explicitly because it later prevents rank and Jordan
defects in the cusp limit.

\subsection{Realizations and branches}
Throughout, $\Log$ is the principal logarithm on
$\C\setminus(-\infty,0]$.  For $\sharp\in\{N,D\}$, the self-adjoint
operator $H_\gamma^\sharp$ is associated with the closed lower-bounded form
\[
 \int_0^\infty |u'|^2\,dy
 +\gamma\int_0^\infty(\log y)_+|u|^2\,dy
 -\gamma\int_0^\infty(\log y)_-|u|^2\,dy
\]
on $H^1(\R_+)\cap L^2((\log y)_+dy)$, with the additional condition
$u(0)=0$ in the Dirichlet case.  The nonselfadjoint half-line realization
is the maximal separated operator
\[
\begin{aligned}
\mathcal D(\cL_\gamma^\sharp)=\{u\in L^2(\R_+):{}&u,u'\in
 AC_{\mathrm{loc}}(0,\infty),\ -u''+i\gamma\log y\,u\in L^2,\\
&u'(0)=0\ \text{if }\sharp=N,\quad u(0)=0\ \text{if }\sharp=D\},
\end{aligned}
\]
where the endpoint traces exist by local regularity.  The full-line
operator is the maximal distributional realization
\[
 \mathcal D(\cL_\gamma)=\{u\in L^2(\R):
 -u''+i\gamma\log|y|\,u\in L^2(\R)\text{ in }\mathcal D'(\R)\}.
\]
Lemma~\ref{lem:log-form-realization} and Lemma~\ref{lem:parity} verify the
equivalent maximal-domain descriptions and the parity splitting.
Rotated values use the sectorial continuation constructed below, with
$\Log(e^{i\pi/4}y)=\log y+i\pi/4$.

\subsection{Auxiliary ODE facts}

\begin{lemma}[Sector Liouville--Green]\label{lem:sector-LG}
Let $\gamma>0$, $E\in\C$, $\alpha\in\{0,\pi/2\}$, and $q(z)=e^{i\alpha}\gamma\Log z-E$.  Let $[\theta_-,\theta_+]\subset[-\pi/4,\pi/4]$ satisfy $\cos(\theta+\alpha/2)\ge c_0>0$ for all $\theta\in[\theta_-,\theta_+]$, and let
\[
    \Sigma=\{re^{i\theta}:\ r\ge y_1,\ \theta\in[\theta_-,\theta_+]\}.
\]
Then for $y_1=y_1(\gamma,E,c_0)$ large the following hold; more generally, if $E$ ranges over a fixed compact subset of $\C$, the same $y_1$ and the constants below may be chosen uniformly over that compact set.  Put
\[
    \Sigma_*:=\{|z|>y_1/2,\ |\arg z|<\pi/3\},
\]
an open truncated sector containing $\Sigma$, and set
\[
    \xi(z)=\int_{y_1}^{z}q(t)^{1/2}\,dt ,
\]
where $q^{1/2}$ is the principal branch and $q^{1/4}$ is the unique compatible holomorphic branch on $\Sigma_*$ selected by
\[
    q^{1/4}(y_1)
       =\exp\!\left(\tfrac12\Log(q^{1/2}(y_1))\right).
\]
This branch is well defined by holomorphic continuation on the simply connected domain $\Sigma_*$: indeed, $q(z)=e^{i\alpha}\gamma\log|z|+O(1)$ uniformly there, so for $y_1$ large the image $q(\Sigma_*)$ lies in the sector $\{|\arg\zeta-\alpha|\le\pi/4\}$ and avoids the cut $(-\infty,0]$.  The phase $\xi$ is path-independent, and is well defined even when $y_1\notin\Sigma$, because $q$ is holomorphic and zero-free on the open, simply connected $\Sigma_*$, which contains both $\Sigma$ and the base point $y_1$.  The equation $u''=q(z)u$ has a solution
\[
    u_+(z)=q(z)^{-1/4}e^{-\xi(z)}\bigl(1+\eta_+(z)\bigr),
\]
holomorphic on an open sector containing $\Sigma$, with $\sup_\Sigma|\eta_+|\le\tfrac12$ and $\eta_+(z)\to0$ as $|z|\to\infty$ uniformly on $\Sigma$; and, along each ray $\{re^{i\theta}:r\ge y_1\}$ of $\Sigma$, a second solution
\[
    u_-(z)=u_-^{(\theta)}(z)=q(z)^{-1/4}e^{+\xi(z)}\bigl(1+\eta_-(z)\bigr),
    \qquad \sup_{r\ge y_1}\bigl|\eta_-(re^{i\theta})\bigr|\le\tfrac12 .
\]
The same asymptotics hold for the derivatives, with an extra factor $\mp q^{1/2}$, with relative error again at most $\tfrac12$, and $o(1)$ as $|z|\to\infty$ in the recessive case.  Consequently, any local holomorphic solution of $u''=q(z)u$ on a connected neighborhood in $\Sigma_*$, continued along paths in that simply connected domain --- equivalently, any ray solution with finite Cauchy data at some point of the ray and then analytically continued by the ODE in $\Sigma_*$ --- that is square-integrable near infinity along some ray of $\Sigma$, or that tends to $0$ along some ray, is a multiple of $u_+$, and therefore obeys the recessive bounds on all of $\Sigma$.  Along every ray in $\Sigma$,
\[
    \frac{d}{dr}\,\Re\xi(re^{i\theta})=\Re\bigl(e^{i\theta}q(re^{i\theta})^{1/2}\bigr)\ \ge\ \frac{c_0}{2}\,(\gamma\log r)^{1/2},
\]
so $u_+$ is recessive on all of $\Sigma$:
\[
    |u_+(re^{i\theta})|\le C\exp\bigl(-c\,r(\log r)^{1/2}\bigr)\quad\text{uniformly on }\Sigma,
\]
and every solution that tends to $0$ along one ray of $\Sigma$ is a constant multiple of $u_+$, hence is recessive on all of the tail $\Sigma$.  The lemma makes no assertion about the compact part $0<r<y_1$; in later sectorial-continuation arguments that part is handled separately by the local Volterra continuation and ordinary compactness.
\end{lemma}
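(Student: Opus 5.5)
The plan is to read Lemma~\ref{lem:sector-LG} as a direct instance of Lemma~\ref{lem:olver-input}. Almost all the work is to verify that lemma's phase hypotheses \eqref{eq:olver-phase-condition}--\eqref{eq:olver-sqrt-normalization} on $\Sigma$, and then to deduce the uniqueness statement from a Wronskian argument.

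\textbf{Branches and phase.} Uniformly on $\Sigma_*$ we have $\Log z=\log|z|+i\arg z$ with $|\arg z|<\pi/3$, so
\[
q(z)=e^{i\alpha}\gamma\log|z|+O(1),
\]
uniformly for $E$ in a compact set. Hence for $y_1$ large, $\arg q$ lies within $\pi/4$ of $\alpha\in\{0,\pi/2\}$, and $q$ is zero-free and avoids the cut. The principal $q^{1/2}$ is therefore holomorphic there and satisfies \eqref{eq:olver-sqrt-normalization}. The fourth root is defined by continuation on the simply connected $\Sigma_*$, and $\xi$ is path independent by Cauchy's theorem.

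Next we compute
\[
\frac{d}{dr}\Re\xi=\Re\bigl(e^{i\theta}q^{1/2}\bigr)=(\gamma\log r)^{1/2}\bigl(\cos(\theta+\alpha/2)+O((\log r)^{-1/2})\bigr).
\]
The error term here is the one coming from $\arg q-\alpha=O(1/\log r)$. After enlarging $y_1$, this gives the lower bound $\tfrac{c_0}{2}(\gamma\log r)^{1/2}$. Integrating along the ray gives $\Re\xi\ge c\,r(\log r)^{1/2}-C$ uniformly on $\Sigma$, because the initial segment from $y_1$ to $y_1e^{i\theta}$ contributes only $O(1)$.

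\textbf{Solutions.} On a slightly enlarged open sector inside $\Sigma_*$ that still satisfies the phase condition with constant $c_0/2$, the recessive Volterra construction of Lemma~\ref{lem:olver-input} gives a holomorphic $u_+$. It satisfies $\|\eta_+\|\le C\mathcal V_{y_1}\le\tfrac12$, has $\eta_+\to0$ through \eqref{eq:olver-tail-variation}, and obeys the derivative bound via \eqref{eq:olver-du}, using $|q'|/|q|^{3/2}\to0$. Adjusting the normalization constant of $\xi$ (base point $y_1$) only rescales $u_+$. The raywise dominant solution $u_-^{(\theta)}$ is exactly the one in \eqref{eq:olver-ray-dominant}, with contraction constant at most $\tfrac12$ for $y_1$ large. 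The recessive bound $|u_+|\le C e^{-c r(\log r)^{1/2}}$ then follows from $|q^{-1/4}|\lesssim 1$ and the lower bound on $\Re\xi$.

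\textbf{Uniqueness.} Let $v$ be a solution continued in $\Sigma_*$. On a fixed ray, $u_\pm$ form a fundamental system: their Wronskian is constant and, by the asymptotics, equals $2(1+o(1))\neq0$. So $v=c_+u_++c_-u_-$ there. If $c_-\ne0$, then $|v|\gtrsim|q|^{-1/4}e^{\Re\xi}\to\infty$ along the ray, which contradicts both $v\to0$ and square integrability. Hence $v=c_+u_+$ on that ray. By uniqueness of analytic continuation on the connected domain, $v=c_+u_+$ throughout, and so $v$ inherits the recessive bounds on all of $\Sigma$.

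The main obstacle is the uniformity bookkeeping. The $O(1/\log r)$ control on the phase and on the branch of $q^{1/4}$ must hold on an open neighborhood of $\Sigma$ with a margin, so that the sectorial contraction argument of Lemma~\ref{lem:olver-input} applies. I would handle this by fixing the enlarged angular interval once, chosen so that $\cos(\theta+\alpha/2)\ge c_0/2$ on it, and then choosing $y_1$ in terms of $\gamma$, $c_0$ and the compact set of $E$ values only.
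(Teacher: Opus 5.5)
Your proposal is correct and follows the paper's proof essentially step for step. You verify the branch and phase hypotheses of Lemma~\ref{lem:olver-input} on a slightly enlarged sector with constant $c_0/2$, obtain $u_+$ from a single sectorial endpoint-at-infinity Volterra construction, and take $u_-^{(\theta)}$ from the raywise construction \eqref{eq:olver-ray-dominant}. You then conclude uniqueness from the constant Wronskian, the dominance of $e^{+\Re\xi}$, and analytic continuation, exactly as the paper does.

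One point is imprecise but harmless. Because $u_-^{(\theta)}$ is normalized at $y_1e^{i\theta}$, its relative error need not tend to zero along the ray. So the Wronskian is $2+O(\mathcal V_{y_1})$, which is nonzero after enlarging $y_1$ (as the paper states), rather than $2(1+o(1))$ as $r\to\infty$.
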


\begin{proof}
On $\Sigma$, $q(z)^{1/2}=e^{i\alpha/2}(\gamma\log|z|)^{1/2}(1+O(1/\log|z|))$ uniformly, which gives the stated lower bound for $\frac{d}{dr}\Re\xi$; in particular every ray is a progressive regular $C^2$ path and $\Sigma$ contains no turning points for $y_1$ large.  Along these rays, the majorizing error-control variation from Lemma~\ref{lem:olver-input} obeys
\[
    \begin{aligned}
    \mathcal V&:=\sup_{\theta\in[\theta_-,\theta_+]}\int_{y_1}^{\infty}\omega_q\bigl(re^{i\theta}\bigr)\,dr\\
    &\lesssim\ \int_{y_1}^\infty\frac{dr}{r^2(\log r)^{3/2}}\ \longrightarrow\ 0
    \quad(y_1\to\infty),
    \end{aligned}
\]
so the logarithmic endpoint-at-infinity conclusions proved in Lemma~\ref{lem:olver-input} apply.  Take $D=\Sigma_*$, the open truncated sector of the statement, on which $q$ is holomorphic and zero-free.  For $u_+$ take the reference endpoint $z_{\rm ref}=\infty$: every point of $\Sigma$ is joined to $\infty$ along its own ray, on which $\Re\xi$ is monotone by the display in the statement, and no other paths are needed; moreover, by continuity of $\cos(\theta+\alpha/2)$ the same display holds with $c_0/2$ for $\theta$ in a slightly larger interval and for $|z|>y_1'$ with some $y_1'\in(y_1/2,y_1)$, so $H(\infty)$ contains a connected open sector neighborhood of $\Sigma$ inside $\Sigma_*$, with the same variation bound (the error-control majorant is estimated on all of $\Sigma_*$), and one application of Lemma~\ref{lem:olver-input} on this single progressive-path domain --- no raywise patching --- yields $u_+$, holomorphic on an open sector containing $\Sigma$ as stated, with $\sup_\Sigma|\eta_+|\le C(e^{C\mathcal V}-1)\le\tfrac12$ for $y_1$ large and, the tail of the variation vanishing at infinity, $\eta_+(z)\to0$ as $|z|\to\infty$ uniformly.  No patching of raywise solutions is needed: $u_+$ is produced by the single endpoint-at-infinity Volterra construction on the progressive-path domain $H(\infty)$, and the raywise uniqueness of recessive solutions is the basis argument at the end of this proof.  For $u_-^{(\theta)}$ no sector-wide dominant branch is used.  Fix the ray $re^{i\theta}$ and use the raywise Volterra construction \eqref{eq:olver-ray-dominant} from Lemma~\ref{lem:olver-input}, with the progressive direction reversed for the exponential $e^{+\xi}$ and with initial point $y_1e^{i\theta}$.  The raywise variation is bounded by the same integral, so $\sup_{r\ge y_1}|\eta_-|\le C(e^{C\mathcal V}-1)\le\tfrac12$; after increasing $y_1$ this quantity, together with the derivative-error bound from Lemma~\ref{lem:olver-input}, is in fact as small as desired.  The derivative asymptotics are part of the same construction.  The obtained ray solution has Cauchy data at any point of the ray and hence admits a local holomorphic continuation by the ODE, but no global dominant sectorial branch is asserted.  Finally, along any fixed ray, $(u_+,u_-^{(\theta)})$ is a solution basis.  Indeed, increase $y_1$ so that the four relative errors in the two function asymptotics and the two differentiated asymptotics are all $<\varepsilon$ on the ray, with $\varepsilon<1/10$.  Evaluating the constant Wronskian far out on that ray gives $W(u_+,u_-^{(\theta)})=2+O(\varepsilon)$, hence it is nonzero.  If $u=c_+u_++c_-u_-^{(\theta)}$ with $c_-\neq0$, then the dominance of $e^{+\xi}$ over $e^{-\xi}$ must be made explicit.  Since $\Re\xi(re^{i\theta})\to+\infty$ and $u_+$ has the recessive bound while $|1+\eta_-|\ge1/2$, there is $R$ on that ray such that, for all $r\ge R$,
\[
\begin{aligned}
    |c_+u_+(re^{i\theta})|
    &\le\frac{|c_-|}{4}|q(re^{i\theta})|^{-1/4}e^{\Re\xi(re^{i\theta})},\\
    |c_-u_-^{(\theta)}(re^{i\theta})|
    &\ge\frac{|c_-|}{2}|q(re^{i\theta})|^{-1/4}e^{\Re\xi(re^{i\theta})}.
\end{aligned}
\]
Thus $|u(re^{i\theta})|\ge\frac{|c_-|}{4}|q|^{-1/4}e^{\Re\xi}\to\infty$ for $r\ge R$, which is neither square-integrable near infinity nor null there; the finite initial segment is irrelevant.  Hence any solution square-integrable or vanishing at infinity along some ray has $c_-=0$, is a multiple of $u_+$, and obeys the recessive bounds on all of $\Sigma$.
\end{proof}

\begin{lemma}[Volterra continuation at the logarithmic endpoint]\label{lem:log-volterra-endpoint}
Let $\alpha\in\{0,\pi/2\}$, $\gamma>0$, $\lambda\in\C$, and let $-\pi<\theta_-<\theta_+<\pi$.  On the punctured sector
\[
    \Sigma_0=\{0<|z|\le\delta,\ \theta_-\le\arg z\le\theta_+\}
\]
we use the branch of $\Log$ determined by this angular interval and set $c(z)=e^{i\alpha}\gamma\Log z-\lambda$.  For every $\varepsilon>0$, the truncated sector $\Sigma_0\cap\{|z|\ge\varepsilon\}$ is compactly contained in the branch domain; the endpoint $0$ is adjoined only for the continuous extension and for the Volterra traces.  There exists $\delta_0=\delta_0(\alpha,\gamma,\lambda,\theta_-,\theta_+)>0$ such that, for every $0<\delta\le\delta_0$ and every pair $(a_0,b_0)\in\C^2$, there is a unique function $u$ continuous on $\Sigma_0\cup\{0\}$ and holomorphic in the punctured interior of $\Sigma_0$, whose complex derivative in the punctured interior extends continuously to the closed sector and to $0$.  Along a boundary ray $z=re^{i\theta}$ the radial derivative is not the complex derivative but
\[
    \partial_r u(re^{i\theta})=e^{i\theta}u'(re^{i\theta}),
    \qquad \partial_r u(0;\theta)=e^{i\theta}b_0.
\]
It satisfies
\[
    u(z)=a_0+b_0z+
    \int_0^z (z-t)c(t)u(t)\,dt
\]
where the integral is taken over the radial segment from $0$ to $z$.  The complex Cauchy values $u(0)=a_0$ and $u'(0)=b_0$ are independent of the ray; here $u'(0)$ means the limit at $0$ of the holomorphic derivative on the punctured sector interior, while boundary derivatives on the sector edges are the radial derivatives displayed above and are compatible with that trace.  Any sectorial holomorphic solution of $u''=c(z)u$ with finite Cauchy data at the endpoint satisfies this Volterra formula.  Therefore the Dirichlet condition is preserved under rotation, and the zero Neumann condition is preserved under rotation because the radial endpoint derivative is multiplied only by the nonzero phase $e^{i\theta}$.
\end{lemma}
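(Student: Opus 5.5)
The plan is to treat the Volterra equation as a fixed-point problem on each closed radial segment. Then we show that the solutions produced on different rays patch together into a holomorphic function, using the integrability of the logarithmic singularity at $0$.

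\emph{Step 1 (kernel bound).} Parametrize the radial segment by $t=\rho e^{i\theta}$ with $0\le\rho\le r=|z|$. Then $|z-t|\le r\le\delta$, and
\[
 |c(t)|\le\gamma\bigl(|\log\rho|+\pi\bigr)+|\lambda|.
\]
Hence $\int_0^{\delta}|c(\rho e^{i\theta})|\,d\rho\le C\delta(1+|\log\delta|)$, uniformly in $\theta\in[\theta_-,\theta_+]$.

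\emph{Step 2 (contraction).} Let $X$ be the space of bounded continuous functions on the closed sector $\Sigma_0\cup\{0\}$, with the sup norm. Define
\[
 (Ku)(z)=\int_0^z(z-t)c(t)u(t)\,dt,
\]
integrated along the radial segment. By Step~1, $\|K\|\le C\delta^2(1+|\log\delta|)$, which is less than $1/2$ once $\delta\le\delta_0$. Continuity of $Ku$, including at $0$ and across rays, follows from dominated convergence in the variable $\rho/r$ after the substitution $t=z\sigma$, $\sigma\in[0,1]$; this gives
\[
 (Ku)(z)=z^2\int_0^1(1-\sigma)\,c(z\sigma)\,u(z\sigma)\,d\sigma.
\]
The map $u\mapsto a_0+b_0z+Ku$ is therefore a contraction on $X$, and it has a unique fixed point $u$.

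\emph{Step 3 (holomorphy and derivative).} The substituted form is the key. If $u$ is holomorphic in the interior, then $z\mapsto c(z\sigma)u(z\sigma)$ is holomorphic for each $\sigma$, and it is dominated by an integrable function of $\sigma$ (namely $C(1+|\log|z\sigma||)$), locally uniformly in $z$. So $Ku$ is holomorphic. Starting the Picard iteration from $a_0+b_0z$, every iterate is holomorphic, and uniform convergence gives holomorphy of the limit. For the derivative, differentiate the substituted form:
\[
 u'(z)=b_0+\int_0^z c(t)u(t)\,dt.
\]
Its integrand is $O(|\log|t||)$, so $u'$ extends continuously to $0$ with value $b_0$, and $u''=cu$ holds in the interior. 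On the boundary rays the radial derivative equals $e^{i\theta}u'$ by the chain rule applied to the continuous extension; at $0$ this gives $e^{i\theta}b_0$. Since $u(0)=a_0$ and $u'(0)=b_0$ are defined as limits of a continuous function, they do not depend on the ray.

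\emph{Step 4 (converse and boundary conditions).} Let $v$ be any sectorial holomorphic solution with finite Cauchy data $(a,b)$ at $0$. Integrating $v''=cv$ twice along a ray from $\epsilon e^{i\theta}$ and letting $\epsilon\to0$ shows that $v$ satisfies the Volterra formula; the limit is justified because $cv$ is integrable near $0$. Uniqueness of the fixed point then gives $v=u$. The rotation statement follows directly: $u(0)=0$ is a condition on $a_0$ alone, and the radial endpoint derivative is $e^{i\theta}b_0$, which vanishes exactly when $b_0=0$.

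The main obstacle is Step~3: proving holomorphy even though the integration paths move with $z$. The substitution $t=z\sigma$ reduces this to a fixed parameter interval, and I would rely on it rather than on path-independence arguments. A second point to handle carefully is that the branch of $\Log$ must be holomorphic on the sector. When $\theta_\pm$ lie close to $\pm\pi$, I would take the branch continuous on the angular interval, as the statement specifies, and work in its interior.
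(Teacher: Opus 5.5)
Your proposal is correct and follows the paper's proof essentially step for step. Both arguments use a sup-norm contraction on the closed sector, driven by the integrability of $\log|t|$ at $0$. Both use the rescaling $t=z\sigma$ to make the Picard iterates holomorphic and pass to the limit by Weierstrass, then derive $u'(z)=b_0+\int_0^z c(t)u(t)\,dt$ for the derivative trace. The converse is obtained by integrating $u''=cu$ twice along a ray. Your branch concern is unnecessary: $-\pi<\theta_-<\theta_+<\pi$ already makes the principal $\Log$ holomorphic on a neighborhood of the closed punctured sector.
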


\begin{proof}
Choose $\delta_0>0$ so small that the linear Volterra operator has norm at most $1/2$ on every closed sector $0<|z|\le\delta\le\delta_0$ with the fixed angular interval; this is possible because $\sup_{\theta\in[\theta_-,\theta_+] }\int_0^\delta |c(re^{i\theta})|\,dr\to0$ as $\delta\downarrow0$.  Thus the Picard map defined by the displayed Volterra equation is a contraction on continuous functions on the closed angular sector with the endpoint adjoined for all $0<\delta\le\delta_0$, uniformly in the affine data $(a_0,b_0)$.  Rewriting the integral as
\[
    z^2\int_0^1(1-\tau)c(\tau z)u(\tau z)\,d\tau
\]
shows the analytic dependence on $z$: the Picard iterates are holomorphic on every truncated punctured sector and converge locally uniformly there, hence the fixed point is holomorphic in the punctured interior by Weierstrass' theorem.  Differentiating the holomorphic Volterra expression with respect to the complex variable gives
\[
    u'(z)=b_0+\int_0^z c(t)u(t)\,dt
       =b_0+z\int_0^1c(\tau z)u(\tau z)\,d\tau .
\]
Equivalently, differentiating along a ray gives $\partial_r u(re^{i\theta})=e^{i\theta}u'(re^{i\theta})$.  The same Picard/local-uniform argument gives holomorphy of $u'$ off $0$, while the displayed formula gives continuity up to $0$ and the ray-independent complex-derivative limit $b_0$.  For $z_0\ne0$, choose a small disc in the punctured sector about $z_0$.  There $c(t)u(t)$ is holomorphic, the local antiderivative in $u'(z)=b_0+\int_0^z c(t)u(t)\,dt$ is path-independent up to a constant, and the complex derivative of $u'$ at $z_0$ is therefore $c(z_0)u(z_0)$; equivalently, this follows by differentiating the locally uniformly convergent holomorphic Picard iterates on that disc.  Hence $u''=cu$ throughout the punctured sector.  Conversely, integrate $u''=cu$ twice along a radial segment and use the finite endpoint traces.  Uniqueness follows either from the contraction argument on a sufficiently small sectorial neighborhood of $0$ and continuation, or directly from the homogeneous Volterra equation and Gronwall along each ray.

\end{proof}

\begin{lemma}[Logarithmic Sturm--Liouville form realization]\label{lem:log-form-realization}
Let $\gamma>0$ and $\sharp\in\{N,D\}$.  Let $h^\sharp_\gamma$ be the closed semibounded quadratic form
\[
    h^\sharp_\gamma[u]=\int_0^\infty |u'|^2+\gamma\int_0^\infty (\log y)_+|u|^2
       -\gamma\int_0^\infty (\log y)_-|u|^2
\]
on $H^1(\R_+)\cap L^2((\log y)_+dy)$, with the additional trace condition $u(0)=0$ in the Dirichlet case and no trace condition in the Neumann case; write $\mathcal Q(h^\sharp_\gamma)$ for this form domain.  The negative part is finite on this domain and infinitesimally form-bounded on $(0,1)$, so the displayed plus-minus expression is equivalent to the signed potential notation $\int(\log y)|u|^2$ only after this fact is used.  The embedding of $\mathcal Q(h^\sharp_\gamma)$ into $L^2(\R_+)$ is compact, hence the associated self-adjoint operator has compact resolvent; this operator is exactly the maximal separated differential realization
\[
\begin{aligned}
\mathcal D(H^\sharp_\gamma)=\{u\in L^2(\R_+):
&\ u,u'\in AC_{\rm loc}(0,\infty),\ u,u'\ \text{have finite traces at }0,\\
&\ -u''+\gamma\log y\,u\in L^2(\R_+),\quad\\
&\ u'(0)=0\ \text{if }\sharp=N,
\quad u(0)=0\ \text{if }\sharp=D\}.
\end{aligned}
\]
The endpoint traces exist for every element of this maximal domain; the separated condition in the displayed set is therefore a genuine trace condition, as verified in the proof.
\end{lemma}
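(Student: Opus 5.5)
The plan has three parts: (1) closedness and semiboundedness of the form with a compact form-domain embedding; (2) identification of the associated operator with the maximal separated domain; (3) existence of the endpoint traces.

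\emph{Form and compactness.} Write $V=\gamma\log y=V_+-V_-$ with $V_-=\gamma(\log y)_-$ supported in $(0,1)$. I would show that $V_-$ is infinitesimally form-bounded. By the one-dimensional Sobolev bound $\|u\|_{L^\infty(0,1)}^2\le \varepsilon\|u'\|_2^2+C_\varepsilon\|u\|_2^2$ and $V_-\in L^1(0,1)$ (since $\int_0^1|\log y|\,dy=1$), we get
\[
\int V_-|u|^2\le\|V_-\|_{L^1(0,1)}\bigl(\varepsilon\|u'\|_2^2+C_\varepsilon\|u\|_2^2\bigr).
\]
The positive form $\int|u'|^2+\int V_+|u|^2$ on $H^1\cap L^2(V_+dy)$ is closed, since it is a sum of closed forms. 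The KLMN theorem then gives a closed semibounded form $h^\sharp_\gamma$. In the Dirichlet case the constraint $u(0)=0$ is continuous in $H^1$, so it defines a closed subspace. For compactness, take a form-bounded sequence. On each $[0,R]$, Rellich gives precompactness in $L^2(0,R)$. The tail satisfies $\int_R^\infty|u|^2\le(\gamma\log R)^{-1}\int V_+|u|^2$, which is uniformly small. A diagonal argument therefore yields a compact embedding, and hence compact resolvent.

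\emph{Operator equals maximal realization.} For $u\in\mathcal D(H^\sharp_\gamma)$ with $f=H^\sharp_\gamma u$, testing against $C_c^\infty(0,\infty)$ gives $-u''+Vu=f$ distributionally. Since $Vu\in L^1_{\rm loc}(0,\infty)$, we get $u'\in AC_{\rm loc}$. Near $0$, the form domain gives $u\in H^1$, so $u$ is bounded and $Vu\in L^1(0,1)$; hence $u'$ has a finite trace at $0$. Testing against $H^1$ functions not vanishing at $0$ (Neumann case) and integrating by parts gives $u'(0)=0$. For the reverse inclusion, take $u$ in the maximal separated domain. The key point is limit-point behaviour at $+\infty$: the Weyl criterion holds because $V\ge0$ for $y\ge1$ and $V\to\infty$, so $\tau=-d^2/dy^2+V$ is limit point at infinity. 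At $0$, $V\in L^1(0,1)$, so $0$ is a regular endpoint. The maximal separated operator is then a self-adjoint extension, symmetric by Green's formula with the boundary terms vanishing. It extends $H^\sharp_\gamma$, and self-adjoint operators have no proper self-adjoint extensions, so the two coincide.

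\emph{Traces.} For any $u$ in the maximal domain, set $g=-u''+Vu\in L^2$. On $(0,1)$, rewrite the equation as the Volterra integral equation $u(y)=A+By+\int_y^1(t-y)(V u-g)(t)\,dt$. Because $V$ and $g$ lie in $L^1(0,1)$ (the latter after Cauchy--Schwarz), Gronwall gives $u$ bounded. It follows that $u'=B'+\int(Vu-g)$ has a limit at $0$, so both traces exist. This is exactly the regular-endpoint theory, as in Weidmann and in Lemma~\ref{lem:log-volterra-endpoint} on the real ray.

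\emph{Main obstacle.} I expect the main difficulty to be the maximal-domain identification at infinity. There, limit-point behaviour must be argued with care, because $V$ grows only logarithmically. I would first try the standard criterion that $V$ bounded below near $\infty$ implies limit point.
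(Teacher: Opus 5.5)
Your proposal is correct, but it identifies the operator by a different route from the paper's.

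\textbf{Where you agree with the paper.} The closedness of the form via infinitesimal form-boundedness of $(\log y)_-$, the Rellich-plus-tail compactness, and the forward inclusion $\mathcal D(H^\sharp_\gamma)\subset$ maximal separated domain are essentially the same as the paper's.

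\textbf{Traces.} The paper does not need boundedness of $u$ near $0$. Since $\log y\in L^2(0,1)$ and $u\in L^2$, Cauchy--Schwarz gives $(\log y)u\in L^1(0,\delta)$ at once, hence $u''\in L^1(0,\delta)$ and both traces exist. Your Volterra--Gronwall argument is the general regular-endpoint argument and uses only $V\in L^1(0,1)$. It is longer here, but it works for any integrable potential.

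\textbf{Reverse inclusion.} The paper argues by hand:
\begin{itemize}
\item it pairs the equation with $\chi_R^2u$ and kills the endpoint term at $0$ by the separated condition;
\item it absorbs $(\log y)_-$, then uses Fatou to place $u$ in $H^1(\R_+)\cap L^2((\log y)_+dy)$;
\item it checks $h^\sharp_\gamma[u,v]=\langle f,v\rangle$ on a form-dense class of compactly supported test functions.
\end{itemize}
This direct argument uses only the sign of $\log y$ at infinity and does not rely on the limit-point property, which the paper proves separately from the Liouville--Green lemma. You instead invoke Weyl's limit-point theory so that the Lagrange bracket vanishes at $+\infty$, which makes the maximal separated operator symmetric, and then conclude by maximality. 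Your route is shorter but imports the standard endpoint theory. The paper's is self-contained and shows form-domain membership explicitly.

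\textbf{Two small remarks.}
\begin{itemize}
\item A self-adjoint operator has no proper \emph{symmetric} extension: $H\subset T\subset T^*\subset H^*=H$. So Green's formula with vanishing brackets already suffices, and you need not cite self-adjointness of the maximal realization separately.
\item Your stated main obstacle is not one. The criterion ``$V$ bounded below near $+\infty$ implies limit point'' covers $\gamma\log y$ regardless of how slowly it grows.
\end{itemize}
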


\begin{proof}
The negative part of $\log y$ lies in $L^1(0,1)$ and is infinitesimally form-bounded with respect to the $H^1(0,1)$ norm, while $(\log y)_+\to\infty$ at infinity.  Thus the forms above are closed and semibounded.  The embedding of the form domain into $L^2(\R_+)$ is compact: on $[0,R]$ this is Rellich compactness for $H^1(0,R)$, while for $R>e$ the tail satisfies $\int_R^\infty |u|^2\le(\log R)^{-1}\int_R^\infty (\log y)_+|u|^2$, uniformly on form-bounded sets.  Consequently the associated form operators have compact resolvent.  The endpoint $+\infty$ is limit point for the differential expression $-d^2/dy^2+\gamma\log y$: for a fixed real spectral parameter $E$, choose the lower radius in Lemma~\ref{lem:sector-LG} with $\alpha=0$ on the positive ray larger than the possible turning point $e^{E/\gamma}$ when $E>0$ (and larger than $e$).  On that tail the Liouville--Green basis has one recessive and one dominant member, the latter growing superexponentially.  Endpoint classification is unchanged by removing the finite interval before this radius, so not every solution is square-integrable near $+\infty$, and Weyl's alternative gives the limit-point case.

Let $A^\sharp$ denote the associated form operator and suppose $A^\sharp u=f$.  Testing the form identity against $C_c^\infty(0,\infty)$ gives
\[
    -u''+\gamma\log y\,u=f
\]
in distributions.  Near $0$, since $\log y\in L^2(0,1)$ and $u\in L^2(0,\delta)$, Cauchy--Schwarz gives $(\log y)u\in L^1(0,\delta)$; also $f\in L^2(0,\delta)\subset L^1(0,\delta)$.  Hence $u''=\gamma\log y\,u-f\in L^1(0,\delta)$, so $u,u'$ extend absolutely continuously to $[0,\delta]$ and the endpoint traces exist.  Testing against form-domain functions with arbitrary trace at $0$ gives the Neumann boundary condition in the Neumann case, while the Dirichlet trace condition is already part of the Dirichlet form domain.  The limit-point property proved in the first paragraph of this lemma supplies the absence of a boundary condition at $+\infty$.  Therefore $\mathcal D(A^\sharp)$ is contained in the displayed maximal separated domain.

Conversely, let $u$ belong to the displayed maximal separated domain and write $f=-u''+\gamma\log y\,u\in L^2$.  The same Cauchy--Schwarz argument near $0$ gives $(\log y)u\in L^1(0,\delta)$ and $u''\in L^1(0,\delta)$, hence $u,u'\in AC[0,\delta]$ and the separated boundary condition is meaningful.  On every finite interval the regular-endpoint Green identity is valid: integrate first on $[\varepsilon,2R]$, pass $\varepsilon\downarrow0$ using $u,u'\in AC[0,\delta]$ and the separated trace condition at $0$, and then use the outer cutoff to remove the endpoint at $2R$.  Choose cutoffs $\chi_R$ equal to one on $[0,R]$, supported in $[0,2R]$, with $|\chi_R'|\le C/R$.  Integration by parts gives
\[
    \int |(\chi_Ru)'|^2+\gamma\int (\log y)|\chi_Ru|^2
    =\Re\int f\,\overline{\chi_R^2u}+\int |\chi_R'|^2|u|^2,
\]
with the endpoint term at $0$ killed by the separated condition and the outer endpoint killed by the cutoff.  Move the negative part of the logarithm to the right.  Since $(\log y)_-$ is infinitesimally form-bounded on $(0,1)$, for every small $\varepsilon>0$,
\[
    \gamma\int_0^1(\log y)_-|\chi_Ru|^2
    \le \varepsilon\int_0^1 |(\chi_Ru)'|^2+C_\varepsilon\int_0^1 |u|^2.
\]
The source term is bounded by $\|f\|_2\|u\|_2$, and the cutoff term is bounded by $C\|u\|_2^2$.  Taking $\varepsilon$ small and absorbing, we obtain a uniform bound
\[
    \sup_R\left(\int |(\chi_Ru)'|^2+\gamma\int (\log y)_+|\chi_Ru|^2\right)<\infty .
\]
Lower semicontinuity and Fatou's lemma imply $u\in H^1(\R_+)\cap L^2((\log y)_+dy)$, with the same separated trace condition.  Finally, for every smooth form test function $v$ with compact support in $[0,\infty)$ --- with no endpoint restriction in the Neumann form case and with $v(0)=0$ in the Dirichlet form case --- the Green identity gives $h^\sharp_\gamma[u,v]=\langle f,v\rangle$.  This class of test functions is form-dense in the corresponding form domain: in the Neumann case the approximants may be nonzero at $0$, while in the Dirichlet case the trace-zero condition is preserved.  To see density, truncate at infinity by cutoffs equal to one near the regular endpoint, then mollify on compact half-line charts; in the Neumann case use a standard extension preserving the boundary value, and in the Dirichlet case use an odd extension to preserve trace zero.  Passing by this density yields
\[
    h^\sharp_\gamma[u,v]=\langle f,v\rangle,
    \qquad v\in\mathcal Q(h^\sharp_\gamma),
\]
so $u\in\mathcal D(A^\sharp)$ and $A^\sharp u=f$.  This proves equality with the maximal separated realization.
\end{proof}

The next two lemmas package, for citation, the two nonstandard quantitative devices used in the proof of Theorem~\ref{thm:D}: a Schur-type compactness criterion for Green kernels whose off-diagonal decay is measured by a phase $\Xi$ of Liouville--Green type, and the inhomogeneous recessive estimate that drives the exclusion of Jordan chains.

\begin{lemma}[Schur tail estimate]\label{lem:schur-tail}
Let $\gamma>0$, $q_0(y)=1+\gamma\log_+y$ with $\log_+y:=\max(\log y,0)$, and let $\Xi:(0,\infty)\to\R$ be nondecreasing, constant on $(0,y_1]$ for some $y_1\ge2$, and $C^1$ on $[y_1,\infty)$ with $\Xi'(y)\asymp q_0(y)^{1/2}$ there.  Since only phase differences enter the kernel hypothesis, we normalize $\Xi(y_1)=0$; equivalently one may replace an arbitrary such phase by $\Xi-\Xi(y_1)$, leaving \eqref{eq:schur-kernel-bound} unchanged.  Let $G$ be a measurable kernel on $(0,\infty)^2$, bounded on $(0,2]^2$, square-integrable on $(0,R)^2$ for every $R<\infty$, and satisfying
\begin{equation}\label{eq:schur-kernel-bound}
    |G(x,y)|\ \le\ C_*\,q_0(x\wedge y)^{-1/4}\,q_0(x\vee y)^{-1/4}\,e^{-(\Xi(x\vee y)-\Xi(x\wedge y))}
    \qquad\text{for }x\vee y\ge2 .
\end{equation}
Then the Schur norms of $G$ are finite, the truncation remainders $G_R:=G\,\one_{\{x\vee y\ge R\}}$ obey
\begin{equation}\label{eq:schur-tail-bound}
\begin{aligned}
    &\sup_x\int_0^\infty |G_R(x,y)|\,dy
      +\sup_y\int_0^\infty |G_R(x,y)|\,dx \\
    &\hspace{2.5cm}\le C\,q_0(R)^{-1/4}\longrightarrow0,
      \qquad R\to\infty
\end{aligned}
\end{equation}
The exponent $1/4$ is deliberately non-sharp: the proof gives the
stronger global tail bound $O(q_0(R)^{-3/4})$, while only the displayed
weaker rate is needed below.  The integral
operator associated with $G$ is bounded and compact on $L^2(0,\infty)$.
All constants depend only on $C_*$, $\gamma$, $y_1$, the comparability
constants of $\Xi'$, and the bound on $(0,2]^2$.
\end{lemma}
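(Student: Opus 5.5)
The plan is a direct Schur-test computation built on the monotone phase $\Xi$. First I will record the two elementary facts about $\Xi$ that drive everything. For $y_1\le x\le y$, the comparability $\Xi'\asymp q_0^{1/2}$ and monotonicity of $q_0$ give
\[
\Xi(y)-\Xi(x)\ \ge\ c\,q_0(x)^{1/2}(y-x).
\]
For $x\le y_1\le y$ we have $\Xi(x)=\Xi(y_1)=0$, so the exponent is just $\Xi(y)\ge c(y-y_1)$. This follows from $q_0\ge1$ and reduces to the previous bound once $q_0$ is compared near $y_1$. The region $x\vee y<2$ is handled by the boundedness of $G$ on $(0,2]^2$. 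Since $(0,2]$ has finite measure, it contributes only a bounded amount to both Schur norms and nothing to $G_R$ once $R>2$.

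The row estimate comes next. Fix $x$ and split $\int|G_R(x,y)|\,dy$ into the parts $y\le x$ and $y\ge x$, always with $x\vee y\ge R$.

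\emph{Part $y\ge x$.} Here the kernel is $q_0(x)^{-1/4}q_0(y)^{-1/4}e^{-(\Xi(y)-\Xi(x))}$. I will integrate in $y$ using $\Xi'(y)\asymp q_0(y)^{1/2}$ and substitute $d\Xi$. This gives $\int_x^\infty q_0(y)^{-1/4}e^{-(\Xi(y)-\Xi(x))}\,dy\lesssim\int q_0(y)^{-3/4}\,d\Xi$. Since $q_0$ is increasing, this is at most $q_0(\max(x,y_1))^{-3/4}$.

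\emph{Part $y\le x$, with $x\ge R$.} Symmetrically, $\int_{y_1}^x q_0(y)^{-1/4}e^{-(\Xi(x)-\Xi(y))}\,dy\lesssim q_0(x)^{-3/4}$. Near $y=x$ I will use $d\Xi\asymp q_0(y)^{1/2}dy$ together with $q_0(y)\asymp q_0(x)$ for $y\in[x/2,x]$, because $q_0$ varies slowly. For $y<x/2$ the exponential gives $e^{-c q_0(x/2)^{1/2}x/2}$, which is negligible. The portion $y\le y_1$ contributes at most $y_1e^{-c(x-y_1)}$.

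Multiplying by the prefactor $q_0(x)^{-1/4}$ gives a total of order $q_0(R)^{-1}$. In the case where only $y\ge R$ is large, the factor $q_0(y)^{-1/4}$ together with the exponential tail again yields $q_0(R)^{-3/4}$. Thus every row integral is $\lesssim q_0(R)^{-3/4}\le q_0(R)^{-1/4}$, and by the symmetry of the bound in $(x,y)$ the column estimate is identical. Taking $R=2$ with the bounded piece added gives finite Schur norms, so the Schur test yields boundedness on $L^2$.

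Compactness follows by splitting $G=G\,\one_{\{x\vee y<R\}}+G_R$. The first kernel is square-integrable on $(0,R)^2$, so its operator is Hilbert--Schmidt. The second has operator norm at most the geometric mean of its Schur norms, which is $\lesssim q_0(R)^{-1/4}\to0$. A norm limit of compact operators is compact.

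The main obstacle is the slowly-varying comparison $q_0(y)\asymp q_0(x)$ on $[x/2,x]$ together with the change of variables $d\Xi\asymp q_0^{1/2}dy$. The pieces near the junction $y_1$ and near the boundary of $\{x\vee y\ge2\}$ need care, but only with explicit constants depending on $y_1$, the comparability constants and $\gamma$. I will first try to do it directly with the substitution $t=\Xi(y)$ on $[y_1,\infty)$, where the integrand becomes $q_0^{-3/4}e^{-|t-t_x|}$ up to constants.
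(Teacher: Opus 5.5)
Your proposal is correct and follows essentially the paper's own route: the Schur test with the substitution $u=\Xi(y)$, $dy\lesssim q_0(y)^{-1/2}\,du$, then a split into a near region where $q_0$ is comparable (the paper splits at $R$ using $y\ge R$ and $[R/2,R]$, you split at the diagonal using $y\ge x$ and $[x/2,x]$) and a far region killed by $\Xi(x)-\Xi(x/2)\gtrsim x\,q_0(x/2)^{1/2}$, followed by symmetry for the column bound and Hilbert--Schmidt truncations converging in norm for compactness. The only bookkeeping to tidy is that the substitution is valid only on $[y_1,\infty)$, so two things must be handled separately: the bounded strip $[x,y_1]$ when $x<y_1$, and the requirement $R\ge 2y_1$ for the $x/2$ split; the paper covers the first by building the global symmetric majorant $M$ at the outset.
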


\begin{proof}
Since $q_0$ and $\Xi$ are nondecreasing, the right-hand side of
\eqref{eq:schur-kernel-bound} is
\[
 C_*M(x,y),\qquad
 M(x,y):=q_0(x)^{-1/4}q_0(y)^{-1/4}
 e^{-|\Xi(x)-\Xi(y)|},
\]
and $M$ is symmetric.  After enlarging the constant, the assumed bound
on $(0,2]^2$ therefore gives a symmetric pointwise majorant for $|G|$
on all of $(0,\infty)^2$.

We first verify the global Schur bounds.  Since $\Xi=0$ on $(0,y_1]$,
\[
 \int_0^{y_1}M(x,y)\,dy
 \le Cq_0(x)^{-1/4}e^{-\Xi(x)}\le C.
\]
On $[y_1,\infty)$ the hypothesis on $\Xi'$ makes $\Xi$ strictly
increasing and unbounded.  With $u=\Xi(y)$ and
$dy\le Cq_0(y)^{-1/2}\,du$, we obtain
\[
\begin{aligned}
 \int_{y_1}^{\infty}M(x,y)\,dy
 &\le Cq_0(x)^{-1/4}
   \int_0^\infty e^{-|u-\Xi(x)|}
        q_0(y(u))^{-3/4}\,du  \\
 &\le Cq_0(x)^{-1/4}
   \int_0^\infty e^{-|u-\Xi(x)|}\,du
 \le C .
\end{aligned}
\]
Thus $\sup_x\int_0^\infty |G(x,y)|\,dy<\infty$; the other Schur norm
is finite by symmetry of the majorant.

It remains to prove the tail estimate.  The bounded range of $R$ is
absorbed into the constant by the global Schur bounds just proved.  Put
$Q_R:=q_0(R)$ and henceforth take $R\ge2y_1$ sufficiently large that
$q_0(R/2)\asymp Q_R$.  If $x<R$,
the support condition in $G_R$ forces $y\ge R$, and the same change of
variables gives
\[
\begin{aligned}
 \int_0^\infty |G_R(x,y)|\,dy
 &=\int_R^\infty |G(x,y)|\,dy \\
 &\le Cq_0(x)^{-1/4}Q_R^{-3/4}
       \int_{\Xi(R)}^\infty e^{-(u-\Xi(x))}\,du \\
 &\le CQ_R^{-3/4}.
\end{aligned}
\]
Here $\Xi(x)\le\Xi(R)$ and $q_0(x)^{-1/4}\le1$.

Now let $x\ge R$.  On the large--large region, substitution over
$y\ge R$ yields
\[
\begin{aligned}
 \int_R^\infty |G(x,y)|\,dy
 &\le Cq_0(x)^{-1/4}Q_R^{-3/4}
    \int_{\Xi(R)}^\infty e^{-|u-\Xi(x)|}\,du \\
 &\le CQ_R^{-1}.
\end{aligned}
\]
For the mixed region $R/2\le y\le R$, the comparability
$q_0(y)\asymp Q_R$ gives
\[
\begin{aligned}
 \int_{R/2}^{R}|G(x,y)|\,dy
 &\le Cq_0(x)^{-1/4}Q_R^{-3/4}
    \int_{\Xi(R/2)}^{\Xi(R)}
       e^{-(\Xi(x)-u)}\,du \\
 &\le CQ_R^{-1}.
\end{aligned}
\]
On the remaining part $0<y\le R/2$, monotonicity and the lower bound on
$\Xi'$ give
\[
 \Xi(x)-\Xi(y)
 \ge \Xi(R)-\Xi(R/2)
 \ge cR\,q_0(R/2)^{1/2}
 \ge c'R Q_R^{1/2}.
\]
Consequently,
\[
 \int_0^{R/2}|G(x,y)|\,dy
 \le CRq_0(x)^{-1/4}e^{-c'RQ_R^{1/2}}
 \le CQ_R^{-1}
\]
for all sufficiently large $R$.  Hence
\[
 \sup_x\int_0^\infty |G_R(x,y)|\,dy
 \le CQ_R^{-3/4}.
\]
Because $|G_R|$ is bounded by the symmetric kernel
$C_*M\,\one_{\{x\vee y\ge R\}}$, the identical calculation with the
variables interchanged gives
\[
 \sup_y\int_0^\infty |G_R(x,y)|\,dx
 \le CQ_R^{-3/4}.
\]
This proves \eqref{eq:schur-tail-bound}, indeed with the stronger exponent
$3/4$.

Finally, for fixed $R$ the kernel
$G\,\one_{\{x\vee y<R\}}$ is Hilbert--Schmidt by local square
integrability.  Schur's test and the preceding estimates show that the
operator with kernel $G_R$ has norm tending to zero.  The operator
associated with $G$ is therefore the operator-norm limit of
Hilbert--Schmidt operators, and hence is bounded and compact.
\end{proof}

\begin{lemma}[Inhomogeneous recessive lemma]\label{lem:inhom-recessive}
Let $\gamma>0$ and $\lambda\in\C$, and set $q_\lambda(y)=i\gamma\Log y-\lambda$, $\xi_\lambda(y)=\int_{y_1}^y q_\lambda(t)^{1/2}\,dt$, and $\Xi_\lambda=\Re\xi_\lambda$, with $y_1=y_1(\gamma,\lambda)$ as in Lemma~\ref{lem:sector-LG}, applied with $\alpha=\pi/2$, $E=\lambda$, on the ray $\theta=0$; the weight $q_0(y)=1+\gamma\log_+y$ is comparable to $|q_\lambda(y)|$ for large $y$, with constants depending on $\lambda$.  Let $g$ be continuous on $[y_1,\infty)$ with
\[
    |g(y)|\le C_g\,(1+y)\,e^{-\Xi_\lambda(y)},\qquad y\ge y_1,
\]
and let $w\in L^2(y_1,\infty)$, with $w,w'\in AC_{\mathrm{loc}}[y_1,\infty)$, solve
\[
    w''-q_\lambda w=-g\qquad\text{on }[y_1,\infty).
\]
Then $w$ and $w'$ decay superexponentially:
\begin{equation}\label{eq:inhom-recessive-bound}
    |w(y)|+q_0(y)^{-1/2}\,|w'(y)|\ \le\ C\,(1+y)^2\,e^{-\Xi_\lambda(y)},
    \qquad y\ge y_1 .
\end{equation}
\end{lemma}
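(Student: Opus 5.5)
Work on the single ray $\theta=0$, where Lemma~\ref{lem:sector-LG} (with $\alpha=\pi/2$, $E=\lambda$) supplies the recessive solution $u_+=q_\lambda^{-1/4}e^{-\xi_\lambda}(1+\eta_+)$ and the dominant ray solution $u_-=q_\lambda^{-1/4}e^{+\xi_\lambda}(1+\eta_-)$. Both relative errors in the functions and their derivatives are at most $\tfrac12$, and after enlarging $y_1$ they can be taken as small as we like. The Wronskian $W=W(u_+,u_-)$ is a nonzero constant, with $W=2+O(\varepsilon)$, as computed at the end of that proof. The plan is to write $w$ by variation of constants, choosing the particular solution that is adapted to recessivity, and then to use square integrability of $w$ to remove the dominant homogeneous component.

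Concretely, we define
\[
 w_p(y)=\frac1W\Bigl(u_-(y)\int_y^\infty u_+(t)g(t)\,dt+u_+(y)\int_{y_1}^y u_-(t)g(t)\,dt\Bigr),
\]
with the signs adjusted so that $w_p''-q_\lambda w_p=-g$. The first integral converges absolutely, because $|u_+g|\lesssim q_0^{-1/4}(1+t)e^{-2\Xi_\lambda(t)}$. For the estimates we use $|u_\pm(t)|\asymp q_0(t)^{-1/4}e^{\pm\Xi_\lambda(t)}$ and the growth bound $\Xi_\lambda'\ge c\,q_0^{1/2}\ge c$ from Lemma~\ref{lem:sector-LG}. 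These give
\[
 |u_-(y)|\int_y^\infty|u_+g|\le C(1+y)e^{\Xi_\lambda(y)}\int_y^\infty e^{-2\Xi_\lambda(t)}\,dt\le C(1+y)e^{-\Xi_\lambda(y)}.
\]
The second term is handled by noting that $u_-g$ is bounded by $C(1+t)q_0^{-1/4}$, so that
\[
 |u_+(y)|\int_{y_1}^y|u_-g|\le Ce^{-\Xi_\lambda(y)}(1+y)^2.
\]
Hence $|w_p|\le C(1+y)^2e^{-\Xi_\lambda}$.

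For the derivative we differentiate the formula. The boundary terms from differentiating the integrals cancel, as usual for variation of constants, so $w_p'$ is the same expression with $u_\pm$ replaced by $u_\pm'$. Since $|u_\pm'|\le Cq_0^{1/2}|u_\pm|$, we get $q_0^{-1/2}|w_p'|\le C(1+y)^2e^{-\Xi_\lambda}$.

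Next, $w-w_p$ is a homogeneous solution on the ray, so $w-w_p=c_+u_++c_-u_-$ because $(u_+,u_-)$ is a basis. Both $w$ and $w_p$ lie in $L^2(y_1,\infty)$, the latter by the bound just proved. If $c_-\neq0$, the dominance argument at the end of the proof of Lemma~\ref{lem:sector-LG} shows that $|w-w_p|\to\infty$, which contradicts square integrability; hence $c_-=0$. The remaining term $c_+u_+$ and its derivative satisfy the bound with factor $q_0^{-1/4}e^{-\Xi_\lambda}$ and $q_0^{1/4}e^{-\Xi_\lambda}$ respectively, both dominated by the right-hand side of \eqref{eq:inhom-recessive-bound}. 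The constants are taken over the compact range $[y_1,Y]$ by continuity and over $[Y,\infty)$ by the estimates above.

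The main obstacle is the first integral estimate, specifically controlling $\int_y^\infty e^{-2\Xi_\lambda(t)}\,dt\lesssim e^{-2\Xi_\lambda(y)}$ with a polynomial rather than exponential loss. This requires the monotone lower bound $\Xi_\lambda'\ge c$ on the whole ray, not merely asymptotically, and I would take it from the display $\frac{d}{dr}\Re\xi\ge\frac{c_0}{2}(\gamma\log r)^{1/2}$ with $\alpha=\pi/2$, $\theta=0$, where $\cos(\pi/4)>0$. The claimed $(1+y)^2$ factor leaves generous room for the second term, so no sharper bookkeeping should be needed there.
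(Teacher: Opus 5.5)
Your proposal is correct and follows the paper's proof essentially step for step. It uses the same variation-of-constants particular solution built from the recessive and dominant ray solutions, the same two integral estimates driven by $\Xi_\lambda'\ge c$, and the same removal of the dominant homogeneous component. The one difference is cosmetic: you exclude $c_-$ by the pointwise blow-up of $w-w_p\in L^2$, whereas the paper applies a unit $\Xi_\lambda$-slab argument to $w$ itself; your version is slightly cleaner.

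One small slip to fix: in the first-term display you cannot pull $(1+t)$ out of $\int_y^\infty$ as $(1+y)$, since $t\ge y$ there. However, $\Xi_\lambda'\ge c$ gives directly $\int_y^\infty(1+t)e^{-2\Xi_\lambda(t)}\,dt\le C(1+y)e^{-2\Xi_\lambda(y)}$. This is exactly the bound the paper uses, and it yields your stated conclusion.
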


\begin{proof}
Write $q=q_\lambda$.  On $[y_1,\infty)$ take the Liouville--Green basis $u_R:=u_+$ (recessive), $u_D:=u_-^{(0)}$ (the dominant solution on the positive real ray), supplied by Lemma~\ref{lem:sector-LG} and normalized on that ray so that $W(u_R,u_D)=u_Ru_D'-u_R'u_D=1$, and set
\[
    w_p(y):=u_D(y)\int_y^\infty u_R\,g\ +\ u_R(y)\int_{y_1}^{y}u_D\,g ;
\]
the boundary terms in $w_p'$ cancel, $w_p'=u_D'\int_y^\infty u_Rg+u_R'\int_{y_1}^yu_Dg$, and differentiating once more gives $w_p''-qw_p=(u_R'u_D-u_Ru_D')g=-g$: a particular solution.  Since $\frac{d}{dt}\Xi_\lambda\ge c(\log t)^{1/2}\ge c>0$, one has $\int_y^\infty(1+t)e^{-2\Xi_\lambda(t)}dt\le C(1+y)e^{-2\Xi_\lambda(y)}$; so $|u_Rg|\le C(1+t)e^{-2\Xi_\lambda(t)}$ bounds the first term by $C(1+y)e^{-\Xi_\lambda(y)}$, and $|u_Dg|\le C|q|^{-1/4}e^{\Xi_\lambda(t)}\cdot(1+t)e^{-\Xi_\lambda(t)}\le C(1+t)$ bounds the second by $C(1+y)^2e^{-\Xi_\lambda(y)}$.  The same estimates apply to $w_p'$, each differentiation of the basis costing a factor $|q|^{1/2}$.  So $w_p,w_p'$ decay superexponentially, the polynomial factors being absorbed by $e^{-\Xi_\lambda(y)}=e^{-cy(\log y)^{1/2}(1+o(1))}$.  Every solution $w$ as in the statement admits on $[y_1,\infty)$ the representation $w=c_Ru_R+c_Du_D+w_p$, with explicit coefficients.  Indeed $f:=w-w_p$ solves the homogeneous equation $f''-qf=0$ on $[y_1,\infty)$, hence
\[
    c_R=W(f,u_D),\qquad c_D=W(u_R,f),\qquad W(p,r)=pr'-p'r,
\]
are independent of $y$ because $W(u_R,u_D)=1$, and $f=c_Ru_R+c_Du_D$.  The estimates above show $w_p,w_p'\in L^2$ and are recessive.  If $c_D\ne0$, consider the unit $\Xi_\lambda$-slabs $S_k=\{y\ge y_1:\ \Xi_\lambda(y)\in[k,k+1]\}$ for large $k$.  Since $c\le\Xi_\lambda'(y)\le Cq_0(y)^{1/2}$, on $S_k$ one has $y\le y_1+(k+1)/c$, hence $q_0\le C\log(k+2)$ and $|S_k|\ge c\,(\log(k+2))^{-1/2}$; the Liouville--Green asymptotics then give $\int_{S_k}|u_D|^2\ge c\,e^{2k}/\log(k+2)$, while $|c_Ru_R|+|w_p|\le C(1+y)^2e^{-\Xi_\lambda}\le C\,k^2e^{-k}$ on $S_k$ by the estimates above.  Hence, by $|f+w_p|^2\ge\tfrac12|c_Du_D|^2-\bigl(|c_Ru_R|+|w_p|\bigr)^2$,

\[
\begin{aligned}
    \int_{S_k}|w|^2
    &\ge \frac{|c_D|^2}{2}\int_{S_k}|u_D|^2-C\,k^4e^{-2k} \\
    &\ge c\,|c_D|^2\,\frac{e^{2k}}{\log(k+2)}-C\,k^4e^{-2k}
      \longrightarrow \infty .
\end{aligned}
\]
whereas the slab integrals of the $L^2$ function $w$ are summable, hence tend to $0$: a contradiction.  Necessarily $c_D=0$.  Thus $w=c_Ru_R+w_p$ on $[y_1,\infty)$, and the bounds just displayed for $w_p$ and $w_p'$, together with the recessive bounds for $u_R$ and $u_R'$ from Lemma~\ref{lem:sector-LG} --- an extra factor $|q_\lambda|^{1/2}$ for the derivative --- give \eqref{eq:inhom-recessive-bound}.
\end{proof}

\subsection{Proof of Theorem~\ref{thm:D} and the interlaced ladder}

\begin{proof}[Proof of Theorem \ref{thm:D}: half-line part and reduction to parity/interlacing]
The proof is organized in two stages.  Steps 0--7 below prove the half-line realizations, spectral-line identity, nonzero overlaps, and half-line simplicity used later.  Lemma~\ref{lem:parity} and Proposition~\ref{prop:ladder} then perform the full-line parity splitting and interlacing, after which the short completion of Theorem~\ref{thm:D} records the resulting full-line statement.  Thus the later parity and ladder results use only the half-line conclusions already established in these steps.

We organize the proof in steps.  Fix $\gamma>0$ and
$\sharp\in\{N,D\}$.  Within Steps~0--7 we abbreviate the selected mode
$\phi^\sharp_{\gamma,n}$ to $\phi_n^\sharp$.

\emph{Step 0: the self-adjoint family.}  The Neumann and Dirichlet realizations of $H_\gamma^\sharp$ are obtained from the closed lower-bounded quadratic form
\[
    q_\gamma^\sharp[u]=\int_0^\infty |u'|^2+\gamma\int_0^\infty (\log y)_+|u|^2
       -\gamma\int_0^\infty (\log y)_-|u|^2,
\]
with form domain $H^1(\R_+)\cap L^2((\log y)_+dy)$ in the Neumann case and its trace-zero subspace in the Dirichlet case.  The negative part of $\log y$ lies in $L^1(0,1)$ and is infinitesimally form-bounded with respect to $\int_0^1(|u'|^2+|u|^2)$ --- indeed $\int_0^1(\log y)_-|u|^2\le\|(\log y)_-\|_{L^1(0,1)}\,\|u\|_{L^\infty(0,1)}^2$ and $\|u\|_{L^\infty(0,1)}^2\le\eps\|u'\|_{L^2(0,1)}^2+(1+\eps^{-1})\|u\|_{L^2(0,1)}^2$ for every $\eps\in(0,1)$ ---; hence the forms are closed and bounded below, and the first representation theorem gives the associated self-adjoint operators \cite[Ch.~VI, \S1.6--7 and \S2.1, pp.~319--325]{Kato}.  Since $(\log y)_+\to\infty$, the embedding of the form domain into $L^2(\R_+)$ is compact: truncate to $[0,R]$ and use Rellich there, while the tail is bounded by $(\log R)^{-1}\int_R^\infty(\log y)|u|^2$.  Thus $H_\gamma^\sharp$ has compact resolvent.  The endpoint $0$ is regular because $\log y\in L^1(0,1)$; $+\infty$ is limit point: at each fixed real spectral parameter choose the Liouville--Green tail in Lemma~\ref{lem:sector-LG} beyond all zeros of $\gamma\log y-E$ on the positive ray; on that tail the dominant solution grows superexponentially, so not every solution is square-integrable near $+\infty$.  The discarded finite interval is irrelevant to the Weyl endpoint alternative, and the endpoint is limit point \cite[Secs.~5--6, pp.~72--103]{Weidmann}.  Standard separated-endpoint Sturm--Liouville theory gives self-adjointness and simplicity of the eigenvalues $E^\sharp_{\gamma,0}<E^\sharp_{\gamma,1}<\cdots$, with real eigenfunctions $\phi_n^\sharp$.  The ground state in each separated self-adjoint problem is strictly positive on $(0,\infty)$ by Lemma~\ref{lem:standard-tools}\textup{(iv)}; the hypotheses of that result have just been verified.  The equality between the form operator and the displayed maximal separated differential realization is the content of Lemma~\ref{lem:log-form-realization}; in particular maximal-domain functions have finite traces at $0$, the separated condition eliminates the regular-endpoint boundary term, and the limit-point condition at $+\infty$ removes any second boundary condition.  By Lemma~\ref{lem:sector-LG} with $\alpha=0$, $\theta=0$, each $\phi_n^\sharp$ and its derivative have the stated superexponential Liouville--Green decay on a tail; local endpoint regularity supplies the boundedness needed on compact intervals.

\emph{Step 1: rigorous realization of $\cL^\sharp_\gamma$ and compactness.}  We first construct an operator with resolvent $\mathcal G$ and then verify that its domain is exactly the announced maximal separated domain.  Apply Lemma~\ref{lem:sector-LG} with $\alpha=\pi/2$, $E=-1$, on the ray $\theta=0$ (allowed: $\cos(\pi/4)>0$): the equation $-u''+(i\gamma\log y+1)u=0$ has a recessive solution $u_+\in L^2$ near $\infty$, unique up to constants, decaying superexponentially together with $u_+'$.  Extend $u_+$ to all of $(0,\infty)$ by the ODE; near the regular endpoint, writing the equation as a first-order system for $(u_+,u_+')$ with coefficient matrix in $L^1(0,1)$, Gr\"onwall gives $u_+,u_+'$ bounded near $0$, hence $u_+''\in L^1(0,\delta)$ and $u_+,u_+'\in AC[0,\delta]$ with finite traces at $0$.  Since the potential is integrable at $0$, fix the nonzero separated solution $u_\sharp$ by the normalization $u_N(0)=1$, $u_N'(0)=0$ in the Neumann case and $u_D(0)=0$, $u_D'(0)=1$ in the Dirichlet case; in either case it is $C^1$ up to $0$ and satisfies the $\sharp$-condition.  Their Wronskian $W:=u_+u_\sharp'-u_+'u_\sharp$ is constant on $(0,\infty)$.  If $W=0$, then $u_+$ satisfies the $\sharp$ boundary condition at $0$.  To see that this is impossible, integrate on $[\varepsilon,R]$:
\[
\begin{aligned}
0&=\Re\int_\varepsilon^R\bigl(-u_+''+(i\gamma\log y+1)u_+\bigr)\overline{u_+}\,dy\\
&=\int_\varepsilon^R(|u_+'|^2+|u_+|^2)\,dy+\Re[-u_+'\overline{u_+}]_\varepsilon^R .
\end{aligned}
\]
The boundary term at $R$ tends to zero by the recessive asymptotics of $u_+$ and $u_+'$; the term at $0$ tends to zero because the traces are finite and $u_+$ satisfies the separated boundary condition.  Letting $\varepsilon\downarrow0$ and $R\to\infty$ gives $u_+=0$, a contradiction.  So $W\neq0$ and
\[
    (\mathcal G f)(x)=\frac1W\Bigl(u_+(x)\int_0^xu_\sharp f+u_\sharp(x)\int_x^\infty u_+f\Bigr)
\]
is well defined on $L^2$.  With $\Xi(x)=\Re\xi(x)$ for $x\ge y_1$, extended to $(0,y_1]$ by the constant value $\Xi(y_1)$ --- a bounded extension, harmless in the exponential bounds below --- and $q_0(x):=1+\gamma\log_+x$, where $\log_+x:=\max(\log x,0)$ (so $q_0$ is comparable to $|q|$ for $x\ge2$): writing $u_\sharp=\alpha u_++\beta u_-^{(0)}$ on the positive real ray $[y_1,\infty)$, with $\beta\neq0$ precisely because $W\neq0$, gives $|u_\sharp(x)|\le Cq_0(x)^{-1/4}e^{\Xi(x)}$ for $x\ge2$; with $|u_+(y)|\le Cq_0(y)^{-1/4}e^{-\Xi(y)}$, the kernel obeys $|G(x,y)|\le Cq_0(x\wedge y)^{-1/4}q_0(x\vee y)^{-1/4}e^{-(\Xi(x\vee y)-\Xi(x\wedge y))}$ on the tail $x\vee y\ge y_1$.  After enlarging the constant, the same bound holds on the compact annulus $2\le x\vee y\le y_1$, because $q_0$, $\Xi$, $u_+$, and $u_\sharp$ are bounded there and $\Xi$ is extended constantly on $(0,y_1]$.  The kernel is bounded on $(0,2]^2$, both solutions being bounded there with finite traces at the regular endpoint.  The kernel bound just displayed, after replacing $\Xi$ by $\Xi-\Xi(y_1)$ for the normalized statement of Lemma~\ref{lem:schur-tail}, the extension of $\Xi$ --- nondecreasing, constant on $(0,y_1]$, and $C^1$ on $[y_1,\infty)$ with $\Xi'=\Re q^{1/2}\asymp q_0^{1/2}$ there, by the ray display of Lemma~\ref{lem:sector-LG} --- and the local square-integrability of $G$ --- on each $(0,R)^2$ the kernel is continuous off the diagonal and continuous across it, only the derivative jumping, hence bounded on compact rectangles and in $L^2((0,R)^2)$ for every finite $R$ --- are exactly the hypotheses \eqref{eq:schur-kernel-bound} of Lemma~\ref{lem:schur-tail}.  Hence the Schur norms of $G$ are finite, the truncation remainders obey the tail bound \eqref{eq:schur-tail-bound}, and $\mathcal G$ is bounded and \emph{compact} on $L^2$.  The range is dense already: if $\varphi\in C_c^\infty(0,\infty)$, then $\varphi$ satisfies either separated boundary condition vacuously and $f=-\varphi''+(i\gamma\log y+1)\varphi\in C_c^\infty(0,\infty)$; the same homogeneous-uniqueness argument as above gives $\mathcal Gf=\varphi$.  Thus $C_c^\infty(0,\infty)\subset\operatorname{Ran}\mathcal G$.  For $f\in C_c(0,\infty)$, differentiating the kernel --- the orientation of $W$ makes the derivative jump of $G(\cdot,y)$ across $x=y$ equal to $-1$ --- shows that $u=\mathcal Gf$ solves $-u''+(i\gamma\log y+1)u=f$ classically, with the $\sharp$-condition inherited from $u_\sharp$: near $0$, $\mathcal Gf=W^{-1}\bigl[u_+\int_0^{\,\cdot}u_\sharp f+u_\sharp\int_{\,\cdot}^\infty u_+f\bigr]$, and the first-order jump terms cancel upon differentiation.  The endpoint trace formula extends directly to all $f\in L^2$: since $u_+\in L^2(0,\infty)$ and $u_\sharp,u_\sharp'$ are bounded near $0$,
\[
    \int_0^x u_\sharp(y)f(y)\,dy=O(x^{1/2}\|f\|_2),\qquad
    \int_x^\infty u_+(y)f(y)\,dy\to\int_0^\infty u_+(y)f(y)\,dy
\]
as $x\downarrow0$, and the same estimate with $u_\sharp'$ gives the derivative trace after differentiating the displayed Green formula.  Hence
\[
    (\mathcal Gf)^{(k)}(0)=W^{-1}u_\sharp^{(k)}(0)\int_0^\infty u_+(y)f(y)\,dy,
    \qquad k=0,1,
\]
for every $f\in L^2$, so the boundary condition holds without relying on trace convergence.  For general $f\in L^2$, take $f_n\in C_c(0,\infty)$ with $f_n\to f$ in $L^2$: then $\mathcal Gf_n\to\mathcal Gf$ in $L^2$ by boundedness, and the identity passes to the limit in $\mathcal D'(0,\infty)$, so $-(\mathcal Gf)''+(i\gamma\log y+1)\mathcal Gf=f$ distributionally for every $f\in L^2$.  In particular $\mathcal Gf=0$ forces $f=0$: $\mathcal G$ is injective.  Define
\[
    \cL^\sharp_\gamma:=\mathcal G^{-1}-1,\qquad \mathcal D(\cL^\sharp_\gamma)=\Ran\mathcal G .
\]
By construction $\cL^\sharp_\gamma+1:\Ran\mathcal G\to L^2(0,\infty)$ is bijective with bounded inverse $\mathcal G$; hence $-1\in\rho(\cL^\sharp_\gamma)$, and this resolvent is compact because $\mathcal G$ is compact.  The operator is closed: if $u_k=\mathcal Gf_k\to u$ and $(\cL^\sharp_\gamma+1)u_k=f_k\to g$ in $L^2$, then $u=\lim\mathcal Gf_k=\mathcal Gg$ by boundedness of $\mathcal G$, so $u\in\Ran\mathcal G$ and $(\cL^\sharp_\gamma+1)u=g$.  This realization is the maximal one asserted in the theorem.  Let $\mathcal D_{\max}$ consist of all $u\in L^2(0,\infty)$ with $u,u'\in AC_{\mathrm{loc}}$, $-u''+i\gamma\log y\,u\in L^2$, and the $\sharp$-condition at $0$ (meaningful, since the endpoint is regular and $u,u'$ extend continuously to $0$).  We record the two domain facts explicitly.  First, $\Ran\mathcal G\subset\mathcal D_{\max}$: for $u=\mathcal Gf$ with $f\in L^2$, the distributional identity just proved gives $-u''+i\gamma\log y\,u=f-u\in L^2$; Cauchy--Schwarz with $\log y\in L^2(0,1)$ and $u\in L^2$ gives $(\log y)u\in L^1(0,\delta)$, so $u''\in L^1_{\mathrm{loc}}[0,\infty)$ and $u,u'\in AC_{\mathrm{loc}}[0,\infty)$, while the displayed trace formula gives the $\sharp$-condition for every $f\in L^2$.  Second, the homogeneous $L^2$ problem with the same boundary condition is unique after forcing is fixed: if $u\in\mathcal D_{\max}$, set $g=-u''+(i\gamma\log y+1)u\in L^2$ and $w=\mathcal Gg$; then $u-w$ is an $L^2$ solution of the homogeneous equation satisfying the $\sharp$-condition.  Lemma~\ref{lem:sector-LG} gives $u-w=c\,u_+$ on a positive tail interval; since both sides solve the same second-order equation on the connected interval $(0,\infty)$, ordinary Cauchy uniqueness propagates this equality to all of $(0,\infty)$.  If $c\neq0$, then $u_+$ itself satisfies the same separated $\sharp$-condition at $0$ as the reference solution $u_\sharp$; the two boundary data are therefore linearly dependent, so $W(u_+,u_\sharp)=0$, contradicting the previously established nonzero Wronskian.  Hence $c=0$, and $u=w$.  Therefore $\mathcal D(\cL^\sharp_\gamma)=\Ran\mathcal G=\mathcal D_{\max}$.   Then $\cL^\sharp_\gamma$ is closed with compact resolvent, its spectrum is discrete, and $\lambda$ is an eigenvalue if and only if the ODE $-u''+i\gamma\log y\,u=\lambda u$ has a nontrivial $L^2$ solution with the $\sharp$-condition; every such solution is recessive at $\infty$ (Lemma~\ref{lem:sector-LG}), so eigenfunctions and their derivatives decay superexponentially, all Green identities hold for them, and in particular $\Re\lambda=\|u'\|^2/\|u\|^2\ge0$.

\emph{Step 2: sectorial continuation, for every spectral parameter.}  Let $\alpha\in\{0,\pi/2\}$, $\lambda\in\C$, and let $u$ solve $u''=(e^{i\alpha}\gamma\Log z-\lambda)u$ on $(0,\infty)$.  The coefficient $c(z)=e^{i\alpha}\gamma\Log z-\lambda$ is holomorphic on $\C\setminus(-\infty,0]$; by interior regularity for linear ODEs with analytic coefficients, the $AC_{\mathrm{loc}}$ solution on the real ray is real-analytic on $(0,\infty)$, and its Cauchy data at any base point $y_*>0$ generate the unique holomorphic continuation along paths, so $u$ extends holomorphically to the open sector $\{z\neq0:\,|\arg z|<\pi/3\}$ --- a simply connected domain on which the coefficient is holomorphic --- and in particular is holomorphic on $\{|\arg z|\le\pi/4\}\setminus\{0\}$.  At the origin we use Lemma~\ref{lem:log-volterra-endpoint}.  Equivalently, writing $u''=c(y)u$, $c\in L^1(0,1)$, as a first-order system for $(u,u')$ with $L^1$ coefficient matrix, Gr\"onwall on $[\eps,\delta]$ gives bounds uniform as $\eps\downarrow0$, so $u,u'$ are bounded near $0$; then $cu\in L^1(0,\delta)$, hence $u''\in L^1(0,\delta)$ and $u,u'\in AC[0,\delta]$ with finite Cauchy data; and every solution satisfies the Volterra equation
\[
    u(z)=a_0+b_0z+\int_0^z(z-t)\,c(t)\,u(t)\,dt ,
\]
integrated along the segment $[0,z]$; since $\int_0^\delta\sup_{|\theta|\le\pi/4}|c(te^{i\theta})|\,dt\le\int_0^\delta\bigl(\gamma|\log t|+\gamma\pi/4+|\lambda|\bigr)dt\to0$ as $\delta\downarrow0$, the Picard iteration converges uniformly on $\{|z|\le\delta,\,|\arg z|\le\pi/4\}$ for $\delta$ small.  Consequently $u$ and $u'$ extend continuously to $z=0$ \emph{within the closed sector}, with ray-independent boundary values $u(0)=a_0$, $u'(0)=b_0$; in particular the $\sharp$-condition is preserved along every ray of the sector.  Applied with $\alpha=0$, $\lambda=E^\sharp_{\gamma,n}$, this continues $\phi_n^\sharp$ to the closed sector $\{0\le\arg z\le\pi/4\}$ --- holomorphically off $0$, continuously up to $0$ --- with the boundary values $\phi_n^\sharp(0),\ (\phi_n^\sharp)'(0)$.  By Lemma~\ref{lem:sector-LG} with $\alpha=0$ on $[0,\pi/4]$, the solution $\phi_n^\sharp$, which tends to $0$ along $\theta=0$, is recessive on the sectorial tail: for some $r_0\ge e$,
\[
    |\phi_n^\sharp(re^{i\theta})|\le C\exp\bigl(-cr(\log r)^{1/2}\bigr),\qquad r\ge r_0,\quad 0\le\theta\le\tfrac\pi4 .
\]
On the compact sector $0\le r\le r_0$ the Volterra continuation at the origin and ordinary ODE continuation away from the origin give continuity and boundedness; no Liouville--Green decay estimate is asserted there.

\emph{Step 3: the forward identity.}  Set $u_{\gamma,n}^\sharp(y)=\phi_n^\sharp(e^{i\pi/4}y)$.  Then $(u_{\gamma,n}^\sharp)''(y)=i\,(\phi_n^\sharp)''(e^{i\pi/4}y)$ and $\Log(e^{i\pi/4}y)=\log y+i\pi/4$, so from $(\phi_n^\sharp)''=(\gamma\Log z-E^\sharp_{\gamma,n})\phi_n^\sharp$,
\[
    -(u_{\gamma,n}^\sharp)''+i\gamma\log y\,u_{\gamma,n}^\sharp
    =\Bigl(\frac{\gamma\pi}{4}+iE^\sharp_{\gamma,n}\Bigr)u_{\gamma,n}^\sharp .
\]
By Step 2, $u_{\gamma,n}^\sharp\in L^2$ with superexponential decay, $u_{\gamma,n}^\sharp(0)=\phi_n^\sharp(0)$ and $(u_{\gamma,n}^\sharp)'(0)=e^{i\pi/4}(\phi_n^\sharp)'(0)$, so the $\sharp$-condition is preserved.  Hence $u_{\gamma,n}^\sharp$ is a genuine half-line eigenfunction: writing $g=(\lambda+1)u_{\gamma,n}^\sharp$ with $\lambda=\gamma\pi/4+iE^\sharp_{\gamma,n}$, both $u_{\gamma,n}^\sharp$ and $\mathcal Gg$ are $L^2$ solutions of the same inhomogeneous problem with the $\sharp$-condition, and their difference is an $L^2$ homogeneous solution at spectral value $-1$, hence $0$ by Step 1.  This proves ``$\supseteq$'' in \eqref{eq:log-osc-spec-gamma}.

\emph{Step 4: the converse and the vertical line.}  Let $\lambda$ be an eigenvalue with eigenfunction $u$.  By Step~2 with $\alpha=\pi/2$ (the sector $\{-\pi/4\le\arg z\le0\}$ is treated identically), $u$ extends holomorphically to a neighborhood of that closed sector punctured at $0$, continuously up to $0$, with ray-independent Cauchy data at $0$, so the $\sharp$-condition transports to the rotated ray.  By Lemma~\ref{lem:sector-LG} with $\alpha=\pi/2$ on the sector $[-\pi/4,0]$ --- admissible since $\theta+\pi/4\in[0,\pi/4]$ there, so $\cos(\theta+\pi/4)\ge2^{-1/2}$ on the whole closed interval --- $u$ is recessive on the tail of the closed sector down to $\arg z=-\pi/4$ ($u\in L^2$ along the positive ray, so the square-integrability clause of the lemma applies).  The compact part of the sector is controlled by the endpoint Volterra continuation and local ODE continuation.  Set $\phi(r)=u(e^{-i\pi/4}r)$; then $\phi''=-i\,u''(e^{-i\pi/4}r)$ and $\Log(e^{-i\pi/4}r)=\log r-i\pi/4$, so the eigenvalue equation transforms into
\[
    -\phi''+\gamma\log r\,\phi=-i\Bigl(\lambda-\frac{\gamma\pi}{4}\Bigr)\phi ,
\]
with $\phi\in L^2$ superexponentially decaying.  The boundary traces are preserved explicitly: $\phi(0)=u(0)$ and $\phi'(0)=e^{-i\pi/4}u'(0)$, so the Dirichlet or Neumann condition is transported to the real half-line.  Moreover $\phi\in L^2$ with $\phi,\phi'\in AC_{\mathrm{loc}}$ and $-\phi''+\gamma\log r\,\phi\in L^2$, so $\phi$ lies in the maximal separated domain, which is $\mathcal D(H^\sharp_\gamma)$ by the identification in Step~0.  Hence $E:=-i(\lambda-\gamma\pi/4)$ is an eigenvalue of the self-adjoint $H^\sharp_\gamma$, so $E=E^\sharp_{\gamma,n}$ is real, which forces $\Re\lambda=\gamma\pi/4$ and $\lambda=\gamma\pi/4+iE^\sharp_{\gamma,n}$.  This proves ``$\subseteq$'' in \eqref{eq:log-osc-spec-gamma} and Remark~\ref{rem:vertical}.

\emph{Step 5: the overlap and non-isotropy.}  Let $F(z)=\phi_n^\sharp(z)^2$, holomorphic on the punctured sector $\{0\le\arg z\le\pi/4\}\setminus\{0\}$ --- indeed on an open neighborhood of it, the coefficient being holomorphic off $(-\infty,0]$, while at the origin $\phi_n^\sharp$ is only $C^1$ --- and continuous up to $0$.  By Step~2, for some $r_0\ge e$ one has $|F(re^{i\theta})|\le Ce^{-cr(\log r)^{1/2}}$ uniformly for $r\ge r_0$ and $0\le\theta\le\pi/4$, while $F$ is bounded on the compact sector $0\le r\le r_0$.  Cauchy's theorem on the pie-slice $\{\eps\le r\le R,\ 0\le\theta\le\pi/4\}$ --- applicable since $\phi_n^\sharp$ is holomorphic on the open sector by Step~2 ---, with the outer arc contribution $\le CRe^{-cR(\log R)^{1/2}}\to0$ and the inner arc bounded by $O(\eps)\to0$ from compact continuity at the origin, gives
\begin{equation}\label{eq:contour-rotation}
    \int_0^\infty \bigl(u_{\gamma,n}^\sharp(y)\bigr)^2\,dy
    =e^{-i\pi/4}\int_0^\infty\phi_n^\sharp(r)^2\,dr\ \neq\ 0,
\end{equation}
since $\phi_n^\sharp$ is real and nontrivial.  For the full line,
$u_*(y)=\phi^N_{1,0}(e^{i\pi/4}|y|)$ is even and $C^1$ across $0$
(Neumann condition), solves the equation on $\R$, and
\eqref{eq:contour-rotation} doubles to \eqref{eq:log-osc-overlap}.

\emph{Step 6: simplicity.}  Geometric multiplicity is one: any eigenfunction is recessive at $\infty$, so Lemma~\ref{lem:sector-LG} makes it a scalar multiple of the recessive solution on a tail, and Cauchy uniqueness for the same second-order equation propagates that scalar relation through the whole interval.  If algebraic simplicity failed, a nontrivial Jordan chain of length at least two would have a first generalized vector whose image is a nonzero multiple of the eigenfunction; rescaling that eigenfunction reduces the obstruction to $(\cL^\sharp_\gamma-\lambda)w=u_{\gamma,n}^\sharp$ for some $w\in\mathcal D(\cL^\sharp_\gamma)$.  With $q_\lambda$, $\Xi_\lambda$, $q_0$, and $y_1$ as in Lemma~\ref{lem:inhom-recessive}, the datum $g=u_{\gamma,n}^\sharp$ is recessive in the sense required there, since $|u_{\gamma,n}^\sharp|\le C|q_\lambda|^{-1/4}e^{-\Xi_\lambda(y)}$ by Lemma~\ref{lem:sector-LG}, and the restriction of $w$ to $[y_1,\infty)$ satisfies the remaining hypotheses: $w\in L^2$, $w,w'\in AC_{\mathrm{loc}}$, and the equation reads $w''-q_\lambda w=-u_{\gamma,n}^\sharp$ there.  Hence, by Lemma~\ref{lem:inhom-recessive}, both $w$ and $w'$ decay superexponentially --- quantitatively, \eqref{eq:inhom-recessive-bound} holds --- and $w\in\mathcal D(\cL^\sharp_\gamma)$ carries the same $\sharp$-condition at the regular endpoint as $u_{\gamma,n}^\sharp$, so the boundary terms there vanish.  Hence the Green identity
\[
    \mathfrak b\bigl((\cL^\sharp_\gamma-\lambda)w,\,u_{\gamma,n}^\sharp\bigr)
      =\mathfrak b\bigl(w,\,(\cL^\sharp_\gamma-\lambda)
          u_{\gamma,n}^\sharp\bigr)=0,
    \qquad \mathfrak b(u,v)=\int_0^\infty uv,
\]
holds (integrate by parts twice on $[0,R]$ and let $R\to\infty$: the boundary term $(w'u_{\gamma,n}^\sharp-w\,(u_{\gamma,n}^\sharp)')\big|_0^R$ vanishes at $0$ for both conditions --- Neumann eliminates the derivatives, Dirichlet the values --- and tends to $0$ as $R\to\infty$ by \eqref{eq:inhom-recessive-bound} together with the recessive bounds for $u_{\gamma,n}^\sharp$ and $(u_{\gamma,n}^\sharp)'$ from Lemma~\ref{lem:sector-LG}).  Since $(\cL^\sharp_\gamma-\lambda)w=u_{\gamma,n}^\sharp$ by the normalization of the first generalized vector, the displayed identity gives $\mathfrak b(u_{\gamma,n}^\sharp,u_{\gamma,n}^\sharp)=0$, contradicting \eqref{eq:contour-rotation}.  This excludes every length-two Jordan chain; since geometric multiplicity is one, every longer chain would contain such a length-two subchain.  So every eigenvalue of $\cL^\sharp_\gamma$ is algebraically simple.

\emph{Step 7: the scaling law.}  If $-u''+\gamma\log y\,u=Eu$ with the $\sharp$-condition, set $v(r)=u(\gamma^{-1/2}r)$: then
\[
    -v''+\log r\,v=\Bigl(\gamma^{-1}E+\tfrac12\log\gamma\Bigr)v ,
\]
with the $\sharp$-condition preserved, so $E^\sharp_{1,n}=\gamma^{-1}E^\sharp_{\gamma,n}+\tfrac12\log\gamma$, which is \eqref{eq:log-osc-spec-scaled}.

\emph{Step 8: passage to the full line.}  Steps~1--7 give closedness,
compact resolvent, the exact spectrum, algebraic simplicity, and nonzero
bilinear overlap on each half-line.  Lemma~\ref{lem:parity} identifies the
maximal full-line realization with the Neumann--Dirichlet parity sum.
Proposition~\ref{prop:ladder} then proves strict interlacing, simplicity of
the merged ladder, and the all-mode overlap.  Together these statements
complete the full-line part of Theorem~\ref{thm:D}.
\end{proof}

\begin{remark}
The rotation $y\mapsto e^{i\pi/4}y$ is a bijection between the
eigenfunctions of $H^\sharp_\gamma$ and those of $\cL^\sharp_\gamma$.
The real shift $\gamma\pi/4$ comes from the logarithmic branch term,
whereas the angle $\pi/4$ is forced by the kinetic phase
$e^{2i\theta}=i$.  This rigidity is the analytic source of the universal
coefficients in Theorem~\ref{thm:E}.
\end{remark}

\begin{lemma}[Full-line realization and parity splitting]\label{lem:parity}
Fix $\gamma>0$ and let $\cL_\gamma$ be the maximal distributional realization on $\R$,
\[
    \mathcal D(\cL_\gamma)=\bigl\{u\in L^2(\R):\ -u''+i\gamma\log|y|\,u\in L^2(\R)\ \text{in }\mathcal D'(\R)\bigr\}.
\]
Then every $u\in\mathcal D(\cL_\gamma)$ has $u,u'\in AC_{\mathrm{loc}}(\R)$ --- in particular $u\in C^1(\R)$ and the equation holds across $0$ --- so this domain coincides with the $AC_{\mathrm{loc}}$ realization described before Theorem~\ref{thm:D}: $u\in L^2(\R)$, $u,u'\in AC_{\mathrm{loc}}(\R)$, and $-u''+i\gamma\log|y|\,u\in L^2(\R)$.  Parity commutes with $\cL_\gamma$.  If $u=u_{\mathrm e}+u_{\mathrm o}$ is its even/odd decomposition, then
\[
 \mathcal Uu:=\bigl(\sqrt2\,u_{\mathrm e}|_{\R_+},
                     \sqrt2\,u_{\mathrm o}|_{\R_+}\bigr)
\]
defines a unitary map from $L^2(\R)$ to
$L^2(\R_+)\oplus L^2(\R_+)$ satisfying
\[
 \mathcal U\mathcal D(\cL_\gamma)
   =\mathcal D(\cL^N_\gamma)\oplus\mathcal D(\cL^D_\gamma),
 \qquad
 \mathcal U\cL_\gamma\mathcal U^{-1}
   =\cL^N_\gamma\oplus\cL^D_\gamma.
\]
Consequently $\cL_\gamma$ is closed with compact resolvent, and $\spec(\cL_\gamma)$ is the union of the two families \eqref{eq:log-osc-spec-gamma}, each eigenvalue simple within its parity sector.  The proof uses only the half-line facts recorded before the full-line transition in the proof of Theorem~\ref{thm:D} and no statement of Proposition~\ref{prop:ladder} below.
\end{lemma}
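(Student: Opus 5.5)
The proof has three steps: local regularity across the origin, the parity splitting, and the transfer of spectral facts from the half-line results.

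\emph{Regularity.} Fix $u\in\mathcal D(\cL_\gamma)$ and put $f:=-u''+i\gamma\log|y|\,u\in L^2(\R)$. Since $\log|y|\in L^2(-1,1)$ and $u\in L^2$, Cauchy--Schwarz gives $(\log|y|)u\in L^1_{\mathrm{loc}}(\R)$. Hence $u''=i\gamma\log|y|\,u-f\in L^1_{\mathrm{loc}}(\R)$ as a distribution on all of $\R$, including near $0$. A distribution whose second derivative lies in $L^1_{\mathrm{loc}}$ has $u'\in AC_{\mathrm{loc}}$ and therefore $u\in C^1$. So $u,u'\in AC_{\mathrm{loc}}(\R)$. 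Conversely, an $AC_{\mathrm{loc}}$ realization satisfies the distributional equation, because there is no jump at $0$ to produce a delta term. This identifies the two domain descriptions.

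\emph{Parity.} The reflection $Ru(y)=u(-y)$ is unitary. It preserves the maximal domain, and it commutes with $\cL_\gamma$ because $\log|y|$ is even. Write $u=u_{\mathrm e}+u_{\mathrm o}$ with both parts in $\mathcal D(\cL_\gamma)$. The odd part is $C^1$, so $u_{\mathrm o}(0)=0$. The even part is $C^1$, and $u_{\mathrm e}'$ is odd and continuous, so $u_{\mathrm e}'(0)=0$. The restrictions to $\R_+$ are $L^2$, are $AC_{\mathrm{loc}}$ with finite traces, and satisfy the half-line equation with $L^2$ right side. So they lie in $\mathcal D(\cL^N_\gamma)$ and $\mathcal D(\cL^D_\gamma)$ respectively.

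Conversely, take $(v,w)\in\mathcal D(\cL^N_\gamma)\oplus\mathcal D(\cL^D_\gamma)$. Extend $v$ evenly and $w$ oddly, with the factor $1/\sqrt2$. Both extensions are $AC_{\mathrm{loc}}$ on $(-\infty,0)$ and $(0,\infty)$, with matching one-sided traces at $0$.
\begin{itemize}
\item For the even extension, the values match and the one-sided derivatives are $\pm v'(0)=0$.
\item For the odd extension, the values are $\pm w(0)=0$ and the derivatives match.
\end{itemize}
Thus both extensions are $C^1$ across $0$, and a direct integration by parts against test functions shows that no distributional jump term appears at $0$. They therefore lie in $\mathcal D(\cL_\gamma)$. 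The map $\mathcal U$ is unitary because $\|u\|^2=\|u_{\mathrm e}\|^2+\|u_{\mathrm o}\|^2=2\|u_{\mathrm e}|_{\R_+}\|^2+2\|u_{\mathrm o}|_{\R_+}\|^2$. The intertwining identity $\mathcal U\cL_\gamma\mathcal U^{-1}=\cL^N_\gamma\oplus\cL^D_\gamma$ holds pointwise on the domains.

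\emph{Consequences.} Closedness, compact resolvent, and the spectral description are preserved under unitary equivalence and direct sums. By Steps~1--6 of the proof of Theorem~\ref{thm:D}, each half-line operator is closed with compact resolvent, its spectrum is \eqref{eq:log-osc-spec-gamma}, and each eigenvalue is algebraically simple. For $-1\in\rho$, the resolvent of the sum is the sum of two compact resolvents, hence compact. The spectrum is the union of the two families, and each eigenvalue is simple within its parity sector, because the Riesz projection of the direct sum splits blockwise. Only half-line facts are used.

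The step I expect to need the most care is the converse gluing at $0$: one must check that the matched traces produce no $\delta$ or $\delta'$ terms in the distributional equation on $\R$. This is the point where the half-line boundary conditions are used.
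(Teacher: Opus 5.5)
Your proposal is correct and follows the paper's proof essentially step for step. Local integrability of $u''$ comes from $\log|y|\in L^2_{\mathrm{loc}}$ and Cauchy--Schwarz, the $C^1$ regularity of the even and odd parts supplies the Neumann and Dirichlet traces, $C^1$ gluing of the reflections gives the reverse inclusion, and closedness, compact resolvent, spectrum and sectorwise simplicity transfer from Steps~1--6 through the unitary direct-sum identification.

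Your treatment of the reverse inclusion is slightly more explicit than the paper's one-line version. The one point left implicit in your ``integration by parts against test functions'' is that the glued function satisfies $U''=i\gamma\log|y|\,U-F\in L^1$ near $0$. This holds because half-line domain elements are bounded near the regular endpoint, as shown in Step~1. It is what makes the one-sided boundary terms at $0^\pm$ the only possible contributions, and your $C^1$ matching then cancels them.
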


\begin{proof}
For $u\in\mathcal D(\cL_\gamma)$, writing $h:=-u''+i\gamma\log|y|\,u\in L^2(\R)$: since $\log|y|\in L^2_{\mathrm{loc}}$ and $u\in L^2$, Cauchy--Schwarz gives $(\log|y|)u\in L^1_{\mathrm{loc}}$, so the distribution $u''=i\gamma\log|y|\,u-h$ lies in $L^1_{\mathrm{loc}}(\R)$, hence $u,u'\in AC_{\mathrm{loc}}(\R)$, $u\in C^1(\R)$, and the equation holds across $0$; this identifies the domain with the displayed $AC_{\mathrm{loc}}$ realization.  Parity commutes with $\cL_\gamma$; an even element restricts to $(0,\infty)$ with $u'(0)=0$, hence lies in the Neumann maximal domain of Step~1 of the proof of Theorem~\ref{thm:D}, an odd element has $u(0)=0$ and lies in the Dirichlet one, and conversely the even (odd) reflection of a Neumann (Dirichlet) element is $C^1$ across $0$ and belongs to $\mathcal D(\cL_\gamma)$.  Thus the normalized restriction map $\mathcal U$ in the statement is unitary, maps the full domain onto the direct sum of the separated half-line domains, and intertwines the operators as displayed.  Since each half-line realization has compact resolvent by Step~1, the finite direct sum has compact resolvent; hence so does the unitarily equivalent full-line realization.  Therefore $\spec(\cL_\gamma)$ is the union of the two half-line spectra, i.e.\ of the two families \eqref{eq:log-osc-spec-gamma} by Steps~3--4, and within each parity sector every eigenvalue is simple by Step~6.
\end{proof}

\begin{proposition}[The interlaced ladder]\label{prop:ladder}
Fix $\gamma>0$.
\begin{enumerate}[label=\textup{(\roman*)}]
\item The Neumann and Dirichlet spectra of $H^\sharp_\gamma$ are disjoint and strictly interlace:
\[
    E^N_{\gamma,0}<E^D_{\gamma,0}<E^N_{\gamma,1}<E^D_{\gamma,1}<\cdots .
\]
\item Write $\bar E_{\gamma,0}<\bar E_{\gamma,1}<\cdots$ for the merged sequence, so that $\bar E_{\gamma,2m}=E^N_{\gamma,m}$ and $\bar E_{\gamma,2m+1}=E^D_{\gamma,m}$.  Then
\[
    \spec(\cL_\gamma)=\bigl\{\nu_{\gamma,n}:=\tfrac{\gamma\pi}4+i\bar E_{\gamma,n}\ :\ n\ge0\bigr\}
\]
on the full line, and every eigenvalue is algebraically simple.  Use the
mode $u_n:=u_{\gamma,n}$ fixed in Theorem~\ref{thm:D}; it is the parity
extension of the corresponding rotated half-line mode.  As for $\phi_n$
below, the fixed $\gamma$ is suppressed from the notation within this
proposition, and $u_n(-y)=(-1)^nu_n(y)$ on $\R$.  With
$\phi_n:=\phi^N_{\gamma,m}$ for $n=2m$ and
$\phi_n:=\phi^D_{\gamma,m}$ for $n=2m+1$ (the half-line self-adjoint
eigenfunction of the $n$-th rung, with the same scalar normalization),
\begin{equation}\label{eq:ladder-overlap}
    \int_\R u_n(y)^2\,dy=2e^{-i\pi/4}\int_0^\infty\phi_n(\rho)^2\,d\rho\ \neq\ 0,
    \qquad\text{so }\ \delta(u_n)>0 .
\end{equation}
For every $c\in\C\setminus\{0\}$,
$\delta(cu_n)=\delta(u_n)>0$; thus non-isotropy is independent of the
normalization.
\item For $\gamma=1$ write $\bar E_n=\bar E_{1,n}$, $\nu_n=\pi/4+i\bar E_n$, and, for $K\in\N_0$,
\[
    g_K:=\min_{0\le n\le K}\bigl(\bar E_{n+1}-\bar E_n\bigr)>0 .
\]
The closed discs $\overline D(\nu_n,g_K/3)$, $0\le n\le K$, are pairwise disjoint and each meets $\spec(\cL_1)$ exactly in $\{\nu_n\}$; moreover $g_0=\bar E_1-\bar E_0$ is the principal full-line gap above $\nu_*$.
\end{enumerate}
\end{proposition}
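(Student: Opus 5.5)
The plan has three parts: (i) is a Sturm--Liouville interlacing statement, (ii) assembles the full-line spectrum from the parity splitting, and (iii) is an elementary gap bookkeeping.

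For (i), I would start from the non-strict interlacing $E^N_n\le E^D_n\le E^N_{n+1}$ given by Lemma~\ref{lem:rank-one-extension-input}. Its hypotheses have already been checked in Step~0 and Lemma~\ref{lem:log-form-realization}: the endpoint $0$ is regular, $+\infty$ is limit point, the forms are closed and semibounded, the Dirichlet form domain is the trace-zero subspace of the Neumann one, and both resolvents are compact. Strictness then follows from disjointness of the two spectra. Suppose $E$ were an eigenvalue of both $H^N_\gamma$ and $H^D_\gamma$. Each eigenfunction is the recessive solution at $+\infty$ on a tail (Lemma~\ref{lem:sector-LG} with $\alpha=0$, beyond the turning point), so by Cauchy uniqueness the two eigenfunctions are proportional on all of $(0,\infty)$. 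The common eigenfunction would then satisfy $u(0)=u'(0)=0$. Since $0$ is a regular endpoint with $\log y\in L^1(0,1)$, the Volterra/Gr\"onwall uniqueness from Step~2 forces $u\equiv0$, a contradiction. Combining disjointness with the weak inequalities gives $E^N_n<E^D_n<E^N_{n+1}$.

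For (ii), Lemma~\ref{lem:parity} shows that $\spec(\cL_\gamma)$ is the union of the two half-line ladders of Theorem~\ref{thm:D}. By (i) these ladders are disjoint, and they merge into $\bar E_{\gamma,n}$ in exactly the stated order. For algebraic simplicity on the full line, use the Riesz projection of the unitarily equivalent direct sum $\cL^N_\gamma\oplus\cL^D_\gamma$: it splits into the two half-line projections. Because the spectra are disjoint, only one summand contributes at $\nu_{\gamma,n}$, and Step~6 shows that summand has rank one. For the overlap, $u_n$ is the even (resp.\ odd) extension of the rotated half-line mode, so $u_n^2$ is even. Hence $\int_\R u_n^2=2\int_0^\infty(u^\sharp_{\gamma,m})^2$, and \eqref{eq:contour-rotation} gives $2e^{-i\pi/4}\int_0^\infty\phi_n^2\neq0$ because $\phi_n$ is real and nontrivial. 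The same parity reasoning gives $u_n(-y)=(-1)^nu_n(y)$. Scale invariance of $\delta$ is immediate from $|\int(cu)^2|/\int|cu|^2=|c|^2|\int u^2|/(|c|^2\int|u|^2)$.

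For (iii), $g_K>0$ because the merged sequence is strictly increasing. The points $\nu_n$ lie on the vertical line $\Re=\pi/4$ with separations at least $g_K$ for $n\le K+1$. Discs of radius $g_K/3$ therefore have centers at distance $\ge g_K>2g_K/3$, so they are disjoint. Such a disc around $\nu_n$ ($n\le K$) cannot contain $\nu_{n\pm1}$, and every other spectral point is farther away still, so it meets the spectrum only at $\nu_n$. Finally, $g_0=\bar E_1-\bar E_0=E^D_{\gamma,0}-E^N_{\gamma,0}$ is the first gap of the ladder.

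The main obstacle is the strictness in (i), specifically justifying that a common eigenfunction has vanishing Cauchy data at $0$ and therefore vanishes identically. This rests on the geometric simplicity and recessive uniqueness established earlier, together with the regular-endpoint uniqueness for the $L^1$ logarithmic potential; everything else is bookkeeping.
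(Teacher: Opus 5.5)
Your proposal is correct and follows essentially the same route as the paper. Both use the rank-one Neumann--Dirichlet comparison for weak interlacing, upgrade it to strictness by disjointness (a common eigenfunction is the unique $L^2$ solution at $+\infty$ with zero Cauchy data at the regular endpoint), and then assemble the full-line ladder, its algebraic simplicity and the doubled overlap through the parity splitting. The only cosmetic difference is that you take $E^D_n\le E^N_{n+1}$ directly from Lemma~\ref{lem:rank-one-extension-input}, whereas the paper re-derives that inequality from the counting-function bound.
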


\begin{proof}
\emph{(i).}  Both operators are self-adjoint extensions of the minimal operator generated by $-\partial_y^2+\gamma\log y$ on $C_c^\infty(0,\infty)$.  The endpoint $0$ is regular since $\log\in L^1(0,1)$, and $+\infty$ is limit point: for each fixed spectral parameter the Liouville--Green analysis of Lemma~\ref{lem:sector-LG}, restricted to a positive-axis tail chosen beyond any turning point, produces a dominant/recessive solution basis with superexponential growth/decay; endpoint classification is tail-based, so only one solution is square-integrable at $+\infty$.  Hence the minimal operator has deficiency indices $(1,1)$: at the regular endpoint both solutions are square-integrable near $0$ (limit circle), at $+\infty$ exactly one (limit point).

The hypotheses of Lemma~\ref{lem:rank-one-extension-input} have now been verified: Lemma~\ref{lem:log-form-realization} supplies the closed semibounded Neumann and Dirichlet forms, their codimension-one form-domain inclusion, and compact resolvent, while the preceding Liouville--Green argument supplies the limit-point property at $+\infty$.  Fix
\[
    \mu<\min\{\inf\spec H^N_\gamma,\inf\spec H^D_\gamma\}.
\]
Applying Lemma~\ref{lem:rank-one-extension-input} with $V(y)=\gamma\log y$ shows that $(H^N_\gamma-\mu)^{-1}-(H^D_\gamma-\mu)^{-1}$ has rank one and that the two counting functions differ by at most one at every energy.  \emph{Disjointness:} at a common eigenvalue the eigenfunctions of both problems would be multiples of the unique solution square-integrable at $+\infty$, which would then satisfy $\phi(0)=\phi'(0)=0$ and vanish identically.  \emph{Order:} the Dirichlet form domain is contained in the Neumann one, so the min--max principle gives $E^N_{\gamma,m}\le E^D_{\gamma,m}$, strictly by disjointness.  \emph{Interlacing:} if $E^D_{\gamma,m}>E^N_{\gamma,m+1}$ for some $m$, the interval $(-\infty,E^N_{\gamma,m+1}]$ would contain $m+2$ Neumann eigenvalues but at most $m$ Dirichlet ones, contradicting the counting bound; so $E^D_{\gamma,m}<E^N_{\gamma,m+1}$, again strictly by disjointness.

\emph{(ii).}  By Lemma~\ref{lem:parity} --- proved independently of the present proposition from the half-line facts established in Steps~1--6 of the proof of Theorem~\ref{thm:D}: half-line closedness and compact resolvent, the half-line spectral identity, algebraic simplicity, and the contour-rotation overlap --- $\cL_\gamma$ splits under parity into the two half-line problems and its spectrum is the union of the two families, i.e.\ the merged ladder; by (i) the union is disjoint: each full-line eigenvalue lies in exactly one parity sector (even for Neumann rungs, odd for Dirichlet rungs), where its eigenspace is one-dimensional by Step~6.  The full-line Riesz projection is the parity-direct sum of the two sector projections, only one of which is nonzero, of rank one by Step~6; hence every eigenvalue of $\cL_\gamma$ is algebraically simple.  For the overlap, the even extension $u_n(y)=\phi_n(e^{i\pi/4}|y|)$, respectively the odd extension $u_n(y)=\operatorname{sgn}(y)\,\phi_n(e^{i\pi/4}|y|)$, is $C^1$ across $0$ by the Neumann, respectively Dirichlet, condition and solves the eigenvalue equation on $\R$; in both cases $u_n^2$ is even, so \eqref{eq:contour-rotation} doubles to \eqref{eq:ladder-overlap}, and $\delta(u_n)=|\int u_n^2|/\int|u_n|^2>0$.

\emph{(iii).}  Positivity of $g_K$ and the disc statement are immediate from (i) and (ii).  For the last claim, interlacing gives $\bar E_0=E^N_{1,0}$ and $\bar E_1=E^D_{1,0}$, whence
\[
    g_0=\bar E_1-\bar E_0=E^D_{1,0}-E^N_{1,0}.
\]
The next Neumann level satisfies $E^D_{1,0}<E^N_{1,1}$ by interlacing, but it is not part of the definition of the adjacent gap $g_0$.
\end{proof}

\begin{proof}[Completion of the proof of Theorem~\ref{thm:D}]
The half-line assertions are exactly Steps~1--7 above.  Lemma~\ref{lem:parity} unitarily identifies the full-line maximal realization with the parity direct sum $\cL^N_\gamma\oplus\cL^D_\gamma$; since the half-line resolvents are compact, this finite direct sum has compact resolvent, and hence so does the full-line realization.  Proposition~\ref{prop:ladder} gives the strict Neumann--Dirichlet interlacing, disjointness of the two parity families, algebraic simplicity of every full-line rung, and the overlap formula.  In particular, for $\gamma=1$ the principal rung is $\nu_* = \pi/4+iE_0^N$, it is isolated from the rest of the ladder by $g_0=E^D_{1,0}-E^N_{1,0}>0$, and its even eigenfunction satisfies \eqref{eq:log-osc-overlap}.  Combining these statements with the half-line spectral identity, the rotation formula for the eigenfunctions, the half-line algebraic simplicity and overlap identity, and the scaling law proves all assertions of Theorem~\ref{thm:D}.
\end{proof}

\begin{lemma}[Model rung decay and logarithmic moments]\label{lem:model-rung-decay}
Fix $\gamma>0$ and a full-line rung $u_{\gamma,n}$ of $\cL_\gamma$.  Then, for some constants $c,C>0$ depending on $\gamma$ and $n$,
\[
    |u_{\gamma,n}(y)|+|u'_{\gamma,n}(y)|\le C\exp\bigl(-c|y|(\log(e+|y|))^{1/2}\bigr),\qquad |y|\ge2.
\]
In particular, for every $m\ge0$,
\[
    \int_\R (1+|\log|y||)^m|u_{\gamma,n}(y)|^2\,dy<\infty,
    \qquad
    \int_\R (1+|\log|y||)^m|u'_{\gamma,n}(y)|^2\,dy<\infty,
\]
with the local logarithmic singularity understood at $y=0$.
\end{lemma}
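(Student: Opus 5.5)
By parity, $u_{\gamma,n}(-y)=\pm u_{\gamma,n}(y)$ and $|u'_{\gamma,n}(-y)|=|u'_{\gamma,n}(y)|$, so it suffices to treat $y\ge2$. There $u_{\gamma,n}(y)=\phi_{\gamma,n}(e^{i\pi/4}y)$, with $\phi_{\gamma,n}$ the real half-line mode (Neumann or Dirichlet) at energy $E=\bar E_{\gamma,n}$, and $u'_{\gamma,n}(y)=e^{i\pi/4}\phi'_{\gamma,n}(e^{i\pi/4}y)$. We will read both bounds off the sectorial continuation of $\phi_{\gamma,n}$ from Step~2 of the proof of Theorem~\ref{thm:D}.

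\textbf{Tail.} Apply Lemma~\ref{lem:sector-LG} with $\alpha=0$, $E=\bar E_{\gamma,n}$, and angular interval $[0,\pi/4]$. This is admissible because $\cos\theta\ge2^{-1/2}$ there. The mode $\phi_{\gamma,n}$ is square-integrable along $\theta=0$, so it is a multiple of the recessive solution $u_+$ and obeys the recessive bounds on the whole tail $\Sigma$. This gives
\[
|\phi_{\gamma,n}(re^{i\theta})|\le C\exp\bigl(-c\,r(\log r)^{1/2}\bigr),\qquad r\ge y_1 .
\]
For the derivative we use the differentiated asymptotics in the same lemma: the relative error is at most $\tfrac12$ and the extra factor is $q^{1/2}$, with $|q|^{1/2}\lesssim(\log r)^{1/2}$. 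The polynomial and logarithmic prefactors $|q|^{\pm1/4}$ and $|q|^{1/2}$ are absorbed by shrinking $c$, which yields the same type of bound for $\phi'_{\gamma,n}$. Setting $\theta=\pi/4$ gives the claim for $y\ge y_1$.

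\textbf{Compact range and rewriting.} On $2\le y\le y_1$, both $u_{\gamma,n}$ and $u'_{\gamma,n}$ are continuous, hence bounded. This holds by the ODE continuation in Step~2, since the segment avoids the origin. We enlarge $C$ to cover this range, and we replace $\log r$ by $\log(e+r)$, which is comparable for $r\ge2$.

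\textbf{Moments.} Split $\R$ into $|y|\ge2$ and $|y|\le2$. On $|y|\ge2$ the superexponential decay dominates $(1+\log|y|)^m$, so the integrals converge. On $|y|\le2$, $u_{\gamma,n}$ and $u'_{\gamma,n}$ are bounded: this follows from the Volterra continuation of Lemma~\ref{lem:log-volterra-endpoint}, which gives continuity of $u,u'$ up to $0$ on the rotated ray, together with the parity extension, which is $C^1$ across $0$. It then suffices that $(1+|\log|y||)^m\in L^1(-2,2)$, which is elementary.

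\textbf{Main obstacle.} The only delicate point is that the derivative bound must hold uniformly up to the edge ray $\theta=\pi/4$. This is exactly what the sectorial, rather than raywise, recessive statement of Lemma~\ref{lem:sector-LG} provides, so we do not expect a real difficulty.
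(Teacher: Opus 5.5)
Your proposal is correct and follows the paper's argument: the rotated half-line mode obeys the recessive sectorial Liouville--Green bounds of Lemma~\ref{lem:sector-LG} (including the differentiated asymptotics), parity reflects the estimate to the full line, the endpoint Volterra continuation gives boundedness near the origin, and local integrability of $|\log|y||^m$ yields the moments. You additionally make explicit the compact range $2\le|y|\le y_1$ and the comparison of $\log r$ with $\log(e+r)$, which the paper leaves implicit.
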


\begin{proof}
On each half-line, $u_{\gamma,n}$ is the rotated continuation of a real half-line Sturm--Liouville eigenfunction, and Lemma~\ref{lem:sector-LG} on the corresponding sector gives the stated superexponential bound for the eigenfunction and its derivative.  Lemma~\ref{lem:parity} then reflects the estimate to the full line.  Near $0$, the endpoint is regular in the sense recorded in Lemma~\ref{lem:log-volterra-endpoint} and Lemma~\ref{lem:log-form-realization}; hence $u_{\gamma,n}$ and $u'_{\gamma,n}$ are locally bounded on each side of $0$ and have finite traces.  Since $|\log|y||^m$ is integrable at $0$ for every finite $m$, the displayed moment bounds follow.
\end{proof}

\begin{remark}
Proposition~\ref{prop:ladder} closes the description of the model: the vertical line of Remark~\ref{rem:vertical} is a \emph{simple} ladder, the two boundary conditions supplying alternate rungs.  Every rung, not only the lowest, is phase-rigid in the sense of \eqref{eq:ladder-overlap}; this is what permits the rung-by-rung spectral transfer of Theorem~\ref{thm:E}\textup{(a)}.
\end{remark}

\section{Logarithmic resonances: proof of Theorem \ref{thm:E}}\label{sec:log-resonance}

We now close the paper's two branches.  The local mass of the cusp first
recovers, through Theorem~\ref{thm:B}, the predicted global resolvent
scale.  The blow-up below then transfers the complete model ladder to
$T_s$, and the pencil results of Section~\ref{sec:pencil} turn it into
genuine wave spectrum.  The principal rung finally meets the global upper
bound in Lemma~\ref{lem:envelope}, producing the sharp regularized decay
envelope.

Throughout this section $a$ is as in Theorem~\ref{thm:E}, with the representative fixed there: $0\le a\in L^\infty(\T)$, $a(x)=(\log(e/|x|))^{-A}$ for $0<|x|<x_0$ with $0<x_0<1$.  Fix once and for all a constant $a_->0$ with $a\ge a_-$ a.e.\ on $\{|x|\ge x_0\}$, for instance any number below the essential infimum there.  Note first that $\Theta_a(r)\asymp(\log(e/r))^{-A}$ for $0<r\le x_0/2$, uniformly over the position of the window.  \emph{Lower bound:} for any $x$, at least half of the window $(x-r,x+r)$ lies in $\{|t|\ge r/2\}$, where the original-scale dummy variable $t$ satisfies $a(t)\ge\min\bigl((\log(2e/r))^{-A},a_-\bigr)\ge c(\log(e/r))^{-A}$ --- the first value where the window meets the cusp chart, the second where it does not --- so the average is $\ge\frac c2(\log(e/r))^{-A}$.  \emph{Upper bound:} at the centered window, $(2r)^{-1}\int_{-r}^ra=(\log(e/r))^{-A}(1+O(1/\log(1/r)))$ by integration by parts.  Hence $a$ is two-sidedly saturated at $L(S)=(\log(eS))^A$, and Theorem~\ref{thm:B} applies.

\subsection{The resonance scale}
Let $R_s$ and $\ell_s=\log(e/R_s)$ be defined by \eqref{eq:Rs-main}.  Substituting $R_s=e^{1-\ell_s}$ gives $2\ell_s=\log s-(A+1)\log\ell_s+O(1)$, whence
\begin{equation}\label{eq:ell-asymp}
    \ell_s=\frac12\log s-\frac{A+1}{2}\log\log s+O(1),
    \qquad
    sR_s^2=\frac{\ell_s^{A+1}}{A}\ \longrightarrow\ \infty .
\end{equation}

\subsection{The scaled operator and pointwise potential comparison}
Rescale $x=R_sy$ via the unitary $U_s:L^2(\T)\to L^2(\T_s)$, $(U_sf)(y)=R_s^{1/2}f(R_sy)$; the circle becomes $\T_s=\R/(2\pi R_s^{-1})\Z$ and the cusp chart is $\{|y|<x_0/R_s\}$.  Since $sR_s^2\ell_s^{-A}=\ell_s/A$ by \eqref{eq:Rs-main}, define
\begin{equation}\label{eq:scaled-Ts}
    \cT_s:=R_s^2\,U_sT_sU_s^{-1}-\frac{i\ell_s}{A}=-\partial_y^2+iV_s(y)
    \quad\text{on }L^2(\T_s),
\end{equation}
where $V_s(y)=sR_s^2\bigl(a(R_sy)-\ell_s^{-A}\bigr)$.  In particular $\|(T_s-\lambda)^{-1}\|=R_s^2\|(\cT_s-\zeta)^{-1}\|$ under $\lambda=R_s^{-2}(\zeta+i\ell_s/A)$, and $\delta(U_sv)=\delta(v)$, so tameness and non-isotropy transport exactly.  More generally, for a fixed damping $c\in L^\infty(\T)$ we write $V^c_s(y):=sR_s^2\,c(R_sy)-\ell_s/A$ and $\cT^c_s:=-\partial_y^2+iV^c_s$ on $L^2(\T_s)$.  Thus $\cT_s=\cT^a_s$ and the perturbed operators $\cT^{\tilde a}_s$ below are covered verbatim by Lemma~\ref{lem:complex-symmetry}.  All displayed chart formulas and comparisons for these potentials are statements about $L^\infty$ classes, i.e.\ hold essential-a.e.\ on $0<|y|<x_0/R_s$ for the representative of $a$ fixed at the start of the section; the value at the single point $y=0$ is arbitrary and irrelevant, multiplication operators depending only on the class.  Off-chart lower bounds, such as \eqref{eq:offchart-coercive}, are likewise essential-a.e.\ statements, i.e.\ essential-infimum bounds, which is all the integral estimates below use.  On the chart, $a(R_sy)=(\ell_s-\log|y|)^{-A}$, so with $m=\log|y|$,
\[
    V_s(y)=\frac{\ell_s}{A}\Bigl[\Bigl(1-\frac m{\ell_s}\Bigr)^{-A}-1\Bigr].
\]
Convexity of $x\mapsto(1-x)^{-A}$ on $(-\infty,1)$ gives $(1-x)^{-A}\ge1+Ax$, and $|(1-x)^{-A}-1|\le A|x|$ for $x\le0$ while $(1-x)^{-A}-1\le Ax\,2^{A+1}$ for $0\le x\le\frac12$.  Hence the \emph{two-sided pointwise comparison}
\begin{equation}\label{eq:pointwise-comparison}
    \begin{gathered}
    |V_s(y)|\le\bigl|\log|y|\bigr|\ \ (0<|y|\le1),\quad
    \log|y|\le V_s(y)\ \ (1\le|y|<x_0/R_s),\\
    V_s(y)\le2^{A+1}\log|y|\ \ (1\le|y|\le e^{\ell_s/2}),
    \end{gathered}
\end{equation}
and, from $|(1-x)^{-A}-1-Ax|\le C_Ax^2$ for $|x|\le\frac12$, the local expansion
\begin{equation}\label{eq:local-coeff-expansion}
    \bigl|V_s(y)-\log|y|\bigr|\ \le\ C_A\,\frac{(\log|y|)^2}{\ell_s},
    \qquad e^{-\ell_s/2}\le|y|\le e^{\ell_s/2}.
\end{equation}
Off the chart, $a\ge a_-$.  Since $\ell_s/A=sR_s^2\ell_s^{-A}=o(sR_s^2)$, the negative shift in $V_s=sR_s^2a(R_s\,\cdot)-\ell_s/A$ is negligible there, and for large $s$
\begin{equation}\label{eq:offchart-coercive}
    V_s(y)\ge a_-sR_s^2-\ell_s/A\ge \frac{a_-}{2}\,sR_s^2=\frac{a_-}{2A}\,\ell_s^{A+1}
    \qquad(|y|\ge x_0/R_s).
\end{equation}
Thus the imaginary potential of $\cT_s$ dominates $\log|y|$ everywhere outside the unit window and is uniformly controlled by $|\log|y||$ inside it; on compact subsets of $\R\setminus\{0\}$, $V_s\to\log|y|$ in $L^\infty$, while on compact sets meeting $0$ the convergence is only in the tested local-integral sense proved in Proposition~\ref{prop:log-scaling}, and the limiting operator is the imaginary logarithmic oscillator $\cL_1=-\partial_y^2+i\log|y|$ of Theorem~\ref{thm:D}, whose spectrum is the simple interlaced ladder $\{\nu_n=\pi/4+i\bar E_n\}_{n\ge0}$ with finite-rung gaps $g_K>0$ for each $K\in\N_0$ and non-isotropic eigenfunctions $u_n$ (Proposition~\ref{prop:ladder}); here $\nu_0=\nu_*=\pi/4+iE_0^N$ and $u_0=u_*$.

\subsection{Riesz projection convergence}

We represent $\T_s$ by the fundamental interval $(-\pi/R_s,\pi/R_s]$ and write $\iota_s:L^2(\T_s)\to L^2(\R)$ for the associated isometric zero-extension; every statement of strong, local, or tight convergence for families on $\T_s$ refers to $\iota_su_s$ in the fixed space $L^2(\R)$, and $\iota_s$ is suppressed from the notation.  For any fixed $\varphi\in C_c^\infty(\R)$ and all large $s$, $\operatorname{supp}\varphi\subset(-\pi/R_s,\pi/R_s)$ and lies in the cusp chart, so circle pairings against $\varphi$ coincide with real-line pairings; all identifications of $\T_s$-pairings with $\R$-pairings for functions of unbounded support are invoked only after the exterior tail estimate and Lemma~\ref{lem:tight-pairings}.  Here and below, $|y|$ denotes the distance on $\T_s$ to the well at $0$, and radial cutoffs such as $\psi_M$ are functions of this distance --- well defined once $4M\le\pi R_s^{-1}$, i.e.\ for $s$ large after $M$ is fixed; off the chart, including the wraparound region of the circle, the coercivity \eqref{eq:offchart-coercive} applies.  All sequences considered below will be shown tight, so this notation is harmless.  For $n\ge0$ let $\hat u_n=u_n/\|u_n\|_{L^2(\R)}$; in particular $\hat u_0=u_*/\|u_*\|_{L^2(\R)}$.

\begin{lemma}[Singular-sequence criterion for the rescaled problem]\label{lem:singular-sequence-criterion}
Assume that each $\cT_s$ is closed with compact resolvent.  Suppose that for every compact $\mathcal K\Subset\C$, every sequence $s_k\to\infty$, $\zeta_k\in\mathcal K$, and $u_k\in\mathcal D(\cT_{s_k})$ with $\|u_k\|=1$ and $(\cT_{s_k}-\zeta_k)u_k\to0$, there is a subsequence, still denoted $u_k$, and a point $\zeta_*\in\mathcal K$ with $\zeta_k\to\zeta_*$ along this subsequence, such that $u_k\to u_\infty$ strongly in $L^2(\R)$, $\|u_\infty\|=1$, $u_\infty\in\mathcal D(\cL_1)$, and $(\cL_1-\zeta_*)u_\infty=0$.  Then every compact subset of $\rho(\cL_1)$ is eventually contained in $\rho(\cT_s)$ with a uniform resolvent bound.

Moreover, if $\Gamma$ is a positively oriented simple closed contour contained in $\rho(\cL_1)$ with winding number one about $\lambda$, and $w_s\in\mathcal D(\cT_s)$ is a family with $\|w_s-w\|\to0$, $w\ne0$, and $(\cT_s-\lambda)w_s\to0$, while $\Gamma$ has the uniform resolvent bound just obtained, then
\[
    \|\Pi_\Gamma(\cT_s)w_s-w_s\|\longrightarrow0,
\]
where $\Pi_\Gamma(\cT_s)$ denotes the Riesz projection.  In particular this projection is nonzero for all large $s$.
\end{lemma}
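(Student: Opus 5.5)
Both assertions follow by contradiction from the singular-sequence hypothesis, with uniformity coming from compactness of $\mathcal K$; the second part is the usual contour-integral argument using the uniform bound on $\Gamma$.

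\emph{Uniform resolvent bound.} Fix a compact $\mathcal K\subset\rho(\cL_1)$ and suppose the conclusion fails. Then there are $s_k\to\infty$ and $\zeta_k\in\mathcal K$ such that either $\zeta_k\in\spec(\cT_{s_k})$ or $\|(\cT_{s_k}-\zeta_k)^{-1}\|\ge k$. Since $\cT_s$ has compact resolvent, a spectral point is an eigenvalue. In either case we obtain unit vectors $u_k\in\mathcal D(\cT_{s_k})$ with $\|(\cT_{s_k}-\zeta_k)u_k\|\le 1/k$: an eigenvector in the first case, and a normalized $(\cT_{s_k}-\zeta_k)^{-1}f_k$ with $\|f_k\|=1$ nearly attaining the norm in the second. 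The hypothesis then gives a subsequence with $\zeta_k\to\zeta_*\in\mathcal K$ and $u_k\to u_\infty$, where $\|u_\infty\|=1$ and $(\cL_1-\zeta_*)u_\infty=0$. Hence $\zeta_*\in\spec(\cL_1)$, contradicting $\mathcal K\subset\rho(\cL_1)$. So for some $s_*$ and $M$ we have $\mathcal K\subset\rho(\cT_s)$ and $\sup_{\zeta\in\mathcal K}\|(\cT_s-\zeta)^{-1}\|\le M$ for all $s\ge s_*$.

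\emph{Riesz projection.} Apply this to $\mathcal K=\Gamma$, which gives $\|(\cT_s-z)^{-1}\|\le M$ on $\Gamma$ for large $s$. Write $g_s:=(\cT_s-\lambda)w_s$, so $g_s\to0$. For $z\in\Gamma$ (note $\lambda\notin\Gamma$),
\[
 (z-\cT_s)^{-1}w_s=\frac{w_s}{z-\lambda}+\frac{(z-\cT_s)^{-1}g_s}{z-\lambda},
\]
which follows by applying $(z-\cT_s)^{-1}$ to the identity $(z-\cT_s)w_s=(z-\lambda)w_s-g_s$. Integrate over $\Gamma$ and divide by $2\pi i$. The first term gives exactly $w_s$, since the winding number about $\lambda$ is one. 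The second term is bounded in norm by $|\Gamma|\,M\,\|g_s\|/(2\pi\operatorname{dist}(\lambda,\Gamma))\to0$. Therefore $\|\Pi_\Gamma(\cT_s)w_s-w_s\|\to0$.

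\emph{Nonvanishing.} Since $\|w_s\|\to\|w\|>0$, we get $\|\Pi_\Gamma(\cT_s)w_s\|\ge\|w\|/2$ for large $s$. In particular $\Pi_\Gamma(\cT_s)\neq0$.

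All the analytic difficulty lies in verifying the singular-sequence hypothesis (tightness and local limits), and that is assumed here. The only step requiring care is the extraction in the first part: $\zeta_*$ must lie in $\mathcal K$ itself, which is guaranteed because $\mathcal K$ is compact.
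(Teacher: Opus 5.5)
Your proposal is correct and follows essentially the same route as the paper: you argue by contradiction, feed either an exact eigenvector or a normalized near-maximizer of the resolvent into the singular-sequence hypothesis, and then use the contour identity $\Pi_\Gamma(\cT_s)w_s-w_s=\frac{1}{2\pi i}\oint_\Gamma (z-\cT_s)^{-1}(\cT_s-\lambda)w_s\,(z-\lambda)^{-1}\,dz$ together with the uniform bound on $\Gamma$. Your explicit length estimate for the integral and the lower bound $\|\Pi_\Gamma(\cT_s)w_s\|\ge\|w\|/2$ only make quantitative the paper's $o(1)$ step.
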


\begin{proof}
If the uniform resolvent conclusion failed on a compact $\mathcal K\subset\rho(\cL_1)$, then, after passing to a sequence, either some $\zeta_k\in\mathcal K$ belongs to $\spec(\cT_{s_k})$, which by compact resolvent is an eigenvalue and gives an exact unit eigenvector, or else $\zeta_k\in\rho(\cT_{s_k})$ and $\|(\cT_{s_k}-\zeta_k)^{-1}\|\to\infty$.  In the latter case choose $g_k$ with $\|g_k\|=1$ and $\|(\cT_{s_k}-\zeta_k)^{-1}g_k\|\ge\tfrac12\|(\cT_{s_k}-\zeta_k)^{-1}\|$, and set
\[
    u_k=\frac{(\cT_{s_k}-\zeta_k)^{-1}g_k}{\|(\cT_{s_k}-\zeta_k)^{-1}g_k\|};
\]
then $\|u_k\|=1$ and $(\cT_{s_k}-\zeta_k)u_k\to0$.  The hypothesis would give a nonzero eigenfunction of $\cL_1$ with eigenvalue in $\mathcal K$, a contradiction.  For the projection statement, use
\[
    \Pi_\Gamma(\cT_s)w_s-w_s=\frac1{2\pi i}\oint_\Gamma
    (\zeta-\cT_s)^{-1}(\cT_s-\lambda)w_s\,(\zeta-\lambda)^{-1}\,d\zeta .
\]
The uniform resolvent bound on $\Gamma$ makes the right-hand side $o(1)$, while $\|w_s\|\to\|w\|>0$; hence $\Pi_\Gamma(\cT_s)w_s\ne0$ for $s$ large.

\end{proof}

\begin{lemma}[Tight convergence of pairings]\label{lem:tight-pairings}
Let $u_k,v_k\in L^2(\T_{s_k})$ be families represented on $(-\pi/R_{s_k},\pi/R_{s_k}]$ and extended by zero to $\R$.  Suppose
\[
    \sup_k(\|u_k\|_2+\|v_k\|_2)<\infty,
    \qquad
    \lim_{M\to\infty}\limsup_{k\to\infty}\int_{|y|>M}(|u_k|^2+|v_k|^2)=0,
\]
and $u_k\to u$, $v_k\to v$ locally in $L^2(\R)$.  Then $u_k\to u$ and $v_k\to v$ strongly in $L^2(\R)$ and
\[
    \int_{\T_{s_k}}u_kv_k\to\int_{\R}uv,
    \qquad
    \langle u_k,v_k\rangle_{L^2(\T_{s_k})}\to\langle u,v\rangle_{L^2(\R)}.
\]
The stronger uniform-tail hypothesis with $\sup_k$ is a special case.  Discarding finitely many initial indices is harmless; this is the form used below, where exterior estimates are uniform only after $s$ exceeds a threshold depending on the fixed exterior radius.
\end{lemma}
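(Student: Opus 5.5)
The plan is a standard tightness argument in two moves: upgrade local convergence to strong convergence in $L^2(\R)$, then pass to the limit in the pairings by continuity. Throughout, $\iota_{s_k}$ is suppressed, and circle integrals are read as real-line integrals of the zero extensions over the fundamental interval.

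\emph{Step 1: limits are in $L^2$ and inherit the tail control.} Local $L^2$ convergence gives, for each fixed $M$,
\[
\int_{|y|\le M}|u|^2=\lim_k\int_{|y|\le M}|u_k|^2\le\sup_k\|u_k\|_2^2 .
\]
Letting $M\to\infty$ shows $u\in L^2(\R)$, and likewise for $v$. For $M<M'$, local convergence on the annulus $\{M<|y|\le M'\}$ gives
\[
\int_{M<|y|\le M'}|u|^2\le\limsup_k\int_{|y|>M}|u_k|^2 .
\]
Letting $M'\to\infty$ bounds $\int_{|y|>M}|u|^2$ by the same $\limsup$, and this tends to $0$ as $M\to\infty$ by hypothesis.

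\emph{Step 2: strong convergence.} Split
\[
\|u_k-u\|_2^2=\int_{|y|\le M}|u_k-u|^2+\int_{|y|>M}|u_k-u|^2 .
\]
The first term tends to $0$ as $k\to\infty$ by local convergence. The second is at most $2\int_{|y|>M}|u_k|^2+2\int_{|y|>M}|u|^2$. Taking $\limsup_k$ first and then $M\to\infty$ makes both pieces vanish, by the hypothesis and by Step 1. The same argument applies to $v_k$.

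\emph{Step 3: pairings.} Since the zero extensions vanish outside the fundamental interval, $\int_{\T_{s_k}}u_kv_k=\int_\R u_kv_k$ and $\langle u_k,v_k\rangle_{L^2(\T_{s_k})}=\langle u_k,v_k\rangle_{L^2(\R)}$. Both the bilinear form $\mathfrak b$ and the inner product are jointly continuous on $L^2(\R)\times L^2(\R)$. Explicitly,
\[
|\mathfrak b(u_k,v_k)-\mathfrak b(u,v)|\le\|u_k-u\|\,\|v_k\|+\|u\|\,\|v_k-v\|\to0,
\]
using the uniform bound on $\|v_k\|$, and the inner product is handled identically.

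The remaining points are routine. Discarding finitely many indices changes neither the $\limsup$ nor the limits. The uniform-tail version, with $\sup_k$ in place of $\limsup_k$, trivially implies the stated hypothesis. No step is genuinely hard; the only care needed is the order of limits in Step 2, where one takes $k\to\infty$ before $M\to\infty$.
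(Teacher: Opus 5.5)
Your proposal is correct and follows essentially the same route as the paper. Both first show that the local limits inherit the sequential tail bound via annuli and monotone convergence, then upgrade to strong $L^2(\R)$ convergence by splitting at $|y|=M$ (taking $k\to\infty$ before $M\to\infty$), and finally pass to the limit in the bilinear and sesquilinear pairings using the same triangle-inequality estimate together with the uniform bound on $\|v_k\|_2$.
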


\begin{proof}
We first record that the local limits inherit the sequential tail bounds.
For fixed $0<M<N$, local $L^2$ convergence gives
\[
 \int_{M<|y|<N}|u|^2
   =\lim_{k\to\infty}\int_{M<|y|<N}|u_k|^2
   \le\limsup_{k\to\infty}\int_{|y|>M}|u_k|^2 .
\]
Letting $N\to\infty$ and using monotone convergence yields
\[
 \int_{|y|>M}|u|^2
   \le\limsup_{k\to\infty}\int_{|y|>M}|u_k|^2,
\]
and the same argument applies to $v$.  Given $\varepsilon>0$, choose
$M$ so large that the sum of the two limsup tails in the hypothesis is
less than $\varepsilon$.  For all sufficiently large $k$, the tails of
$u_k$ and $v_k$ beyond this $M$ are less than $2\varepsilon$ in total,
while the preceding inequalities give the corresponding bound for $u$
and $v$.  Local convergence on $[-M,M]$ and
\[
 \|u_k-u\|_{L^2(|y|>M)}^2
 \le2\|u_k\|_{L^2(|y|>M)}^2
     +2\|u\|_{L^2(|y|>M)}^2
\]
(and similarly for $v_k-v$) now prove global strong $L^2$ convergence.
Finally, after identifying the circle functions with their zero
extensions, the bilinear convergence follows from
\[
 \left|\int u_kv_k-\int uv\right|
 \le\|u_k-u\|_2\|v_k\|_2+\|u\|_2\|v_k-v\|_2,
\]
and the identical estimate with conjugation proves the sesquilinear
convergence.
\end{proof}

\begin{lemma}[Finite-dimensional root-space alternative]\label{lem:root-space-alt}
Let $A$ be a closed operator with compact resolvent and let $\Gamma\subset\rho(A)$ be a positively oriented contour enclosing a finite spectral set $\Sigma_\Gamma\subset\spec(A)$.  Let
\[
    P_\Gamma=\frac{1}{2\pi i}\oint_\Gamma (\zeta-A)^{-1}\,d\zeta
\]
be the corresponding Riesz projection.  If $\operatorname{rank}P_\Gamma>1$, then at least one of the following alternatives holds for the finite-dimensional root space $\Ran P_\Gamma$:
\begin{enumerate}[label=\textup{(\alph*)}]
\item $A$ has at least two distinct eigenvalues $\mu_1,\mu_2$ inside $\Gamma$;
\item some enclosed eigenvalue $\mu\in\Sigma_\Gamma$ has geometric multiplicity at least two;
\item some enclosed eigenvalue $\mu\in\Sigma_\Gamma$ has a Jordan chain of length at least two, i.e. there are $u,w\in\Ran P_\Gamma$ with $u\neq0$, $(A-\mu)u=0$, and $(A-\mu)w=u$.
\end{enumerate}
The alternatives are exhaustive but need not be mutually exclusive.
\end{lemma}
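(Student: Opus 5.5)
This is a finite-dimensional linear algebra statement, so the plan is to reduce to the restriction of $A$ to $\Ran P_\Gamma$ and argue by contraposition. By Lemma~\ref{lem:riesz-tools}\textup{(i)}, $\Ran P_\Gamma$ is finite-dimensional, $A$-invariant, and contained in $\mathcal D(A)$. The restriction $A_\Gamma:=A|_{\Ran P_\Gamma}$ is then a linear map on a space of dimension $N=\operatorname{rank}P_\Gamma\ge2$, and its spectrum is exactly $\Sigma_\Gamma$, the enclosed eigenvalues. This is the standard Riesz decomposition: the complementary range $\ker P_\Gamma$ carries the spectrum of $A$ outside $\Gamma$.

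Suppose (a) fails. Then $\Sigma_\Gamma$ consists of a single point $\mu$; it is nonempty because $P_\Gamma\neq0$, and a nonzero Riesz projection must enclose spectrum. The finite-dimensional operator $A_\Gamma$ then has the single eigenvalue $\mu$, so $N_0:=A_\Gamma-\mu$ is nilpotent on $\Ran P_\Gamma$. Its kernel is $\ker(A-\mu)\cap\Ran P_\Gamma$, and this equals all of $\ker(A-\mu)$, since every eigenvector at an enclosed eigenvalue lies in the range of the Riesz projection.

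Now suppose (b) also fails, so $\dim\ker N_0=1$. A nilpotent operator with one-dimensional kernel on a space of dimension $N\ge2$ is a single Jordan block of size $N$. In particular $N_0\neq0$, so $\Ran N_0\cap\ker N_0\neq\{0\}$: take the last nonzero power $N_0^{k}\neq0$ with $N_0^{k+1}=0$, where $k\ge1$. Choose $v$ with $u:=N_0^kv\neq0$ and set $w:=N_0^{k-1}v$. Then $u,w\in\Ran P_\Gamma$, $(A-\mu)u=0$, and $(A-\mu)w=u$, which is (c).

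Each of these steps is routine. The only point that needs checking is that $\Ran P_\Gamma$ lies in $\mathcal D(A)$ and that $\spec(A_\Gamma)=\Sigma_\Gamma$. I would cite both as standard Riesz-projection facts from \cite{Kato} rather than reprove them.
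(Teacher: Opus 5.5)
Your proposal is correct and follows the same route as the paper: restrict $A$ to the finite-dimensional invariant space $\Ran P_\Gamma$, whose spectrum is $\Sigma_\Gamma$, and read off the alternatives from the Jordan structure of that restriction. Your contrapositive argument spells out the Jordan-normal-form step that the paper simply cites: a single enclosed eigenvalue and a one-dimensional kernel force $A_\Gamma-\mu$ to be nonzero and nilpotent, and its last nonzero power yields the chain. Your remark that $\ker(A-\mu)\subset\Ran P_\Gamma$ correctly ties the geometric multiplicity of $\mu$ for $A$ to that of the restriction.
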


\begin{proof}
The restriction $A|_{\Ran P_\Gamma}$ is a finite-dimensional operator
whose spectrum is precisely $\Sigma_\Gamma$, counted with algebraic
multiplicity, and the assertion is its Jordan normal form.  In the third
alternative a nonzero scalar multiple of the leading chain vector has
been absorbed into $u$.  The parameter there is therefore an actual
enclosed eigenvalue, not necessarily the center of the contour.
\end{proof}

\begin{proposition}[Localized resolvent bounds and eigenvalue convergence]\label{prop:log-scaling}
Fix $K\in\N_0$ and let $\nu_n$, $u_n$, $g_K$ be as in Proposition~\ref{prop:ladder}.
\begin{enumerate}[label=\textup{(\roman*)}]
\item For every compact $\mathcal K\subset\C$ with $\mathcal K\cap\spec(\cL_1)=\emptyset$ there is $s(\mathcal K)$ such that $\mathcal K\subset\rho(\cT_s)$ and $\sup_{\zeta\in\mathcal K}\|(\cT_s-\zeta)^{-1}\|\le C(\mathcal K)$ for all $s\ge s(\mathcal K)$;
\item for each $0\le n\le K$ and $s$ large, the Riesz projection $\Pi_{\Gamma_n}(\cT_s)$ over $\Gamma_n=\{|\zeta-\nu_n|=g_K/3\}$ has rank one; the unique eigenvalue $\mu_s^{(n)}$ of $\cT_s$ in $D(\nu_n,g_K/3)$ is algebraically simple and $\mu_s^{(n)}\to\nu_n$;
\item let $\omega_s^{(n)}$ be a normalized eigenfunction of $\cT_s$ associated with $\mu_s^{(n)}$.  For $s$ large, after choosing the phase so that $\langle \omega_s^{(n)},\hat u_n\rangle$ is real and positive, the eigenfunctions $\omega_s^{(n)}$ converge to $\hat u_n$ in $L^2(\R)$, and
\begin{equation}\label{eq:nonisotropy-transfer}
    \delta(\omega_s^{(n)})\ \longrightarrow\ \delta(u_n)>0 .
\end{equation}
\end{enumerate}
\end{proposition}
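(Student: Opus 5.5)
The approach is to verify the compactness hypothesis of Lemma~\ref{lem:singular-sequence-criterion} for the family $\cT_s$, which immediately gives (i), and then to deduce (ii)--(iii) from Riesz projection convergence, complex symmetry, and the non-isotropy of the model rungs. Fix a compact $\mathcal K$, sequences $s_k\to\infty$, $\zeta_k\in\mathcal K$, and unit $u_k$ with $f_k:=(\cT_{s_k}-\zeta_k)u_k\to0$. Pairing with $u_k$ gives
\[
 \|u_k'\|^2=\Re\langle f_k,u_k\rangle+\Re\zeta_k,\qquad
 \int V_{s_k}|u_k|^2=\Im\langle f_k,u_k\rangle+\Im\zeta_k .
\]
The first identity bounds $u_k$ in $H^1$. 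For the second, the comparison \eqref{eq:pointwise-comparison} yields $V_s\ge\log|y|\ge0$ for $|y|\ge1$, the coercive off-chart bound \eqref{eq:offchart-coercive} holds on $|y|\ge x_0/R_s$, and $|V_s|\le|\log|y||$ on $|y|\le1$. That last region is controlled by $\|u\|_{L^\infty}^2\int_0^1|\log y|\le C\|u\|_{H^1}^2$. Together these give $\sup_k\int_{|y|\ge M}\log|y|\,|u_k|^2<\infty$, hence the uniform tail bound $\int_{|y|>M}|u_k|^2\lesssim(\log M)^{-1}$. Rellich compactness on bounded windows then supplies a subsequence converging locally in $L^2$, and Lemma~\ref{lem:tight-pairings} upgrades this to strong convergence $u_k\to u_\infty$ with $\|u_\infty\|=1$ and $\zeta_k\to\zeta_*$.

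To identify the limit, pair the equation with a test function $\varphi\in C_c^\infty(\R)$. On $\operatorname{supp}\varphi$ away from $0$, $V_s\to\log|y|$ uniformly by \eqref{eq:local-coeff-expansion}. Near $0$, the domination $|V_s|\le|\log|y||\in L^1_{\rm loc}$ together with uniform convergence of $u_k$ (via the $H^1\subset C$ compact embedding on bounded intervals) lets dominated convergence pass to the limit in $\int V_{s_k}u_k\varphi$. Hence $-u_\infty''+i\log|y|\,u_\infty=\zeta_*u_\infty$ in $\mathcal D'(\R)$, so $u_\infty\in\mathcal D(\cL_1)$ as the maximal realization, and it is a nonzero eigenfunction. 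This verifies the hypothesis of Lemma~\ref{lem:singular-sequence-criterion}, and (i) follows.

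For (ii), apply (i) to $\mathcal K=\Gamma_n$ to get a uniform resolvent bound on $\Gamma_n$. Take the truncated model mode $w_s=\chi_s\hat u_n$ with a slowly growing cutoff; Lemma~\ref{lem:model-rung-decay} and \eqref{eq:local-coeff-expansion} should give $(\cT_s-\nu_n)w_s\to0$, since the error is $\int|V_s-\log|y||^2|\hat u_n|^2\lesssim\ell_s^{-2}\int(\log|y|)^4|\hat u_n|^2$ on the expansion window, plus superexponentially small contributions elsewhere. The projection part of Lemma~\ref{lem:singular-sequence-criterion} then gives $\operatorname{rank}\Pi_{\Gamma_n}(\cT_s)\ge1$. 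To bound the rank by one, I would argue by contradiction using Lemma~\ref{lem:root-space-alt}, along a sequence. In alternative (a) or (b), two independent eigenvectors can be chosen orthonormal (for two distinct eigenvalues, orthonormalize within the two-dimensional span, which consists of approximate eigenvectors because the eigenvalues converge to $\nu_n$ by the criterion). Both converge strongly to eigenfunctions of $\cL_1$ at $\nu_n$, which is simple, so the limits are collinear, contradicting orthonormality. In alternative (c), normalize so that $\|w\|+\|u\|$ is controlled and $(\cT_s-\mu)w=u$. Complex symmetry (Lemma~\ref{lem:complex-symmetry}(iii)) gives $\mathfrak b(u,u)=0$, whereas $u$ converges strongly to a multiple of $u_n$, and by Lemma~\ref{lem:tight-pairings} $\mathfrak b(u,u)\to c^2\mathfrak b(u_n,u_n)\ne0$ by \eqref{eq:ladder-overlap}, again a contradiction. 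The main obstacle here is normalizing the Jordan pair. If $\|w\|\to\infty$ relative to $\|u\|$, I would instead renormalize $w$ to unit norm, so that $(\cT_s-\mu)w\to0$, and apply the criterion to both vectors; the eigenvalue $\mu$ itself is handled by a uniform resolvent bound on a smaller circle. Finally, $\mu_s^{(n)}\to\nu_n$ holds because any limit point is an eigenvalue of $\cL_1$ in $\overline D(\nu_n,g_K/3)$.

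For (iii), apply the same compactness argument to $\omega_s^{(n)}$: every subsequence has a further subsequence converging to a unit eigenfunction at $\nu_n$, that is, to $e^{i\alpha}\hat u_n$. With the stated phase normalization $\langle\omega_s^{(n)},\hat u_n\rangle>0$, the limit is forced to be $\hat u_n$, so the whole family converges. Then Lemma~\ref{lem:tight-pairings} gives $\int(\omega_s^{(n)})^2\to\int\hat u_n^2$, which proves \eqref{eq:nonisotropy-transfer}.
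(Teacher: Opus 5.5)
There is a genuine gap in your exclusion of alternative~(a) of Lemma~\ref{lem:root-space-alt}, the case of two distinct eigenvalues $\mu_1\neq\mu_2$ in $D(\nu_n,g_K/3)$. You orthonormalize within $\operatorname{span}\{\varphi_1,\varphi_2\}$ on the grounds that this span consists of approximate eigenvectors, but that claim is false in general. Put $c=\langle\varphi_2,\varphi_1\rangle$, $d=(1-|c|^2)^{1/2}$ and $e_2=(\varphi_2-c\varphi_1)/d$. Then exactly $(\cT_s-\mu_2)e_2=c(\mu_2-\mu_1)\varphi_1/d$, so the residual is $|c|\,|\mu_1-\mu_2|/d$.

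Both $|\mu_1-\mu_2|$ and $d$ tend to zero. The second does so precisely because, as you argue next, both eigenfunctions converge to multiples of the same simple mode. The residual of $e_2$ is therefore an uncontrolled $0/0$ quotient, and $e_2$ need not be a singular sequence. Two nearby eigenvalues with nearly parallel eigenvectors is exactly the nonnormal near-Jordan configuration that the rank-one statement must exclude, so it cannot be removed by orthonormalizing.

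The paper closes this case with complex symmetry instead. By Lemma~\ref{lem:complex-symmetry}(i), $(\mu_1-\mu_2)\,\mathfrak b(\varphi_1,\varphi_2)=\mathfrak b(\cT_s\varphi_1,\varphi_2)-\mathfrak b(\varphi_1,\cT_s\varphi_2)=0$, so the eigenfunctions themselves are $\mathfrak b$-orthogonal. On the other hand, tightness and Lemma~\ref{lem:tight-pairings} give $\mathfrak b(\varphi_1,\varphi_2)\to e^{i(\theta_1+\theta_2)}\int_\R u_n^2/\|u_n\|^2\neq0$ by \eqref{eq:ladder-overlap}, a contradiction.

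A compactness repair would also work. By (i), $\cT_s\Pi_{\Gamma_n}(\cT_s)$ is uniformly bounded, so unit vectors in $\Ran\Pi_{\Gamma_n}(\cT_s)$ have bounded residual and are tight. Strong resolvent convergence on $\Gamma_n$ would then force the limit of an orthonormal pair into the one-dimensional $\Ran\Pi_{\Gamma_n}(\cL_1)$. That route, however, needs the a~priori bounds with a non-vanishing residual and an identification of resolvent limits, and your proposal sets up neither. Your argument for alternative~(b) is fine as written, since orthonormal vectors in a single eigenspace are exact eigenvectors.

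The other steps are sound. Your exterior tail bound from the global identity $\int V_s|u|^2=\Im\langle f,u\rangle+\Im\zeta$ (using $V_s\ge0$ on $|y|\ge1$ and $H^1$ control of the core) is a valid substitute for the paper's radial cutoff $\psi_M$. Likewise, your dominated-convergence passage through $y=0$ can replace its split at $|y|=e^{-\ell_s/2}$.

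In the quasimode estimate, the core $|y|<e^{-\ell_s/2}$ is not superexponentially small. It is instead controlled by $|V_s-\log|y||\le2|\log|y||$ from \eqref{eq:pointwise-comparison} together with $\int_0^{e^{-\ell_s/2}}(\log y)^2\,dy\to0$.

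In alternative~(c) there is no normalization obstacle. The identity $\mathfrak b(u,u)=0$ is scale invariant, so you may take $\|u\|=1$ and pass only the eigenfunction to the limit. The generalized vector $w$ never needs to be controlled, so your fallback with a renormalized $w$ and a smaller circle is unnecessary.
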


\begin{proof}
We verify the hypotheses of Lemma~\ref{lem:singular-sequence-criterion}: every singular sequence --- unit $u_s$ with $\|(\cT_s-\zeta_s)u_s\|\to0$, $\zeta_s\to\zeta_*$ in a fixed compact set --- is tight and converges along a subsequence to a unit maximal-domain eigenfunction of $\cL_1$ at $\zeta_*$.  On compact sets disjoint from $\spec(\cL_1)$ this is impossible, and Lemma~\ref{lem:singular-sequence-criterion} gives the uniform resolvent bounds of \textup{(i)}.

\emph{Uniform a priori structure.}  For each fixed $s$, $V_s\in L^\infty(\T_s)$, so $\cT_s$ is closed on $H^2(\T_s)$ with compact resolvent; in particular its spectrum is discrete and the Riesz projections below are well defined.  Let $u\in H^2(\T_s)$ with $\|u\|=1$, $\zeta$ in a fixed compact set, and $f=(\cT_s-\zeta)u$ with $\|f\|\le C_*$.  Pairing with $u$ and taking real parts (the potential is purely imaginary) gives the global gradient bound $\|u'\|_{L^2(\T_s)}^2\le\|f\|+|\Re\zeta|\le C$.  For a fixed cutoff $\chi$, the IMS identity $\Re\langle(\cT_s-\zeta)u,\chi^2u\rangle=\|(\chi u)'\|^2-\|\chi'u\|^2-\Re\zeta\|\chi u\|^2$ gives uniform local $H^1$ bounds --- explicitly $\sup_{I\subset\T_s,\,|I|\le2}\|u\|_{H^1(I)}\le C$, the constant independent of the period --- hence local $L^\infty$ bounds in one dimension, by the Sobolev embedding on unit intervals, and so \emph{no concentration at the singular point}: $\int_{|y|<\rho}|u|^2\le C\rho$.  For the exterior, fix $M\ge2$ and take $s$ so large that $4M\le\pi R_s^{-1}$: the radial cutoff $\psi_M$ --- a smooth function of the distance $|y|$ to the well, $\psi_M=1$ on $\{|y|\ge2M\}$, $\psi_M=0$ on $\{|y|\le M\}$, $|\psi_M'|\le2/M$ --- is then well defined on $\T_s$, and on $\operatorname{supp}\psi_M$ either the chart comparison \eqref{eq:pointwise-comparison} or the off-chart bound \eqref{eq:offchart-coercive} applies.  Taking imaginary parts of $\langle(\cT_s-\zeta)u,\psi_M^2u\rangle$ and using $|\Im\langle-u'',\psi_M^2u\rangle|\le2\|\psi_M'\|_\infty\|u'\|\|u\|\le C/M$,
\[
    \int V_s\,\psi_M^2|u|^2\ \le\ |\Im\zeta|+\|f\|+\frac CM\ \le\ C .
\]
By \eqref{eq:pointwise-comparison} on the chart part of $\operatorname{supp}\psi_M$ and, after enlarging the threshold to $s\ge s(M)$ so that the off-chart coercive bound \eqref{eq:offchart-coercive} dominates $\log M$ as well, $V_s\ge\log M$ on all of $\operatorname{supp}\psi_M$, so
\begin{equation}\label{eq:exterior-small}
    \int_{|y|\ge2M}|u|^2\ \le\ \frac{C}{\log M},
    \qquad\text{uniformly in }s\ge s(M);
\end{equation}
the finitely many smaller $s$ are irrelevant in the compactness arguments below: given any sequence $s_k\to\infty$, for each fixed $M$ all but finitely many $k$ satisfy $s_k\ge s(M)$, the tail bound applies to those, and $M\to\infty$ is taken only afterwards; this diagonal order is the only way \eqref{eq:exterior-small} is used.
Together these give \emph{tightness}: any such family is precompact in $L^2(\R)$ (fix $M\in\N$ and extract, by Rellich and the uniform local $H^1$ bounds, a subsequence converging in $L^2(-M,M)$; diagonalize over $M$; the sequential tail bound supplied by \eqref{eq:exterior-small} and the uniform smallness of neighborhoods of $0$ then upgrade local to strong $L^2(\R)$ convergence), and norms pass to the limit, so any $L^2$-limit $u_\infty$ has $\|u_\infty\|=1$.  Moreover, for $\varphi\in C_c^\infty(\R)$, split the integral $\int(V_s-\log|y|)u\bar\varphi$ at $|y|=e^{-\ell_s/2}$, where the expansion \eqref{eq:local-coeff-expansion} takes over: on the inner region, \eqref{eq:pointwise-comparison} gives $|V_s-\log|y||\le2|\log|y||$, so that contribution is at most $C\,\|u\|_{L^\infty(\{|y|\le1\})}\,\|\varphi\|_\infty\int_{|y|\le e^{-\ell_s/2}}\bigl|\log|y|\bigr|\,dy\to0$, the first factor uniformly bounded by the local $L^\infty$ bounds; on the outer part of $\operatorname{supp}\varphi$, \eqref{eq:local-coeff-expansion} bounds the contribution, after Cauchy--Schwarz, by $C_A\,\ell_s^{-1}\,\|u\|_{2}\,\bigl\|(\log|y|)^2\varphi\bigr\|_{2}\to0$, the fourth power of the logarithm being locally integrable.  Hence $\int(V_s-\log|y|)u\bar\varphi\to0$; so if, after zero extension to $L^2(\R)$, the right-hand sides $f_s$ are uniformly bounded and $f_s\rightharpoonup f_\infty$ weakly along a subsequence, while $\zeta_s\to\zeta_*$, then --- the derivative term passing because both derivatives fall on the test function, $\int u\,\bar\varphi''\to\int u_\infty\bar\varphi''$ by local $L^2$ convergence, and the resulting identity holding against every $\varphi\in C_c^\infty(\R)$, including test functions straddling $y=0$, so that no interface condition arises at the origin and the limit lies in the maximal full-line domain --- the limit solves
\[
    -u_\infty''+i\log|y|\,u_\infty=\zeta_*u_\infty+f_\infty
    \quad\text{in }\mathcal D'(\R) .
\]
In the singular-sequence case used below, $f_s=(\cT_s-\zeta_s)u_s\to0$ strongly in $L^2$ after zero extension, hence $f_\infty=0$.  Then $u_\infty$ is an $L^2$ distributional solution of the eigenvalue equation; since $\log|y|\in L^2_{\mathrm{loc}}$ and $u_\infty\in L^2$, the product is locally integrable, so $u_\infty''=(i\log|y|-\zeta_*)u_\infty\in L^1_{\mathrm{loc}}$ and $u_\infty,u_\infty'\in AC_{\mathrm{loc}}(\R)$; moreover $-u_\infty''+i\log|y|\,u_\infty=\zeta_*u_\infty\in L^2(\R)$, which is precisely membership in the maximal domain of $\cL_1$ described in Lemma~\ref{lem:parity}; by Lemma~\ref{lem:sector-LG}, applied to $u_\infty(y)$ and to the reflection $u_\infty(-y)$ on $y>0$ --- both are $L^2$ solutions of the same logarithmic half-line equation --- it is recessive at $\pm\infty$: an eigenfunction of $\cL_1$ at $\zeta_*$.

\emph{(i).}  Since $\spec(\cL_1)$ is closed and discrete, any compact $\mathcal K$ disjoint from it satisfies $\dist(\mathcal K,\spec\cL_1)>0$.  If (i) failed for such a compact $\mathcal K$ there would be $s_k\to\infty$ and $\zeta_k\in\mathcal K$ along which either $\zeta_k\in\spec(\cT_{s_k})$ or $\|(\cT_{s_k}-\zeta_k)^{-1}\|\ge k$; since $\cT_s$ has compact resolvent, $\cT_s-\zeta$ is Fredholm of index $0$, so the first case yields a unit eigenfunction $w_k$ and the second a unit near-maximizer $w_k=(\cT_{s_k}-\zeta_k)^{-1}u_k/\|(\cT_{s_k}-\zeta_k)^{-1}u_k\|$; either way $\|w_k\|=1$ and $(\cT_{s_k}-\zeta_k)w_k\to0$.  By the above, a subsequential limit is a unit eigenfunction of $\cL_1$ with eigenvalue in $\mathcal K$, contradicting $\dist(\mathcal K,\spec\cL_1)>0$.

\emph{(ii), rank $\ge1$.}  Fix $0\le n\le K$; note $\dist(\Gamma_n,\spec\cL_1)\ge g_K/3$.  Let $\chi_s(y):=\chi\bigl(e^{-\ell_s/4}|y|\bigr)$ with a fixed $\chi\in C_c^\infty([0,\infty))$, $\chi=1$ on $[0,1]$, $\operatorname{supp}\chi\subset[0,2]$ --- so $\chi_s=1$ on $\{|y|\le e^{\ell_s/4}\}$, $\operatorname{supp}\chi_s\subset\{|y|\le2e^{\ell_s/4}\}$, $\|\chi_s'\|_\infty=O(e^{-\ell_s/4})$, and $\|\chi_s''\|_\infty=O(e^{-\ell_s/2})$ --- and put $w_s=\chi_su_n$: this lies in $H^2(\T_s)$, since by Lemma~\ref{lem:parity} $u_n,u_n'\in AC_{\mathrm{loc}}(\R)$ and, from the eigenvalue equation, $u_n''=(i\log|y|-\nu_n)u_n\in L^2_{\mathrm{loc}}(\R)$ ($\log|y|\in L^2_{\mathrm{loc}}$ and $u_n$ locally bounded), so $u_n\in H^2_{\mathrm{loc}}(\R)$, while $\chi_s$ is smooth and constant near $0$.  For $s$ large, $2e^{\ell_s/4}<\min(\pi,x_0)R_s^{-1}$ --- indeed $e^{\ell_s/4}R_s=e\,e^{-3\ell_s/4}\to0$ --- so $\operatorname{supp}w_s$ lies well inside both the fundamental interval and the cusp chart; extended by zero, $w_s$ is a genuine element of $H^2(\T_s)$; its support is compactly contained in the chosen fundamental interval, so the endpoint traces of $w_s$ and $w_s'$ vanish and the periodic $H^2$ compatibility is automatic.  Every potential comparison used on $\operatorname{supp}w_s$ is the chart estimate.  By the superexponential decay and logarithmic moment bounds of Lemma~\ref{lem:model-rung-decay} and the potential comparison --- split at $|y|=e^{-\ell_s/2}$: on the inner region, \eqref{eq:pointwise-comparison} gives $|V_s-\log|y||\le2|\log|y||$, and $\|u_n\|_\infty<\infty$ with $\int_0^{e^{-\ell_s/2}}(\log y)^2dy\to0$ makes that contribution vanish; on $e^{-\ell_s/2}\le|y|\le2e^{\ell_s/4}$, the expansion \eqref{eq:local-coeff-expansion} applies, and the weighted bound $\|(V_s-\log|y|)w_s\|_{L^2}^2\le C_A^2\,\ell_s^{-2}\int_{e^{-\ell_s/2}\le|y|\le2e^{\ell_s/4}}(1+|\log|y||)^4\,|u_n(y)|^2\,dy=O(\ell_s^{-2})$ absorbs the pointwise growth $O(\ell_s)$ at both edges of the region into the finite moment $\int_\R(1+|\log|y||)^4|u_n|^2\,dy<\infty$, $u_n$ being bounded near $0$ and superexponentially small at infinity ---,
\[
    \|(\cT_s-\nu_n)w_s\|\le\bigl\|\chi_s(V_s-\log|y|)u_n\bigr\|+\|\chi_s''u_n\|+2\|\chi_s'u_n'\|\ \longrightarrow\ 0 ,
\]
while $w_s\to u_n$ strongly in $L^2(\R)$ after zero extension --- $\chi_s\to1$ pointwise and $|(1-\chi_s)u_n|\le|u_n|\in L^2(\R)$, so dominated convergence applies --- in particular $\|w_s\|\to\|u_n\|\neq0$.  The projection part of Lemma~\ref{lem:singular-sequence-criterion}, with $\Gamma=\Gamma_n$, $\lambda=\nu_n$, $w=u_n$, and (i) applied to $\mathcal K=\Gamma_n$, gives $\|\Pi_{\Gamma_n}(\cT_s)w_s-w_s\|\to0$, so $\Pi_{\Gamma_n}(\cT_s)\neq0$ for $s$ large.

\emph{(ii), rank $\le1$ and simplicity.}  Note that $\cT_s$ is complex symmetric on the circle in the closed-operator sense above (Lemma~\ref{lem:complex-symmetry}(i), applied on $\T_s$); in particular, $\mathfrak b(\cT_su,v)=\mathfrak b(u,\cT_sv)$ with $\mathfrak b(u,v)=\int_{\T_s}uv$ and no boundary terms.  All eigenvalues enclosed by $\Gamma_n$ lie in the fixed compact $\overline D(\nu_n,g_K/3)$, so the uniform a priori estimates from the opening paragraph of this proof apply, with $f=0$ and uniform constants, to every eigenfunction appearing in the alternatives below; in particular every normalized such eigenfunction is sequentially tight and has, along a subsequence, a strong $L^2(\R)$ limit of norm one --- a unimodular multiple of $\hat u_n$, the full-line eigenspace at $\nu_n$ being one-dimensional (Proposition~\ref{prop:ladder}(ii)).  By part \textup{(i)}, $\Gamma_n\subset\rho(\cT_s)$ for all large $s$, so the Riesz projection is defined and has finite rank.  By Lemma~\ref{lem:root-space-alt}, a Riesz range of rank at least two forces one of three configurations: two distinct eigenvalues in the disc, one eigenvalue of geometric multiplicity at least two, or a Jordan chain of length at least two (hence a length-two initial segment); each is excluded in turn.  Suppose along $s_k\to\infty$ the disc $D(\nu_n,g_K/3)$ contained two eigenvalues $\mu_k^{(1)}\neq\mu_k^{(2)}$ with unit eigenfunctions $\varphi_k^{(1)},\varphi_k^{(2)}$.  Each family is tight and, after passing to subsequences and adjusting phases, $\varphi_k^{(j)}\to e^{i\theta_j}\hat u_n$ in $L^2(\R)$ (the limits are unit eigenfunctions of $\cL_1$ with eigenvalue in $\overline D(\nu_n,g_K/3)$, hence equal to $\nu_n$, whose full-line eigenspace is one-dimensional by Proposition~\ref{prop:ladder}(ii)).  But $\mathfrak b$-symmetry gives $\mathfrak b(\varphi_k^{(1)},\varphi_k^{(2)})=0$ for distinct eigenvalues, while $\mathfrak b$ passes to the limit by Lemma~\ref{lem:tight-pairings}: after passing to a common subsequence, the a priori structure, applied with $f=0$ and $\zeta=\mu_k^{(j)}$ in the fixed disc, gives the sequential tail bound and local convergence simultaneously for both families.  Hence
\[
    0=\lim_k\mathfrak b(\varphi_k^{(1)},\varphi_k^{(2)})=e^{i(\theta_1+\theta_2)}\,\frac{\int_\R u_n^2}{\|u_n\|^2}\neq0
\]
by \eqref{eq:ladder-overlap}: contradiction.  If instead a single eigenvalue $\mu_k$ had geometric multiplicity two, take orthonormal eigenfunctions $\varphi_k^{(1)}\perp\varphi_k^{(2)}$: both families are tight with $L^2$ limits $e^{i\theta_j}\hat u_n$, and Lemma~\ref{lem:tight-pairings}, now for the sesquilinear product, gives $0=\langle\varphi_k^{(1)},\varphi_k^{(2)}\rangle\to e^{i(\theta_1-\theta_2)}\neq0$: contradiction.  Finally, a Jordan chain $(\cT_{s_k}-\mu_k)w_k=\varphi_k$ (the generalized vector $w_k$ varying with $k$) forces $\mathfrak b(\varphi_k,\varphi_k)=0$ at each fixed $k$ by Lemma~\ref{lem:complex-symmetry}(iii) (exact on the circle); no compactness of the $w_k$ is used and only the eigenfunctions $\varphi_k$ are passed to the limit, while $\mathfrak b(\varphi_k,\varphi_k)\to e^{2i\theta_1}\int u_n^2/\|u_n\|^2\neq0$: contradiction.  Hence exactly one eigenvalue $\mu_s^{(n)}\in D(\nu_n,g_K/3)$, algebraically simple.  To identify its limit directly, let $s_k\to\infty$ be any sequence along which $\mu_{s_k}^{(n)}\to\mu$ and take normalized eigenfunctions.  The a priori structure above, with $f=0$ and $\zeta=\mu_{s_k}^{(n)}$ in the fixed disc, gives a subsequence converging strongly in $L^2(\R)$ to a unit eigenfunction of $\cL_1$ at $\mu$.  But $\overline D(\nu_n,g_K/3)$ meets $\spec(\cL_1)$ only at $\nu_n$, hence $\mu=\nu_n$.  Since every convergent subsequence has this limit, $\mu_s^{(n)}\to\nu_n$.

\emph{(iii).}  The normalized eigenfunctions $\omega_s^{(n)}$ are sequentially tight, and every subsequence has a further subsequence converging in $L^2(\R)$ to a unit eigenfunction of $\cL_1$ at $\nu_n$, i.e.\ to $e^{i\theta}\hat u_n$ for some $\theta$.  The overlap $\langle \omega_s^{(n)},\hat u_n\rangle$ is therefore nonzero for all sufficiently large $s$: otherwise a subsequence with zero overlap would converge to $e^{i\theta}\hat u_n$, whose overlap with $\hat u_n$ has modulus one.  Thus the phase normalization is legitimate for large $s$, and under it every subsequential limit equals $\hat u_n$ itself.  Hence the full family converges, $\omega_s^{(n)}\to\hat u_n$ in $L^2(\R)$, and $\delta(\omega_s^{(n)})=|\mathfrak b(\omega_s^{(n)},\omega_s^{(n)})|\to|\mathfrak b(\hat u_n,\hat u_n)|=\delta(u_n)>0$ by \eqref{eq:ladder-overlap}.
\end{proof}

\subsection{The low ladder of \texorpdfstring{$T_s$}{Ts}}

\begin{proposition}[The low ladder of \texorpdfstring{$T_s$}{Ts}]\label{prop:log-Ts}
Fix $K\in\N_0$.  For $s$ large, $T_s$ has algebraically simple eigenvalues $\lambda_s^{(0)},\dots,\lambda_s^{(K)}$,
\begin{equation}\label{eq:lambda-s-asymp}
    \lambda_s^{(n)}=is\,\ell_s^{-A}+As\,\ell_s^{-A-1}\,\mu_s^{(n)},
    \qquad \mu_s^{(n)}=\nu_n+o(1),\quad 0\le n\le K,
\end{equation}
whose normalized eigenfunctions may be taken as $v_s^{(n)}=U_s^{-1}\omega_s^{(n)}$, with $\omega_s^{(n)}$ the normalized rescaled eigenfunctions of Proposition~\ref{prop:log-scaling}, and satisfy $\delta(v_s^{(n)})\ge\delta_K>0$.  Each is the only spectrum of $T_s$ in the disc $\{|\lambda-\lambda_s^{(n)}|\le\rho_s\}$ with $\rho_s=(g_KA/6)\,s\,\ell_s^{-A-1}$, and on the circle $|\lambda-\lambda_s^{(n)}|=\rho_s$ the tameness bound \eqref{eq:tameness} holds with a constant $C_t=C_t(K)$ independent of $s$.  Moreover, if $B\subset\C$ is a closed rectangle with $\partial B\cap\spec(\cL_1)=\emptyset$ and $K$ is large enough that $\spec(\cL_1)\cap B\subset\{\nu_0,\dots,\nu_K\}$, then for $s\ge s(B)$
\begin{equation}\label{eq:Ts-completeness}
    \spec(T_s)\cap\bigl\{is\,\ell_s^{-A}+As\,\ell_s^{-A-1}\,\zeta:\ \zeta\in B\bigr\}
    =\bigl\{\lambda_s^{(n)}:\ \nu_n\in B\bigr\}.
\end{equation}
\end{proposition}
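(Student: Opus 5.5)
This proposition is essentially a transport of Proposition~\ref{prop:log-scaling} back to the original scale through the exact identity \eqref{eq:scaled-Ts}. We write $\lambda=R_s^{-2}(\zeta+i\ell_s/A)$. Using \eqref{eq:Rs-main}, $R_s^{-2}=As\,\ell_s^{-A-1}$ and $R_s^{-2}\ell_s/A=s\ell_s^{-A}$, so
\[
 \lambda=is\,\ell_s^{-A}+As\,\ell_s^{-A-1}\zeta .
\]
The map $\zeta\mapsto\lambda$ is affine with dilation factor $R_s^{-2}$, and $T_s=R_s^{-2}U_s^{-1}(\cT_s+i\ell_s/A)U_s$ is unitarily equivalent to an affine image of $\cT_s$.

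\textbf{Ladder and eigenfunctions.} The spectrum, algebraic multiplicities, Riesz projections and eigenfunctions therefore correspond exactly. We define $\lambda_s^{(n)}$ as the image of $\mu_s^{(n)}$. Then Proposition~\ref{prop:log-scaling}(ii) gives \eqref{eq:lambda-s-asymp}, together with algebraic simplicity, since rank one of the Riesz projection is invariant under this conjugation. The eigenfunctions are $v_s^{(n)}=U_s^{-1}\omega_s^{(n)}$. Since $U_s$ is a real-preserving dilation, $\delta$ is invariant. So \eqref{eq:nonisotropy-transfer} gives $\delta(v_s^{(n)})\to\delta(u_n)$, and we may take $\delta_K=\tfrac12\min_{n\le K}\delta(u_n)$ for $s$ large.

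\textbf{Isolation and tameness.} The disc of radius $\rho_s=(g_KA/6)s\ell_s^{-A-1}$ about $\lambda_s^{(n)}$ corresponds to the disc of radius $g_K/6$ about $\mu_s^{(n)}$. Since $\mu_s^{(n)}\to\nu_n$, for large $s$ this closed disc lies inside $D(\nu_n,g_K/3)$. There $\mu_s^{(n)}$ is the only spectrum, so isolation follows.

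For tameness, the circle $|\zeta-\mu_s^{(n)}|=g_K/6$ eventually stays in the compact annulus
\[
 \mathcal K_n=\{g_K/12\le|\zeta-\nu_n|\le g_K/4\},
\]
which is disjoint from $\spec(\cL_1)$ by Proposition~\ref{prop:ladder}(iii). Proposition~\ref{prop:log-scaling}(i) then gives $\|(\cT_s-\zeta)^{-1}\|\le C(\mathcal K_n)$ on it. Hence
\[
 \|(T_s-\lambda)^{-1}\|=R_s^2\|(\cT_s-\zeta)^{-1}\|\le C\,R_s^2=\frac{C g_K/6}{\rho_s},
\]
which is \eqref{eq:tameness} with $C_t(K)=\max_{n\le K}C(\mathcal K_n)\,g_K/6$.

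\textbf{Window completeness.} This is the main point, though it too reduces to earlier results. We must show that in $B$ the spectrum of $\cT_s$ consists exactly of $\{\mu_s^{(n)}:\nu_n\in B\}$. We let $\mathcal K=B\setminus\bigcup_{n\le K}D(\nu_n,g_K/3)$, after shrinking $g_K$ if necessary so that these discs around points $\nu_n\in B$ avoid $\partial B$ and the discs around $\nu_n\notin B$ avoid $B$. This uses $\partial B\cap\spec(\cL_1)=\emptyset$, and the shrinking is harmless because Proposition~\ref{prop:log-scaling} holds for any smaller radius by the same argument. Then $\mathcal K$ is compact and disjoint from $\spec(\cL_1)$, so Proposition~\ref{prop:log-scaling}(i) excludes spectrum of $\cT_s$ there for $s\ge s(B)$.

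Each disc $D(\nu_n,g_K/3)$ with $\nu_n\in B$ contains exactly the one eigenvalue $\mu_s^{(n)}$, and for $s$ large it lies in $B$. The discs with $\nu_n\notin B$ do not meet $B$. Together these give \eqref{eq:Ts-completeness}.

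The only delicate step is bookkeeping: the disc radius must be small enough relative to $\operatorname{dist}(\partial B,\spec\cL_1)$, and all thresholds must be taken uniform over the finitely many $n\le K$.
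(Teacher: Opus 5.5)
Your proof is correct and follows the paper's argument essentially step for step. The steps are: the affine unitary rescaling $\lambda=R_s^{-2}(\zeta+i\ell_s/A)$; Proposition~\ref{prop:log-scaling}(ii)--(iii) for the rungs, simplicity and non-isotropy; the annulus $\{g_K/12\le|\zeta-\nu_n|\le g_K/4\}$ with Proposition~\ref{prop:log-scaling}(i) for tameness; and exclusion of $B$ minus the rung discs for completeness.

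The one difference is your radius shrinking in the completeness step, which is unnecessary. The paper removes only the discs about rungs $\nu_n\in B$, then uses the disjointness of those discs and $\mu_s^{(n)}\to\nu_n\in\operatorname{int}B$. If you keep the shrinking, shrink an auxiliary radius rather than $g_K$ itself, because $g_K$ also fixes $\rho_s$.
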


\begin{proof}
The unitary $U_s$ conjugates $T_s$ to $R_s^{-2}(\cT_s+i\ell_s/A)$, so $\lambda=R_s^{-2}(\zeta+i\ell_s/A)$ maps $\spec(\cT_s)$ onto $\spec(T_s)$, preserving algebraic multiplicities, and $R_s^{-2}=As\ell_s^{-A-1}$ by \eqref{eq:Rs-main}.  Proposition~\ref{prop:log-scaling}(ii) gives, for each $0\le n\le K$, the unique simple eigenvalue $\mu_s^{(n)}\to\nu_n$ of $\cT_s$ in $D(\nu_n,g_K/3)$, which is \eqref{eq:lambda-s-asymp}.  The circle $|\lambda-\lambda_s^{(n)}|=\rho_s$ corresponds to $|\zeta-\mu_s^{(n)}|=g_K/6$, which for large $s$ (when $|\mu_s^{(n)}-\nu_n|\le g_K/12$) lies inside the compact set $\{g_K/12\le|\zeta-\nu_n|\le g_K/4\}$, disjoint from $\spec(\cL_1)$; hence, by Proposition~\ref{prop:log-scaling}(i),
\[
    \|(T_s-\lambda)^{-1}\|=R_s^2\,\|(\cT_s-\zeta)^{-1}\|\le CR_s^2=\frac{Cg_K/6}{\rho_s},
\]
which is \eqref{eq:tameness} with $C_t=Cg_K/6$, while the disc $\{|\zeta-\mu_s^{(n)}|\le g_K/6\}$ lies in $D(\nu_n,g_K/3)$, where $\mu_s^{(n)}$ is the only spectrum.  The unitary dilation $U_s^{-1}$ leaves $\delta$ invariant, so \eqref{eq:nonisotropy-transfer} gives $\delta(v_s^{(n)})=\delta(\omega_s^{(n)})\ge\delta_K:=\tfrac12\min_{0\le n\le K}\delta(u_n)$ for $s$ large.

For \eqref{eq:Ts-completeness}: the set $B\setminus\bigcup_{\nu_n\in B}D(\nu_n,g_K/3)$ is compact and disjoint from the closed discrete set $\spec(\cL_1)$ --- hence at positive distance from it --- so by Proposition~\ref{prop:log-scaling}(i) it is free of $\spec(\cT_s)$ for $s$ large, while each disc $\overline D(\nu_n,g_K/3)$ meets $\spec(\cT_s)$ exactly in $\{\mu_s^{(n)}\}$; and $\mu_s^{(n)}\to\nu_n$, which lies in the interior of $B$ when $\nu_n\in B$ (since $\partial B\cap\spec(\cL_1)=\emptyset$) and outside $\overline B$ otherwise.  Mapping by $\lambda=R_s^{-2}(\zeta+i\ell_s/A)$ gives \eqref{eq:Ts-completeness}.
\end{proof}

\subsection{Proof of Theorem \ref{thm:E}}

\begin{proof}[Proof of Theorem~\ref{thm:E}\textup{(a)--(b)}]

\emph{Part (a).}  Fix a closed rectangle $B$ with
$\partial B\cap\spec(\cL_1)=\emptyset$, and choose $K$ so that every
model rung in $B$ belongs to $\{\nu_0,\dots,\nu_K\}$.  For each such rung,
Proposition~\ref{prop:log-Ts} supplies an algebraically simple eigenvalue
$\lambda_s^{(n)}$ of $T_s$, isolated at
\[
    \rho_s=\frac{g_KA}{6}s\ell_s^{-A-1},
\]
with a uniform tameness constant and
$|\lambda_s^{(n)}|\le\Lambda_s:=2s\ell_s^{-A}$ for large $s$.  We have
$(\Lambda_s+\rho_s)/(2s)\le2\ell_s^{-A}\le1$ and, uniformly for the
finitely many rungs under consideration,
\[
    \eps_s=C_t(\|a\|_\infty+1)\frac{\Lambda_s+\rho_s}{2s\rho_s}\le C\,\frac{\ell_s}{s}\to0,
    \quad
    4C_t(\Lambda_s+\rho_s)\eps_s\le C\,\ell_s^{\,1-A}=o(\rho_s),
\]
since $\ell_s^{\,1-A}/(s\ell_s^{-A-1})=\ell_s^2/s\to0$.
Thus Proposition~\ref{prop:pencil-transfer} gives an algebraically simple
block eigenvalue $z_j^{(n)}$ for every $\nu_n\in B$.  Using
\eqref{eq:lambda-s-asymp} in \eqref{eq:resonance-inclusion}, and
$\ell_s^{1-A}/s=o(\ell_s^{-A-1})$, gives
\eqref{eq:ladder-asymptotic}; its real and imaginary parts are
\eqref{eq:ladder-real-imag}.

It remains to exclude additional roots.  Put
\[
    \mathfrak B_s(B)
      =\{is\ell_s^{-A}+As\ell_s^{-A-1}\beta:\beta\in B\}.
\]
By \eqref{eq:Ts-completeness}, this window contains exactly the
$T_s$-eigenvalues $\lambda_s^{(n)}$ with $\nu_n\in B$.  On its boundary,
the scaling identity \eqref{eq:scaled-Ts} and
Proposition~\ref{prop:log-scaling}(i) give
\[
    \|(T_s-\lambda)^{-1}\|
       =R_s^2\|(\cT_s-\beta)^{-1}\|\le C_BR_s^2,
    \qquad \beta\in\partial B.
\]
Moreover $|\lambda|\le 2s\ell_s^{-A}$ for large $s$, and hence
\begin{align*}
 &\sup_{\lambda\in\partial\mathfrak B_s(B)}
       \|(T_s-\lambda)^{-1}\|
       \frac{|\lambda|}{2s}
       \left(\|a\|_\infty+\frac{|\lambda|}{2s}\right)\\
 &\qquad\le C_B(\|a\|_\infty+1)\,R_s^2\,\ell_s^{-A}
    =\frac{C_B(\|a\|_\infty+1)}{A}\,\frac{\ell_s}{s}
      \longrightarrow 0 .
\end{align*}
Proposition~\ref{prop:cluster-transfer}, applied to
$\operatorname{int}\mathfrak B_s(B)$, therefore identifies the total
pencil-root count with the $T_s$ eigenvalue count.  The simple roots
already constructed exhaust that count, and the affine change
$\lambda=-2is(z-is)$ identifies the resulting window with
$\mathcal W_j(B)$.  This proves the finite-window completeness in
\textup{(a)}.

\medskip
\emph{Part (b).}  Choose a small closed rectangle containing $\nu_0$ and
no other model rung, and apply \textup{(a)}.  This gives
\eqref{eq:log-resonance-asymptotic}; taking real and imaginary parts with
$\nu_*=\pi/4+iE_0^N$ gives \eqref{eq:log-real-imag} and
\eqref{eq:log-imag-shift}.  In particular
$|z_j|\asymp s_j$ and $-\Re z_j\lesssim(\log s_j)^{-A}$.

The mass computation at the start of the section and
Theorem~\ref{thm:B} give the global generator-resolvent upper bound.
Since $a$ is nontrivial, Lemma~\ref{lem:axis-cleanness} gives
$i\R\subset\rho(\cA)$, in particular $0\in\rho(\cA)$.  For each distinct
large transverse frequency, a scalar pencil eigenfunction for the
principal rung embeds, by Lemma~\ref{lem:blocks}, as a genuine eigenvector
of the full generator.  These frequencies are dyadically syndetic by
\S\ref{subsec:notation}, so
Lemma~\ref{lem:envelope}(a)--(c) combines the resolvent upper bound with
this eigenvalue family and yields \eqref{eq:sharp-envelope}.  This proves
\textup{(b)}.  Part \textup{(c)} is proved after the stable-cusp transfer
below.
\end{proof}

\subsection{Stable cusp perturbations}

The exact cusp identifies the model cleanly, but the ladder is not an
artifact of exact symmetry.  This subsection verifies that the three
properties used above---local convergence to $\log|y|$, coercive tails,
and compactly supported model quasimodes---survive arbitrary measurable
relative perturbations of lower logarithmic order.  Once those properties
are established, the same Riesz and pencil transfers give the identical
limiting ladder and leading constants.

We use the replacement-damping convention introduced after Lemma~\ref{lem:blocks}: for a real bounded $d$, $\cA_d$, $\cA_{d,j}$, and $P_d$ denote the generator, its transverse blocks, and the associated block pencil with $a$ replaced by $d$.  Here $d=\tilde a$ is always a fixed nonnegative damping with positive integral.

For a stable cusp perturbation $\tilde a$ with a chosen chart radius $x_1$ as in Lemma~\ref{lem:stable-cusp-estimates}, define its exterior lower-bound datum by
\begin{equation}\label{eq:stable-exterior-datum}
    m_{\tilde a}(\rho):=\operatorname*{ess\,inf}_{\{|x|\ge\rho\}}\tilde a,
    \qquad 0<\rho\le x_1 .
\end{equation}
The hypothesis below is precisely $m_{\tilde a}(\rho)>0$ for every such $\rho$.  When constants or thresholds are said to depend on exterior lower bounds, they depend on the relevant values of this function $m_{\tilde a}$ for the fixed exterior radii used in the proof; this datum contains no upper bound for $\tilde a$.

\begin{lemma}[Stable cusp estimates]\label{lem:stable-cusp-estimates}
With $A>0$ and $x_0\in(0,1)$ fixed as in Theorem~\ref{thm:E}, let $\sigma>0$ and let $0\le\tilde a\in L^\infty(\T)$ admit a representative such that, for some $x_1\in(0,x_0]$, some $C_W>0$, and some measurable $w$,
\[
    \tilde a(x)=(\log(e/|x|))^{-A}(1+w(x)),\qquad
    |w(x)|\le C_W(\log(e/|x|))^{-1-\sigma}
\]
for a.e. $0<|x|<x_1$, and assume $\operatorname*{ess\,inf}_{\{\rho\le |x|\}}\tilde a>0$ for every $\rho\in(0,x_1]$.  Let $a_0(x)=(\log(e/|x|))^{-A}$ denote the local cusp model on $0<|x|<x_1$, and define its \emph{local} rescaled potential by
\[
 V_{s,\mathrm{loc}}^{a_0}(y):=sR_s^2a_0(R_sy)-\frac{\ell_s}{A},
 \qquad 0<R_s|y|<x_1.
\]
Every occurrence of $V_{s,\mathrm{loc}}^{a_0}$ below is restricted to
this chart; no global extension of $a_0$ is intended or needed, and its
value at $y=0$ may be chosen arbitrarily.  After possibly shrinking the cusp chart to $|x|<x_*$, with $x_*>0$ depending only on $x_1,C_W,\sigma,A$, the rescaled potentials $V_s^{\tilde a}=sR_s^2\tilde a(R_sy)-\ell_s/A$ satisfy, in the essential-a.e. sense for this representative, the precise replacements for the cusp estimates used in Proposition~\ref{prop:log-scaling}.  Namely, for $s$ large, the following are $L^\infty_{\mathrm{ess}}$ or essential-infimum estimates on the indicated sets:
\begin{align}
    |V_s^{\tilde a}(y)|&\le C\bigl(1+|\log|y||\bigr),
        &&0<|y|\le1, \label{eq:stable-inner-bound}\\
    |V_s^{\tilde a}(y)-\log|y||&\le
        C\left(\frac{(\log|y|)^2}{\ell_s}+\ell_s^{-\sigma}\right),
        &&e^{-\ell_s/2}\le |y|\le e^{\ell_s/2}, \label{eq:stable-local-expansion}\\
    V_s^{\tilde a}(y)&\ge \log M-C,
        &&|y|\ge M,
\end{align}
for every fixed $M\ge2$ and all sufficiently large $s=s(M)$; here $|y|$ is the distance on the rescaled circle $\T_s$ to $0$, represented on $(-\pi/R_s,\pi/R_s]$, and the estimates are essential-a.e. on the corresponding set.  The constant $C$ in these three estimates may be chosen as $C=C(A,x_1,C_W,\sigma)$, independent of $M$, $s$, and the exterior lower-bound datum; only the threshold may depend on the exterior datum $m_{\tilde a}$ in \eqref{eq:stable-exterior-datum} at the relevant fixed radii.  In particular, after increasing this same constant $C$ if necessary, there is $M_0\ge2$ depending only on $A,x_1,C_W,\sigma$ such that for every $M\ge M_0$ and all sufficiently large $s=s(M)$,
\begin{equation}\label{eq:stable-exterior-positive}
    V_s^{\tilde a}(y)\ge \frac12\log M,\qquad |y|\ge M.
\end{equation}
On every fixed compact subset of $\R\setminus\{0\}$, $V_s^{\tilde a}\to\log|y|$ in $L^\infty_{\mathrm{ess}}$, and
\[
        \Theta_{\tilde a}(r)\asymp(\log(e/r))^{-A}
        \qquad (r\downarrow0),
\]
the constants in this $\asymp$ depending also on the exterior datum $m_{\tilde a}$ (through windows positioned outside the well).  These estimates are exactly the replacement inputs used in the stability proposition below.
\end{lemma}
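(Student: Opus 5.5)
The plan is to compare $V_s^{\tilde a}$ with the exact cusp potential $V_{s,\mathrm{loc}}^{a_0}$, whose estimates are already recorded in \eqref{eq:pointwise-comparison}--\eqref{eq:offchart-coercive}. On the chart $R_s|y|<x_*$ I will write
\[
V_s^{\tilde a}(y)=V_{s,\mathrm{loc}}^{a_0}(y)+sR_s^2a_0(R_sy)\,w(R_sy).
\]
The key identity is $sR_s^2a_0(R_sy)=(\ell_s/A)\bigl(1-m/\ell_s\bigr)^{-A}$, where $m=\log|y|$ and $\log(e/(R_s|y|))=\ell_s-m$. With this, the perturbation term is bounded by
\[
C_W\,\frac{\ell_s}{A}\,(\ell_s-m)^{-A}\ell_s^{A}\,(\ell_s-m)^{-1-\sigma}.
\]
I will shrink the chart to $x_*\le x_1$ so that $\log(e/x_*)$ is large. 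Then $|w|\le\tfrac12$ on the chart, and $1+w\ge\tfrac12$ there.

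\emph{Inner and local estimates.} On $e^{-\ell_s/2}\le|y|\le e^{\ell_s/2}$ one has $\ell_s-m\asymp\ell_s$. The perturbation term is then $O(\ell_s\cdot\ell_s^{-1-\sigma})=O(\ell_s^{-\sigma})$. Adding \eqref{eq:local-coeff-expansion} gives \eqref{eq:stable-local-expansion}.

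For $0<|y|\le1$ we have $m\le0$, so $\ell_s-m\ge\ell_s$. The factor $(\ell_s/(\ell_s-m))^{A}$ is at most $1$, and the perturbation is at most $C\ell_s^{-\sigma}\le C$. Combining with $|V_{s,\mathrm{loc}}^{a_0}|\le|\log|y||$ gives \eqref{eq:stable-inner-bound}.

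\emph{Exterior lower bound.} This is the main obstacle, since $w$ is only relatively small and the bound must hold for $|y|\ge M$ all the way to $R_s|y|\sim x_*$. Here I use positivity of the multiplicative factor:
\[
V_s^{\tilde a}\ge\tfrac{\ell_s}{A}\bigl[(1+w)(1-x)^{-A}-1\bigr],\qquad x=m/\ell_s\in(0,1).
\]
I split into two regimes.

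For $M\le|y|\le e^{\ell_s/2}$, the local expansion already gives $V_s^{\tilde a}\ge\log|y|-C\bigl((\log|y|)^2/\ell_s+\ell_s^{-\sigma}\bigr)$. This alone is not enough, because the quadratic error $(\log|y|)^2/\ell_s$ can be as large as $\ell_s/4$. Instead I use convexity, $(1-x)^{-A}\ge1+Ax$, together with the bound
\[
|w|\le C_W(\ell_s-m)^{-1-\sigma}\le C\ell_s^{-1-\sigma}\qquad(x\le\tfrac12).
\]
This gives
\[
V\ge m-\tfrac{\ell_s}{A}|w|(1-x)^{-A}\ge m-C\ell_s^{-\sigma}.
\]

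For $x\in[\tfrac12,1)$, that is, up to the chart edge, I write $\ell_s-m=\log(e/(R_s|y|))\ge\log(e/x_*)$. Then
\[
(1+w)(1-x)^{-A}\ge\tfrac12\,2^{A}\ge\tfrac12,
\]
and I want this to exceed $1$ comfortably. That may fail for small $A$. So I refine: in this regime
\[
(1-x)^{-A}|w|\le C_W\,\ell_s^{A}(\ell_s-m)^{-A-1-\sigma},
\]
which I aim to make smaller than $\tfrac12\bigl((1-x)^{-A}-1\bigr)$ by choosing $x_*$ small. I expect this to work, since $(1-x)^{-A}-1\ge c_A$ for $x\ge\tfrac12$, and $\ell_s^{A}(\ell_s-m)^{-A}=(1-x)^{-A}$ while the remaining factor $(\ell_s-m)^{-1-\sigma}$ is small. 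Hence $V\gtrsim\ell_s\gg\log M$ in this regime.

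Off the chart, the exterior datum $m_{\tilde a}(x_*)>0$ gives, as in \eqref{eq:offchart-coercive}, $V\ge c\,sR_s^2\gg\log M$ for $s\ge s(M)$. Together these give $V\ge\log M-C$, and \eqref{eq:stable-exterior-positive} follows for $M\ge M_0$.

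\emph{Remaining claims.} On compact subsets of $\R\setminus\{0\}$, local uniform convergence $V_s^{\tilde a}\to\log|y|$ follows from \eqref{eq:stable-local-expansion}. For $\Theta_{\tilde a}$, the two-sided mass comparison used for $a$ at the start of the section carries over with factors $1\pm\tfrac12$. The centered window gives the upper bound. The lower bound uses $\tilde a\ge c(\log(e/r))^{-A}$ on the half of any window lying in $|t|\ge r/2$, with $m_{\tilde a}$ covering windows that lie outside the chart.
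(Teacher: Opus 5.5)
Your proposal is correct and follows the paper's proof essentially step by step. You use the same comparison $V_s^{\tilde a}=V^{a_0}_{s,\mathrm{loc}}+sR_s^2a_0(R_sy)\,w(R_sy)$ with perturbation of size $\ell_s^{A+1}(\ell_s-m)^{-A-1-\sigma}$, the same inner and local estimates, the convexity bound $V^{a_0}_{s,\mathrm{loc}}\ge\log|y|$ on the middle region, a deep-zone coercivity step, and the off-chart bound from $m_{\tilde a}(x_*)$. The only difference is cosmetic and lies in the deep zone. The paper splits at $\ell_s-m=\kappa\ell_s$ with $\kappa$ small and uses only $1+w\ge\tfrac12$ against the large factor $\kappa^{-A}$. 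You instead split at $x=\tfrac12$ and shrink $x_*$ further so that $(1-|w|)2^{A}-1\ge\tfrac12(2^{A}-1)$, which works equally well.
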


\begin{proof}
On the chart write $m=\log|y|$; then $a_0(R_sy)=(\ell_s-m)^{-A}$ and
\[
 |sR_s^2(\tilde a-a_0)(R_sy)|\le C sR_s^2(\ell_s-m)^{-A-1-\sigma}
       =C\ell_s^{A+1}(\ell_s-m)^{-A-1-\sigma}.
\]
If $0<|y|\le1$, then $m\le0$ and the last display is $O(1)$, while the model potential $V_{s,\mathrm{loc}}^{a_0}(y)$ satisfies $|V_{s,\mathrm{loc}}^{a_0}(y)|\le|\log|y||$ by \eqref{eq:pointwise-comparison}; this gives \eqref{eq:stable-inner-bound}.  If $e^{-\ell_s/2}\le |y|\le e^{\ell_s/2}$, then $|m|\le\ell_s/2$, so the perturbation is $O(\ell_s^{-\sigma})$, and \eqref{eq:stable-local-expansion} follows from \eqref{eq:local-coeff-expansion}.  In particular $V_s^{\tilde a}\to\log|y|$ in $L^\infty_{\mathrm{ess}}$ on fixed compact subsets of $\R\setminus\{0\}$.

Shrink the chart so that $|w|\le1/2$ a.e.\ on $0<|x|<x_*$.  Then $\frac12a_0\le\tilde a\le\frac32a_0$ a.e.\ near the well, and the stated $\Theta_{\tilde a}$ asymptotic follows by the same two-case window argument used for the exact cusp at the start of Section~\ref{sec:log-resonance}: for $0<r\le x_*/2$ and any window $(x-r,x+r)$, at least half of the window lies in $\{|t|\ge r/2\}$, where the original-scale dummy variable $t$ satisfies $\tilde a(t)\ge\min\bigl(\tfrac12(\log(2e/r))^{-A},\ \operatorname*{ess\,inf}_{\{x_*\le|x|\}}\tilde a\bigr)\ge c\,(\log(e/r))^{-A}$ a.e.\ --- the first alternative where the window meets the shrunken chart, the second where it does not, the exterior essential lower bound being positive by hypothesis --- so $\Theta_{\tilde a}(r)\ge\tfrac c2(\log(e/r))^{-A}$; while at the centered window $(2r)^{-1}\int_{-r}^r\tilde a\le\tfrac32\,(2r)^{-1}\int_{-r}^ra_0\le C(\log(e/r))^{-A}$.  In the part of the chart where $\ell_s-m\le \kappa\ell_s$, choose $\kappa>0$ so small that $\frac12\kappa^{-A}-1>0$; then
\[
V_s^{\tilde a}\ge sR_s^2\bigl(\tfrac12(\kappa\ell_s)^{-A}-\ell_s^{-A}\bigr)\to+\infty.
\]
On the remaining chart region, $V_s^{\tilde a}\ge V_{s,\mathrm{loc}}^{a_0}-o(1)\ge\log|y|-1$ for $|y|\ge2$ and large $s$.  Off the shrunken chart, if $m=m_{\tilde a}(x_*)>0$, then the negative shift is dominated exactly as in \eqref{eq:offchart-coercive}: $V_s^{\tilde a}\ge m sR_s^2-\ell_s/A\ge (m/2)sR_s^2$ for all large $s$, hence the essential infimum tends to $+\infty$ on each fixed exterior region.  Combining these alternatives yields the exterior coercivity $V_s^{\tilde a}\ge\log M-C$ in the essential-infimum sense on $|y|\ge M$ for fixed $M$ and large $s$.  The constant $C$ here is produced by the chart alternatives alone (in fact $C=1$ suffices for the middle region once $s$ is large), and thus depends only on $A,x_1,C_W,\sigma$; the exterior essential lower bound of $\tilde a$ enters only through the threshold $s(M)$, since off the chart and in the deep zone the potential exceeds $\log M$ outright for $s$ large.  This is what makes $M_0$ below independent of the exterior data.  In the exterior-tail step of Proposition~\ref{prop:log-scaling} one chooses $M$ only after all local compactness estimates have been obtained; hence we may take $M\ge M_0:=\max(2,e^{2C})$, and then $V_s^{\tilde a}\ge\frac12\log M$ on the exterior.  The same computation gives the tail bound with the denominator $\log M-C\ge\frac12\log M$, hence still $O((\log M)^{-1})$.  These estimates are precisely the replacement inputs used in the transfer proposition below: \eqref{eq:stable-inner-bound} gives the near-zero integrability of the multiplication term, used to pass the potential through the exponentially small core --- the no-concentration input itself comes from the real-part/IMS local $H^1$ bounds, which do not involve the potential; \eqref{eq:stable-local-expansion} gives local distributional convergence away from the exponentially small core and, in mode-weighted $L^2$ form, the compact-window quasimode residuals; and \eqref{eq:stable-exterior-positive} gives the exterior-tail bound.  The spectral consequences themselves are proved in Proposition~\ref{prop:stable-cusp-transfer}.
\end{proof}

\begin{proposition}[Stable-cusp transfer of the logarithmic spectral analysis]\label{prop:stable-cusp-transfer}
Under the hypotheses of Lemma~\ref{lem:stable-cusp-estimates}, Propositions~\ref{prop:log-scaling} and~\ref{prop:log-Ts} remain valid with $\tilde a$ in place of the exact cusp $a$, possibly after replacing the cusp chart by the shrunken radius $x_*$ of Lemma~\ref{lem:stable-cusp-estimates}, while keeping the same scale $R_s,\ell_s$ and the same limiting operator $\cL_1$.  Explicitly, for every fixed $K$: (i) every compact $\mathcal K\Subset\rho(\cL_1)$ lies in $\rho(\cT^{\tilde a}_s)$ for $s$ large, with $\sup_{\zeta\in\mathcal K}\|(\cT^{\tilde a}_s-\zeta)^{-1}\|$ uniformly bounded; (ii) each contour $\Gamma_n$, $0\le n\le K$, eventually encloses exactly one eigenvalue $\tilde\mu_s^{(n)}$, algebraically simple, with $\tilde\mu_s^{(n)}\to\nu_n$, normalized eigenfunctions converging in $L^2(\R)$ to $\hat u_n$ after the usual phase choice, and non-isotropy converging to $\delta(u_n)>0$; (iii) the rescaled conclusions of Proposition~\ref{prop:log-Ts} --- the expansion \eqref{eq:lambda-s-asymp}, the isolation radius $\rho_s$, the tameness constant, and the finite-window completeness \eqref{eq:Ts-completeness} --- hold verbatim for $T_s^{\tilde a}=-\partial_x^2+is\tilde a$.  The local limiting coefficients and model constants are unchanged.  Constants and thresholds in the stable limiting estimates may depend on $(A,x_1,C_W,\sigma)$ and the exterior datum $m_{\tilde a}$; the pencil-transfer, finite-window, and generator/envelope thresholds may also depend on $\|\tilde a\|_{L^\infty(\T)}$.  Consequently, writing $\ell_j:=\ell_{s_j}$, for every fixed closed rectangle $B$ with $\partial B\cap\spec(\cL_1)=\emptyset$, the block generator $\cA_{\tilde a,j}$ has exactly one algebraically simple eigenvalue for each rung $\nu_n\in B$ in the window \eqref{eq:ladder-window}, no other spectrum there, and
\[
    z_j^{\tilde a,(n)}=is_j-\frac12\ell_j^{-A}+\frac{iA\nu_n}{2}\ell_j^{-A-1}+o(\ell_j^{-A-1}) .
\]
The regularized envelope in Theorem~\ref{thm:E} also holds for $\tilde a$.  In particular, for every fixed rung $n$ and every sufficiently large transverse frequency $s_j$, the eigenvalue in the preceding display is denoted consistently by $z_j^{\tilde a,(n)}$ and satisfies the same finite-window completeness statement as in Theorem~\ref{thm:E}\textup{(a)}.
\end{proposition}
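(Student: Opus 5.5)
The plan is to re-run the proofs of Propositions~\ref{prop:log-scaling} and~\ref{prop:log-Ts} for $\cT^{\tilde a}_s$, using the replacement inputs of Lemma~\ref{lem:stable-cusp-estimates}. The pencil and envelope steps are then applied verbatim with $d=\tilde a$.

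First, the uniform a priori structure. For unit $u$ with $\|(\cT^{\tilde a}_s-\zeta)u\|\le C_*$ and $\zeta$ in a compact set, the real-part and IMS identities involve only the kinetic term. They therefore give the same global gradient bound, the same local $H^1$ and $L^\infty$ bounds, and the same no-concentration estimate as before. For the exterior tail, take the imaginary part of the pairing against $\psi_M^2u$ and use \eqref{eq:stable-exterior-positive}, with $M\ge M_0$ chosen after $s\ge s(M)$. This gives $\int_{|y|\ge2M}|u|^2\le C/\log M$, and hence tightness.

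For the distributional limit, split $\int(V^{\tilde a}_s-\log|y|)u\bar\varphi$ at $|y|=e^{-\ell_s/2}$. On the inner region, \eqref{eq:stable-inner-bound} gives $|V^{\tilde a}_s-\log|y||\le C(1+|\log|y||)$, and this is integrable against the locally bounded $u$ over a shrinking set. On the outer region, \eqref{eq:stable-local-expansion} contributes $O(\ell_s^{-1}+\ell_s^{-\sigma})$ after Cauchy--Schwarz. Every singular sequence therefore converges strongly to a unit eigenfunction of $\cL_1$, and Lemma~\ref{lem:singular-sequence-criterion} gives (i).

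For (ii), construct the quasimodes $w_s=\chi_su_n$ exactly as before; the cutoff support lies inside the shrunken chart $|x|<x_*$ for large $s$. The new ingredient is the residual term $\|\chi_s(V^{\tilde a}_s-\log|y|)u_n\|$. Estimate it by \eqref{eq:stable-local-expansion} weighted against the logarithmic moments of Lemma~\ref{lem:model-rung-decay}, together with the inner bound \eqref{eq:stable-inner-bound} on $|y|\le e^{-\ell_s/2}$; the result is $O(\ell_s^{-1}+\ell_s^{-\sigma})\to0$. The rank $\le1$ argument uses only three facts: complex symmetry of $\cT^{\tilde a}_s$ on $\T_s$ (Lemma~\ref{lem:complex-symmetry}, which requires only a real bounded potential and in particular no parity), tightness of eigenfunctions, and the nonzero overlap \eqref{eq:ladder-overlap}. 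All three carry over unchanged, so the three alternatives of Lemma~\ref{lem:root-space-alt} are excluded as before. Part (iii) and the eigenfunction convergence then follow word for word.

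The rescaled conclusions of Proposition~\ref{prop:log-Ts} follow by the same unitary dilation, because the scale $R_s,\ell_s$ is unchanged. The pencil transfer also carries over. Proposition~\ref{prop:pencil-transfer} and Proposition~\ref{prop:cluster-transfer} are stated for arbitrary real bounded $d$, and the smallness quantities there are $O(\ell_s/s)$, now with the constant $\|\tilde a\|_\infty$. This yields the rung-by-rung roots, the asymptotic formula, and finite-window completeness for $\cA_{\tilde a,j}$ via Lemma~\ref{lem:companion}.

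For the envelope, Lemma~\ref{lem:stable-cusp-estimates} gives $\Theta_{\tilde a}(r)\asymp(\log(e/r))^{-A}$. Theorem~\ref{thm:B} then supplies the generator bound, and Lemma~\ref{lem:axis-cleanness} gives $i\R\subset\rho(\cA_{\tilde a})$. Lemma~\ref{lem:envelope} combines these with the principal-rung eigenvalues.

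The main obstacle is the inner-core step. Without pointwise control of $w$, all convergence statements through $y=0$ must rest on the essential bound \eqref{eq:stable-inner-bound} and on the no-concentration estimate rather than on the exact formula. The point to verify is that the constant in \eqref{eq:stable-inner-bound} does not depend on $s$, so that the inner contribution still vanishes with the measure of the core.
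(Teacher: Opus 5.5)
Your proposal is correct and follows the paper's proof essentially step for step. The same replacement inputs from Lemma~\ref{lem:stable-cusp-estimates} drive the a priori, tightness and distributional-limit argument for (i); the same complex-symmetry and tight-pairing exclusions give rank one; and the rescaling, the pencil transfers with $d=\tilde a$, axis cleanness and the envelope lemma finish the proof exactly as you describe. The only difference is cosmetic: the paper builds the rank-$\ge1$ quasimode with a fixed cutoff $\chi_R$, choosing $R$ first and then $s$, whereas your moving cutoff $\chi_s$, supported in $|y|\le 2e^{\ell_s/4}$ and hence inside the validity range of \eqref{eq:stable-local-expansion}, works equally well. Also, the $s$-independence of the constant in \eqref{eq:stable-inner-bound}, which you flag as the main obstacle, is already part of that lemma's statement.
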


\begin{proof}
We indicate explicitly how each ingredient in the exact-cusp proof is replaced, and how the conclusions are recovered.  Fix $K$ and set
\[
    \mathcal K_K:=\bigcup_{0\le n\le K}\overline D(\nu_n,g_K/3).
\]
The proof is organized as four transfer subclaims: first the singular-sequence compactness and resolvent exclusion for $\cT_s^{\tilde a}$; second the nonzero Riesz projections, rank-one/simplicity exclusions, eigenfunction convergence, and non-isotropy; third the rescaling to $T_s^{\tilde a}$ with isolation, tameness, and finite-window completeness; and fourth the pencil transfer, rectangle count, axis cleanness, and envelope.
The estimates in Subclaim~1 are a priori compactness and tightness estimates
for $\zeta$ in an arbitrary fixed compact subset of $\C$; only the
resolvent-exclusion conclusion requires compact subsets of $\rho(\cL_1)$.
With this distinction in force, Subclaim~1 verifies the singular-sequence
hypotheses of Lemma~\ref{lem:singular-sequence-criterion} for
$\cT^{\tilde a}_s$, giving Proposition~\ref{prop:log-scaling}(i).

Subclaim~2 combines the rank-$\ge1$ construction --- with the two-stage
choice, $R=R(\varepsilon,K)$ before $s(\varepsilon,K)$, recorded below ---
with the rank-$\le1$ and simplicity exclusions to give
Proposition~\ref{prop:log-scaling}(ii)--(iii).  Every \emph{eigenfunction}
arising in the alternatives --- including the leading eigenfunction of the
Jordan-chain alternative --- has its eigenvalue inside the fixed compact
$\mathcal K_K$, hence satisfies the Subclaim~1 estimates with $f=0$ and
$\zeta\in\mathcal K_K$: global $H^1$ bounds, no concentration, the exterior
tail, and local $L^\infty$ bounds.  The sequential form of the tight-pairing
lemma therefore applies to every pairing used.  In the Jordan-chain
alternative only the eigenfunction is passed to a limit; the generalized
vector, which solves the inhomogeneous equation
$(\cT^{\tilde a}_s-\mu)w=u$ rather than the $f=0$ equation, enters solely
through complex symmetry to give $\mathfrak b(u,u)=0$.  The exclusions are
the same finite-dimensional alternatives, now with the estimates in
Subclaim~1 explicitly supplying the required tightness.  They use only
complex symmetry on $\T_s$ (Lemma~\ref{lem:complex-symmetry}), tight pairings
(Lemma~\ref{lem:tight-pairings}) with the tail estimate from Subclaim~1, the
root-space alternative (Lemma~\ref{lem:root-space-alt}), and the ladder
overlap \eqref{eq:ladder-overlap}; none of these inputs involves the damping
beyond the Subclaim~1 estimates.

Subclaim~3 is the algebraic rescaling to $T_s^{\tilde a}$ from
Proposition~\ref{prop:log-Ts}, and yields its eigenvalue expansion, isolation
radii, and tameness unchanged.  Subclaim~4 combines the pencil bound and
rectangle Rouch\'e count with boundedness of $\tilde a$, the boundary
resolvent from Subclaim~1, and thresholds allowed to depend on
$\|\tilde a\|_{L^\infty}$.

\smallskip
\noindent\emph{Subclaim 1: singular-sequence compactness and resolvent
exclusion.}  First, the real-part identities giving global $H^1$ bounds and
local IMS bounds are unchanged, because the potential is purely imaginary;
in particular the local $H^1$ (hence local $L^\infty$) bounds give the same
no-concentration estimate at the logarithmic singularity.  The near-zero
estimate \eqref{eq:stable-inner-bound} is used to pass the potential term
through the exponentially small core, and the local expansion
\eqref{eq:stable-local-expansion} gives, on every compact subset of
$\R\setminus\{0\}$, convergence of the multiplication term to $\log|y|$ in
$L^\infty_{\rm ess}$.  For test functions $\varphi$ whose support meets $0$,
split $\int(V_s^{\tilde a}-\log|y|)u\bar\varphi$ at
$|y|=e^{-\ell_s/2}$ exactly as in the proof of
Proposition~\ref{prop:log-scaling}: on the inner region,
\eqref{eq:stable-inner-bound} and the local $L^\infty$ bounds control the
integrand by $C(1+|\log|y||)$, whose integral over
$\{|y|\le e^{-\ell_s/2}\}$ vanishes, while on the outer part
\eqref{eq:stable-local-expansion} bounds the contribution, after
Cauchy--Schwarz, by
$C\ell_s^{-1}\,\|u\|_2\,\bigl\|(\log|y|)^2\varphi\bigr\|_2
+C\ell_s^{-\sigma}\|u\|_2\|\varphi\|_2\to0$, the fourth power of the
logarithm being locally integrable.  Hence every normalized singular sequence
has the same local distributional limit.  The exterior step is also
unchanged after the only necessary modification: choose $M\ge M_0$ in
\eqref{eq:stable-exterior-positive}, so the denominator in the tail estimate
is at least $\frac12\log M$ and the tail remains
$O((\log M)^{-1})$.  Thus the singular-sequence criterion, uniform contour
resolvent exclusion, and compact-window resolvent bounds of
Proposition~\ref{prop:log-scaling}(i) hold for $\tilde a$.

\emph{Subclaim 2: nonzero projections, rank one, and non-isotropy.}  Second, the rank-$\ge1$ construction uses fixed compactly supported cutoffs of the model eigenfunctions.  Let $w_R=\chi_Ru_n$ (as before, $w_R\in H^2$ because $u_n''=(i\log|y|-\nu_n)u_n\in L^2_{\rm loc}$, $\log|y|\in L^2_{\rm loc}$, and the cutoff is smooth and compactly supported; for large $s$ the support lies in the fundamental interval, so the periodic endpoint traces of the zero extension vanish), where $\chi_R(y):=\chi(|y|/R)$ for a fixed $\chi\in C_c^\infty([0,\infty))$ with $\chi=1$ on $[0,1]$ and $\operatorname{supp}\chi\subset[0,2]$ --- so that $\chi_R=1$ on $\{|y|\le R\}$, $\operatorname{supp}\chi_R\subset\{|y|\le2R\}$, $\|\chi_R'\|_\infty=O(R^{-1})$, $\|\chi_R''\|_\infty=O(R^{-2})$, and in particular $\|\chi_R'\|_\infty+\|\chi_R''\|_\infty\le C$ uniformly in $R\ge1$.  The commutator residual $\varepsilon(R):=\|\chi_R''u_n\|_2+2\|\chi_R'u_n'\|_2$ obeys $\varepsilon(R)\le Ce^{-cR(\log R)^{1/2}}$ by Lemma~\ref{lem:model-rung-decay}; it remains to estimate the multiplication error.  Split the support into $|y|<e^{-\ell_s/2}$ and its complement.  On the inner piece, \eqref{eq:stable-inner-bound}, the corresponding cusp bound, and local boundedness of $u_n$ give
\[
    \int_{|y|<e^{-\ell_s/2}}
       \bigl(|V_s^{\tilde a}|+|V_{s,\mathrm{loc}}^{a_0}|+|\log|y||+1\bigr)^2 |w_R|^2\,dy=o(1),
\]
for each fixed $R$, since $\int_0^{e^{-\ell_s/2}}(1+|\log y|)^2dy\to0$.  On the complementary part $e^{-\ell_s/2}\le |y|\le2R$, the pointwise error is not uniformly small near the moving inner edge; we use \eqref{eq:stable-local-expansion} only in the mode-weighted $L^2$ form
\[
    \begin{aligned}
    \bigl\|(V_s^{\tilde a}-\log|y|)w_R\bigr\|_{L^2(e^{-\ell_s/2}\le |y|\le2R)}^2
    &\le C\ell_s^{-2}\int_{\R}(1+|\log|y||)^4 |w_R(y)|^2\,dy \\
    &\quad +C\ell_s^{-2\sigma}\|w_R\|_2^2=o_s(1).
    \end{aligned}
\]
The fourth logarithmic moment is finite because $u_n$ is bounded near $0$ and superexponentially small at infinity.  Hence $\|(\cT_s^{\tilde a}-\nu_n)w_R\|\le o_s(1)+\varepsilon(R)$, with the limits taken in this order: given $\varepsilon>0$, first choose $R=R(\varepsilon,K)$ so large that $\varepsilon(R)\le\varepsilon/2$ and $\|w_R\|\ge\tfrac12\|u_n\|$ simultaneously for the finitely many fixed model eigenfunctions $u_n$, $0\le n\le K$; then choose $s(\varepsilon,K)$ so that the displayed multiplication errors --- whose constants are independent of $R$, the weighted moments being bounded by those of $u_n$ itself --- are at most $\varepsilon/2$ for all $s\ge s(\varepsilon,K)$ and all $n\le K$.  With $C_\Gamma$ the uniform bound on $\sup_{\zeta\in\Gamma_n}\|(\cT^{\tilde a}_s-\zeta)^{-1}\|$ for $s$ large --- supplied for $\tilde a$ by Subclaim~1 --- the projection identity in Lemma~\ref{lem:singular-sequence-criterion} gives $\|\Pi_{\Gamma_n}(\cT^{\tilde a}_s)w_R-w_R\|\le C\,C_\Gamma\bigl(o_s(1)+\varepsilon(R)\bigr)\le C\,C_\Gamma\,\varepsilon$; taking $\varepsilon<\min_{0\le n\le K}\|u_n\|/(4C\,C_\Gamma)$ in the two-stage choice --- one threshold for all rungs, the subsequent $s(\varepsilon,K)$ chosen after this finite minimum --- makes this smaller than $\tfrac12\|w_R\|$, so the Riesz projection is nonzero at every rung for all large $s$.  No norm-smallness at a fixed $R$-independent rate in $s$ alone is claimed.  The rank-$\le1$ alternatives are excluded exactly as before, with the following convention: the compactness estimates are applied only to the eigenfunction families appearing in the two-eigenvalue and geometric-multiplicity alternatives, and to the eigenfunction $u_s$ in the Jordan-chain alternative.  In that Jordan case the generalized vector is not passed to a compactness limit; it is used only through complex symmetry to infer $\mathfrak b(u_s,u_s)=0$.  Lemma~\ref{lem:tight-pairings} then applies to the relevant eigenfunctions, and the limiting mode has nonzero overlap \eqref{eq:ladder-overlap}.  Therefore the Riesz ranges are rank one, the eigenvalues converge rung by rung, and non-isotropy persists.

\emph{Subclaim 3: rescaling, isolation, tameness, and finite-window completeness.}  Third, the algebraic rescaling is explicit.  With the same unitary $U_s$ as in \eqref{eq:scaled-Ts},
\[
    \cT_s^{\tilde a}=R_s^2U_sT_s^{\tilde a}U_s^{-1}-\frac{i\ell_s}{A},
    \qquad
    T_s^{\tilde a}=R_s^{-2}U_s^{-1}\Bigl(\cT_s^{\tilde a}+\frac{i\ell_s}{A}\Bigr)U_s.
\]
Thus the affine map
\[
    \lambda=R_s^{-2}\Bigl(\zeta+\frac{i\ell_s}{A}\Bigr)
       =is\ell_s^{-A}+As\ell_s^{-A-1}\zeta,
    \qquad R_s^{-2}=As\ell_s^{-A-1},
\]
carries the spectrum of $\cT_s^{\tilde a}$ onto the spectrum of $T_s^{\tilde a}$, preserving algebraic multiplicity, and
\[
    \|(T_s^{\tilde a}-\lambda)^{-1}\|
       =R_s^2\|(\cT_s^{\tilde a}-\zeta)^{-1}\|.
\]
Consequently
\[
    \tilde\lambda_s^{(n)}:=R_s^{-2}\Bigl(\tilde\mu_s^{(n)}+\frac{i\ell_s}{A}\Bigr)
    =is\ell_s^{-A}+As\ell_s^{-A-1}\tilde\mu_s^{(n)}
\]
is algebraically simple and has the same expansion as in \eqref{eq:lambda-s-asymp}.  The circle
\[
    |\lambda-\tilde\lambda_s^{(n)}|=\rho_s^{(K)},
    \qquad \rho_s^{(K)}:=\frac{Ag_K}{6}s\ell_s^{-A-1}=R_s^{-2}\frac{g_K}{6},
\]
corresponds exactly to $|\zeta-\tilde\mu_s^{(n)}|=g_K/6$.  For large $s$ this circle lies in a fixed compact annulus disjoint from $\spec(\cL_1)$, so Subclaim~1 gives
\[
    \|(T_s^{\tilde a}-\lambda)^{-1}\|
       \le C R_s^2
       =\frac{C g_K/6}{\rho_s^{(K)}}.
\]
This is the required tameness bound, and the same affine unitary map preserves non-isotropy of the normalized eigenfunctions.  Finally fix a closed rectangle $B$ with $\partial B\cap\spec(\cL_1)=\emptyset$.  Put $\delta_B:=\operatorname{dist}(\partial B,\spec\cL_1)>0$ and let
\[
    B^+:=\{\zeta\in\C:\operatorname{dist}(\zeta,B)\le\delta_B/2\}.
\]
Since $\spec(\cL_1)$ is closed and discrete and $B^+$ is bounded, only finitely many model rungs lie in $B^+$.  Choose $K$ so that every rung in $B^+$ is among $\nu_0,\dots,\nu_K$ and the finite contours under discussion include those rungs.  Set
\[
    \mathcal K_B:=B^+\setminus\bigcup_{\nu_n\in B^+}D(\nu_n,g_K/3).
\]
This compact set is disjoint from $\spec(\cL_1)$.  By Subclaim~1 it is eventually contained in $\rho(\cT_s^{\tilde a})$; by Subclaim~2 each disc $D(\nu_n,g_K/3)$ with $\nu_n\in B^+$ contains exactly one algebraically simple eigenvalue $\tilde\mu_s^{(n)}\to\nu_n$, and no other eigenvalue.  Any $\cT_s^{\tilde a}$ eigenvalue lying in $B$ lies automatically in $B^+$; hence it is either in the resolvent set $\mathcal K_B$ just excluded or in one of these finitely many discs.  If the corresponding model rung lies in $B^+\setminus B$, convergence keeps the perturbed eigenvalue outside $B$ for large $s$, while the rungs in $B$ are precisely the listed ones.  Hence
\[
    \spec(T_s^{\tilde a})\cap
    \{is\ell_s^{-A}+As\ell_s^{-A-1}\beta:\beta\in B\}
    =\{\tilde\lambda_s^{(n)}:\nu_n\in B\}
\]
with algebraic multiplicity one for each listed eigenvalue.

\emph{Subclaim 4: pencil transfer, rectangle count, axis cleanness, and envelope.}  Finally, the quadratic pencil perturbation has the same size on every fixed contour: if $\lambda$ lies in the low $T_s$ window, then $w(\lambda)=i\lambda/(2s)=O(\ell_s^{-A})$, and
\[
    \|w(\lambda)\tilde a+w(\lambda)^2\|_{L^2\to L^2}
    \le C(\|\tilde a\|_{L^\infty}+1)\ell_s^{-A}.
\]
Thus the smallness threshold in the pencil step is allowed to depend on $\|\tilde a\|_{L^\infty}$.  For each fixed rung, Proposition~\ref{prop:pencil-transfer} with $d=\tilde a$ and the isolation and tameness from Subclaim~3 produces the asserted algebraically simple block eigenvalue and the displayed asymptotic; the non-isotropy from Subclaim~2 is recorded for the accompanying condition-number and phase-rigidity statements, not as a direct hypothesis of Proposition~\ref{prop:pencil-transfer}.

For the finite-window count, let
\[
    \mathfrak B_s(B)=\{is\ell_s^{-A}+As\ell_s^{-A-1}\beta:\beta\in B\}.
\]
For $\lambda=is\ell_s^{-A}+As\ell_s^{-A-1}\beta$ with $\beta\in\partial B$ --- a compact subset of $\rho(\cL_1)$ --- Subclaim~1 and the scaling identity above give
\[
    \|(T_s^{\tilde a}-\lambda)^{-1}\|
       =R_s^2\|(\cT_s^{\tilde a}-\beta)^{-1}\|
       \le C_B R_s^2.
\]
Together with the previous perturbation estimate this yields
\[
\begin{aligned}
    \sup_{\lambda\in\partial\mathfrak B_s(B)}
    &\|(T_s^{\tilde a}-\lambda)^{-1}\|
      \frac{|\lambda|}{2s}
      \left(\|\tilde a\|_{L^\infty}+\frac{|\lambda|}{2s}\right) \\
    &\le C_B(\|\tilde a\|_{L^\infty}+1)R_s^2\ell_s^{-A}
      =C_B(\|\tilde a\|_{L^\infty}+1)\frac{\ell_s}{As}\to0.
\end{aligned}
\]
Proposition~\ref{prop:cluster-transfer}, applied to
$\operatorname{int}\mathfrak B_s(B)$, therefore identifies the total
pencil-root count with the $T_s^{\tilde a}$ eigenvalue count from
Subclaim~3, namely $\#\{n:\nu_n\in B\}$.  The simple roots already
constructed for those rungs exhaust this count, so no additional
block-generator eigenvalues enter the window.

For the regularized envelope, the same low-frequency hypotheses required by Lemma~\ref{lem:envelope} are verified here.  Since $\int_\T\tilde a>0$, Lemma~\ref{lem:axis-cleanness} gives $i\R\subset\rho(\cA_{\tilde a})$, in particular $0\in\rho(\cA_{\tilde a})$.  The upper envelope follows from $\Theta_{\tilde a}(r)\asymp(\log(e/r))^{-A}$ and Corollary~\ref{cor:log}, and the lower envelope comes from the genuine block eigenvalues just constructed.
\end{proof}

\begin{proof}[Proof of Theorem~\ref{thm:E}\textup{(c)}]
Replace $x_1$ by $\min(x_1,x_0)$ if necessary.  The hypotheses are then
exactly those of Lemma~\ref{lem:stable-cusp-estimates}.  That lemma gives
the same local logarithmic limit, coercive tails, compact-window
quasimodes, and mass asymptotics as for the exact cusp.
Proposition~\ref{prop:stable-cusp-transfer} consequently gives the full
finite-window ladder, its algebraic count, and the two leading
coefficients with the unchanged scales $R_s,\ell_s$.  Its final
axis-cleanness and envelope argument gives $i\R\subset\rho(\cA_{\tilde
a})$ and the sharp regularized decay.  These are precisely the conclusions
of part~\textup{(c)}.
\end{proof}

\begin{remark}[Fixed measurable perturbations]\label{rem:rough-members}
Theorem~\ref{thm:E}\textup{(c)} allows arbitrary measurable modulation
patterns, provided their relative amplitude is lower order at the well.
Fix $\sigma>0$ and $x_1\in(0,1)$.  For every measurable $F\subset\T$ and
$0<\eps_0\le1$, define, for $0<|x|<x_1$,
\[
 \tilde a(x)=(\log(e/|x|))^{-A}
 \Bigl(1+\eps_0(\log(e/|x|))^{-1-\sigma}
       (2\one_F(x)-1)\Bigr)
\]
and set $\tilde a(0)=0$.  Any bounded nonnegative extension to $\T$
satisfying
\[
 \operatorname*{ess\,inf}_{\{|x|\ge x_1\}}\tilde a>0
\]
is admissible in Theorem~\ref{thm:E}\textup{(c)}.  Indeed, on each punctured
annulus $\rho\le |x|<x_1$ the displayed factor is bounded below by a
positive constant, while the extension supplies the required positivity
outside the chart.  Thus neither continuity nor geometric regularity of the
perturbation is required.
\end{remark}

\section{Isometric holonomy and the boundary of flat separation}
\label{sec:mapping-torus}

The global product topology can be relaxed without changing the
one-dimensional mechanism.  Let $\varphi\in\operatorname{Isom}(Y)$ and
form the locally product mapping torus
\[
 X_\varphi=([0,2\pi]\times Y)/\bigl((2\pi,y)\sim(0,\varphi(y))\bigr),
\]
with the metric induced by $dx^2+g_Y$ and damping pulled back from the
base circle.  This class includes genuinely nonproduct examples, while
retaining flat one-dimensional separation with a holonomy phase.
More generally, for $L>0$, $k\in\{1,2\}$, and
$\theta\in\R/2\pi\Z$, set
\[
 H^k_\theta(0,L):=\{v\in H^k(0,L):
 v^{(j)}(L)=e^{i\theta}v^{(j)}(0),\ 0\le j<k\}.
\]
We abbreviate $H^k_\theta=H^k_\theta(0,2\pi)$ and use
$H^k_\theta(I)$ for the translated endpoint conditions on any bounded
interval $I$.
We use the pullback convention
\[
 U_\varphi=\varphi^*,\qquad U_\varphi f:=f\circ\varphi.
\]
Since $\varphi$ is an isometry, $U_\varphi$ is unitary and commutes with
$-\Delta_Y$.

\begin{thmx}[Isometric mapping-torus extension]\label{thm:F}
Let $Y$ be a compact connected Riemannian manifold without boundary with
$\dim Y\ge1$, let
$\varphi\in\operatorname{Isom}(Y)$, and let $X_\varphi$ carry the locally
product metric defined above.  The damping is pulled back from a function
on the base circle.
\begin{enumerate}[label=\textup{(\alph*)},leftmargin=*]
\item Let $0\le a\in L^\infty(\T)$.  Theorems~\ref{thm:A}--\ref{thm:C},
including Theorem~\ref{thm:B}\textup{(iii)$\Rightarrow$(i)} and
Corollary~\ref{cor:log},
hold on $X_\varphi$ with the same lower interval mass.  More precisely,
condition \textup{(iii)} of Theorem~\ref{thm:B} is replaced by the
following statement: there exist $C,s_0>0$, a dyadically syndetic set
$\cS\subset\Sigma_Y\cap[s_0,\infty)$, and, for every $s\in\cS$, a choice
\[
 e^{i\theta_s}\in
 \spec\!\left(U_\varphi|_{E_{s^2}}\right),
 \qquad E_{s^2}=\ker(-\Delta_Y-s^2),
\]
such that $Q^{\theta_s}_{s,0}$ is invertible and
\[
 Q^{\theta_s}_{s,0}=-\partial_x^2+isa(x)
 \quad\text{on}\quad H^2_{\theta_s},
 \qquad
 \bigl\|(Q^{\theta_s}_{s,0})^{-1}\bigr\|
 \le Cs^{-1}L(s).
\]
The direct estimates are uniform in the phase, while one such choice per
dyadic octave suffices for the inverse implication.

\item Suppose first that $a$ satisfies the logarithmic-cusp hypotheses of
Theorem~\ref{thm:E}.  In every sufficiently high joint
transverse/Floquet block, the finite-window statement and rung-by-rung
asymptotics of Theorem~\ref{thm:E}\textup{(a)} hold with the same scales,
limiting ladder, algebraic count, and two leading coefficients.  Every
listed eigenvalue is algebraically simple within that scalar joint block.
The principal rungs and the global resolvent estimate give on $X_\varphi$
the two-sided regularized semigroup envelope of
Theorem~\ref{thm:E}\textup{(b)}.

If $\tilde a$ satisfies the stable measurable-cusp hypotheses of
Theorem~\ref{thm:E}\textup{(c)}, all preceding conclusions hold with
$\tilde a$ in place of $a$, again with the same scales, model ladder, and
two leading coefficients.  No finite-order assumption on $\varphi$ is
required.  Algebraic multiplicities in the full generator are obtained
by summing the scalar joint-block multiplicities over all transverse
eigenvectors and holonomy phases producing the same spectral point.
\end{enumerate}
In particular, both main results apply to every isometric mapping torus,
including the flat Klein bottle obtained from $Y=\T$ and the reflection
$\varphi(y)=-y$.
\end{thmx}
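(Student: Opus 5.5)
The plan is to reduce everything on $X_\varphi$ to the scalar Floquet blocks and then rerun the earlier proofs with $H^2(\T)$ replaced by $H^2_\theta$.

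\emph{Joint block decomposition.} Since $U_\varphi$ is unitary and commutes with $-\Delta_Y$, it restricts to a unitary operator on each finite-dimensional eigenspace $E_\lambda=\ker(-\Delta_Y-\lambda)$. I would diagonalize it there, obtaining a joint orthonormal eigenbasis $\psi_{j}$ with $-\Delta_Y\psi_j=\lambda_j\psi_j$ and $U_\varphi\psi_j=e^{i\theta_j}\psi_j$. A function on $X_\varphi$ is a function $u(x,y)$ on $[0,2\pi]\times Y$ with $u(2\pi,y)=u(0,\varphi(y))$. Expanding $u=\sum_j u_j(x)\psi_j(y)$, this gluing condition becomes the Floquet condition $u_j(2\pi)=e^{i\theta_j}u_j(0)$, and the same holds for derivatives; the sign convention is fixed by the pullback convention. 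The metric is locally a product, so $-\Delta_{X_\varphi}$ is the tensor sum of $-\partial_x^2$ on $H^2_{\theta_j}$ and $\lambda_j$. I would then re-prove Lemma~\ref{lem:blocks} verbatim: dissipativity, compact resolvent, the identity \eqref{eq:P-block} with $Q^{\theta_j}_{s,\mu_j}$, the direct-sum resolvent criterion, and additivity of multiplicities. The $H^2$ characterization holds locally, because $X_\varphi$ is locally isometric to the product. The zero mode is handled as before: only $\lambda_0=0$ with $\theta=0$ (constants on a connected $Y$ are $\varphi$-invariant) gives the kernel.

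\emph{Part (a).} Each scalar ingredient has to be checked on the Floquet domains with constants uniform in $\theta$. The weighted Poincar\'e inequality \eqref{eq:periodic-wp} holds because $|v|$ extends periodically (Remark~\ref{rem:spectral-calibration-floquet}). In Lemma~\ref{lem:observability} the Fourier basis becomes $e^{i(n+\theta/2\pi)x}$. The cluster $\{||n+\theta/2\pi|-\sqrt\mu|\le2\}$ still has dimension $\le10$, and Nazarov's estimate does not depend on the frequency locations. The pairing identities also persist, since the boundary terms cancel under the Floquet condition. Hence Proposition~\ref{prop:upper-engine} holds uniformly in $\theta$, and the direct implications follow from the block identity. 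For the inverse direction, the absorbing quasimode of Proposition~\ref{prop:absorbing} is compactly supported in an interval of length $2r<2\pi$, so after translation it lies in every $H^2_\theta$ and gives the lower bound for $Q^{\theta_s}_{s,0}$ at any admitted phase. The contrapositive argument for Theorem~\ref{thm:B}\textup{(iii)}$\Rightarrow$\textup{(i)} is then unchanged. The exponential stability and eigenfrequency statements of Theorems~\ref{thm:A} and~\ref{thm:C} follow as before. The undamped eigenvectors in Theorem~\ref{thm:A} now use $e^{i\theta_jx/2\pi}\psi_j$, and axis cleanness uses unique continuation for constant-coefficient ODEs.

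\emph{Part (b).} Here the only change is the circle operator $\cT_s$ on $\T_s$ with the Floquet condition. Complex symmetry (Lemma~\ref{lem:complex-symmetry}) has to be rechecked, since conjugation maps $H^2_\theta$ to $H^2_{-\theta}$. I would use the bilinear form $\mathfrak b(u,v)=\int uv$ only between $H^2_\theta$ and $H^2_{-\theta}$. The preferred alternative is to exploit that all the eigenfunctions in Proposition~\ref{prop:log-scaling} concentrate near $y=0$ with exterior tails $O((\log M)^{-1/2})$. Then I would apply symmetry via the reflection $x\mapsto -x$ composed with conjugation, which preserves $H^2_\theta$ when $a$ is even. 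For the stable non-even case I expect this to be the main obstacle, and I would fall back on a direct argument. The Jordan-chain exclusion would be shown by pairing $(\cT-\mu)w=u$ against an eigenfunction $\tilde u$ of the transpose operator on $H^2_{-\theta}$, where $\tilde u$ converges to the same $\hat u_n$. This gives $\mathfrak b(u,\tilde u)=0$ in the limit, contradicting \eqref{eq:ladder-overlap}. The other rank exclusions would be handled with the analogous paired families.

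With the scaled resolvent convergence (tightness and the local limit do not see the boundary condition, which sits at distance $\pi/R_s$), the pencil transfers of Propositions~\ref{prop:pencil-transfer} and~\ref{prop:cluster-transfer} hold on the fixed domain $H^2_\theta$. The envelope follows from Lemma~\ref{lem:envelope}, using one principal rung per distinct transverse frequency with any phase. All thresholds are uniform in $\theta$, so no finite-order assumption on $\varphi$ is needed.
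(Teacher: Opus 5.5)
Part (a) of your plan is essentially the paper's argument: the gauge to the shifted frequencies $n+\theta/2\pi$, the location-independent Nazarov bound, the Poincar\'e inequality through the periodic modulus, and uniformity in every phase. That uniformity is also what gives adjoint injectivity, since $C(Q^\theta_{s,\mu})^*C=Q^{-\theta}_{s,\mu}$. One imprecision: the absorbing function of Proposition~\ref{prop:absorbing} cannot be translated, because it is built on a minimal-mass window of the unmoved damping. Keep it in place, and either move the cut of the fundamental interval or, as the paper does, multiply it by $e^{i\theta}$ beyond a point $x_*$ near which it vanishes.

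The genuine gap is in part (b).

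\emph{The reflection route fails.} Your preferred route uses complex symmetry through $J=CR$ with $(Rv)(y)=v(-y)$. This $J$ does map $H^2_\theta(I_s)$ to itself, but $J\cT^{\theta}_sJ=(\cT^{\theta}_s)^*$ holds only if $V_s$ is even on all of $I_s$. The exact cusp of Theorem~\ref{thm:E} is assumed even only on $(-x_0,x_0)$; off the chart $a$ is an arbitrary bounded function with a positive lower bound. So this route fails already in the first case of (b), not only for $\tilde a$.

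Concentration of the eigenfunctions does not by itself repair this. Put $\mathfrak b_R(f,g)=\int f(y)g(-y)\,dy$. A Jordan chain $(\cT^\theta_s-\mu)w=u$ then gives $\mathfrak b_R(u,u)=i\int(V_s(y)-V_s(-y))\,w(y)\,u(-y)\,dy$. Here the asymmetric part of $V_s$ has size of order $\ell_s^{A+1}$, and the generalized vector $w$ has no a priori control. So you must use your paired-sector fallback in every case.

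\emph{What the fallback is missing.} As written, the fallback presupposes that $\cT^{-\theta}_s$ has an eigenvalue at the same point $\mu$ (and, in the two-eigenvalue alternative, at $\mu_2$). Without that there is no $\tilde u$ to pair against. The missing ingredient is Lemma~\ref{lem:paired-floquet}:
\begin{itemize}
\item $A_V^{-\theta}=C(A_V^\theta)^*C$, so the two sectors have the same eigenvalues with the same algebraic multiplicities (equivalently, $D_{-\theta}=e^{-2i\theta}D_\theta$ for the monodromy determinants);
\item the exact cross-identities: $\mathfrak b_\theta(u,v)=0$ for eigenfunctions at distinct eigenvalues, and for a Jordan vector's image against the paired eigenfunction at the same eigenvalue.
\end{itemize}
Note also that $\mathfrak b(u,\tilde u)$ vanishes exactly at each $s$, not only in the limit. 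The contradiction comes from tight convergence to $\int_\R\hat u_n^2\neq0$.

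With this lemma added, and with the compactness arguments run along sequences $(s_k,\theta_k)$, your fallback is exactly the paper's proof of Proposition~\ref{prop:uniform-floquet-ladder}. The sequential form is needed so that the thresholds are uniform over the possibly infinitely many holonomy phases when $\varphi$ has infinite order.
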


The mapping-torus theorem removes global product topology without changing
the scalar mechanism.  A smooth variation of the fiber metric is
different: it can change the stabilization class while leaving both the
damping and its lower interval mass fixed.

\begin{proposition}[Failure of $\Theta_a$-classification under smooth warping]
\label{prop:warped-sharpness}
On $\T_x\times\T_y$, let
\[
 g_\eps=dx^2+(2+\eps\sin x)^2dy^2,
 \qquad |\eps|<1,
\]
and let $a\in C(\T)$ satisfy $a\ge0$ and $a^{-1}(0)=\{0\}$.  Denote by
$\cA_{\eps,a}$ the damped-wave generator for $(\T^2,g_\eps)$ with
damping $a(x)$.  Then $e^{t\cA_{0,a}}$ is not exponentially stable,
whereas $e^{t\cA_{\eps,a}}$ is exponentially stable for every
$0<|\eps|<1$.  The base-circle quantity $\Theta_a$ is independent of
$\eps$.  Consequently, classification by $\Theta_a$ alone cannot hold
uniformly over the class of smooth warped metrics, even for metrics
arbitrarily close in $C^\infty$ to the flat product metric $g_0$.
\end{proposition}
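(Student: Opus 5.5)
For $\eps=0$, Theorem~\ref{thm:A} applies directly. For $0<|\eps|<1$ I plan to verify the geometric control condition and invoke the classical geometric-control theory for continuous damping.

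\emph{The flat case and the invariant.} For $\eps=0$ the metric is the flat product $dx^2+4\,dy^2$. This is $X=\T_x\times Y$ with $Y$ a circle of length $4\pi$, so Theorem~\ref{thm:A} applies. Since $a$ is continuous with $a(0)=0$, the centered averages satisfy $(2r)^{-1}\int_{-r}^ra\to0$. Hence $\liminf_{r\downarrow0}\Theta_a(r)=0$, condition (i) fails, and $e^{t\cA_{0,a}}$ is not exponentially stable. The statement that $\Theta_a$ does not depend on $\eps$ is immediate, because $\Theta_a$ is defined from $a$ on the base circle alone.

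\emph{Geodesic structure for $\eps\ne0$.} Write $h(x)=2+\eps\sin x$, which satisfies $h\ge 2-|\eps|>0$. The metric $dx^2+h(x)^2dy^2$ is a surface of revolution, so Clairaut's relation holds: $p_y=h^2\dot y$ is conserved, and $\ddot x=h h'\dot y^2$. A geodesic with $\dot x\equiv0$ on a time interval must satisfy $h'(x)=0$, that is $x\in\{\pi/2,3\pi/2\}$. Since $h'(0)=\eps\ne0$, no geodesic segment of positive length lies inside the circle $\{x=0\}$. Thus every unit-speed geodesic leaves $\{x=0\}$ immediately, except possibly at isolated crossing times.

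\emph{Uniform GCC.} The set $\omega=\{a>0\}=\{x\ne0\}$ is open. The map $(\rho,t)\mapsto$ (base point of the geodesic flow) is continuous on the compact unit cosphere bundle $S^*\T^2$. For each $\rho\in S^*\T^2$ there is a time $t_\rho\in(0,1)$ with $x(t_\rho)\ne0$. By openness of $\omega$ and continuity of the flow, the same $t_\rho$ works on a neighborhood of $\rho$. Compactness of $S^*\T^2$ then gives a uniform $T$ such that every geodesic of length $T$ meets $\omega$; this is the geometric control condition. I then plan to invoke the Rauch--Taylor / Bardos--Lebeau--Rauch theorem \cite{RT,BLR}, in the form valid for continuous damping (e.g.\ Burq--G\'erard \cite{BG}), which gives exponential stability of $e^{t\cA_{\eps,a}}$. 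That version also relies on the unique-continuation step, which here follows from $\omega$ being nonempty and open. Finally, $g_\eps\to g_0$ in $C^\infty$ as $\eps\to0$, which yields the closing remark of the proposition.

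\emph{Expected obstacle.} The main point needing care is the citation. The classical BLR result is stated for smooth damping, so I need the version that allows merely continuous $a$ with GCC for the open set $\{a>0\}$. If no such citation is available, I would replace $a$ by a smooth minorant $0\le\tilde a\le a$ that is positive on $\{|x|\ge\delta\}$, and choose $\delta$ small enough that GCC still holds for $\{|x|>\delta\}$. This is possible because the compactness argument above gives a positive margin. The comparison step is that larger damping preserves the energy-decay estimate via the standard observability formulation. Either way, the geodesic analysis is elementary and the weight of the proof lies in this citation step.
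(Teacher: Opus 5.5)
Your proof is correct. For $0<|\eps|<1$ it is the paper's argument. The $x$-equation $\ddot x=hh'\dot y^{2}$, with $h(0)h'(0)=2\eps\neq0$ and $\dot y\neq0$ from unit speed, prevents any geodesic from remaining in $\{x=0\}$. Compactness of the unit cosphere bundle then makes the control uniform, and \cite{RT,BLR} give exponential stability.

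The paper states the uniform condition in the integral form $\int_0^{T_\eps}a(\gamma_{\rho,\eps}(t))\,dt\ge c_\eps$ and cites the theorem directly. Your smooth-minorant fallback is also sound: for bounded damping, exponential stability is equivalent to an observability inequality for the undamped waves, and that inequality is monotone in the weight. A small point: since every $t_\rho$ lies in $(0,1)$, the qualitative condition already holds with $T=1$. What compactness actually supplies is a positive margin, for $\max_{[0,1]}|x(\gamma_\rho)|$ or for $\int_0^1a(\gamma_\rho)$, and that margin is what your choice of $\delta$ and the integral form need.

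The genuine difference is at $\eps=0$. The paper notes that $\{x=0\}$ is an undamped closed geodesic and invokes the Gaussian-beam necessity of geometric control \cite{Ralston1969,Lebeau1996}. You instead view $(\T^2,g_0)$ as $\T\times Y$ with $Y$ a circle of length $4\pi$ and apply Theorem~\ref{thm:A}, using that continuity and $a(0)=0$ give $\liminf_{r\downarrow0}\Theta_a(r)=0$.

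Your route stays inside the paper: it rests on the absorbing quasimode of Proposition~\ref{prop:absorbing} at transverse frequencies rather than on an external beam construction. It also reads the failure at $\eps=0$ directly off the invariant that the warping leaves unchanged, which sharpens the point of the proposition. The paper's route is more geometric and works for any metric with an undamped closed geodesic, whether or not it is a product.
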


\begin{corollary}[The logarithmic cusp under warping]
\label{cor:warped-log-cusp}
Fix $A>0$ and $0<x_0<1$, and let $a_A\in C(\T)$ be nonnegative,
positive on $\T\setminus\{0\}$, even on $(-x_0,x_0)$, and such that
\[
 a_A(0)=0,
 \qquad
 a_A(x)=(\log(e/|x|))^{-A},\qquad 0<|x|<x_0.
\]
Then
\begin{equation}\label{eq:warped-log-mass}
 \Theta_{a_A}(r)\sim(\log(e/r))^{-A},
 \qquad r\downarrow0,
\end{equation}
for every metric $g_\eps$ above.  For $\eps=0$, the finite-window ladder
and two-term eigenvalue asymptotics of
Theorem~\ref{thm:E}\textup{(a)--(b)} hold, and
\[
 c e^{-Ct^{1/(A+1)}}
 \le \|e^{t\cA_{0,a_A}}\cA_{0,a_A}^{-1}\|
 \le C e^{-ct^{1/(A+1)}},
 \qquad t\ge1.
\]
For every $0<|\eps|<1$, $e^{t\cA_{\eps,a_A}}$ is exponentially stable.
Thus an arbitrarily small smooth warping changes the sharp logarithmic-cusp
decay from stretched exponential to exponential without changing either
the damping or its lower interval mass.
\end{corollary}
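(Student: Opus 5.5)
The proof has three independent pieces: the mass asymptotic \eqref{eq:warped-log-mass}, the flat case $\eps=0$, and the warped case $0<|\eps|<1$.

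\emph{The mass asymptotic.} First note that $\Theta_{a_A}$ involves only the base coefficient, so it does not depend on $\eps$; it suffices to prove \eqref{eq:warped-log-mass} once. The centered window gives the upper bound. Integrating by parts,
\[
(2r)^{-1}\int_{-r}^{r}a_A=(\log(e/r))^{-A}\bigl(1+O(1/\log(1/r))\bigr),
\]
exactly as in the mass computation at the start of Section~\ref{sec:log-resonance}. For the matching lower bound with constant $1$ (needed for $\sim$, not just $\asymp$), I would split the windows of radius $r$ into two classes:
\begin{enumerate}[label=\textup{(\roman*)},leftmargin=*]
\item Windows contained in $(-x_0,x_0)$. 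There $a_A$ is even and a nondecreasing function of $|x|$. The layer-cake argument of Lemma~\ref{lem:centered-arc-min} applies verbatim, since it only compares superlevel sets, and these are complements of centered intervals inside the chart. It shows that the centered window minimizes the average.
\item Windows meeting $\{|x|\ge x_0\}$, for $r\le x_0/4$. Such a window lies in $\{|x|\ge x_0/2\}$. Since $a_A$ is continuous and positive on $\T\setminus\{0\}$, it is bounded below there by a constant $m>0$. For small $r$, $m$ exceeds $(\log(e/r))^{-A}$.
\end{enumerate}
Hence $\Theta_{a_A}(r)$ equals the centered average for all small $r$, which gives \eqref{eq:warped-log-mass}.

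\emph{The flat case $\eps=0$.} Here $g_0=dx^2+4\,dy^2$, so $(\T^2,g_0)$ is the product $\T_x\times Y$ with $Y$ a round circle of length $4\pi$. This $Y$ is compact, connected and boundaryless, so the setting of Section~\ref{sec:intro} applies directly. The coefficient $a_A$ meets the hypotheses of Theorem~\ref{thm:E}: it is even on $(-x_0,x_0)$, equal to the cusp there, and bounded below by $m>0$ on $\{|x|\ge x_0\}$ by continuity and positivity. Theorem~\ref{thm:E}\textup{(a)--(b)} then gives the finite-window ladder, the two-term asymptotics, and the two-sided envelope $ce^{-Ct^{1/(A+1)}}\le\|e^{t\cA_{0,a_A}}\cA_{0,a_A}^{-1}\|\le Ce^{-ct^{1/(A+1)}}$.

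\emph{The warped case $0<|\eps|<1$.} Since $a_A\in C(\T)$, $a_A\ge0$ and $a_A^{-1}(0)=\{0\}$, Proposition~\ref{prop:warped-sharpness} applies directly and gives exponential stability. Combined with the $\eps$-independence of $\Theta_{a_A}$, this yields the final sentence of the corollary.

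\emph{Main obstacle.} The only real point is the sharp constant in the lower mass bound, i.e.\ showing the infimum is attained at the centered window. Everything else is a direct citation. I expect the chart-local rearrangement in case (i) to suffice, with the continuity of $a_A$ supplying the positive floor $m$ in case (ii).
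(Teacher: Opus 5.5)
Your proposal is correct and follows the paper's proof essentially step for step. For small $r$ the paper likewise argues that a minimizing window cannot reach the region where $a_A$ has a positive floor, since the centered average tends to zero; it then uses evenness and monotonicity in $|x|$ inside the chart to identify the centered window as the minimizer, and cites Theorem~\ref{thm:E} for the product case $\eps=0$ and Proposition~\ref{prop:warped-sharpness} for $0<|\eps|<1$. The only cosmetic difference is that the paper gets the sharp constant by dominated convergence in $\int_0^1(\log(e/r)+\log(1/t))^{-A}\,dt$, rather than by your integration by parts.
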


The mapping-torus extension identifies exactly which part of the product
geometry is global bookkeeping and which part drives the estimates.  The
holonomy changes the scalar boundary condition, but it does not change the
local one-dimensional equation.  We first make that reduction explicit.

Because $U_\varphi$ is unitary and commutes with $-\Delta_Y$, every
finite-dimensional eigenspace $E_\lambda$ has an orthonormal basis of joint
eigenvectors
\begin{equation}\label{eq:mapping-joint-basis}
 -\Delta_Y\psi_{\lambda,q}=\lambda\psi_{\lambda,q},
 \qquad
 U_\varphi\psi_{\lambda,q}=e^{i\theta_{\lambda,q}}
      \psi_{\lambda,q}.
\end{equation}
A function on $X_\varphi$ is a function on $[0,2\pi]\times Y$ satisfying
$u(2\pi,y)=u(0,\varphi(y))$, with the analogous trace identity for
$\partial_xu$.  In the Floquet notation introduced before
Theorem~\ref{thm:F}, the coefficient of $\psi_{\lambda,q}$ belongs to
$H^2_{\theta_{\lambda,q}}$.
The stationary operator is the orthogonal sum of the exact Floquet
blocks
\begin{equation}\label{eq:mapping-floquet-block}
 Q^\theta_{s,\mu}=-\partial_x^2-\mu+isa(x),
 \qquad \mathcal D(Q^\theta_{s,\mu})=H^2_\theta,
 \qquad \mu=s^2-\lambda.
\end{equation}
The generator blocks and their pencils have the same boundary condition.
Elliptic regularity on the compact manifold $X_\varphi$ assembles the
block inverses exactly as in Lemma~\ref{lem:blocks}; in particular the
stationary norm is the supremum of the Floquet-block norms and algebraic
multiplicities add over the joint blocks.  The Lax--Milgram,
elliptic-regularity, and compact-embedding proof of
Lemma~\ref{lem:blocks}\textup{(i)} applies verbatim on the compact
manifold $X_\varphi$; hence its damped-wave generator has compact
resolvent.

The logarithmic spectral argument requires a bilinear symmetry.  For a
general Floquet phase it pairs the opposite sectors rather than acting
within one sector.

\begin{lemma}[Paired Floquet symmetry and multiplicity]
\label{lem:paired-floquet}
Let $V\in L^\infty(0,L)$ be real and let
\[
 A_V^\theta=-\partial_x^2+iV,
 \qquad \mathcal D(A_V^\theta)=H^2_\theta(0,L).
\]
For one vector in the $\theta$ sector and one in the $-\theta$ sector,
in either order, set $\mathfrak b_\theta(u,v)=\int_0^Luv$.  Then:
\begin{enumerate}[label=\textup{(\roman*)},leftmargin=*]
\item The two sectors are bilinear transposes:
\begin{equation}\label{eq:paired-floquet-green}
 \mathfrak b_\theta(A_V^\theta u,v)=\mathfrak b_\theta(u,A_V^{-\theta}v),
 \qquad u\in H^2_\theta,\quad v\in H^2_{-\theta}.
\end{equation}
Moreover
\begin{equation}\label{eq:paired-floquet-adjoint}
 A_V^{-\theta}=C(A_V^\theta)^*C,
\end{equation}
where $C$ is pointwise conjugation.

\item The operators $A_V^\theta$ and $A_V^{-\theta}$ have the same
eigenvalues, with the same geometric and algebraic multiplicities.  If
$M_V(\zeta)$ is the monodromy matrix for $-u''+iVu=\zeta u$, normalized
at $0$, then $\det M_V(\zeta)=1$ and
\begin{equation}\label{eq:floquet-determinants}
 \begin{aligned}
 D_\theta(\zeta)
   &:=\det(M_V(\zeta)-e^{i\theta}I)
     =e^{i\theta}\bigl(2\cos\theta-\operatorname{tr}M_V(\zeta)\bigr),\\
 D_{-\theta}(\zeta)&=e^{-2i\theta}D_\theta(\zeta).
 \end{aligned}
\end{equation}
The order of a zero of $D_\theta$ is the algebraic multiplicity of the
corresponding eigenvalue of $A_V^\theta$.

\item Let $\lambda$ be algebraically simple, and choose
$0\ne u_\theta\in\ker(A_V^\theta-\lambda)$ and
$0\ne u_{-\theta}\in\ker(A_V^{-\theta}-\lambda)$.  Then
\begin{equation}\label{eq:cross-overlap-nonzero}
 \mathfrak b_\theta(u_\theta,u_{-\theta})\ne0.
\end{equation}
If $\Pi_\theta(\lambda)$ and $\Pi_{-\theta}(\lambda)$ are the two Riesz
projections, then
\begin{equation}\label{eq:cross-projection-formula}
 \begin{aligned}
 \Pi_\theta(\lambda)f
   &=\frac{\mathfrak b_\theta(f,u_{-\theta})}
           {\mathfrak b_\theta(u_\theta,u_{-\theta})}\,u_\theta,\\
 \Pi_{-\theta}(\lambda)g
   &=\frac{\mathfrak b_\theta(g,u_\theta)}
           {\mathfrak b_\theta(u_{-\theta},u_\theta)}\,u_{-\theta}.
 \end{aligned}
\end{equation}
Consequently, with
\begin{equation}\label{eq:cross-nonisotropy}
 \delta_\times(u_\theta,u_{-\theta})
 :=\frac{|\mathfrak b_\theta(u_\theta,u_{-\theta})|}
         {\|u_\theta\|_2\|u_{-\theta}\|_2},
\end{equation}
one has
\begin{equation}\label{eq:cross-projection-norm}
 \|\Pi_\theta(\lambda)\|
 =\|\Pi_{-\theta}(\lambda)\|
 =\delta_\times(u_\theta,u_{-\theta})^{-1}.
\end{equation}

\item If $A_V^\theta u=\lambda u$ and
$A_V^{-\theta}v=\mu v$ with $\lambda\ne\mu$, then
$\mathfrak b_\theta(u,v)=0$.  If $(A_V^\theta-\lambda)w=u$ and
$A_V^{-\theta}v=\lambda v$, then
\begin{equation}\label{eq:cross-jordan-obstruction}
 \mathfrak b_\theta(u,v)=0.
\end{equation}
Thus a nonzero cross-overlap excludes a length-two Jordan chain in either
paired sector.
\end{enumerate}
\end{lemma}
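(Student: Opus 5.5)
The plan is to mirror Lemma~\ref{lem:complex-symmetry}, replacing self-pairing within one sector by cross-pairing between the $\theta$ and $-\theta$ sectors.

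\emph{Step (i).} For (i) integrate by parts twice on $(0,L)$. The boundary term is $[u'v-uv']_0^L$. For $u\in H^2_\theta$ and $v\in H^2_{-\theta}$, the values at $L$ pick up $e^{i\theta}e^{-i\theta}=1$, so $u'(L)v(L)=u'(0)v(0)$ and likewise for the other product. Hence the boundary term cancels. Multiplication by $iV$ is bilinearly symmetric, which gives \eqref{eq:paired-floquet-green}.

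For \eqref{eq:paired-floquet-adjoint}, compute the adjoint in the usual way. The sesquilinear boundary term forces $(A_V^\theta)^*=-\partial_x^2-iV$ on $H^2_\theta$, because $\overline{e^{i\theta}}$ pairs with $e^{i\theta}$ through $|e^{i\theta}|=1$. Conjugation $C$ maps $H^2_\theta$ onto $H^2_{-\theta}$ and flips $-iV$ to $iV$.

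\emph{Step (ii).} Lemma~\ref{lem:riesz-tools}(ii) shows that $\bar\lambda$ has the same algebraic multiplicity for $(A_V^\theta)^*$ as $\lambda$ has for $A_V^\theta$. Conjugating by the antiunitary $C$ then transfers Jordan chains of $(A_V^\theta)^*$ at $\bar\lambda$ to Jordan chains of $A_V^{-\theta}$ at $\lambda$. So the two sectors share eigenvalues with equal geometric and algebraic multiplicities.

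For the determinants, Liouville's formula gives $\det M_V=1$, since the first-order system is trace-free. A $2\times2$ matrix $M$ with $\det M=1$ satisfies $\det(M-\mu I)=1-\mu\operatorname{tr}M+\mu^2$. Putting $\mu=e^{\pm i\theta}$ yields both displayed formulas.

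Identifying the order of a zero of $D_\theta$ with algebraic multiplicity is the step I expect to be the main obstacle. I would prove it by writing $A_V^\theta-\zeta$ as a fixed-domain Fredholm family, as in Lemma~\ref{lem:GS-input}. Its Gohberg--Sigal multiplicity equals the algebraic multiplicity, by the linear-pencil remark. I would then reduce it to the scalar function $D_\theta$ through the standard equivalence of boundary-value Fredholm families with the $2\times2$ characteristic matrix $M_V(\zeta)-e^{i\theta}I$, whose order of vanishing of the determinant equals the partial-multiplicity sum. A fallback is to compare with the Green-function representation of the resolvent, in which $D_\theta(\zeta)^{-1}$ appears as the only singular factor, and read the multiplicity off the Riesz projection via a trace formula.

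\emph{Steps (iii)--(iv).} For (iv), \eqref{eq:paired-floquet-green} gives $\lambda\,\mathfrak b(u,v)=\mu\,\mathfrak b(u,v)$ when $\lambda\ne\mu$, so $\mathfrak b(u,v)=0$. If $(A_V^\theta-\lambda)w=u$, then $\mathfrak b(u,v)=\mathfrak b(w,(A_V^{-\theta}-\lambda)v)=0$.

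For (iii), the Riesz projection has rank one, so it has the form $\langle\cdot,w\rangle u_\theta$ with $w\in\ker((A_V^\theta)^*-\bar\lambda)$. By (i), this kernel is $C\ker(A_V^{-\theta}-\lambda)$, hence spanned by $\overline{u_{-\theta}}$. The normalization $\Pi u_\theta=u_\theta$ then forces $\mathfrak b(u_\theta,u_{-\theta})\neq0$ and gives \eqref{eq:cross-projection-formula}. The norm of a rank-one operator $f\mapsto\langle f,\bar v\rangle u/c$ is $\|u\|\|v\|/|c|$, which gives \eqref{eq:cross-projection-norm}. The $-\theta$ projection follows by symmetry.
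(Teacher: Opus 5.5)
Your proposal is correct and is essentially the paper's proof. Parts (i), (iii) and (iv) are argued the same way, and the identification of the zero order of $D_\theta$ with algebraic multiplicity follows exactly the paper's route. The paper makes your ``standard equivalence'' explicit: it conjugates the augmented family $u\mapsto\bigl(-u''+iVu-\zeta u,\ u(L)-e^{i\theta}u(0),\ u'(L)-e^{i\theta}u'(0)\bigr)$ on $H^2(0,L)$ by analytic invertible factors, both to $I_{L^2}\oplus(M_V(\zeta)-e^{i\theta}I)$ and to $(A_V^\theta-\zeta)\oplus I_{\C^2}$.

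The only variation is how you get equality of the paired multiplicities. You derive it from $A_V^{-\theta}=C(A_V^\theta)^*C$ and Lemma~\ref{lem:riesz-tools}\textup{(ii)}. The paper instead reads it off from $D_{-\theta}=e^{-2i\theta}D_\theta$, adds a monodromy argument for geometric multiplicity, and records your relation $\Pi_{-\theta}(\lambda)=C\Pi_\theta(\lambda)^*C$ as an equivalent formulation.
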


\begin{proof}
For \textup{(i)}, integration by parts twice leaves the boundary form
$[u'v-uv']_0^L$.  At $L$ the two factors acquire the phases
$e^{i\theta}$ and $e^{-i\theta}$, whose product is one, so the boundary
form vanishes.  The Hilbert-space adjoint of $A_V^\theta$ is
$-\partial_x^2-iV$ on $H^2_\theta$; conjugation maps this domain onto
$H^2_{-\theta}$ and gives \eqref{eq:paired-floquet-adjoint}.

For \textup{(ii)}, the first-order system associated with the scalar ODE
has trace zero, hence Liouville's formula gives
$\det M_V(\zeta)=1$.  Formula~\eqref{eq:floquet-determinants} follows from
$\det(M-rI)=1-r\operatorname{tr}M+r^2$ with $r=e^{i\theta}$.
To identify the zero order, consider on the unrestricted space
$H^2(0,L)$ the family
\[
 \mathcal F_\theta(\zeta)u
 :=\bigl(-u''+iVu-\zeta u,\,
 u(L)-e^{i\theta}u(0),\,
 u'(L)-e^{i\theta}u'(0)\bigr).
\]
Solving the inhomogeneous ODE with prescribed Cauchy data analytically
conjugates this family to
$I_{L^2}\oplus(M_V(\zeta)-e^{i\theta}I)$.  Splitting $H^2(0,L)$ into
$H^2_\theta$ and a fixed two-dimensional right inverse of the boundary
trace analytically conjugates it to
$(A_V^\theta-\zeta)\oplus I_{\C^2}$.  Analytic invertible left and right
factors preserve root functions and their orders.  Hence the zero order
of $D_\theta$ equals the operator algebraic multiplicity, and the second
identity in \eqref{eq:floquet-determinants} proves equality of the paired
multiplicities.  Their geometric multiplicities agree as well: if
$\theta\notin\{0,\pi\}$, each eigenspace is one-dimensional, since a
two-dimensional eigenspace would force
$M_V(\zeta)=e^{i\theta}I$ and hence $e^{2i\theta}=1$; for
$\theta\in\{0,\pi\}$ the two Floquet domains coincide.  Equivalently,
the Riesz projections satisfy
\begin{equation}\label{eq:paired-riesz-adjoint}
 \Pi_{-\theta}(\lambda)=C\Pi_\theta(\lambda)^*C.
\end{equation}

For \textup{(iii)}, $\overline{u_{-\theta}}$ spans the adjoint
eigenspace of $(A_V^\theta)^*$ at $\bar\lambda$.  The rank-one Riesz
formula gives \eqref{eq:cross-projection-formula}; its normalizing
denominator cannot vanish.  Taking norms gives
\eqref{eq:cross-projection-norm}.  Finally,
\eqref{eq:paired-floquet-green} gives
$(\lambda-\mu)\mathfrak b_\theta(u,v)=0$ and, in the Jordan case,
\[
 \mathfrak b_\theta(u,v)
 =\mathfrak b_\theta((A_V^\theta-\lambda)w,v)
 =\mathfrak b_\theta(w,(A_V^{-\theta}-\lambda)v)=0.
\]
This proves \textup{(iii)--(iv)}.
\end{proof}

\begin{remark}[Holomorphic pencils]\label{rem:paired-floquet-pencil}
The monodromy argument does not use the special form $iV$ or the reality
of $V$.  If $q(z,\cdot)\in L^\infty(0,L)$ depends holomorphically on $z$
and
\[
 \mathcal P_\theta(z)=-\partial_x^2+q(z,x)
 :H^2_\theta(0,L)\longrightarrow L^2(0,L),
\]
then the characteristic determinants of $\mathcal P_\theta$ and
$\mathcal P_{-\theta}$ again differ by the nonzero factor
$e^{-2i\theta}$.  Their characteristic values therefore have the same
Gohberg--Sigal multiplicities.  This applies in particular to the exact
damped-wave quadratic pencil in every transverse block.
\end{remark}

\begin{proof}[Proof of Theorem~\ref{thm:F}\textup{(a)}]
The case $a\equiv0$ in Theorem~\ref{thm:A} is immediate, and
Theorems~\ref{thm:B}--\ref{thm:C} impose nontriviality whenever it is
needed.  We may therefore assume $\int_\T a>0$ in the axis-cleanness
argument below.
Put $\alpha=\theta/(2\pi)$, chosen in $[-1/2,1/2]$.  The unitary gauge
$v(x)=e^{i\alpha x}w(x)$ identifies $H^2_\theta$ with the periodic domain
and conjugates the free part to
\[
 -\partial_x^2\longmapsto-(\partial_x+i\alpha)^2.
\]
Consequently its Fourier frequencies are $n+\alpha$, $n\in\Z$.  The
propagating window used in Lemma~\ref{lem:observability} contains a
uniformly bounded number of these shifted frequencies, independently of
$\alpha$, and the complementary free spectral gap is uniform as well.
Nazarov's estimate in Lemma~\ref{lem:nazarov-input} is
independent of the locations and separation of the frequencies, so the
entire propagating-cluster argument is uniform in the holonomy phase.

The other local input is equally phase-independent.  The quasi-periodic
extension of $v\in H^1_\theta$ has periodic modulus, and the one-dimensional
diamagnetic inequality gives $|v|\in H^1(\T)$ with
$|(|v|)'|\le |v'|$ a.e.  Lemma~\ref{lem:wp}, applied to $|v|$, therefore
has the same constant in every phase.  Sesquilinear integration by parts
for $v\in H^2_\theta$ has no endpoint term because
the phase of $v$ cancels that of $\bar v$.  Hence the upper engine,
including all nonresonant blocks, holds uniformly for
$Q^\theta_{s,\mu}$.

The Fredholm-surjectivity step is also uniform.  The adjoint of
$Q^\theta_{s,\mu}$ has potential $-isa$ on the same Floquet domain, and
pointwise conjugation gives
\[
 C(Q^\theta_{s,\mu})^*C=Q^{-\theta}_{s,\mu}.
\]
Since the preceding estimate holds for every phase, injectivity in the
opposite phase supplies the required adjoint injectivity.

For the inverse direction, let $v\in H^2(\T)$ be the absorbing function
from Proposition~\ref{prop:absorbing}.  Its support lies in a proper base
arc, so in the fixed representative $[0,2\pi]$ one may choose
$x_*\in(0,2\pi)$ with an open neighborhood on which $v$ vanishes.
For any phase $\theta$, define in this fixed representative
\[
 v_\theta(x):=
 \begin{cases}
   v(x),&0\le x\le x_*,\\
   e^{i\theta}v(x),&x_*\le x\le2\pi.
 \end{cases}
\]
The phase change occurs where $v$ vanishes identically; hence
$v_\theta\in H^2(0,2\pi)$.  Periodicity of the traces of $v$ gives
\[
 v_\theta(2\pi)=e^{i\theta}v_\theta(0),\qquad
 v_\theta'(2\pi)=e^{i\theta}v_\theta'(0),
\]
so $v_\theta\in H^2_\theta$ in the original fixed Floquet realization.
Moreover, the multiplying phase is locally constant wherever $v$ or its
residual is nonzero, and the damping representative has not been moved;
therefore
\[
 \|v_\theta\|_2=\|v\|_2,
 \qquad
 \|Q^\theta_{s,0}v_\theta\|_2
   =\|Q^0_{s,0}v\|_2.
\]
Thus Proposition~\ref{prop:absorbing} supplies
\eqref{eq:absorbing-scalar-lower} in every phase, in particular in the
chosen phase $\theta_s$ occurring in $E_{s^2}$.  At $s=\sqrt\lambda$,
multiplication by the corresponding joint eigenvector in
\eqref{eq:mapping-joint-basis} inserts this scalar $\mu=0$ quasimode into
the required joint Floquet block.  Hence a bound for one such phase at
one frequency per octave gives the same scalar lower estimate, and the
proof of Theorem~\ref{thm:B}\textup{(iii)$\Rightarrow$(i)} is unchanged.

The direct-sum identity following \eqref{eq:mapping-floquet-block}, the
stationary--generator comparison of Lemma~\ref{lem:comparison}, and axis
cleanness complete the operator-theoretic passage.  At a nonzero point of
the imaginary axis, the imaginary-part identity gives $av=0$ for every
joint block component; the resulting scalar constant-coefficient ODE and
positive-measure unique continuation force that component to vanish.  At
zero, $\cA([u],v)=0$ first makes $v$ constant, while integration of
$\Delta u=av$ over the connected compact mapping torus gives
$v\int a=0$, hence $v=0$; then $u$ is harmonic, so it is constant and
vanishes in the homogeneous energy quotient.  Compact resolvent excludes
the rest of the imaginary axis.

The profile construction in Theorem~\ref{thm:C}\textup{(a)} is entirely
on the base circle, while the upper and lower power estimates in
Theorem~\ref{thm:C}\textup{(b)} use only the uniform block estimates and
the same localized resonant test.  The transverse eigenvalues, and hence
their dyadic nonlacunarity, are unchanged by the holonomy.  This proves all
claims in part~\textup{(a)}.
\end{proof}

\begin{proposition}[Uniform Floquet transfer of the logarithmic ladder]
\label{prop:uniform-floquet-ladder}
Let $d$ be either the exact cusp $a$ of Theorem~\ref{thm:E} or a fixed
stable measurable cusp $\tilde a$ as in
Theorem~\ref{thm:E}\textup{(c)}.  On
$I_s=(-\pi/R_s,\pi/R_s)$, let
\[
 \mathcal T^{d,\theta}_s=-\partial_\eta^2+iV_s^d,
 \qquad \mathcal D(\mathcal T^{d,\theta}_s)=H^2_\theta(I_s),
 \qquad \theta\in\R/2\pi\Z,
\]
where $V_s^d(\eta)=sR_s^2d(R_s\eta)-\ell_s/A$.  Then the compact-window
conclusions of Propositions~\ref{prop:log-scaling} and
\ref{prop:log-Ts}, and of Proposition~\ref{prop:stable-cusp-transfer} in
the stable case, hold uniformly in $\theta$.  More precisely, for every
fixed $K$:
\begin{enumerate}[label=\textup{(\roman*)},leftmargin=*]
\item if $\mathcal K\Subset\rho(\cL_1)$, then
\begin{equation}\label{eq:uniform-floquet-resolvent}
 \sup_{\theta\in\R/2\pi\Z}\sup_{\zeta\in\mathcal K}
 \| (\mathcal T_s^{d,\theta}-\zeta)^{-1}\|\le C_{\mathcal K}
\end{equation}
for all sufficiently large $s$;
\item for $0\le n\le K$, the contour $\Gamma_n$ eventually encloses
exactly one eigenvalue $\mu_{s,\theta}^{(n)}$ of
$\mathcal T_s^{d,\theta}$, algebraically simple, and
\begin{equation}\label{eq:uniform-floquet-rung-limit}
 \sup_\theta|\mu_{s,\theta}^{(n)}-\nu_n|\longrightarrow0.
\end{equation}
The paired sectors have the identical eigenvalue,
$\mu_{s,-\theta}^{(n)}=\mu_{s,\theta}^{(n)}$.  If normalized
eigenfunctions $\omega_{s,\theta}^{(n)}$ and
$\omega_{s,-\theta}^{(n)}$ are phased so that each has positive real
$L^2$ pairing with $\hat u_n$, then, after zero extension to $\R$,
\begin{equation}\label{eq:uniform-cross-nonisotropy}
 \sup_\theta\|\omega_{s,\theta}^{(n)}-\hat u_n\|_2
 +\sup_\theta\|\omega_{s,-\theta}^{(n)}-\hat u_n\|_2
 \longrightarrow0,
\end{equation}
and
\begin{equation}\label{eq:uniform-cross-overlap-limit}
 \inf_\theta
 \delta_\times\bigl(\omega_{s,\theta}^{(n)},
                    \omega_{s,-\theta}^{(n)}\bigr)
 \longrightarrow \delta(u_n)>0.
\end{equation}
In particular the paired Riesz projections are uniformly conditioned;
\item after undoing the dilation, the rung expansion, isolation radius,
tameness, and finite-window completeness of
Proposition~\ref{prop:log-Ts} hold uniformly in $\theta$.  The simple-mode
and finite-window pencil transfers hold on $H^2_\theta(0,2\pi)$ with the
same uniformity.  Thus every fixed model rectangle transfers, with its
full algebraic count and with no additional roots, to every Floquet
generator block.
\end{enumerate}
\end{proposition}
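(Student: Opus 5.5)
The plan is to rerun the proof of Proposition~\ref{prop:log-scaling} (and Proposition~\ref{prop:stable-cusp-transfer}), replacing the fixed circle by the Floquet domain $H^2_\theta(I_s)$. Uniformity in $\theta$ will come from a contradiction argument in which the phase varies along the sequence. The key observation is that every a priori estimate in that proof is local or sesquilinear, so each has a Floquet analogue with the same constant. The real-part identity and the IMS bound produce no endpoint term, because the phases of $u$ and $\bar u$ cancel (Remark~\ref{rem:spectral-calibration-floquet}). The exterior imaginary-part estimate with the radial cutoff $\psi_M$ is likewise phase-free. Finally, $|u|$ extends periodically, so the local $L^\infty$ bound, the no-concentration estimate and the tail bound \eqref{eq:exterior-small} (or \eqref{eq:stable-exterior-positive}) hold uniformly in $\theta$.

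For \textup{(i)}, I would argue by contradiction: suppose there are $s_k\to\infty$, phases $\theta_k$ and points $\zeta_k\in\mathcal K$ with unit $u_k\in H^2_{\theta_k}(I_{s_k})$ and $(\mathcal T^{d,\theta_k}_{s_k}-\zeta_k)u_k\to0$. The uniform tightness forces any limit to live near the well, where the Floquet condition is invisible once the functions are zero-extended from the fundamental interval. Test functions $\varphi\in C_c^\infty(\R)$ are supported away from $\pm\pi/R_s$, so the distributional limit argument goes through unchanged, and the limit is an eigenfunction of $\cL_1$ at a point of $\mathcal K$, which is a contradiction. Lemma~\ref{lem:singular-sequence-criterion} then gives \eqref{eq:uniform-floquet-resolvent}. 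For rank at least one, the compactly supported quasimodes $w_s=\chi_s u_n$ lie in every $H^2_\theta(I_s)$, since their traces vanish, and their residual is independent of $\theta$. The projection identity therefore gives a nonzero Riesz projection uniformly in $\theta$.

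For rank at most one, simplicity and \eqref{eq:uniform-cross-overlap-limit}, I would replace complex symmetry by the paired symmetry of Lemma~\ref{lem:paired-floquet}. First, by Lemma~\ref{lem:paired-floquet}\textup{(ii)} the sectors $\theta$ and $-\theta$ have the same spectrum with the same multiplicities; this gives $\mu^{(n)}_{s,-\theta}=\mu^{(n)}_{s,\theta}$. The three alternatives of Lemma~\ref{lem:root-space-alt} in sector $\theta$ are then excluded as follows.
\begin{enumerate}[label=\textup{(\alph*)}]
\item Two distinct eigenvalues in sector $\theta$ produce, by the paired spectral equality, a $-\theta$ eigenfunction at the second eigenvalue. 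Part \textup{(iv)} makes its cross-pairing with the first vanish, while tight limits give the cross-pairing $e^{i(\cdot)}\int u_n^2/\|u_n\|^2\neq0$.
\item Geometric multiplicity two is excluded by sesquilinear orthogonality, exactly as before.
\item A Jordan chain is excluded by \eqref{eq:cross-jordan-obstruction} against a $-\theta$ eigenfunction, again passed to the limit.
\end{enumerate}
All sequences here carry arbitrary $\theta_k$, which handles the uniformity. The convergence statement \eqref{eq:uniform-cross-nonisotropy} follows by the same subsequence argument with phase normalization, now taken over sequences $(s_k,\theta_k)$. Since $\delta_\times$ is a bilinear pairing of two strongly convergent families, its limit is $|\mathfrak b(\hat u_n,\hat u_n)|=\delta(u_n)$, and \eqref{eq:cross-projection-norm} gives the uniform conditioning.

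For \textup{(iii)}, I would repeat the proof of Proposition~\ref{prop:log-Ts} verbatim: it uses only the dilation identity and \textup{(i)}--\textup{(ii)}, all of which are now uniform in $\theta$. The pencil transfers, Proposition~\ref{prop:pencil-transfer} and Proposition~\ref{prop:cluster-transfer}, need the fixed-domain Gohberg--Sigal count on $H^2_\theta(0,2\pi)$. That count holds because $T^{d,\theta}_s+1$ is invertible by the real-part identity, and Lemma~\ref{lem:companion} carries over with the same boundary condition, as noted in Remark~\ref{rem:paired-floquet-pencil}. The smallness constants depend only on the uniform tameness bound. I expect the main obstacle to be the multiplicity exclusion, because the single-sector bilinear form is no longer symmetric. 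The fix requires showing that the partner eigenfunction in the $-\theta$ sector converges to the same $\hat u_n$, and this is exactly what the joint uniform convergence provides.
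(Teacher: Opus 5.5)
Your proposal is correct and follows the paper's proof step for step. It uses phase-independent sesquilinear, IMS and tail estimates with a sequential contradiction over $(s_k,\theta_k)$ for (i), and compactly supported model quasimodes lying in every $H^2_\theta(I_s)$ for nonvanishing projections. It then uses the paired Floquet identities of Lemma~\ref{lem:paired-floquet}\textup{(ii)},\textup{(iv)} in place of complex symmetry in the three alternatives of Lemma~\ref{lem:root-space-alt}, and a verbatim rescaling with fixed-domain pencil transfer on $H^2_\theta$ for (iii).

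One phrasing should be tightened. The $-\theta_k$ partner eigenfunctions converge to a unimodular multiple of $\hat u_n$ because the $f=0$ a priori compactness applies along the phases $-\theta_k$, as in the paper. It is not the joint uniform convergence \eqref{eq:uniform-cross-nonisotropy} that supplies this, since that is only established after rank one.
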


\begin{proof}
Every a priori estimate in Proposition~\ref{prop:log-scaling} is uniform
in $\theta$.  Sesquilinear integration by parts has no boundary
contribution on $H^2_\theta$, so the real-part, IMS, local $H^1$,
no-concentration, and exterior-tail estimates are unchanged.  If
\eqref{eq:uniform-floquet-resolvent} failed, sequences
$s_k\to\infty$, $\theta_k$, $\zeta_k\in\mathcal K$, and normalized
singular vectors in the $\theta_k$ sectors would, by the same compactness
and tightness argument, produce a nonzero eigenfunction of $\cL_1$ at a
point of $\mathcal K$.  This proves \textup{(i)}.

Compactly supported cutoffs of the model eigenfunctions vanish near the
endpoints of $I_s$ and hence belong to $H^2_\theta(I_s)$ for every
$\theta$.  The projection argument in
Lemma~\ref{lem:singular-sequence-criterion}, with the uniform contour
bound from \textup{(i)}, makes every model Riesz projection nonzero in
every phase, with a phase-independent threshold.

We prove that its rank is one.  Otherwise there are
$s_k\to\infty$ and phases $\theta_k$ for which one of the three
alternatives of Lemma~\ref{lem:root-space-alt} occurs.  In the
two-eigenvalue alternative, take a normalized $\theta_k$-sector
eigenfunction at the first eigenvalue and, using
Lemma~\ref{lem:paired-floquet}\textup{(ii)}, a normalized
$-\theta_k$-sector eigenfunction at the second.  Their cross-pairing is
zero by Lemma~\ref{lem:paired-floquet}\textup{(iv)}.  Compactness and
tightness make both converge, after subsequences and multiplication by
unimodular constants, to the same model mode $\hat u_n$, contradicting
$\int_\R u_n^2\ne0$.  Geometric multiplicity at least two is excluded as
in Proposition~\ref{prop:log-scaling}: two orthonormal eigenfunctions in
one sector converge strongly to scalar multiples of the same unit model
mode.  Finally, if
\[
 (\mathcal T_s^{d,\theta}-\mu)w=u,
\]
choose a normalized eigenfunction $v$ at the same eigenvalue in the
paired $-\theta$ sector.  Equation~\eqref{eq:cross-jordan-obstruction}
gives $\int uv=0$, whereas tight convergence of $u$ and $v$ gives the
nonzero limit $\int_\R u_n^2/\|u_n\|_2^2$.  No compactness estimate for
$w$ is used.  This proves algebraic simplicity uniformly in phase.

The same sequential contradiction proves
\eqref{eq:uniform-floquet-rung-limit} and the uniform eigenfunction
convergence in \eqref{eq:uniform-cross-nonisotropy}.  The tight-pairing
lemma then gives, uniformly in $\theta$,
\[
 \int_{I_s}\omega_{s,\theta}^{(n)}
                  \omega_{s,-\theta}^{(n)}
 \longrightarrow
 \int_\R\hat u_n^2,
\]
which is \eqref{eq:uniform-cross-overlap-limit}; the projection norm
statement follows from \eqref{eq:cross-projection-norm}.

The dilation preserves the Floquet phase.  Hence the affine rescaling,
isolation radii, and finite-window argument of
Proposition~\ref{prop:log-Ts} are unchanged, and their constants are
uniform by \textup{(i)}.  Propositions~\ref{prop:pencil-transfer} and
\ref{prop:cluster-transfer} also apply on $H^2_\theta$: their
integrations by parts are sesquilinear, $-\partial_x^2+1$ is invertible
on every Floquet domain, and their Gohberg--Sigal homotopies use a fixed
domain for each block.  All quantitative input bounds are uniform in
$\theta$, so the resulting thresholds are uniform.  The companion-chain
calculation of Lemma~\ref{lem:companion} is algebraic and remains valid on
$H^2_\theta\times H^1_\theta$; hence the Gohberg--Sigal multiplicity of
each pencil root equals its algebraic multiplicity in the corresponding
Floquet generator block.  This proves
\textup{(iii)} for the exact cusp.

For a stable measurable cusp,
Lemma~\ref{lem:stable-cusp-estimates} changes only the local potential
comparisons and exterior positivity datum; it is independent of the
boundary phase.  The preceding compactness, paired-sector rank
exclusions, rescaling, and pencil transfer therefore apply verbatim, with
the parameter dependence stated in
Proposition~\ref{prop:stable-cusp-transfer}.
\end{proof}

\begin{proof}[Proof of Theorem~\ref{thm:F}\textup{(b)}]
Pointwise conjugation on $L^2(Y)$ commutes with both $-\Delta_Y$ and the
real pullback operator $U_\varphi$.  Hence every holonomy phase in
$E_\lambda$ occurs with its conjugate: if
$U_\varphi\psi=e^{i\theta}\psi$, then
$U_\varphi\bar\psi=e^{-i\theta}\bar\psi$.  This supplies the paired
Floquet sectors used in Lemma~\ref{lem:paired-floquet}; no restriction on
the order of $\varphi$ is involved.

Choose the cut in the base circle away from the cusp.  In each joint
transverse/holonomy block, dilation about the cusp gives the operator
$\mathcal T_s^{a,\theta}$ of
Proposition~\ref{prop:uniform-floquet-ladder}.  That proposition gives,
uniformly in the holonomy phase, one algebraically simple eigenvalue per
model rung, compact-window resolvent bounds, isolation and tameness, and
the complete finite-window count.  Its final clause transfers these
rungs through the exact quadratic pencil on the same Floquet domain.
Undoing the scaling gives precisely the window
\eqref{eq:ladder-window} and the asymptotics
\eqref{eq:ladder-asymptotic}--\eqref{eq:ladder-real-imag}.  This proves the
blockwise assertion for the exact cusp.

The global upper resolvent bound is part~\textup{(a)}.  At every distinct
positive transverse frequency, choose any one joint holonomy eigenvector;
the principal rung in that scalar block has the asymptotics
\eqref{eq:log-resonance-asymptotic}--\eqref{eq:log-imag-shift}.  The set
of distinct transverse frequencies is the same as on the product and is
dyadically syndetic by \S\ref{subsec:notation}, so
Lemma~\ref{lem:envelope} gives the matching lower
regularized semigroup envelope.  This proves the global conclusion of
Theorem~\ref{thm:E}\textup{(b)} on $X_\varphi$.

For a stable measurable cusp, the stable part of
Proposition~\ref{prop:uniform-floquet-ladder} supplies the same blockwise
count, asymptotics, and principal rungs with unchanged model constants.
Part~\textup{(a)} supplies the corresponding global upper bound, so the
same envelope argument applies.

Finally, the joint decomposition~\eqref{eq:mapping-joint-basis} is an
orthogonal reducing decomposition.  The direct-sum Riesz-projection
argument of Lemma~\ref{lem:blocks} therefore adds algebraic
multiplicities over precisely those joint blocks that produce the same
spectral point.  In particular, conjugate nonreal holonomy phases produce
paired scalar blocks with identical rung locations, as follows also from
Remark~\ref{rem:paired-floquet-pencil}; each scalar contribution is
simple, while the full multiplicity is their sum.
\end{proof}

\begin{proof}[Proof of Proposition~\ref{prop:warped-sharpness}]
Write $f_\eps(x)=2+\eps\sin x$.  When $\eps=0$, the closed curve
$\{x=0\}$ is a unit-speed geodesic after a constant reparametrization,
and the damping vanishes identically on it.  The Gaussian-beam necessity
of geometric control therefore rules out exponential stability
\cite{Ralston1969,Lebeau1996}.

Suppose now that $0<|\eps|<1$.  A unit-speed geodesic that never meets
$\{a>0\}$ would have $x(t)\equiv0$.  Its $x$-equation is
\[
 \ddot x-f_\eps(x)f_\eps'(x)\dot y^{\,2}=0.
\]
At $x=0$ this becomes $-2\eps\dot y^{\,2}=0$, which is impossible:
unit speed and $\dot x=0$ imply $\dot y\ne0$.  Hence every forward
unit-speed geodesic meets the open set $\{a>0\}$.

This pointwise conclusion is uniform for the fixed nonzero $\eps$ under
consideration.  For each initial covector
$\rho\in S^*_{g_\eps}\T^2$, let $\gamma_{\rho,\eps}$ denote the
$g_\eps$-unit-speed geodesic issued from $\rho$, and choose $T_\rho>0$
such that
\[
 \int_0^{T_\rho}a(\gamma_{\rho,\eps}(t))\,dt>0.
\]
Continuous dependence of the $g_\eps$-geodesic flow gives a neighborhood
$U_\rho$ on which the same integral has a positive lower bound.  A finite
subcover of the compact unit cosphere $S^*_{g_\eps}\T^2$, followed by
taking the largest time and the smallest lower bound, yields
$T_\eps,c_\eps>0$ such that
\[
 \int_0^{T_\eps}a(\gamma_{\rho,\eps}(t))\,dt\ge c_\eps
 \qquad(\rho\in S^*_{g_\eps}\T^2).
\]
The geometric control theorem gives exponential stability
\cite{RT,BLR}.  No uniformity as $\eps\to0$ is asserted: the
geometric-control constants, and hence the exponential-stability constants
supplied by this argument, degenerate in that limit, consistently with
failure of exponential stability at $\eps=0$.  Finally, $a$ and the base coordinate have not changed,
so \eqref{eq:Theta} gives the same $\Theta_a$ for every $\eps$.  Since
$g_\eps\to g_0$ in $C^\infty$ as $\eps\to0$, the final assertion follows.
\end{proof}

\begin{proof}[Proof of Corollary~\ref{cor:warped-log-cusp}]
For all sufficiently small $r$, the interval of length $2r$ on which the
average of $a_A$ is least is centered at zero.  Indeed, if
$2r<x_0/4$, every such interval not contained in
$(-x_0/2,x_0/2)$ lies in $\{|x|\ge x_0/4\}$, where $a_A$ has a positive
lower bound.  The centered average tends to zero, so a minimizing
interval must eventually lie in $(-x_0/2,x_0/2)$; there the claim follows
because $a_A$ is even and increasing with $|x|$.  Therefore
\[
 \Theta_{a_A}(r)
 =\frac1r\int_0^r(\log(e/x))^{-A}\,dx
 =\int_0^1\bigl(\log(e/r)+\log(1/t)\bigr)^{-A}\,dt.
\]
After division by $(\log(e/r))^{-A}$, dominated convergence gives
\eqref{eq:warped-log-mass}.  For $\eps=0$ the metric is the exact product
of the base circle with the circle $(\T,4dy^2)$, and $a_A$ satisfies the
hypotheses of Theorem~\ref{thm:E}.  Its conclusions, including the stated
two-sided decay envelope, follow.  For $0<|\eps|<1$, exponential stability
is Proposition~\ref{prop:warped-sharpness}.
\end{proof}

Analytically, the $k$th Fourier block of the warped Laplacian is unitarily
equivalent to
\[
 -\partial_x^2+\frac{k^2}{f_\eps(x)^2}+V_\eps(x),
 \qquad
 V_\eps=\frac{f_\eps''}{2f_\eps}
          -\frac{(f_\eps')^2}{4f_\eps^2}.
\]
For $k\asymp s$ the $x$-dependent term $k^2f_\eps^{-2}$ has principal
size $s^2$; it is not a constant detuning parameter.  The proposition
therefore identifies a change in the trapping geometry, and proves only
that $\Theta_a$ by itself is no longer a complete invariant once flat
separation is lost.

\section{Conclusion}

Theorems~\ref{thm:B} and~\ref{thm:E} answer the two questions posed in
the introduction.  For arbitrary bounded measurable transverse damping,
the worst local average $\Theta_a$ is the coefficient-level invariant at
the exponential endpoint and throughout the critical slowly varying class.
The inverse direction is sparse: one resonant scalar channel at one
transverse frequency per octave recovers the full multiscale mass
condition.  Theorems~\ref{thm:A} and~\ref{thm:C} supply, respectively, the
endpoint and the abundance and power-scale counterexamples.

For the canonical logarithmic cusp, the resolvent scale is carried by true
spectrum rather than only by pseudospectrum.  Theorem~\ref{thm:D} gives the
exact model identity, and Theorem~\ref{thm:E} transfers every fixed finite
window with its complete blockwise algebraic count and two-term
coefficients.  The principal eigenvalues then meet the global resolvent
bound to give the sharp two-sided regularized decay envelope.  The same
leading description survives measurable lower-order relative
perturbations.

Theorem~\ref{thm:F} separates topology from geometry: isometric holonomy
preserves both main results, while an arbitrarily small smooth warping may
replace the logarithmic regime by exponential stabilization without
changing either the damping or $\Theta_a$.  The quantitative content is
the mass--resolvent--spectrum--decay chain displayed in
\eqref{eq:spine}.
\section*{Data availability}

No datasets were generated or analyzed in connection with this article.

\end{document}